\def\MR#1{\href{http://www.ams.org/mathscinet-getitem?mr=#1}{MR#1}}
\newtheorem{theorem}{Theorem}[section]
\newtheorem{lemma}[theorem]{Lemma}
\newtheorem{corollary}[theorem]{Corollary}
\theoremstyle{definition}
\newtheorem{definition}{Definition}[section]
\newtheorem{assumption}[definition]{Assumption}
\theoremstyle{remark}
\newtheorem{remark}{Remark}[section]
\numberwithin{equation}{section}
\newcommand{\RR}{\mathbb{R}}
\newcommand{\CC}{\mathbb{C}}
\newcommand{\ZZ}{\mathbb{Z}}
\newcommand{\NN}{\mathbb{N}}
\newcommand{\PP}{\mathbb{P}}
\newcommand{\EE}{\mathbb{E}}
\newcommand{\Beta}{\mathrm{Beta}\mypp}
\newcommand{\Gammad}{\mathrm{Gamma}\mypp}
\newcommand{\GEM}{\mathrm{GEM}\myp}
\newcommand{\Psp}{\mathbb{P}_{N\myn,\myp L}}
\newcommand{\Esp}{\mathbb{E}_{N\myn,\myp L}}
\newcommand{\Ens}{\tilde{\mathbb{E}}_{N}}
\newcommand{\Pns}{\tilde{\mathbb{P}}_{N}}
\newcommand{\bfOne}{\mathbbm{1}}
\newcommand{\rmd}{{\rm d}}
\newcommand{\rme}{{\rm e}}
\newcommand{\rmi}{{\rm i}}
\newcommand{\bfx}{\boldsymbol{x}}
\newcommand{\bfs}{\boldsymbol{s}}
\newcommand{\bfk}{\boldsymbol{k}}
\newcommand{\bfzero}{\boldsymbol{0}}
\newcommand{\GN}{G_{\myn N}}
\newcommand{\PD}{\mbox{\rm PD}}
\newcommand{\SN}{\mathfrak{S}_N}
\newcommand{\Sn}{\mathfrak{S}_n}
 \DeclareMathOperator{\Li}{Li}
\newcommand{\const}{\mathrm{const}}
\newcommand{\myp}{\mbox{$\:\!$}}
\newcommand{\mypp}{\mbox{$\;\!$}}
\newcommand{\myn}{\mbox{$\;\!\!$}}
\newcommand{\mynn}{\mbox{$\:\!\!$}}
\newcommand{\mynnn}{\mbox{$\!$}}
\begin{document}
\title[Cycles in surrogate-spatial random permutations]{Asymptotic
statistics of cycles in surrogate-spatial permutations}

\author[L.\mypp{}V.\myp~Bogachev]{Leonid V.\myp\ Bogachev}
\address{Department of Statistics, School of Mathematics, University of Leeds, Leeds LS2 9JT, UK}
\email{L.V.Bogachev@leeds.ac.uk}

\author[\myp{}D.~Zeindler]{Dirk Zeindler}
\address{Department of Mathematics, University of Bielefeld, Bielefeld D-33501,
Germany} \email{zeindler@math.uni-bielefeld.de}

\begin{abstract}
We propose an extension of the Ewens measure on permutations by
choosing the cycle weights to be asymptotically proportional to the
degree of the symmetric group. This model is primarily motivated by
a natural approximation to the so-called spatial random permutations
recently studied by V.~Betz and D.~Ueltschi (hence the name
``surrogate-spatial''), but it is of substantial interest in its own
right. We show that under the suitable (thermodynamic) limit both
measures have the similar critical behaviour of the cycle statistics
characterized by the emergence of infinitely long cycles. Moreover,
using a greater analytic tractability of the surrogate-spatial
model, we obtain a number of new results about the asymptotic
distribution of the cycle lengths (both small and large) in the full
range of subcritical, critical, and supercritical domains. In
particular, in the supercritical regime there is a parametric
``phase transition'' from the Poisson--Dirichlet limiting
distribution of ordered cycles to the occurrence of a single giant
cycle. Our techniques are based on the asymptotic analysis of the
corresponding generating functions using P\'olya's Enumeration
Theorem and complex variable methods.
\end{abstract}

\maketitle

\keywords{\emph{Keywords and phrases:} Spatial random permutations;
Surrogate-spatial measure; Generating functions; P\'olya's
Enumeration Theorem; Long cycles; Poisson--Dirichlet distribution}

\medskip \subjclass{\emph{MSC 2010:} Primary 60K35; Secondary
05A15, 82B26}

\tableofcontents

\section{Introduction}

\subsection{Surrogate-spatial permutations}\label{sec:1.1}
Let $\SN$ be the symmetric group of all permutations on elements
$1,\dots,N$. For any permutation $\sigma\in \SN$, denote by
$C_j=C_j(\sigma)$ the \emph{cycle counts}, that is, the number of
cycles of length $j=1,\dots,N$ in the cycle decomposition
of~$\sigma$; clearly
\begin{equation}\label{eq:sumC}
C_j\ge0 \quad(j\ge1),\qquad \sum_{j=1}^N j\mypp C_j=N.
\end{equation}
A probability measure on $\SN$ with (multiplicative) cycle weights
may now be introduced by the expression
\begin{equation}\label{eq:weights}
\PP_N(\sigma):=\frac{1}{N!\mypp H_N}\prod_{j=1}^N w_j^{C_j},\qquad
\sigma\in\SN,
\end{equation}
where $H_N$ is the normalization constant,
\begin{equation}\label{eq:weights_H}
H_N=\frac{1}{N!}\sum_{\sigma\in\SN} \prod_{j=1}^N w_j^{C_j}.
\end{equation}

The cycle weights $w_j$ \,($j=1,\dots,N$) in \eqref{eq:weights},
\eqref{eq:weights_H} may in principle also depend on the degree $N$.
In the simplest possible case one just puts $w_j\equiv 1$, resulting
in the classical \emph{uniform distribution} on permutations dating
back to Cauchy (see, e.g., \cite[\S\myp1.1]{ABT03}). Generalization
with $w_j\equiv \theta>0$ is known as the \emph{Ewens sampling
formula} $\mathrm{ESF}(\theta)$, which first emerged in the study of
population dynamics in mathematical biology~\cite{Ewens}. A class of
models with variable coefficients $w_j$ (but independent of $N$) was
recently studied in papers \cite{BeUeVe11}, \cite{ErUe12},
\cite{MaNiZe12}, \cite{NiZe13} (see also an extensive background
bibliography therein); more general models of assemblies allowing
for mild dependence of the weights on $N$ were considered earlier by
Manstavi\v{c}ius (see \cite[\S\,2, pp.~67--68]{Man02}).

The \emph{surrogate-spatial measure} $\Pns$ proposed in the present
paper is a further natural generalization specified by choosing the
cycle weights in the form
\begin{equation}\label{eq:weights-SS}
w_j(N):=\theta_j+N\kappa_j,\qquad j\in\NN\myp,
\end{equation}
where $(\theta_j)$ and $(\kappa_j)$ are given sequences with
$\theta_j\ge0$, $\kappa_j\ge0$ \,($j\in\NN$). Thus, the general
model \eqref{eq:weights}, \eqref{eq:weights_H} specializes to
\begin{gather}\label{eq:def_near_spatial}
\Pns(\sigma):= \frac{1}{ N!\mypp H_{N}} \prod_{j=1}^N (\theta_j
+ N\kappa_j)^{C_j},\qquad \sigma\in \SN,\\
\label{eq:HN} H_N=\frac{1}{N!}\sum_{\sigma\in\SN} \prod_{j=1}^N
(\theta_j + N\kappa_j)^{C_j}.
\end{gather}

\subsection{Spatial permutations}\label{sec:1.2}
Our original interest to the model \eqref{eq:def_near_spatial},
\eqref{eq:HN} (which also explains the proposed name
``surrogate-spatial'') has been generated by the so-called
\emph{spatial random permutations}, introduced and studied by Betz
and Ueltschi in a series of papers \cite{BeUe09}, \cite{BeUe10},
\cite{BeUe11}, \cite{BeUe11a} (see also a recent preprint by
Ercolani et al.\
\cite{ErJaUe14} developing a more general setting of \emph{spatial
random partitions}). Specifically, the spatial model leads to the
following family of probability measures on the symmetric group
$\mathfrak{S}_N$,
\begin{equation} \label{eq:Pspatial_with_partition}
\Psp(\sigma):=
 \frac{1}{N!\mypp H_{N\myn,\myp L}} \prod_{j=1}^N \left(\rme^{-\alpha_j} \sum_{\bfk\in \ZZ^d}
   \rme^{-j\myp\varepsilon(\bfk/L)}\right)^{C_j},\qquad \sigma\in\SN,
\end{equation}
where $(\alpha_j)$ is a real sequence, $L>0$ is an additional
``spatial'' parameter, \,$\varepsilon\colon\RR^d\to[0,\infty)$ is a
certain function, and $H_{N\myn,\myp L}$ is the corresponding
normalization factor.

The hidden spatial structure of the measure
\eqref{eq:Pspatial_with_partition} is revealed by the fact that
$\Psp$ emerges as the $\SN$-marginal of a suitable probability
measure on a bigger space $\varLambda^N\mynn\times \mathfrak{S}_N$,
where $\varLambda:=[-\frac12L,\frac12L\myp]^d\subset \RR^d$;
namely (cf.~\cite[Eq.~(3.6), p.\:1179]{BeUe11a})
\begin{gather}\label{eq:P_N,L}
\Psp(\sigma)=\frac{1}{N!\mypp H_{N\myn,\myp L}}\int_{\varLambda^N}
\,\rme^{-\mathcal{H}_N(\bfx_1,\dots,\myp\bfx_N;
\mypp\sigma)}\,\rmd{\bfx_1}\dots\rmd{\bfx_N},\qquad
\sigma\in\mathfrak{S}_N,\\
\label{eq:H-hamiltonian}
\mathcal{H}_N(\bfx_1,\dots,\bfx_N;\myp\sigma): = \sum_{j=1}^N
V(\bfx_j-\bfx_{\sigma(j)})+\sum_{j=1}^N\alpha_j\myp C_j\,,
\end{gather}
where the interaction potential $V\colon \RR^d\to(-\infty, +\infty]$
is such that the function $\rme^{-V(\bfx)}$ is continuous, has
\emph{positive Fourier transform} (which implies that
$V(-\bfx)=V(\bfx)$, \,$\bfx\in\RR^d$),~and
\begin{equation}\label{eq:density}
\int_{\RR^d} \rme^{-V(\bfx)}\,\rmd{\bfx}=1,
\end{equation}
so that the function $f(\bfx):=\rme^{-V(\bfx)}$ can be interpreted
as a probability density on $\RR^d$.

The basic physical example is the \emph{Gaussian case} with a
quadratic potential (see~\cite[p.\:1175]{BeUe11a})
\begin{equation}\label{eq:GaussianV}
V(\bfx)=\tfrac14\mypp\beta^{-1}\|\bfx\|^2+\tfrac12\myp d\myp
\log\myn(4\pi\beta), \qquad \bfx=(x_1,\dots,x_d)\in\RR^d,
\end{equation}
where $\|\bfx\|:=\left(\sum_{i=1}^d x_i^2\right)^{1/2}$ is the usual
(Euclidean) norm in $\RR^d$, $\beta>0$ is the inverse temperature,
and a constant term in \eqref{eq:GaussianV} insures the
normalization condition \eqref{eq:density}. According to formula
\eqref{eq:H-hamiltonian}, particles in a random spatial
configuration $\{\bfx_1,\dots, \bfx_N\}$ interact with one another
via the spatial potential $V$ only along cycles of an auxiliary
permutation $\sigma\in\SN$ (see
Fig.\;\ref{fig:spatial_permutations}), whereby the existence of a
cycle of length $j\in\NN$ is either promoted or penalized depending
on whether $\alpha_j<0$ or $\alpha_j>0$, respectively.
\begin{figure}[h!]
 \centering
 \includegraphics[width= 0.51\textwidth]{cube3c.eps}
  \put(-115,4){\mbox{\scriptsize$L$}}
  \put(-12,110){\mbox{\scriptsize$L$}}
  \put(-72,190){\mbox{\scriptsize$N$ (particles)}}
\caption{Illustration of spatial permutations.}
 \label{fig:spatial_permutations}
\end{figure}

The link between formulas \eqref{eq:Pspatial_with_partition} and
\eqref{eq:P_N,L} is provided by the function $\varepsilon(\bfs)$
defined by the Fourier transform
\begin{equation}\label{eq:eps}
\rme^{-\varepsilon(\bfs)}=\int_{\RR^d} \rme^{-2\pi\rmi\myp
(\bfx,\bfs)}\,\rme^{-V(\bfx)}\,\rmd{\bfx},\qquad \bfs\in\RR^d,
\end{equation}
where $(\bfx,\bfs)$ denotes the inner product in $\RR^d$. For
example, the Gaussian potential \eqref{eq:GaussianV} leads to a
quadratic function $\varepsilon(\bfs)=c \myp\|\bfs\|^2$ (with
$c=4\pi^2\beta$).

From the assumptions on $V(\cdot)$, it readily follows that
$\varepsilon(\bfzero)=0$, \,$\varepsilon(-\bfs)=\varepsilon(\bfs)$
\,($\bfs\in\RR^d$), \,$\varepsilon(\bfs)> 0$ \,($\bfs\ne\bfzero$)
and, by the Riemann--Lebesgue lemma,
\,$\lim_{\bfs\to\infty}\varepsilon(\bfs) = \infty$. Since the
Fourier transform \eqref{eq:eps} is positive, a simple lemma (see
\cite[Theorem~9, p.~20]{Bochner} or \cite[Lemma 7.2.1, p.\:162]{IL})
yields that
\begin{equation}\label{eq:int-eps}
\int_{\RR^d} \rme^{-\varepsilon(\bfs)}\,\rmd{\bfs} < \infty,
\end{equation}
hence the Fourier inversion formula implies the dual relation
\begin{equation*}
\rme^{-V(\bfx)}=\int_{\RR^d} \rme^{\myp2\pi\rmi\myp
(\bfx,\bfs)}\,\rme^{-\varepsilon(\bfs)}\,\rmd{\bfs},\qquad
\bfx\in\RR^d.
\end{equation*}
Finally, let us assume that the function $\varepsilon(\bfs)$ is
regular enough as $\bfs\to\infty$, so that the integrability
condition \eqref{eq:int-eps} implies the convergence (for any $L>0$)
of the series
\begin{equation}\label{eq:sum-eps}
\sum_{\bfk\in\ZZ^d} \rme^{-\varepsilon(\bfk/L)}<\infty.
\end{equation}
For the latter, it is sufficient that, for  $\bfs$ large enough,
$\varepsilon(\bfs)=\varepsilon(s_1,\dots,s_d)$ is non-decreasing in
each of the variables $s_i$, or that there is a lower bound
$\varepsilon(\bfs)\ge c_1+\gamma\log \|\bfs\|$, with some $c_1>0$
and $\gamma>d$. Importantly, the condition \eqref{eq:sum-eps}
ensures that the spatial measure \eqref{eq:Pspatial_with_partition}
is well defined.

The model \eqref{eq:P_N,L}, \eqref{eq:H-hamiltonian} is motivated by
the Feynman--Kac representation of the dilute Bose gas (at least in
the Gaussian case), and it has been proposed in connection with the
study of the \emph{Bose--Einstein condensation} (for more details
and the background, see \cite{BeUe09}, \cite{BeUe10}, \cite{BeUe11}
and further references therein). An important question in this
context, which is also interesting from the combinatorial point of
view, is the possible emergence of an infinite cycle under the
\emph{thermodynamic limit}, that is, by letting $N,L\to\infty$ while
keeping constant the \emph{particle density} $\rho= NL^{-d}$. The
(expected) fraction of points contained in infinitely long cycles
can be defined as
\begin{equation}\label{eq:nu}
\nu := \lim_{K\to \infty} \liminf_{N \to \infty}
\frac{1}{N}\,\Esp\!\mynn\left(\myn\sum_{j>K} j\mypp C_j \right).
\end{equation}
Betz and Ueltschi have shown in \cite{BeUe11a} that in the model
\eqref{eq:Pspatial_with_partition}, under certain assumptions on the
coefficients $\alpha_j$, the quantity $\nu$ is identified explicitly
as
\begin{equation}\label{eq:rho-critical}
\nu = \max\left\{0, \,1 -
\frac{\rho_{\myp\mathrm{c}}}{\rho}\right\},
\end{equation}
where $\rho_{\myp\mathrm{c}}$ is the \emph{critical density} given
by
\begin{equation}\label{eq:Riemann-integral}
\rho_{\myp\mathrm{c}} := \sum_{j=1}^\infty \rme^{-\alpha_j}
\int_{\RR^d} \rme^{-j\myp\varepsilon(\bfs)}\,\rmd{\bfs}\le +\infty.
\end{equation}
That is to say, infinite cycles emerge (in the thermodynamic limit)
when the density $\rho$ is greater than the critical density
$\rho_{\myp\mathrm{c}}$ (see further details in \cite{BeUe11a}).

However, the computations in \cite{BeUe11a} for the original spatial
measure $\Psp$ are quite complicated, and it may not be entirely
clear as to why the asymptotic behaviour of cycles is drastically
different for $\rho<\rho_{\myp\mathrm{c}}$ and
$\rho>\rho_{\myp\mathrm{c}}$ (even though intuition does suggest
such a phase transition; see a heuristic explanation in
\cite[p.\:1175]{BeUe11a}).

\subsection{Surrogate-spatial model as an approximation of the spatial model}\label{sec:1.3}
A simple observation that has motivated the present work is that the
sum in \eqref{eq:Pspatial_with_partition} can be viewed, for each
fixed $j$, as a Riemann sum (with mesh size $1/L$) for the
corresponding integral appearing in~\eqref{eq:Riemann-integral}.
Using Euler--Maclaurin's (multidimensional) summation formula (see,
e.g., \cite[\S\myp A.4]{BRR})
and recalling that $\rho= NL^{-d}$, this suggests the following
plausible approximation of the cycle weights in
formula~\eqref{eq:Pspatial_with_partition},
\begin{equation}\label{eq:approx_weigths}
\rme^{-\alpha_j} \sum_{\bfk\in \ZZ^d} \rme^{-j\myp
\varepsilon(\bfk/L)} = N \kappa_j + \theta_j + o\myp(1),\qquad
N,L\to\infty,
\end{equation}
with the coefficients
\begin{equation}\label{eq:kappa_j}
\kappa_j=\rho^{-1}\myp\rme^{-\alpha_j} \int_{\RR^d} \rme^{-j
\myp\varepsilon(\bfs)}\,\rmd{\bfs},\qquad j\in\NN\myp.
\end{equation}
(Note that, due to the condition \eqref{eq:int-eps}, the integral in
\eqref{eq:kappa_j} is finite for all $j\in\NN$\myp.) Thus,
neglecting the $o$-terms in \eqref{eq:approx_weigths}, we arrive at
the surrogate-spatial model~\eqref{eq:weights-SS}.

The ansatz \eqref{eq:approx_weigths} demands a few comments. In the
Gaussian case, with $\varepsilon(\bfs)=c \myp\|\bfs\|^2$, it can be
checked with the help of the Poisson summation formula
\cite[\S\myp3.12, p.~52]{Bru}
that the expansion \eqref{eq:approx_weigths} holds true for any
fixed $j\in\NN$ with $\kappa_j\propto \rme^{-\alpha_j} j^{-d/2}$ and
$\theta_j\equiv0$ (see Section~\ref{sec:6.1} below). In general,
however, it may not be obvious that the leading correction to the
principal term in \eqref{eq:approx_weigths} is necessarily
\emph{constant in $N$}.

More importantly, the expansion \eqref{eq:approx_weigths} with the
integral coefficients \eqref{eq:kappa_j} may fail to be adequate if
the index $j\le N$ grows fast enough with $N$. For instance, again
assuming the Gaussian case, for the sum in \eqref{eq:approx_weigths}
with $j=N$ we have in dimension $d\ge3$
\begin{equation*}
\sum_{\bfk\in \ZZ^d} \rme^{-Nc\mypp\|\bfk/L\|^2}=\sum_{\bfk\in
\ZZ^d} \exp\left\{-c\myp\rho^{2/d}N^{1-2/d}\|\bfk\|^2\right\}\to
1,\qquad N\to\infty,
\end{equation*}
whereas the integral counterpart (see \eqref{eq:kappa_j}) is
asymptotic to $O(N^{1-2/d})=o(1)$. This suggests that for $j$ close
to $N$ the relation \eqref{eq:approx_weigths} holds with the
universal (potential-free) coefficients $\kappa_j=0$, \,$\theta_j
=\rme^{-\alpha_j}$. That is to say, the (Gaussian) spatial model
$\Psp$ dynamically interpolates (up to asymptotically small
correction terms) between the surrogate-spatial model $\Pns$ with
$\kappa_j>0$, $\theta_j=0$ (for small $j$) and the one with
$\kappa_j=0$, $\theta_j>0$ (for large $j$).

We postpone a detailed comparison of the models
\eqref{eq:def_near_spatial} and \eqref{eq:Pspatial_with_partition}
until Section~\ref{sec:comparison}. For now let us just point out
that both models, after a suitable calibration, share the same
critical density $\rho_{\myp\mathrm{c}}$ and the expected fraction
of points in infinite cycles (see~\eqref{eq:Riemann-integral});
there are also qualitative similarities in the transition from the
Poisson--Dirichlet distribution of large cycles to a single giant
cycle (see Section~\ref{sec:long}).

\subsection{Synopsis and layout}\label{sec:1.4}

As will be demonstrated in this paper, the class of
surrogate-spatial measures $\Pns$ defined by
\eqref{eq:def_near_spatial} produces a rich picture of the
asymptotic statistics of permutation cycles as $N\to\infty$, being
at the same time reasonably tractable analytically. Thus, despite
lacking direct physical relevance, it can be used as an efficient
exploratory tool in the analysis of more complicated spatial models.

More specifically, the asymptotics of the surrogate-spatial model
can be characterized using the singularity analysis of the
generating functions of the sequences $(\theta_j/j)$ and
$(\kappa_j/j)$,
\begin{equation}\label{eq:def_g_theta_and_p_tau}
g_\theta(z):=\sum_{j=1}^\infty \frac{\theta_j}{j}\, z^j,\qquad
g_\kappa(z):=\sum_{j=1}^\infty \frac{\kappa_j}{j}\, z^j\qquad
(z\in\CC).
\end{equation}
In particular, the emergence of \emph{criticality} is determined by
the condition $R\mypp g'_\kappa(R\myp)<1$, where $R>0$ is the radius
of convergence of the power series $g_\kappa(z)$; this inequality
defines the \emph{supercritical} regime, with the \emph{subcritical}
counterpart represented by the opposite inequality, $R\mypp
g'_\kappa(R\myp)>1$. In the limit $N\to\infty$, the criticality is
manifested as a phase transition in the cycle statistics when the
system passes through the critical point.

\begin{remark}
The phase transition can be parameterized by scaling the
coefficients $\kappa_j\mapsto \varrho^{-1} \kappa_j$
\,($\varrho>0$). Then the critical value of the parameter $\varrho$
is given by $\varrho_{\myp\rm c}=R\mypp g'_\kappa(R\myp)$, with the
subcritical and supercritical domains corresponding to the intervals
$0<\varrho<\varrho_{\myp\rm c}$ and $\varrho>\varrho_{\myp\rm c}$,
respectively. We will argue in Section \ref{sec:6.2} below that the
scaling parameter $\varrho$ can be viewed as an analogue of the
particle density (cf.\ \eqref{eq:kappa_j}).
\end{remark}

\begin{remark}
We will routinely assume that $g_\theta(z)$ is analytic in the disk
$|z|<R$\myp, which allows us to distill the role of the leading
sequence $(\kappa_j)$ in the surrogate-spatial model
\eqref{eq:weights-SS}. However, $g_\theta(z)$ will be permitted to
have a singularity at $z=R$ (usually of a logarithmic type), which
may have significant impact on the asymptotic statistics of
\emph{long} cycles (see Section~\ref{sec:long}).
\end{remark}

The first evidence of the different asymptotic statistics of cycles
in the subcritical \emph{vs.}\ supercritical regimes is provided by
the (weak) law of large numbers for individual cycle counts,
$C_j/N\to \kappa_j\myp r_*^j/j$ as $N\to\infty$, where $r_*>0$ is
the (unique) root of the equation $r g'_\kappa(r)=1$ if $R\mypp
g'_\kappa(R\myp)\ge 1$ and $r_*\myn:=R$ otherwise (see
Theorem~\ref{thm:cycle_counts_normalized}).

This result is heuristically matched by the law of large numbers for
the \emph{total number of cycles} $T_N=\sum_{j=1}^\infty C_j$\myp,
stating that $T_N/N\to \sum_{j=1}^\infty \kappa_j\myp
r_*^j/j=g_\kappa(r_*)$. In all cases, fluctuations of $T_N$ appear
to be of order of $\sqrt{N}$, and a central limit theorem holds in
both subcritical and supercritical regimes
(Theorem~\ref{thm:asymptotic_Kon}\myp(a),~(b)). The critical case
can be studied as well (see
Theorem~\ref{thm:asymptotic_Kon}\myp(c)), but the situation there is
more complicated, being controlled by the analytic structure of the
generating functions $g_\kappa(z)$ and $g_\theta(z)$ at the
singularity point $z=R$\myp; in particular, if $g_\theta(z)$ has a
non-degenerate logarithmic singularity (e.g.,
$g_\theta(z)=-\theta^*\myn\log\myn(1-z/R\myp)$ with
$\theta^*\myn>0$, which corresponds to a geometric sequence
$\theta_j=\theta^* R^{-j}$), then it is possible that the limiting
distribution of $T_N$ includes an independent gamma-distributed
correction to a Gaussian component (see
Theorem~\ref{thm:asymptotic_Kon}\myp(c-iii)).

The criticality is further manifested by a phase transition in the
fraction of points contained in \emph{long cycles}
(cf.~\eqref{eq:nu}); namely, we will see (Theorems
\ref{thm:fractional_infinite}
and~\ref{thm:fractional_infinite_random}) that such a fraction is
asymptotically given by $\tilde{\nu}= 1-r_*\myp g'_\kappa(r_*)$ and
so is strictly positive in the supercritical regime. The quantity
$r_*\myp g'_\kappa(r_*)\le 1$ can also be interpreted (see
Theorem~\ref{thm:unordered_sub}) as the full mass of the limiting
distribution of individual cycle lengths (under a convenient
ordering called \emph{lexicographic}).

The statistics of long cycles emerging in the supercritical regime
will be investigated in Section~\ref{sec:long}. Our main result
there is Theorem~\ref{thm:large_cycles_1} stating that if the
generating function $g_\theta(z)$ has a logarithmic singularity at
$z=R$ with $\theta^*\myn>0$ (cf.\ above), then the cycle lengths,
arranged in decreasing order and normalized by $N\tilde{\nu}$,
converge
to the Poisson--Dirichlet distribution with parameter $\theta^*$.
Furthermore, if $\theta^*\myn=0$ then the limiting distribution of
the cycle order statistics is reduced to the deterministic vector
$(1,0,0,\dots)$, meaning that there is a \emph{single giant cycle}
of length $N\tilde{\nu}\mypp(1+o(1))$ (see
Theorem~\ref{thm:large_cycles_0}).

The rest of the paper is organized as follows.
Section~\ref{sec:conj_class_of_Sn} contains the necessary
preliminaries concerning certain generating functions, including a
basic identity deriving from P\'olya's Enumeration Theorem (see
Lemma~\ref{lem:cycle_index_theorem}). In
Section~\ref{sec:asymptotic_theorem}, with the help of complex
analysis we prove some basic theorems enabling us to compute the
asymptotics of (the coefficients of) the corresponding generating
functions. In Section~\ref{sec:cycles}, we apply these techniques to
study the cycle counts, the total number of cycles and also the
asymptotics of lexicographically ordered cycles. In
Section~\ref{sec:long}, we study the asymptotic statistics of long
cycles. Finally, we compare our surrogate-spatial model with the
original spatial model in Section~\ref{sec:comparison}.

\section{Preliminaries}\label{sec:conj_class_of_Sn}

\subsection{Generating functions}\label{sec:gfs}
We use the standard notation $\ZZ$ and $\NN$ for the sets of integer
and natural numbers, respectively, and also denote
$\NN_0:=\{j\in\ZZ:\,j\ge0\}=\{0\}\cup\NN$\myp.

For a sequence of complex numbers $(a_j)_{j\ge 0}$, its (ordinary)
generating function is defined as the (formal) power series
\begin{align}\label{eq:G}
g(z): = \sum_{j=0}^\infty a_j z^j,\qquad z\in\CC.
\end{align}
As usual \cite[\S\myp{}I.1, p.\:19]{FlSe09}, we define the
\emph{extraction symbol} $[z^j\myp]\, g(z):= a_j$, that is the
coefficient of $z^j$ in the power series expansion \eqref{eq:G}
of~$g(z)$.

The following simple lemma known as \emph{Prings\-heim's Theorem}
(see, e.g., \cite[Theorem IV.6, p.~240]{FlSe09}) is important in the
singularity analysis of asymptotic enumeration problems, where
generating functions with non-negative coefficients are usually
involved.
\begin{lemma}\label{lm:Pringsheim}
Assume that $a_j\ge0$ for all $j\ge0$, and let the series expansion
\eqref{eq:G} have a finite radius of convergence $R$\myp. Then the
point $z = R$ is a singularity of the function $g(z)$.
\end{lemma}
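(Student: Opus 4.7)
The plan is to argue by contradiction using the key consequence of non-negativity: when $g$ is expanded as a Taylor series around a real point $z_0 \in (0,R)$, the Taylor coefficients are themselves non-negative, so there is no cancellation when we try to resum.

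First I would assume, toward a contradiction, that $g$ admits an analytic continuation to some open disk containing $z = R$. Fix a real point $z_0 \in (0, R)$ close enough to $R$ that the Taylor series of $g$ at $z_0$ has radius of convergence $r > R - z_0$; such a $z_0$ exists because by assumption $g$ is analytic in an open neighborhood of the closed segment $[0, R]$ after the hypothetical continuation. Then pick a real $z_1$ with $R < z_1 < z_0 + r$, so that $|z_1 - z_0| < r$ and consequently
\begin{equation*}
g(z_1) \;=\; \sum_{n=0}^\infty \frac{g^{(n)}(z_0)}{n!}\,(z_1 - z_0)^n
\end{equation*}
converges absolutely.

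Next I would compute $g^{(n)}(z_0)/n!$ directly by termwise differentiation of the original power series (which is legitimate since $z_0$ lies inside the disk of convergence of \eqref{eq:G}), giving
\begin{equation*}
\frac{g^{(n)}(z_0)}{n!} \;=\; \sum_{j=n}^\infty \binom{j}{n} a_j\, z_0^{\,j-n}.
\end{equation*}
Because $a_j \ge 0$ and $z_0 > 0$, every term in this sum is non-negative. Substituting into the Taylor expansion at $z_0$ and using that $z_1 - z_0 > 0$, one obtains a double series with all non-negative entries,
\begin{equation*}
g(z_1) \;=\; \sum_{n=0}^\infty \sum_{j=n}^\infty \binom{j}{n} a_j\, z_0^{\,j-n}(z_1 - z_0)^n.
\end{equation*}

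The main (and essentially only) step requiring care is to interchange the order of summation in this non-negative double series, which is legitimate by Tonelli's theorem for counting measure. Performing the interchange and applying the binomial theorem to the inner sum yields
\begin{equation*}
g(z_1) \;=\; \sum_{j=0}^\infty a_j \sum_{n=0}^{j} \binom{j}{n} z_0^{\,j-n}(z_1 - z_0)^n \;=\; \sum_{j=0}^\infty a_j\, z_1^{\,j}.
\end{equation*}
Since $g(z_1)$ is finite, the series $\sum_{j\ge 0} a_j z_1^j$ converges at a point $z_1 > R$, contradicting the hypothesis that the radius of convergence of \eqref{eq:G} equals $R$. Therefore no analytic continuation across $z = R$ can exist, i.e., $z = R$ is a singularity of $g$.
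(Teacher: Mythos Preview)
Your proof is correct and is in fact the standard argument for Pringsheim's Theorem. Note, however, that the paper does not actually give its own proof of this lemma: it simply states the result and cites \cite[Theorem IV.6, p.~240]{FlSe09}, so there is no paper-proof to compare against. Your argument matches the classical one found in that reference.
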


The generating functions with the coefficients $(a_j/j)$ (cf.\
$g_\theta(z)$ and $g_\kappa(z)$ introduced in
\eqref{eq:def_g_theta_and_p_tau}) are instrumental in the context of
permutation cycles due to the following important result. Recall
that the cycle counts $C_j=C_j(\sigma)$ are defined as the number of
cycles of length $j\in\NN$ in the cycle decomposition of permutation
$\sigma\in\SN$ (see the Introduction). The next well-known identity
is a special case of the general \emph{P\'olya's Enumeration
Theorem} \cite[\S\myp16, p.\:17]{Polya}; we give its proof for the
reader's convenience (cf., e.g., \cite[p.~5]{Mac95}).
\begin{lemma}\label{lem:cycle_index_theorem}
Let $(a_j)_{j\in\NN}$ be a sequence of \textup{(}real or
complex\textup{)} numbers. Then there is the following
\textup{(}formal\textup{)} power series expansion
\begin{equation}\label{eq:symm_fkt}
\exp\left(\sum_{j=1}^{\infty}\frac{a_jz^j}{j}\right)
=\sum_{n=0}^\infty\frac{z^n}{n!}\sum_{\sigma\in\Sn}\prod_{j=1}^{n}
a_{j}^{C_j},
\end{equation}
where $C_j=C_j(\sigma)$ are the cycle counts. If either of the
series in \eqref{eq:symm_fkt} is absolutely convergent then so is
the other one.
\end{lemma}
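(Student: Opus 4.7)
The plan is to prove the identity by expanding both sides as formal power series in $z$ and matching coefficients of $z^n$ combinatorially. First I would factor the exponential on the left as $\prod_{j=1}^\infty \exp(a_j z^j/j)$ (legitimate at the formal level since each factor contributes only to $z^j, z^{2j}, \dots$), and then expand each factor via the exponential series to obtain
\begin{equation*}
\prod_{j=1}^\infty \sum_{m_j=0}^\infty \frac{a_j^{m_j}\, z^{j m_j}}{j^{m_j}\, m_j!} = \sum_{\mathbf{m}} \prod_{j=1}^\infty \frac{a_j^{m_j}}{j^{m_j}\, m_j!}\, z^{\sum_{j} j m_j},
\end{equation*}
where the outer sum runs over sequences $\mathbf{m}=(m_1,m_2,\dots)$ of non-negative integers with finite support. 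Grouping terms by $n:=\sum_j j m_j$ gives the coefficient
\begin{equation*}
[z^n]\exp\!\left(\sum_{j=1}^\infty \frac{a_j z^j}{j}\right) = \sum_{\mathbf{m}:\,\sum_j j m_j = n} \prod_{j=1}^\infty \frac{a_j^{m_j}}{j^{m_j}\, m_j!}.
\end{equation*}

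The key combinatorial input needed to identify this with the right-hand side is Cauchy's classical formula: the number of permutations $\sigma\in\Sn$ with prescribed cycle counts $(C_1(\sigma),\dots,C_n(\sigma))=(m_1,\dots,m_n)$ equals $n!/\prod_{j=1}^n (j^{m_j}\, m_j!)$. Grouping the sum $\sum_{\sigma\in\Sn}\prod_j a_j^{C_j}$ on the right-hand side according to cycle type $\mathbf{m}$ and invoking this count, one obtains
\begin{equation*}
\frac{1}{n!}\sum_{\sigma\in\Sn} \prod_{j=1}^n a_j^{C_j} = \sum_{\mathbf{m}:\,\sum_j j m_j = n} \prod_{j=1}^n \frac{a_j^{m_j}}{j^{m_j}\, m_j!},
\end{equation*}
which matches the coefficient of $z^n$ extracted above. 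Summation over $n\ge 0$ then yields the claimed identity.

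For the equivalence of absolute convergence, I would observe that the entire derivation above works verbatim with $a_j$ replaced by $|a_j|$ (all intermediate sums consist of non-negative terms), and in that case the identity becomes an equality of series with non-negative terms. Thus the finiteness of either side at a given $z$ (with $|z|$ in place of $z$) forces the finiteness of the other, proving that the two series in~\eqref{eq:symm_fkt} converge absolutely together.

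The main obstacle is not analytical but bookkeeping: one must justify the rearrangement and regrouping of the infinite product of series, and invoke Cauchy's permutation-counting formula cleanly. At the formal-series level the rearrangement is unproblematic because for each $n$ only finitely many sequences $\mathbf{m}$ satisfy $\sum_j j m_j = n$, so each coefficient $[z^n]$ is a finite sum; at the analytic level the non-negativity argument takes care of absolute convergence.
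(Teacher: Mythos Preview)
Your proof is correct and follows essentially the same route as the paper's: both arguments group permutations by cycle type, invoke Cauchy's formula $n!/\prod_j j^{m_j} m_j!$ for the number of permutations of a given type, and match this against the expansion of $\prod_j \exp(a_j z^j/j)$. The only cosmetic difference is direction (you go from the exponential to the sum over $\mathfrak{S}_n$, the paper goes the other way), and for the convergence claim the paper appeals to dominated convergence while you use the equivalent non-negativity argument with $|a_j|$.
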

\begin{proof}
Dividing all permutations $\sigma\in\Sn$ into classes with the same
cycle type $(c_j):=(c_1,\dots,c_n)$, that is, such that
$C_j(\sigma)=c_j$ \,($j=1,\dots,n$), we have
\begin{equation}\label{eq:cycle_types}
\sum_{\sigma\in\Sn}\prod_{j=1}^{n} a_{j}^{C_j}=\sum_{(c_j)}
N_{(c_j)}\prod_{j=1}^n a_j^{c_j},
\end{equation}
where $\sum_{(c_j)}$ means summation over non-negative integer
arrays $(c_j)$ satisfying the condition $\sum_{j=1}^n j\myp c_j=n$
\,(see~\eqref{eq:sumC}), and $N_{(c_j)}$ denotes the number of
permutations with cycle type $(c_j)$.

Furthermore, allocating the elements $1,\dots,n$ to form a given
cycle type $(c_j$) and taking into account that (i) each cycle is
invariant under cyclic rotations (thus reducing the initial number
$n!$ of possible allocations by a factor of $\prod_{j=1}^n
j^{c_j}$\myp), and (ii) cycles of the same length can be permuted
among themselves (which leads to a further reduction by a factor of
$\prod_{j=1}^n c_j!$\,), it is easy to obtain Cauchy's formula (see,
e.g., \cite[\S\myp1.1]{ABT03})
\begin{equation}\label{eq:N(type)}
N_{(c_j)}=\frac{n!}{\prod_{j=1}^n j^{c_j} c_j!}\mypp.
\end{equation}
Hence, using \eqref{eq:cycle_types} and \eqref{eq:N(type)} and
noting that $z^n=\prod_{j=1}^n z^{jc_j}$, the right-hand side of
\eqref{eq:symm_fkt} is rewritten as
$$
1+\sum_{n=1}^\infty\prod_{j=1}^{n}
\frac{1}{c_j!}\left(\frac{a_{j}z^{j}}{j}\right)^{c_j}
=\prod_{j=1}^\infty \sum_{k=0}^\infty
\frac{1}{k!}\left(\frac{a_{j}z^{j}}{j}\right)^{k}=\prod_{j=1}^\infty
\exp\!\left(\frac{a_jz^{j}}{j}\right),
$$
which coincides with the left-hand side of~\eqref{eq:symm_fkt}.

The second claim of the lemma follows by the dominated convergence
theorem.
\end{proof}

Lemma~\ref{lem:cycle_index_theorem} can be used to obtain a
convenient expression for the normalization constant $H_N$ involved
in the definition of the surrogate-spatial measure $\Pns$ (see
\eqref{eq:def_near_spatial},~\eqref{eq:HN}). More generally, set
$h_0(N):=1$ and
\begin{align}\label{eq:hn(v)_def}
h_n(N) := \frac{1}{n!}\sum_{\sigma\in \Sn} \prod_{j=1}^n (\theta_j +
N \kappa_j)^{C_j},\qquad n\in\NN\myp.
\end{align}
In particular (see \eqref{eq:HN}) we have $H_N=h_N(N)$. By
Lemma~\ref{lem:cycle_index_theorem} (with $a_j=\theta_j+N\kappa_j$),
it is immediately seen that the generating function of the sequence
$(h_n(N))_{n\in\NN_0}$ is given by
\begin{equation}\label{eq:generating_h(v)}
\sum_{n=0}^\infty h_n(N)\mypp z^n = \exp\myn\left\{\sum_{j=1}^\infty
\frac{\theta_j+N\kappa_j}{j}\,z^j\right\}\equiv\rme^{\myp\GN(z)},
\end{equation}
where we define the function
\begin{equation}\label{eq:GN}
\GN(z):=g_\theta(z) + N g_\kappa(z),
\end{equation}
with the generating functions $g_\theta(z)$ and $g_\kappa(z)$
defined in \eqref{eq:def_g_theta_and_p_tau}. Hence, the coefficients
$h_n(N)$ can be represented as
\begin{equation}\label{eq:Hn_generatingN}
h_{n}(N)=[z^{n}]\,\rme^{\myp\GN(z)},\qquad n\in\NN_0.
\end{equation}
In particular, for $n=N$ formula \eqref{eq:Hn_generatingN}
specializes to
\begin{equation}\label{eq:Hn_generatingN1}
H_N\equiv  h_N(N)=[z^N]\,\rme^{\myp\GN(z)}.
\end{equation}

\subsection{Some simple properties of the cycle distribution}\label{sec:prob}
In what follows, we use the Poch\-hammer symbol $(x)_{n}$ for the
falling factorials,
\begin{equation}\label{eq:Pochhammer}
(x)_{n}:=x(x-1)\cdots(x-n+1)\quad(n\in\NN),\qquad (x)_0:=1.
\end{equation}
Denote by $\Ens$ the expectation with respect to the measure $\Pns$
(see \eqref{eq:def_near_spatial}).
\begin{lemma}\label{lem:generatin_moments_factorial_Cm}
For each $m\in \NN$ and any integers $n_1,\dots,n_m\ge0$, we have
\begin{equation}\label{eq:E(C)}
\Ens\!\mynn\left(\mynn\prod_{j=1}^m(C_j)_{n_j}\right) =
\frac{h_{N-K_m}(N)}{H_N}\prod_{j=1}^m\left( \frac{\theta_j+ N
\kappa_j}{j}\right)^{\!n_j},
\end{equation}
where $K_m:=\sum_{j=1}^m j n_j$ and $N\ge K_m$.
\end{lemma}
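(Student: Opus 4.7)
The plan is to evaluate the left-hand side directly from the definition of $\Pns$ by expanding $\prod_{j=1}^m (C_j)_{n_j}$ combinatorially and re-summing. Starting from
\begin{equation*}
\Ens\!\left[\prod_{j=1}^m (C_j)_{n_j}\right] = \frac{1}{N!\,H_N}\sum_{\sigma\in\SN}\left(\prod_{j=1}^m (C_j(\sigma))_{n_j}\right)\prod_{j=1}^N (\theta_j+N\kappa_j)^{C_j(\sigma)},
\end{equation*}
I would interpret $(C_j(\sigma))_{n_j}$ (the falling factorial defined in \eqref{eq:Pochhammer}) as the number of ordered $n_j$-tuples of distinct length-$j$ cycles in the cycle decomposition of~$\sigma$. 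Consequently $\prod_{j=1}^m (C_j(\sigma))_{n_j}$ counts the number of ways to select an ordered family of $n_1+\dots+n_m$ pairwise-disjoint cycles in~$\sigma$ whose lengths and multiplicities match $(n_1,\dots,n_m)$.

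The core step is an exchange of summation: rather than summing over $\sigma$ first and marking cycles afterwards, I sum first over admissible marked configurations $\mathcal{M}$ (that is, ordered tuples of pairwise-disjoint cycles of the prescribed lengths, built on some subset of $\{1,\dots,N\}$ of size $K_m$) and then over all permutations $\sigma\in\SN$ whose cycle decomposition contains the cycles of $\mathcal{M}$. Counting $\mathcal{M}$: one chooses the $K_m$ underlying elements, partitions them into groups of sizes $jn_j$, and within each group writes down an ordered list of $n_j$ length-$j$ cycles; accounting for the $(j!)^{n_j}$ orderings inside the groups versus the $((j-1)!)^{n_j}$ cycle rotations, the total number of configurations is $N!\bigl/\bigl((N-K_m)!\prod_j j^{n_j}\bigr)$. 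Each such $\mathcal{M}$ contributes the factor $\prod_j (\theta_j+N\kappa_j)^{n_j}$, and the cycles outside $\mathcal{M}$ form an arbitrary permutation $\sigma'\in\mathfrak{S}_{N-K_m}$ on the remaining $N-K_m$ labels.

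After this swap, the residual sum is
\begin{equation*}
\sum_{\sigma'\in\mathfrak{S}_{N-K_m}}\prod_{j=1}^{N-K_m}(\theta_j+N\kappa_j)^{C_j(\sigma')} = (N-K_m)!\,h_{N-K_m}(N),
\end{equation*}
by definition \eqref{eq:hn(v)_def} (note the weights still involve~$N$, not $N-K_m$, which fortunately matches the $N$-dependence inside $h_{N-K_m}(N)$). Collecting the three factors and dividing by $N!\,H_N$ yields \eqref{eq:E(C)}.

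The only point requiring genuine care is the combinatorial count of the configurations $\mathcal{M}$, where one must reconcile the ordering of the selected cycles with their invariance under rotation; this is essentially Cauchy's formula \eqref{eq:N(type)} applied to a subset of cycles rather than to the whole permutation. Everything else is mechanical bookkeeping, and no analytic input (in particular, no use of the generating functions \eqref{eq:def_g_theta_and_p_tau} or of Lemma~\ref{lem:cycle_index_theorem}) is needed beyond the definitions.
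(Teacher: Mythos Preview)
Your argument is correct. It is, however, a genuinely different route from the paper's. The paper proves the identity analytically: it differentiates the generating-function identity \eqref{eq:generating_h(v)} a total of $n_j$ times with respect to each parameter $\theta_j$ (for $j=1,\dots,m$), which produces exactly the factor $\prod_j (C_j)_{n_j}$ on the series side and the factor $\prod_j \bigl((\theta_j+N\kappa_j)\,z^j/j\bigr)^{n_j}$ on the exponential side; extracting the coefficient of $z^N$ then gives \eqref{eq:E(C)} immediately via \eqref{eq:Hn_generatingN}. Your proof instead unpacks the falling factorials combinatorially and swaps the order of summation, which is more hands-on but entirely self-contained: it never touches Lemma~\ref{lem:cycle_index_theorem} or the exponential formula. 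The paper's approach is slicker and meshes naturally with the analytic machinery used throughout Section~\ref{sec:asymptotic_theorem}; your approach has the advantage of making the factor $h_{N-K_m}(N)$ visibly arise as the partition function on the complementary $N-K_m$ points, and it would survive unchanged even if the cycle weights were not differentiable functions of any auxiliary parameter.
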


\begin{proof}
Using the definitions of $h_n(N)$ and $\GN(z)$ (see
\eqref{eq:hn(v)_def} and \eqref{eq:GN}, respectively), differentiate
the identity \eqref{eq:generating_h(v)} $n_j$ times with respect to
$\theta_j$ for all $j=1,\dots,m$ to obtain
\begin{equation}\label{eq:dif-theta}
\sum_{n=0}^\infty \frac{z^n}{n!}\sum_{\sigma\in\Sn}
\prod_{j=1}^m(C_j)_{n_j} \prod_{i=1}^n(\theta_i + N\kappa_i)^{C_i}
=\rme^{\myp\GN(z)}\prod_{j=1}^m (\theta_j + N\kappa_j)^{n_j}\left(
\frac{z^{j}}{j}\right)^{\!n_j}.
\end{equation}
Extracting the coefficient $[z^N](\cdot)$ on both sides of
\eqref{eq:dif-theta} and using \eqref{eq:def_near_spatial} we get
\begin{align*}
\Ens\!\mynn\left(\mynn\prod_{j=1}^m(C_j)_{n_j}\right)& =
\frac{1}{N!\mypp H_N}\sum_{\sigma\in\SN}
\prod_{j=1}^m(C_j)_{n_j} \prod_{i=1}^N(\theta_i + N\kappa_i)^{C_i}\\
&=\frac{1}{H_N}\,[z^N]\!\left(\rme^{\myp\GN(z)}\prod_{j=1}^m\left(
\frac{\theta_j +N\kappa_j}{j}\right)^{\myn n_j} z^{j n_j}
\right)\\
&=\prod_{j=1}^m\left(\frac{\theta_j +N\kappa_j}{j}\right)^{\myn n_j}
\mynn \frac{[z^{N-K_m}] \exp\myn\{\GN(z)\}}{H_N}\mypp,
\end{align*}
and formula \eqref{eq:E(C)} follows on
using~\eqref{eq:Hn_generatingN}.
\end{proof}

Next, let $T_N$ be the total number of cycles,
\begin{equation}\label{eq:T_N}
T_N:= \sum_{j=1}^N C_j.
\end{equation}
\begin{lemma}\label{lem:generating_K0n} For each  $v>0$,
the probability generating function of\/ $T_N$ is given by
\begin{equation}\label{eq:generating_K}
\Ens(v^{\myp T_N})= \frac{1}{H_N}\,
[z^N]\myp\exp\myn\{v\myp\GN(z)\},
\end{equation}
where the expansion of the function $z\mapsto \exp\myn\{v\myp
\GN(z)\}$ on the right-hand side of \eqref{eq:generating_K} is
understood as a formal power series.
\end{lemma}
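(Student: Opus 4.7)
The plan is to apply Lemma~\ref{lem:cycle_index_theorem} (the P\'olya cycle-index identity) to a suitably rescaled sequence of weights and then read off the desired expression by coefficient extraction. The key observation is that multiplying each weight $a_j$ by a constant factor $v$ produces a factor of $v^{T_n}$ in each permutation-indexed term, since $\sum_{j=1}^n C_j = T_n$.

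Concretely, I would take the sequence $a_j = v\myp(\theta_j + N\kappa_j)$ \,($j\in\NN$) and substitute it into the power-series identity \eqref{eq:symm_fkt}. On the left-hand side this yields
\begin{equation*}
\exp\myn\left\{\sum_{j=1}^\infty \frac{v\myp(\theta_j+N\kappa_j)}{j}\, z^j\right\}=\exp\myn\{v\myp\GN(z)\},
\end{equation*}
using the definitions \eqref{eq:def_g_theta_and_p_tau} and \eqref{eq:GN}. On the right-hand side, the factor $v$ pulls out of each weight as $v^{C_j}$, and since $\prod_{j=1}^n v^{C_j}=v^{\sum_{j=1}^n C_j}=v^{T_n}$, the identity becomes
\begin{equation*}
\exp\myn\{v\myp\GN(z)\}=\sum_{n=0}^\infty \frac{z^n}{n!}\sum_{\sigma\in\Sn} v^{\myp T_n}\prod_{j=1}^n (\theta_j+N\kappa_j)^{C_j}.
\end{equation*}

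Extracting the coefficient $[z^N](\cdot)$ on both sides and recalling the definition \eqref{eq:def_near_spatial} of $\Pns$, together with $H_N=h_N(N)$, I obtain
\begin{equation*}
[z^N]\myp\exp\myn\{v\myp\GN(z)\}=\frac{1}{N!}\sum_{\sigma\in\SN} v^{\myp T_N}\prod_{j=1}^N (\theta_j+N\kappa_j)^{C_j}=H_N\myp\Ens\bigl[v^{\myp T_N}\bigr],
\end{equation*}
from which \eqref{eq:generating_K} follows upon dividing by $H_N$.

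There is no real obstacle here: the whole argument is a direct specialization of Lemma~\ref{lem:cycle_index_theorem}. The only point that deserves a brief comment is that the identity is meant in the formal-power-series sense, as explicitly stated in the lemma, so no convergence issues need to be addressed for arbitrary $v>0$; that disclaimer is exactly why the statement restricts attention to $[z^N]$ of the formal expansion of $\exp\myn\{v\myp\GN(z)\}$.
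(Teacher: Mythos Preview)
Your proof is correct and follows essentially the same approach as the paper: both arguments apply the cycle-index identity (Lemma~\ref{lem:cycle_index_theorem}) with the rescaled weights $a_j=v\myp(\theta_j+N\kappa_j)$, observe that the factor $v$ aggregates to $v^{T_n}$, and extract the coefficient of $z^N$. The paper phrases this slightly differently---it first absorbs $v$ into $\theta_j,\kappa_j$ and then cites the already-established formula \eqref{eq:Hn_generatingN} with $v\myp\GN$ in place of $\GN$---but the substance is identical.
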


\begin{proof}
By definition of the measure $\Pns$ (see
\eqref{eq:def_near_spatial}) we have
\begin{align*}
\Ens(v^{\myp T_N})&=\frac{1}{N!\mypp H_N}
\sum_{\sigma\in\SN} v^{\myp C_1+\dots+C_N}\prod_{j=1}^N(\theta_j+N\kappa_j)^{C_j}\\
&= \frac{1}{N!\mypp H_N}
\sum_{\sigma\in\SN}\prod_{j=1}^N(v\myp\theta_j+Nv\kappa_j)^{C_j}.
\end{align*}
The last sum is analogous to the expression \eqref{eq:hn(v)_def},
only with $\theta_j$ and $\kappa_j$ replaced  by $v\myp\theta_j$ and
$v\myp\kappa_j$, respectively. Hence, formula
\eqref{eq:Hn_generatingN} may be used with $v\myp\GN(z)$ in place of
$\GN(z)$, thus readily yielding~\eqref{eq:generating_K}.
\end{proof}

One convenient way to list the cycles (and their lengths) is via the
lexicographic ordering, that is, by tagging them with a suitable
increasing subsequence of elements starting from $1$.
\begin{definition}\label{def:L-lex}
For permutation $\sigma\in\Sn$ decomposed as a product of cycles,
let $L_1 = L_1(\sigma)$ be the length of the cycle containing
element~$1$, $L_2=L_2(\sigma)$ the length of the cycle containing
the smallest element not in the previous cycle, etc. The sequence
$(L_j)$ is said to be \emph{lexicographically ordered}.
\end{definition}

It is easy to compute the joint (finite-dimensional) distribution of
the lengths $L_j$.

\begin{lemma}\label{lem:distribution_of_ell_1,ell_2}
For each $m\in\NN$ and any $\ell_1,\dots,\ell_m\in\NN$
\,\textup{(}with $\ell_0:=0$\textup{)}, we have
\begin{equation}\label{eq:L=l}
\Pns\{L_1 = \ell_1,\dots, L_m = \ell_m\}= \prod_{j=1}^m
\frac{\theta_{\ell_j} + N\kappa_{\ell_j}}{N-
\ell_1-\dots-\ell_{j-1}}\cdot
\frac{h_{N-\ell_1-\dots-\ell_m}(N)}{H_N}\mypp.
\end{equation}
In particular, for $m=1$
\begin{equation}\label{eq:dist_L1}
\Pns\{L_1 = \ell\}= \frac{\theta_{\ell} + N\kappa_{\ell}}{N} \cdot
\frac{h_{N-\ell}(N)}{H_N}\mypp.
\end{equation}
\end{lemma}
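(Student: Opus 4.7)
The plan is to prove \eqref{eq:L=l} by direct combinatorial counting. Using the definition \eqref{eq:def_near_spatial}, I would write
\begin{equation*}
\Pns\{L_1=\ell_1,\dots,L_m=\ell_m\}=\frac{1}{N!\mypp H_N}\sum_{\sigma}\prod_{i=1}^N (\theta_i+N\kappa_i)^{C_i(\sigma)},
\end{equation*}
where the sum ranges over permutations $\sigma\in\SN$ whose lexicographic cycle lengths start with $\ell_1,\dots,\ell_m$. Any such $\sigma$ is specified uniquely by: (i) a cycle of length $\ell_1$ through element~$1$; (ii) for $j=2,\dots,m$, a cycle of length $\ell_j$ through the smallest element not contained in the previously built cycles; (iii) an arbitrary permutation of the remaining $N-K_m$ elements (with $K_m:=\ell_1+\dots+\ell_m$).

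Set $M_1:=N$ and $M_j:=N-\ell_1-\dots-\ell_{j-1}$ for $j\ge2$, so that $M_{m+1}=N-K_m$. Once the first $j-1$ lex-cycles are placed, the smallest remaining element of the $j$-th cycle is determined, and the other $\ell_j-1$ elements may be chosen and arranged cyclically in
\begin{equation*}
\binom{M_j-1}{\ell_j-1}(\ell_j-1)!=\frac{(M_j-1)!}{(M_j-\ell_j)!}=\frac{(M_j-1)!}{M_{j+1}!}
\end{equation*}
ways, each contributing a factor $\theta_{\ell_j}+N\kappa_{\ell_j}$ to the weight. By the definition \eqref{eq:hn(v)_def}, summing the weight of the residual permutation over $\mathfrak{S}_{M_{m+1}}$ yields $M_{m+1}!\,h_{N-K_m}(N)$. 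Therefore the sum above factors as
\begin{equation*}
\prod_{j=1}^m \frac{(M_j-1)!}{M_{j+1}!}\,(\theta_{\ell_j}+N\kappa_{\ell_j})\cdot (N-K_m)!\,h_{N-K_m}(N).
\end{equation*}

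The final step is the elementary telescoping identity
\begin{equation*}
\prod_{j=1}^m\frac{(M_j-1)!}{M_{j+1}!}=\left(\prod_{j=1}^m\frac{1}{M_j}\right)\prod_{j=1}^m\frac{M_j!}{M_{j+1}!}=\frac{N!}{(N-K_m)!}\prod_{j=1}^m\frac{1}{M_j},
\end{equation*}
which, after dividing by $N!\myp H_N$, cancels both $N!$ and $(N-K_m)!$ and leaves exactly \eqref{eq:L=l}; formula \eqref{eq:dist_L1} is just the case $m=1$. There is no real obstacle; the only point requiring care is the bookkeeping of the telescoping factorials, i.e.\ making sure that the factor $1/N!$ in the definition of $\Pns$ matches the product of choose-and-order counts for the $m$ prescribed cycles together with $(N-K_m)!$ from the residual sum.
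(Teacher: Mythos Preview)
Your proof is correct and follows essentially the same combinatorial counting as the paper's proof: enumerate the ways to build the first $m$ lexicographic cycles one at a time and then sum the residual weight via $h_{N-K_m}(N)$. The only difference is presentational---the paper writes out $m=1$ and $m=2$ using the Pochhammer symbol $(N-1)_{\ell-1}$ and then says the general case is analogous, whereas you carry out the general-$m$ case explicitly with the telescoping factorial identity.
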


\begin{proof} Note that there are $(N-1)\cdots(N-\ell+1)=(N-1)_{\ell-1}$
cycles of length $\ell$ containing the element~$1$, and the choice
of such a cycle does not influence the cycle lengths of the
remaining cycles. Using the definition \eqref{eq:hn(v)_def} of
$h_n(N)$ we get
\begin{align*}
\Pns\{L_1 = \ell\} &= \frac{  (N-1)_{\ell-1} (\theta_{\ell} +
N\kappa_{\ell})\cdot (N-\ell)! \,h_{N-\ell}(N)}{N!\mypp H_N}\\
&= \frac{\theta_{\ell} + N\kappa_{\ell}}{N} \cdot
\frac{h_{N-\ell}(N)}{H_N}\mypp,
\end{align*}
which proves the lemma for $m=1$ (see~\eqref{eq:dist_L1}).
Similarly, for $m=2$
\begin{align}
\notag
\Pns\{L_1 = \ell_1,L_2=\ell_2\} &=
\begin{aligned}[t](N-1)_{\ell_1-1} (\theta_{\ell_1} +
N\kappa_{\ell_1})&\cdot (N-\ell_1-1)_{\ell_2-1}
(\theta_{\ell_2} + N\kappa_{\ell_2})\\
& \times  (N-\ell_1-\ell_2)!
\,\,\frac{h_{N-\ell_1-\ell_2}(N)}{N!\mypp H_N}
\end{aligned}\\
&= \frac{(\theta_{\ell_1} + N\kappa_{\ell_1})(\theta_{\ell_2} +
N\kappa_{\ell_2})}{N(N-\ell_1)} \cdot
\frac{h_{N-\ell_1-\ell_2}(N)}{H_N} \label{eq:N-L1}
\end{align}
(cf.~\eqref{eq:L=l}). The general case $m\in\NN$ is handled in the
same manner.
\end{proof}

\section{Asymptotic Theorems for the Generating
Function}\label{sec:asymptotic_theorem}

In this section, we develop complex-analytic tools for computing the
asymptotics of the coefficient $h_{N}(N)=H_N$ in the power series
expansion of $\exp\myn\{\GN(z)\}$ (see \eqref{eq:Hn_generatingN}).
More generally, it is useful to consider expansions of the function
$\exp\myn\{v\myp\GN(z)\}$, with some parameter $v>0$. From Lemma
\ref{lem:generatin_moments_factorial_Cm} it is clear that the case
$v=1$ is of primary importance, but Lemma \ref{lem:generating_K0n}
suggests that information for $v\approx 1$ will also be needed for
the sake of limit theorems for cycles (see Sections \ref{sec:cycles}
and~\ref{sec:long}). General considerations in Sections
\ref{sec:3.1}\mypp--\mypp\ref{sec:critical} below are illustrated by
a few important examples (Section~\ref{sec:examples}).

\subsection{Preliminaries and motivation}\label{sec:3.1}
Let us introduce notation for the ``modified'' derivatives of a
function $z\mapsto g(z)$,
\begin{equation}\label{eq:mod_d}
g^{\{n\}}\myn(z):= z^n\mypp \frac{\rmd^n g(z)}{\rmd z^n}\mypp,\qquad
n\in\NN_0.
\end{equation}
For the generating functions $g_\theta(z)$, $g_\kappa(z)$ (see
\eqref{eq:def_g_theta_and_p_tau}) it is easy to see that, for each
$n\in\NN$\myp,
\begin{equation}\label{eq:a}
g_\theta^{\{n\}}\myn(z)=\sum_{j=1}^\infty (j-1)_{n-1}\mypp\theta_j
\mypp z^j,\qquad g_\kappa^{\{n\}}\myn(z)=\sum_{j=1}^\infty
(j-1)_{n-1}\mypp\kappa_j \mypp z^j,
\end{equation}
where $(\cdot)_{n-1}$ is the Pochhammer symbol defined
in~\eqref{eq:Pochhammer}.

Let $R>0$ (possibly $R=+\infty$) be the radius of convergence of the
power series $g_\kappa(z)$, and hence of each of its (modified)
derivatives $g_\kappa^{\{n\}}\myn(z)$. If $R<\infty$ then, according
to Prings\-heim's Theorem (see Lemma~\ref{lm:Pringsheim}), $z=R$ is
a point of singularity of $g_\kappa(z)$ (and each
$g_\kappa^{\{n\}}\myn(z)$, see~\eqref{eq:a}). We write
$g_\kappa^{\{n\}}\myn(R\myp):= \lim_{r\uparrow R}
\,g_\kappa^{\{n\}}\myn(r)$, with
$g_\kappa^{\{n\}}\myn(R\myp):=+\infty$ if this limit is divergent.
For $r\in(0,R\myp]$, let us also denote
\begin{equation}\label{eq:b12}
b_1(r):= g_\kappa^{\{1\}}\myn(r)>0,\qquad b_2(r):=
g_\kappa^{\{1\}}\myn(r)+g_\kappa^{\{2\}}\myn(r)>0.
\end{equation}
The following simple lemma will be useful.

\begin{lemma}\label{lm:g<g}
For any\/ $r\in(0,R\myp]$,
\begin{equation}\label{eq:g<g}
b_1(r)\le \sqrt{g_\kappa(r)\,b_2(r)}\,,
\end{equation}
where the inequality is in fact strict unless $\kappa_j=0$ for all\/
$j\ge2$.
\end{lemma}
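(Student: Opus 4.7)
The inequality is an instance of the Cauchy--Schwarz inequality in disguise, once the three quantities are rewritten as power series in the $\kappa_j$'s. First I would use the identities in \eqref{eq:a} with $n=1$ and $n=2$ to expand
\[
b_1(r) = \sum_{j=1}^\infty \kappa_j\mypp r^j, \qquad g_\kappa^{\{2\}}\myn(r) = \sum_{j=1}^\infty (j-1)\myp\kappa_j\mypp r^j,
\]
so that $b_2(r) = \sum_{j=1}^\infty j\myp \kappa_j\mypp r^j$, while by definition $g_\kappa(r) = \sum_{j=1}^\infty (\kappa_j/j)\mypp r^j$. This reduces \eqref{eq:g<g} to the statement
\[
\Bigl(\,\sum_{j=1}^\infty \kappa_j\myp r^j\Bigr)^{\!2} \le \Bigl(\,\sum_{j=1}^\infty \tfrac{\kappa_j}{j}\, r^j\Bigr) \Bigl(\,\sum_{j=1}^\infty j\myp \kappa_j\myp r^j\Bigr).
\]

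The second step is to factor each summand as $\kappa_j r^j = \sqrt{\kappa_j r^j/j}\cdot\sqrt{j\myp\kappa_j r^j}$ (legitimate since $\kappa_j\ge 0$ and $r>0$) and apply Cauchy--Schwarz to the nonnegative sequences $\bigl(\sqrt{\kappa_j r^j/j}\bigr)$ and $\bigl(\sqrt{j\myp\kappa_j r^j}\bigr)$. This immediately yields \eqref{eq:g<g} for any $r\in(0,R)$. The boundary case $r=R$ (when finite) is handled by passing to the limit $r\uparrow R$, all three series being nondecreasing in $r$ by positivity of the coefficients, with the usual convention that the inequality is preserved in $[0,+\infty]$ when either side is infinite.

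For the equality clause I would invoke the standard equality condition in Cauchy--Schwarz: the two sequences above must be linearly dependent, which on the support of $(\kappa_j)$ requires the ratio $\sqrt{1/j}\,/\sqrt{\myp j}=1/j$ to be constant in $j$. This can only hold if the support consists of a single index; in particular, if $\kappa_j=0$ for all $j\ge 2$ only the $j=1$ term survives and equality holds trivially, while if at least two of the $\kappa_j$ are positive the inequality is strict.

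There is no substantive obstacle here: the lemma is a one-line application of Cauchy--Schwarz once the generating functions are unpacked; the only mild care is in handling the boundary value $r=R$, where one or both sides may be infinite, and this is dispatched by monotonicity in $r$.
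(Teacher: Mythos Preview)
Your proof is correct and follows essentially the same route as the paper: expand $b_1(r)$, $g_\kappa(r)$, and $b_2(r)$ as power series, factor $\kappa_j r^j=\sqrt{\kappa_j r^j/j}\cdot\sqrt{j\kappa_j r^j}$, and apply Cauchy--Schwarz. The paper applies Cauchy--Schwarz directly for all $r\in(0,R]$ rather than passing to the limit at $r=R$, but this is only a cosmetic difference; your treatment of the equality case (support a single index) is in fact slightly sharper than the paper's phrasing.
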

\begin{proof} Using the expression \eqref{eq:a} for the modified
derivatives of $g_\kappa$ and applying the Cauchy--Schwarz
inequality, we have
\begin{align*}
g_\kappa^{\{1\}}\myn(r)=\sum_{j=1}^\infty \kappa_j\mypp
r^j&=\sum_{j=1}^\infty \left(\frac{\kappa_j\mypp
r^j}{j}\right)^{1/2}\!\bigl(j\myp
\kappa_j\mypp r^j\bigr)^{1/2}\\
&\le \left(\sum_{j=1}^\infty \frac{\kappa_j\mypp
r^j}{j}\right)^{1/2}\!\left(\sum_{j=1}^\infty j\myp \kappa_j\mypp
r^j\right)^{1/2}\\
&=\sqrt{g_\kappa(r)}\cdot\sqrt{g_\kappa^{\{1\}}\myn(r)+g_\kappa^{\{2\}}\myn(r)}\,,
\end{align*}
and the inequality \eqref{eq:g<g} follows in view of the
notation~\eqref{eq:b12}. The equality is only possible when
$\kappa_j/j=j\kappa_j$ for all $j\ge1$, which implies that
$\kappa_j\equiv0$ for $j\ge2$.
\end{proof}

To avoid trivial complications (or simplifications), let us impose
\begin{assumption}\label{as:NA}
The sequence $(\kappa_j)$ is assumed to be \emph{non-arithmetic},
that is, there is no integer $j_0>1$ such that the inequality
$\kappa_j\ne0$ would imply that $j$ is a multiple of $j_0$. We also
suppose that $(\kappa_j)$ is non-degenerate, in that $\kappa_j>0$
for some $j\ge2$.
\end{assumption}

\begin{remark}
The assumption of non-arithmeticity simplifies the analysis by
ensuring there is a unique maximum of (the real part of) the
generating function $g_\kappa(z)$ (see Lemma \ref{lm:NA} below).
However, a more general case (with multiple maxima) can be treated
as well, without too much difficulty. Note also that in the original
spatial model (see \eqref{eq:approx_weigths}, \eqref{eq:kappa_j})
all coefficients $\kappa_j$ are positive, hence such a sequence
$(\kappa_j)$ is automatically non-arithmetic.
\end{remark}

In what follows, $\Re\myp(w)$ denotes the real part of a complex
number $w\in\CC$, and $\arg\myn(w)\in(-\pi,\pi]$ is the principal
value of its argument.
\begin{lemma}\label{lm:NA} \textup{(a)} \myp{}Under Assumption
\textup{\ref{as:NA}}, for each $r\in(0,R\myp)$ there is a unique
maximum of the function\/ $t\mapsto\Re\myp(g_\kappa(r\rme^{\myp\rmi
t}))$ over $t\in[-\pi,\pi]$, attained at $t=0$ and equal to
$g_\kappa(r)$. If\/ $g_\kappa(R\myp)<\infty$ then this claim is also
true with $r=R$\myp.

\textup{(b)} \myp{}The same statements hold for the function
$t\mapsto\Re\myp(g_\kappa^{\{1\}}\myn(r\rme^{\myp\rmi t}))$,
\,$t\in[-\pi,\pi]$.
\end{lemma}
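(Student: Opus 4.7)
The plan is to write $\Re\myp(g_\kappa(r\rme^{\rmi t}))$ as a cosine series and exploit the non-negativity of the coefficients $\kappa_j$. For $r\in(0,R)$ and $t\in\RR$, since $g_\kappa(z)=\sum_{j\ge 1}\frac{\kappa_j}{j}z^j$ converges absolutely on $|z|<R$, we get
$$
\Re\myp(g_\kappa(r\rme^{\rmi t}))=\sum_{j=1}^\infty \frac{\kappa_j}{j}\,r^j\cos(jt),
$$
which is bounded termwise by $g_\kappa(r)$ using $\cos(jt)\le 1$, with equality at $t=0$. So the content of part (a) reduces to showing that $t=0$ is the \emph{unique} point in $[-\pi,\pi]$ at which equality holds.

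For the uniqueness, I would analyse the equality case: if $\Re\myp(g_\kappa(r\rme^{\rmi t}))=g_\kappa(r)$, then $\cos(jt)=1$ (i.e.\ $jt\in 2\pi\ZZ$) for every index $j$ in the support $J:=\{j\ge 1:\kappa_j>0\}$. The key step is to translate Assumption \ref{as:NA} into $\gcd(J)=1$: non-arithmeticity says no integer $j_0>1$ divides every $j\in J$, which is exactly $\gcd(J)=1$, and the non-degeneracy clause guarantees $J$ contains some $j\ge 2$ so the claim is non-vacuous. By Bezout's identity there exist finitely many $j_1,\dots,j_k\in J$ and integers $a_1,\dots,a_k$ with $\sum_i a_i j_i=1$; then $t=\sum_i a_i(j_it)\in 2\pi\ZZ$, and the only such $t$ in $[-\pi,\pi]$ is $t=0$.

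The boundary case $r=R$ (under $g_\kappa(R)<\infty$) needs only the remark that the series $\sum\frac{\kappa_j}{j}R^j\rme^{\rmi jt}$ then converges absolutely on $|z|\le R$, so the termwise bound and the equality analysis extend verbatim. Part (b) requires no new argument: by \eqref{eq:a} we have $g_\kappa^{\{1\}}(z)=\sum_{j\ge 1}\kappa_j z^j$, whose coefficient sequence has the same support $J$, so the cosine-series bound and the Bezout conclusion carry over unchanged.

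The only non-routine step is the logical move from the combinatorial formulation of non-arithmeticity in Assumption \ref{as:NA} to the number-theoretic statement $\gcd(J)=1$ and then to Bezout; everything else is elementary inequalities on absolutely convergent series. I expect no genuine obstacle, but some care is warranted in stating the equivalence precisely so that the boundary point $t=\pm\pi$ is also excluded (this is automatic, since $jt=\pm j\pi\in 2\pi\ZZ$ for all $j\in J$ would force every $j\in J$ to be even, violating $\gcd(J)=1$).
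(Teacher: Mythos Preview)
Your proposal is correct. The underlying strategy matches the paper's: use non-negativity of the coefficients to see that $t=0$ is a global maximum, then invoke non-arithmeticity for uniqueness. The execution differs slightly. The paper argues that uniqueness of the maximum is equivalent to the statement that $g_\kappa(z)$ cannot be written as $f(z^{j_0})$ for any holomorphic $f$ and integer $j_0>1$, and observes that this is precisely what non-arithmeticity rules out. You instead work directly with the equality case $\cos(jt)=1$ for all $j$ in the support $J$, translate non-arithmeticity into $\gcd(J)=1$, and apply Bezout to force $t\in 2\pi\ZZ$. Your route is more explicit and self-contained; the paper's is terser but leaves the ``easy to see'' equivalence to the reader. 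Both handle part~(b) identically, by noting that $g_\kappa^{\{1\}}$ has the same support. Your final parenthetical about $t=\pm\pi$ is harmless but redundant: once Bezout gives $t\in 2\pi\ZZ$, the only such $t$ in $[-\pi,\pi]$ is already $t=0$.
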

\begin{proof} (a) Since the coefficients \strut$\kappa_j$ are non-negative,
$t=0$ is always a point of global maximum of the function
$t\mapsto\Re\myp(g_\kappa(r\rme^{\myp\rmi t}))$. It is easy to see
that the uniqueness of this maximum over $t\in[-\pi,\pi]$ is
equivalent to the property that $g_\kappa(\cdot)$ cannot be written
as $g_\kappa(z)=f(z^{j_0})$ with a holomorphic function $f(\cdot)$
and some integer $j_0>1$, and the latter is true because the
sequence $(\kappa_j)$ is non-arithmetic due to
Assumption~\ref{as:NA}.

(b) The same considerations are valid for the function
$g_\kappa^{\{1\}}\myn(z)=\sum_{j=1}^\infty \kappa_j z^j$ which is
again a power series with non-negative non-arithmetic coefficients.
\end{proof}

\begin{remark}
The uniqueness part of Lemma \ref{lm:NA}\myp(b) may fail for
$g_\kappa^{\{n\}}\myn(z)$ with $n\ge2$, because the
non-arithmeticity property may cease to hold, like in the following
example: $\kappa_1>0$, \,$\kappa_{2j}>0$, \,$\kappa_{2j+1}=0$
\,($j\in\NN$).
\end{remark}

\begin{remark}
Lemma \ref{lm:NA} is akin to the ``Daffodil Lemma'' in
\cite[\S\myp{}IV.6.1, Lemma~IV.1, p.~266]{FlSe09}, but the latter
deals with the maximum of the absolute value rather than the real
part.
\end{remark}

Setting $z=r\myp\rme^{\myp\rmi t}$ with $r=|z|<R$\myp,
\,$t=\arg\myn(z)\in(-\pi,\pi]$, let us consider the Taylor expansion
of $g_\kappa(z)$ near $z_0=r$ with respect to $t$,
\begin{equation}\label{eq:admissible_expansion}
g_\kappa(r\rme^{\myp\rmi t}) = g_\kappa(r) + \rmi\myp t\mypp
b_1(r)-\tfrac{1}{2}\myp t^2 b_2(r) + o\myp(t^2),\qquad  t\to0,
\end{equation}
where $b_1(r)$, $b_2(r)$ are defined in \eqref{eq:b12}. Note that
$g_\kappa^{\{1\}}\myn(0) = 0$ and the function $r\mapsto
g_\kappa^{\{1\}}\myn(r)$ is real analytic and strictly increasing
for $r\geq 0$ (of course, provided that $g_\kappa(z)$ is not
identically zero). Thus, the inverse of $g_\kappa^{\{1\}}\myn(r)$
exists for $0<r\le R$\myp. For $v\ge 1/g_\kappa^{\{1\}}\myn(R\myp)$,
let $r_v$ be the (unique) solution of the equation
\begin{equation}\label{eq:r_v}
g_\kappa^{\{1\}}\myn(r) = v^{-1},\qquad 0<r\le R.
\end{equation}
In particular, for $v=1/g_\kappa^{\{1\}}\myn(R\myp)$ we have
$r_v=R$\myp.

\begin{definition}\label{def:sub-sup}
The cases $g_\kappa^{\{1\}}\myn(R\myp)>1$ and
$g_\kappa^{\{1\}}\myn(R\myp)<1$ are termed ``subcritical'' and
``supercritical'', respectively.
\end{definition}

This terminology will be justified in Section~\ref{sec:cycles}
below, in particular by Theorem \ref{thm:fractional_infinite}, where
we will demonstrate that the limiting fraction of points in infinite
cycles is positive if and only if $g_\kappa^{\{1\}}\myn(R\myp)<1$.
In~Section~\ref{sec:comparison}, it will also be shown that the
dichotomy in Definition \ref{def:sub-sup} corresponds to the cases
$\tilde{\rho}< \tilde{\rho}_{\myp\mathrm{c}}$ and $\tilde{\rho}>
\tilde{\rho}_{\myp\mathrm{c}}$, respectively, with $\tilde{\rho}$
standing for the system ``density'' (see Section~\ref{sec:6.2}
below).

The analytical reason for such a distinction becomes clear from the
following observation. Proceeding from the expression
\eqref{eq:Hn_generatingN1} for $H_N$, with the function $\GN(z)$
defined in~\eqref{eq:GN}, Cauchy's integral formula with the contour
\,$\gamma\mynn:=\{z = r \myp\rme^{\myp\rmi t},\;t\in[-\pi,\pi]\}$
\,($r<R$) gives
\begin{align}
\notag [z^{N}]\myp\exp\myn\{\GN(z)\}
&= \frac{1}{2 \pi\rmi} \oint_{\gamma} \frac{\myp\exp\myn\{\GN(z)\}}{z^{N+1}} \,\rmd{z}\\
\label{eq:int_HN} &= \frac{1}{2 \pi\rmi} \oint_{\gamma}
\frac{\exp\myn\{g_\theta(z)\}}{z}\,\exp\myn\bigl\{N\bigl(
g_\kappa(z)-\log z\bigr)\bigr\} \,\rmd{z}.
\end{align}
Hence, the classical saddle point method (see, e.g.,
\cite[Ch.~5]{Bru})
suggests that the asymptotics of the integral \eqref{eq:int_HN} are
determined by the maximum of the function $z\mapsto
\Re\myp(g_\kappa(z)-\log z)$. In turn, by Lemma~\ref{lm:NA}\myp(a)
this is reduced to finding the maximum of the function $r\mapsto
g_\kappa(r)-\log r$ for real $r>0$, leading to the equation
\begin{equation*}
g'_\kappa(r)-\frac{1}{r}=0\quad\Leftrightarrow\quad
g_\kappa^{\{1\}}\myn(r)=1,
\end{equation*}
which, in view of Lemma~\ref{lm:NA}\myp(b), is solvable if and only
if $g_\kappa^{\{1\}}\myn(R\myp)\ge 1$.

\subsection{The subcritical case}
\label{sec:weak_admissible} In what follows, we shall frequently use
a standard shorthand $a_N\sim b_N$ for $\lim_{N\to\infty}
a_N/b_N=1$, or equivalently $a_N=b_N(1+o(1))$. The same symbol
$\sim$ will have a similar meaning for other limits (e.g., as
$z\uparrow R$\myp).

\begin{theorem}\label{thm:aux_asypmtotic} Assume
that the generating functions $g_\theta(z)$ and $g_\kappa(z)$ both
have radius of convergence $R>0$, and suppose that\/
$1<g_\kappa^{\{1\}}\myn(R\myp)\le\infty$. Let $f(z)$ be a function
holomorphic in the open disk $|z|<R$\myp. Then, uniformly in
$v\in[v_1,v_2]$ with arbitrary constants
$v_2>1>v_1>1/g_\kappa^{\{1\}}\myn(R\myp)$, we have
\begin{equation}\label{eq:th3.2}
[z^{N}]\bigl[f(z)\exp\myn\{v\myp \GN(z)\}\bigr] \sim
\frac{f(r_v)\exp\myn\{v\myp \GN(r_v)\}}{r_v^{N}\sqrt{2\pi N v\myp
b_2(r_v)}}\mypp,\qquad N\to\infty,
\end{equation}
where $b_2(r)$ and $r_v$ are defined in \eqref{eq:b12} and
\eqref{eq:r_v}, respectively. In particular,
\begin{equation}\label{eq:HN-1}
H_N\sim \frac{\exp\myn\{g_\theta(r_1) + N
g_\kappa(r_1)\}}{r_1^{N}\sqrt{2\pi N\myp b_2(r_1)}}\mypp,\qquad
N\to\infty.
\end{equation}
\end{theorem}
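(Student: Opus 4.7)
The plan is to apply the classical saddle-point method to the Cauchy integral representation
\begin{equation*}
[z^{N}]\bigl[f(z)\exp\{v\myp \GN(z)\}\bigr] = \frac{1}{2\pi\myp r_v^N}\int_{-\pi}^{\pi} f(r_v\rme^{\rmi t})\exp\bigl\{v g_\theta(r_v\rme^{\rmi t})\bigr\}\exp\bigl\{N\bigl(v g_\kappa(r_v\rme^{\rmi t}) - \rmi t\bigr)\bigr\}\,\rmd t,
\end{equation*}
obtained by parametrising the contour $|z| = r_v$. The radius $r_v$ is fixed precisely by the saddle-point equation $v g_\kappa'(z) - 1/z = 0$, i.e.\ $g_\kappa^{\{1\}}(z) = v^{-1}$, solvable in $(0,R]$ under the subcritical hypothesis $g_\kappa^{\{1\}}(R\myp) > 1$; for $v \in [v_1,v_2]$ with $v_1 > 1/g_\kappa^{\{1\}}(R\myp)$ the mapping $v \mapsto r_v$ (continuous by the inverse function theorem applied to the strictly increasing real-analytic $g_\kappa^{\{1\}}$) takes values in a compact subinterval of $(0, R)$.

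The first step is to expand the exponent at $t = 0$ using (3.7), with the remainder sharpened to $O(t^3)$ by the analyticity of $g_\kappa$ at $r_v$. Since $v\myp b_1(r_v) = v g_\kappa^{\{1\}}(r_v) = 1$ by the very definition of $r_v$, the linear contributions from $v g_\kappa(r_v\rme^{\rmi t})$ and from $-\rmi t$ cancel exactly, leaving
\begin{equation*}
N\bigl(v g_\kappa(r_v\rme^{\rmi t}) - \rmi t\bigr) = N v g_\kappa(r_v) - \tfrac{1}{2} N v\myp b_2(r_v)\, t^2 + O(N t^3), \qquad t \to 0,
\end{equation*}
with $b_2(r_v) > 0$ since Assumption 3.1 forces some $\kappa_j > 0$. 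I would then split the integral into a central region $|t| \le \delta_N$ and a tail $\delta_N < |t| \le \pi$, taking $\delta_N = N^{-2/5}$ so that $N\delta_N^2 = N^{1/5} \to \infty$ while $N\delta_N^3 = N^{-1/5} \to 0$. On the central region the Taylor expansion reduces the integrand to $(1+o(1))\,f(r_v)\exp\{v g_\theta(r_v) + N v g_\kappa(r_v)\}\exp\{-\tfrac12 N v\myp b_2(r_v) t^2\}$, and extending the resulting Gaussian integral to all of $\mathbb{R}$ introduces only negligible error, producing the factor $\sqrt{2\pi/(N v\myp b_2(r_v))}$ that yields the right-hand side of (3.10).

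The main obstacle will be making the tail bound and the Taylor remainder \emph{uniform} in $v \in [v_1, v_2]$. For the tail, Lemma 3.2(a) supplies, for each fixed $v$, a constant $c(v) > 0$ with $\Re(g_\kappa(r_v\rme^{\rmi t})) \le g_\kappa(r_v) - c(v)$ on $\eta \le |t| \le \pi$ for any chosen $\eta > 0$; since $(v,t) \mapsto g_\kappa(r_v) - \Re(g_\kappa(r_v\rme^{\rmi t}))$ is continuous and strictly positive on the compact set $[v_1,v_2]\times\{\eta \le |t|\le \pi\}$, it admits a uniform lower bound $c_0 > 0$, so the tail contribution is $O(\exp\{-c_0 N v_1\})$ relative to the main prefactor (for $N$ large enough that $\delta_N < \eta$). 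The same compactness argument yields uniform control of $b_2(r_v)$, of the prefactors $f(r_v)\exp\{v g_\theta(r_v)\}$, and of the cubic remainder in the Taylor expansion. Specialising to $v = 1$ and $f \equiv 1$ immediately reduces (3.10) to (3.11).
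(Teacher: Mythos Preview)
Your argument follows the same saddle-point route as the paper---Cauchy's formula on the circle $|z|=r_v$, cancellation of the linear term via $v\,b_1(r_v)=1$, Gaussian evaluation on the central arc, and Lemma~3.2(a) for the far tail---but there is a gap in the tail estimate. You split at $\delta_N=N^{-2/5}$ and then invoke Lemma~3.2(a), which only supplies a uniform positive lower bound $c_0$ for $g_\kappa(r_v)-\Re\bigl(g_\kappa(r_v\rme^{\rmi t})\bigr)$ on a set $\{\eta\le|t|\le\pi\}$ with a \emph{fixed} $\eta>0$. Your parenthetical ``for $N$ large enough that $\delta_N<\eta$'' goes the wrong way: it makes your tail $(\delta_N,\pi]$ strictly \emph{larger} than $[\eta,\pi]$, so the intermediate band $\delta_N<|t|<\eta$ is left uncontrolled. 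On that band the quantity $g_\kappa(r_v)-\Re\bigl(g_\kappa(r_v\rme^{\rmi t})\bigr)$ is only of order $t^2$, so no fixed exponential rate $c_0$ is available there, and the claimed bound $O(\exp\{-c_0 N v_1\})$ for the whole tail does not follow.

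The paper handles this by a three-way split $|t|\in[0,t_N]\cup[t_N,\delta]\cup[\delta,\pi]$ with $t_N=N^{-\beta}$, $\tfrac13<\beta<\tfrac12$, and a fixed small $\delta>0$. On the middle region $[t_N,\delta]$ the Taylor expansion (3.7) is still valid (since $r_v$ ranges over a compact subset of $(0,R)$), and the quadratic term yields a Gaussian-tail contribution of order $N^{-1/2}$ times $\int_{\sqrt{N}\,t_N}^\infty \exp\{-\tfrac12 v\,b_2(r_v)\,s^2\}\,\rmd s=o(1)$, negligible against the main term; only on the outer region $[\delta,\pi]$ is Lemma~3.2(a) invoked. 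You could equivalently keep your two-way split but supplement the $c_0$ bound on $[\eta,\pi]$ with the local quadratic lower bound $g_\kappa(r_v)-\Re\bigl(g_\kappa(r_v\rme^{\rmi t})\bigr)\ge c_1 t^2$ for $|t|\le\eta$ (uniform in $v$ by compactness, from the expansion (3.7)); together these cover all of $(\delta_N,\pi]$.
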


\begin{proof}
First of all, according to \eqref{eq:Hn_generatingN1} formula
\eqref{eq:HN-1} readily follows from \eqref{eq:th3.2} by setting
$f(z)\equiv1$ and $v=1$, whereby $r_v|_{v=1}=r_1$. To handle the
general case, apply Cauchy's integral formula with the contour
\,$\gamma\mynn:=\{ z = r_v \myp\rme^{\myp\rmi t},\allowbreak
\;t\in[-\pi,\pi]\}$ to obtain
\begin{align}
\notag [z^{N}]&\bigl[f(z)\exp\myn\{v\myp \GN(z)\}\bigr] = \frac{1}{2
\pi\rmi} \oint_{\gamma} \frac{f(z)\exp\myn\{v\myp
\GN(z)\}}{z^{N+1}} \,\rmd{z}\\
\notag &= \frac{1}{2 \pi} \int_{-\pi}^\pi \frac{f(r_v\myp
\rme^{\myp\rmi t})\exp\myn\bigl\{v\myp g_\theta(r_v\myp
\rme^{\myp\rmi t})+\rmi\myp t\myp
n\bigr\}}{r_v^{N}}\,\exp\myn\bigl\{N\bigl( v\myp g_\kappa(r_v\myp
\rme^{\myp\rmi t}) - \rmi\myp
t\bigr) \bigr\} \,\rmd{t}\\
\label{eq:int123} &=:\frac{1}{2
\pi}\bigl(\mathcal{I}_N^{\myp1}+\mathcal{I}_N^{\mypp2}+\mathcal{I}_N^{\mypp3}\bigr),
\end{align}
where $\mathcal{I}_N^{\myp1}$, $\mathcal{I}_N^{\mypp2}$ and
$\mathcal{I}_N^{\mypp3}$ are the corresponding integrals arising
upon splitting the interval $[-\pi,\pi]$\strut{} into three parts,
with $|t|\in[0,t_N]\cup[t_N,\delta]\cup[\delta,\pi]$. Choosing
$t_N=N^{-\beta}$ with \strut{}$\frac13<\beta<\frac12$ and $\delta>0$
small enough, we estimate each of the integrals in~\eqref{eq:int123}
as follows.

\smallskip
(i) \,By Taylor's expansion \eqref{eq:admissible_expansion} at
$r=r_v$, with $b_1(r_v) = v^{-1}$ (see \eqref{eq:b12},
\eqref{eq:r_v}), we have
\begin{equation}\label{eq:I1}
\mathcal{I}_N^{\myp1}=\frac{f(r_v)\exp\myn\{v\myp
\GN(r_v)\}}{r_v^{N}} \int_{-t_N}^{t_N} \exp\myn\bigl\{-\tfrac12\myp
N\myp t^2\myp v\myp b_2(r_v) + O(N\myp t^3)\bigr\}\, \rmd{t}.
\end{equation}
On the change of variables $s= \sqrt{N}\mypp t$, the integral in
\eqref{eq:I1} becomes
\begin{align*}
\frac{1}{\sqrt{N}} &\int_{-\sqrt{N}\myp t_N}^{\sqrt{N}\myp t_N}
\exp\myn\bigl\{-\tfrac12\myp v\myp b_2(r_v)\myp s^2 +
O(N^{-1/2}s^3)\bigr\}\, \rmd{s}\\
&= \frac{\exp\myn\{O(N t_N^3)\}}{\sqrt{N}} \int_{-\sqrt{N}\myp
t_N}^{\sqrt{N}\myp t_N} \exp\myn\bigl\{-\tfrac12\myp v\myp
b_2(r_v)\myp s^2\bigr\} \,\rmd{s}\sim \sqrt{\frac{2\pi}{N v\myp
b_2(r_v)}}\,,
\end{align*}
as long as $N\myp t_N^3\to0$. Hence, returning to \eqref{eq:I1} we
get
\begin{equation}\label{eq:I1as-sub}
\mathcal{I}_N^{\myp1} \sim \frac{f(r_v)\exp\myn\{v\myp
\GN(r_v)\}}{r_v^{N}}\,\sqrt{\frac{2\pi}{N v\myp b_2(r_v)}}\,,\qquad
N\to\infty.
\end{equation}

\smallskip
(ii) \,Similarly, using the expansion
\eqref{eq:admissible_expansion} for $|t|\le \delta$ we obtain
\begin{align}
\notag |\mathcal{I}_N^{\mypp2}|&= O(1)\,\frac{\exp\myn\{v\myp
\GN(r_v)\}}{\sqrt{N}\,r_v^{N}} \int_{\sqrt{N}t_N}^{\infty}
\exp\myn\bigl\{-\tfrac12\myp
v\myp b_2(r_v)\myp s^2\bigr\} \,\rmd{s}\\
\label{eq:I2as-sub} &= o\myp(1)\,\frac{\exp\myn\{v\myp
\GN(r_v)\}}{\sqrt{N}\,r_v^{N}}\mypp,
\end{align}
as long as $\sqrt{N}\mypp t_N\to\infty$.

\smallskip
(iii) \,For $|t|\ge\delta$, by a simple absolute value estimate we
have
\begin{equation}\label{eq:I3as-sub}
|\mathcal{I}_N^{\mypp3}|= \frac{O(1)}{r_v^{N}} \int_{\delta}^{\pi}
\exp\myn\bigl\{-Nv\myp\Re \myp(g_\kappa(r_v\myp \rme^{\myp\rmi t}))
\bigr\} \,\rmd{s} =
O(1)\,\frac{\exp\myn\{-Nv\myp\mu_\kappa(\delta)\}}{r_v^{N}}\mypp,
\end{equation}
where, according to Lemma~\ref{lm:NA}\myp(a),
\begin{equation*}
\mu_\kappa(\delta):= \max_{|t|\in[\delta,\myp\pi]}
\Re\myp(g_\kappa(r_v\myp \rme^{\myp\rmi t}))< g_\kappa(r_v).
\end{equation*}
Hence, the bound \eqref{eq:I2as-sub} is exponentially small as
compared to \eqref{eq:I1as-sub}, and so the contribution from
$\mathcal{I}_N^{\mypp3}$ is negligible.

\smallskip
Substituting the estimates \eqref{eq:I1as-sub}, \eqref{eq:I2as-sub}
and \eqref{eq:I3as-sub} into \eqref{eq:int123} yields the asymptotic
formula \eqref{eq:th3.2} for a fixed $v>0$. Finally, it is easy to
see that all $O(\cdot)$ and $o\myp(\cdot)$ terms used above are
uniform in $v\in [v_1,v_2]$, as claimed in the theorem. This
completes the proof.
\end{proof}


\subsection{The supercritical case}
\label{sec:weak_admissible_2}

\subsubsection{The domain $\varDelta_0$}\label{sec:Delta0}
Recall (see Definition \ref{def:sub-sup}) that the supercritical
case occurs when $g_\kappa^{\{1\}}\myn(R\myp)<1$, whereby the
equation $g_\kappa^{\{1\}}\myn(r) =1$ is no longer solvable. To
overcome this difficulty, we have to allow the contour of
integration in Cauchy's integral formula akin to \eqref{eq:int_HN}
to go outside the disk of convergence $|z|< R$\myp. To make this
idea more precise, we give the following definition (see
Fig.~\ref{fig_delta_0}).
\begin{definition} \label{def:Delta0} For
$\eta>0$ and $\varphi\in(0,\frac{\pi}{2})$, define an open domain
$\varDelta_0= \varDelta_0(R,\eta,\varphi)$ in the complex plane by
\begin{equation*}
\varDelta_0: = \bigl\{z\in \CC :
\,|z|<|R\mypp(1+\eta\mypp\rme^{\myp\rmi\varphi})|, \;z\ne R,
\;|\myn\arg\myn(z-R\myp)|>\varphi\bigr\}.
\end{equation*}
We also introduce the notation for the radius of the outer circle,
$R^{\myp\prime}:=R\,|1+\eta\mypp\rme^{\myp\rmi\varphi}|$, and the
angle $\alpha:=\arg\myn(1+\eta\mypp\rme^{\myp\rmi\varphi})\in
(0,\myp\pi/2)$, so that $R\mypp(1+\eta\mypp \rme^{\myp\rmi\varphi})
= R^{\myp\prime}\rme^{\myp\rmi\alpha}$.
\end{definition}

\begin{figure}[ht!]
\centering
\includegraphics[width=.35\textwidth]{flajolet1c.eps}
  \put(-81,67){\mbox{\scriptsize$0$}}
  \put(-93,27){\mbox{\scriptsize$|z|\mynn=\mynn R$}}
  \put(-156,131){\mbox{\scriptsize$|z|\mynn=\mynn R^{\myp\prime}$}}
  \put(-34.9,53.0){\mbox{\scriptsize$R\myp\eta$}}
 \put(-23,82){\mbox{\scriptsize$\varphi$}}
 \put(-57.8,79){\mbox{\scriptsize$\alpha$}}
 \caption{Domain $\varDelta_0=\varDelta_0(R,\eta,\varphi)$.}
 \label{fig_delta_0}
\end{figure}

\begin{lemma}\label{lm:NA1}
Suppose that Assumption \textup{\ref{as:NA}} holds, and let
$g_\kappa(z)$ be holomorphic in a domain
$\varDelta_0=\varDelta_0(R,\eta,\varphi)$ as defined above, with
$g_\kappa^{\{1\}}\myn(R\myp)<\infty$. Then there exist constants
$\delta>0$, $\varepsilon>0$ such that, for all $z\in\varDelta_0$
with $R\le|z|\le R\myp(1+\delta)$, \,$|\myn\arg\myn(z)|\ge\delta$,
\begin{equation}\label{eq:log|z|}
\Re\myp(g_\kappa(z)) \le
g_\kappa(R\myp)+\bigl(g_\kappa^{\{1\}}\myn(R\myp)-\varepsilon\bigr)\myp\log
\frac{|z|}{R}\mypp.
\end{equation}
\end{lemma}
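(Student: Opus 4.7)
The plan is to integrate a gradient bound along radii starting from the circle $|z|=R$. With $z=R(1+s)\rme^{\myp\rmi t}$ and $F(t,s):=\Re\bigl(g_\kappa(z)\bigr)$, the identity $zg_\kappa'(z)=g_\kappa^{\{1\}}\myn(z)$ immediately yields
\begin{equation*}
\frac{\partial F}{\partial s}(t,s)=\frac{\Re\bigl(g_\kappa^{\{1\}}\myn(R(1+s)\rme^{\myp\rmi t})\bigr)}{1+s},\qquad \frac{\partial}{\partial s}\log\frac{|z|}{R}=\frac{1}{1+s}.
\end{equation*}
Hence it suffices to produce an $\varepsilon>0$ such that $\Re\bigl(g_\kappa^{\{1\}}\myn(w)\bigr)\le g_\kappa^{\{1\}}\myn(R)-\varepsilon$ uniformly for $w$ on each radial segment $\{R(1+u)\rme^{\myp\rmi t}:0\le u\le s\}$ under consideration: integrating this bound from $0$ to $s$ and adding the base estimate $F(t,0)\le g_\kappa(R)$ then produces exactly \eqref{eq:log|z|}.

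Both base ingredients come from Lemma~\ref{lm:NA} applied at $r=R$, which is legitimate because the hypothesis $g_\kappa^{\{1\}}\myn(R)<\infty$ dominates the series $\sum\kappa_jR^j/j$ termwise and so forces $g_\kappa(R)<\infty$. Part~(a) gives $F(t,0)\le g_\kappa(R)$, while part~(b) gives the pointwise strict inequality $\Re\bigl(g_\kappa^{\{1\}}\myn(R\rme^{\myp\rmi t})\bigr)<g_\kappa^{\{1\}}\myn(R)$ for every $t\in[-\pi,\pi]\setminus\{0\}$. The arc $A_\delta:=\{R\rme^{\myp\rmi t}:\delta\le|t|\le\pi\}$ lies inside $\varDelta_0$ (since $|\myn\arg(R\rme^{\myp\rmi t}\mynn-R)|=\pi/2+|t|/2\ge\pi/2>\varphi$) and is compact, so continuity upgrades the pointwise inequality to a uniform gap $\Re\bigl(g_\kappa^{\{1\}}\myn\bigr)\le g_\kappa^{\{1\}}\myn(R)-2\varepsilon$ on $A_\delta$; continuity of the holomorphic function $g_\kappa^{\{1\}}$ on $\varDelta_0$ then extends the gap (with a weaker constant $\varepsilon$) to some open $\varDelta_0$-neighborhood $U$ of $A_\delta$. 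Choosing $\delta$ small enough that every point in the slice $\{z\in\varDelta_0:R\le|z|\le R(1+\delta),\;|\myn\arg z|\ge\delta\}$ lies in $U$ closes the argument.

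The main obstacle is verifying that the radial segment joining $R\rme^{\myp\rmi t}$ to such a $z$ sits entirely inside $\varDelta_0\cap U$. Staying in $\varDelta_0$ rests on the monotonicity identity
\begin{equation*}
\frac{\rmd}{\rmd u}\arg\bigl((1+u)\rme^{\myp\rmi t}-1\bigr)=-\frac{\sin t}{|(1+u)\rme^{\myp\rmi t}-1|^2},
\end{equation*}
which shows that $u\mapsto\arg(R(1+u)\rme^{\myp\rmi t}\mynn-R)$ has constant-sign derivative, so once both endpoints of the segment satisfy $|\myn\arg(\myp\cdot\myp-R)|>\varphi$ (the first endpoint because $|\myn\arg(R\rme^{\myp\rmi t}\mynn-R)|=\pi/2+|t|/2>\varphi$, the second by the hypothesis $z\in\varDelta_0$), the entire segment satisfies it as well. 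Staying in $U$ is then elementary: each segment has length at most $R\delta$, so it fits into any prescribed $\varDelta_0$-neighborhood of $A_\delta$ provided $\delta$ is taken sufficiently small, where ``sufficiently small'' depends on $\varphi$ through the geometry of $\varDelta_0$ near $z=R$.
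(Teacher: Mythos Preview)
Your proof is correct and follows essentially the same approach as the paper's: both integrate the bound $\Re\bigl(g_\kappa^{\{1\}}(u\rme^{\myp\rmi t})\bigr)\le g_\kappa^{\{1\}}(R)-\varepsilon$ along radial segments, obtaining that bound from Lemma~\ref{lm:NA}(b) on the arc $|z|=R$ and extending it by a compactness--continuity argument, and then add the base inequality from Lemma~\ref{lm:NA}(a). Your treatment is in fact slightly more careful than the paper's, since you explicitly verify (via the monotonicity of $u\mapsto\arg((1+u)\rme^{\myp\rmi t}-1)$) that the radial integration path remains inside~$\varDelta_0$, a point the paper leaves implicit.
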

\begin{proof} By Lemma \ref{lm:NA}\myp(b), if $|z|=R$ and $z\ne0$\/ then
\begin{equation}\label{eq:{1}}
\Re\myp(g_\kappa^{\{1\}}\myn(z))<g_\kappa^{\{1\}}\myn(R\myp).
\end{equation}
Along with $g_\kappa(z)$, the function $z\mapsto
\Re\myp(g_\kappa^{\{1\}}\myn(z))$ is also analytic in $\varDelta_0$,
and in particular it is continuous in a vicinity of the punctured
circle\strut{} $\{|z|=R,\ z\ne R\}\subset \varDelta_0$. Hence, by a
compactness argument the inequality \eqref{eq:{1}} also holds on a
closed domain
$$
\varDelta_\delta:=\varDelta_0\cap\{z:R\le |z|\le R\myp(1+\delta),\
|\myn\arg\myn(z)|\ge\delta\},
$$
with $\delta>0$ small enough. Moreover, by the continuity of
$\Re\myp(g_\kappa^{\{1\}}\myn(z))$ the strict inequality
\eqref{eq:{1}} implies that
$$
\varepsilon:=g_\kappa^{\{1\}}
\myn(R\myp)-\sup_{z\in\varDelta_\delta}\Re\myp(g_\kappa^{\{1\}}\myn(z))>0.
$$
Therefore, for any $z=r\myp\rme^{\myp\rmi t}\in\varDelta_\delta$ we
have
\begin{equation}\label{eq:log-r/R-a}
\int_R^{r} \Re\myp\bigl(g_\kappa^{\{1\}}\myn(u\myp\rme^{\myp\rmi
t})\bigr)\, \frac{\rmd{u}}{u}\le \int_R^{r}
\bigl(g_\kappa^{\{1\}}\myn(R\myp)-\varepsilon\bigr)\,
\frac{\rmd{u}}{u}
=\bigl(g_\kappa^{\{1\}}\myn(R\myp)-\varepsilon\bigr) \myp\log
\frac{|z|}{R}\mypp.
\end{equation}
On the other hand,
\begin{align}\label{eq:log-r/R-b}
\int_R^{r} \Re\myp\bigl(g_\kappa^{\{1\}}\myn(u\myp\rme^{\myp\rmi
t})\bigr)\, \frac{\rmd{u}}{u}&=\Re\left(\int_R^{r}
\frac{\partial}{\partial u}\,g_\kappa(u\myp\rme^{\myp\rmi
t})\,\rmd{u}\right)=
\Re\myp\bigl(g_\kappa(z)-g_\kappa(R\myp\rme^{\myp\rmi t})\bigr).
\end{align}
Hence, combining \eqref{eq:log-r/R-a} and \eqref{eq:log-r/R-b}  we
obtain
$$
\Re\myp(g_\kappa(z)) \le \Re\myp (g_\kappa(R\,\rme^{\myp\rmi
t}))+\bigl(g_\kappa^{\{1\}}\myn(R\myp)-\varepsilon\bigr)\myp \log
\frac{|z|}{R}\mypp,
$$
and the inequality \eqref{eq:log|z|} readily follows in view of
Lemma~\ref{lm:NA}\myp(a).
\end{proof}

Motivated by the model choice $\theta_j=\const\ge0$ (see Section
\ref{sec:polylog} below), in the supercritical asymptotic theorems
that follow we allow the generating function $g_\theta(z)$ to have a
logarithmic singularity at $z=R$\myp, of the form
$g_\theta(z)\sim-\theta^*\log\myn(1-z/R\myp)$ as $z\to R$
\,($z\in\varDelta_0$). It turns out that there is a significant
distinction between the cases $\theta^*\myn>0$ and $\theta^*\myn=0$.

\subsubsection{Case $\theta^*\myn>0$}
We first handle the case with a non-degenerate log-singularity of
$g_\theta(z)$.
\begin{theorem}\label{thm:aux_asypmtotic_2}
Let the generating functions $g_\theta(z)$ and $g_\kappa(z)$ both
have radius of convergence $R>0$ and be holomorphic in some domain
$\varDelta_0$ as in Definition~\textup{\ref{def:Delta0}}. Assume
that $g_\kappa^{\{1\}}\myn(R\myp)< 1$ and the following asymptotic
formulas hold as $z\to R$ \,\textup{(}$z\in \varDelta_0$\textup{)},
with some $\theta^*\ge0$, $\delta>0$,
\begin{align}\label{eq:g_theta_as}
    g_\theta(z) &= -\theta^* \log\myn(1-z/R\myp)
+O\bigl((1-z/R\myp)^{\delta}\bigr),\\
    \label{eq:g_kappa_as}
    g_\kappa(z)  &= g_\kappa(R\myp) - g_\kappa^{\{1\}}\myn(R\myp)
    (1-z/R\myp) +
    O\bigl((1-z/R\myp)^{1+\delta}\bigr).
\end{align}
Finally, let $f\colon\varDelta_0 \to \CC$ be a holomorphic function
such that for some $\beta\geq0$
\begin{equation}\label{eq:f_as}
f(z) =
(1-z/R\myp)^{-\beta}\mypp\bigl\{1+O\bigl((1-z/R\myp)^{\delta}\bigr)\bigr\},
\qquad z\to R\ \ \ (z\in\varDelta_0).
\end{equation}
Then, provided that\/ $v\myp\theta^*\myn+\beta>0$, we have, as
$N\to\infty$,
\begin{equation}\label{eq:th3.3}
 [z^{N}]\bigl[f(z)\exp\myn\{v\myp \GN(z)\}\bigr]
\sim\frac{\exp\myn\{N v\myp
g_\kappa(R\myp)\}\cdot\bigl\{N\myp(1-v\myp
g_\kappa^{\{1\}}\myn(R\myp))\bigr\}^{v\theta^*\myn+\beta -1}}{R^{N}
\,\Gamma(v\myp\theta^*\myn+\beta)}\mypp,
\end{equation}
uniformly in $v\in[v_1,v_2]$ for any
$0<v_1<1<v_2<1/g_\kappa^{\{1\}}\myn(R\myp)$.  In particular, for
$\theta^*\myn>0$
\begin{equation}\label{eq:HN-2}
H_N\sim \frac{\exp\myn\{N g_\kappa(R\myp) \}\cdot\bigl\{N\myp(1-
g_\kappa^{\{1\}}\myn(R\myp))\bigr\}^{\theta^*\myn-1}}{R^{N}
\,\Gamma(\theta^*\myn)}\mypp,\qquad N\to\infty.
\end{equation}
\end{theorem}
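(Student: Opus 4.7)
The approach is singularity analysis à la Flajolet--Sedgewick, replacing the saddle-point method of Theorem~\ref{thm:aux_asypmtotic} which is unavailable in the supercritical regime. Starting from Cauchy's formula,
\begin{equation*}
[z^{N}]\bigl[f(z)\exp\myn\{v\myp \GN(z)\}\bigr]
= \frac{1}{2\pi\rmi}\oint_{\gamma}\frac{f(z)\exp\myn\{v\myp g_\theta(z)+Nv\myp g_\kappa(z)\}}{z^{N+1}}\,\rmd z,
\end{equation*}
we exploit that $f$, $g_\theta$ and $g_\kappa$ are holomorphic on $\varDelta_0$, with their only singularity on $|z|=R$ located at $z=R$. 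We deform the original circle $|z|=r<R$ into a Hankel-type contour $\gamma\subset\varDelta_0$ (still enclosing the origin) whose outer portion is an arc of $|z|=R\myp(1+\delta_0)$ for some small $\delta_0>0$, connected by two straight segments just above and below the positive real axis to a small loop around $z=R$ at distance of order $1/N$.

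The contribution of the outer arc and of the straight segments away from $z=R$ is controlled by Lemma~\ref{lm:NA1}: since $v_2\myp g_\kappa^{\{1\}}\myn(R\myp)<1$, we may pick $\varepsilon>0$ so that $v\bigl(g_\kappa^{\{1\}}\myn(R\myp)-\varepsilon\bigr)<1$ uniformly in $v\in[v_1,v_2]$. The estimate \eqref{eq:log|z|}, combined with $|z|^{-(N+1)}$ and polynomial-type bounds on $f$ and $\exp\myn\{v\myp g_\theta\}$, then produces a factor of order $(|z|/R)^{N(v(g_\kappa^{\{1\}}(R\myp)-\varepsilon)-1)}$ that makes this portion exponentially smaller than the expected order $R^{-N}\exp\myn\{Nv\myp g_\kappa(R\myp)\}\myp N^{v\theta^*\myn+\beta-1}$. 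The main contribution therefore comes from the Hankel portion near $z=R$, where we substitute $z=R\myp(1+t/N)$, so that $\rmd z=(R/N)\,\rmd t$, \,$1-z/R=-t/N$, and $z^{-N-1}=R^{-N-1}\rme^{-t}\bigl(1+O(|t|^2/N)\bigr)$. Feeding the expansions \eqref{eq:g_theta_as}--\eqref{eq:f_as} into the integrand, the product $f(z)\exp\myn\{v\myp g_\theta(z)\}$ becomes $(-t/N)^{-(v\theta^*\myn+\beta)}(1+O(N^{-\delta}))$, and $\exp\myn\{Nv\myp g_\kappa(z)\}=\exp\myn\{Nv\myp g_\kappa(R\myp)\}\myp\exp\myn\{v\myp g_\kappa^{\{1\}}\myn(R\myp)\myp t\}(1+o\myp(1))$ on any compact $t$-set. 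Writing $s:=v\theta^*\myn+\beta>0$ and $a:=1-v\myp g_\kappa^{\{1\}}\myn(R\myp)>0$, the Hankel portion reduces to
\begin{equation*}
\frac{\exp\myn\{Nv\myp g_\kappa(R\myp)\}\myp N^{s-1}}{R^{N}}\cdot\frac{1}{2\pi\rmi}\int_{H}(-t)^{-s}\myp\rme^{-at}\,\rmd t\,\bigl(1+o\myp(1)\bigr),
\end{equation*}
and the classical identity $\frac{1}{2\pi\rmi}\int_{H}(-t)^{-s}\rme^{-at}\rmd t=a^{s-1}/\Gamma(s)$ (via $w=at$ reducing to the standard Hankel representation of $1/\Gamma$) yields formula \eqref{eq:th3.3}. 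The corollary \eqref{eq:HN-2} is the special case $v=1$, $f\equiv1$, $\beta=0$.

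The main technical obstacle is the bookkeeping of error terms and the verification of uniformity in $v\in[v_1,v_2]$. The key checks are: (i) the $O\bigl((1-z/R)^{1+\delta}\bigr)$ error in \eqref{eq:g_kappa_as}, after multiplication by $N$ in the exponential, becomes $N\cdot|t/N|^{1+\delta}=N^{-\delta}|t|^{1+\delta}$, hence $o\myp(1)$ throughout the portion of the $t$-contour where $|t|=o\bigl(N^{\delta/(1+\delta)}\bigr)$; (ii) the tail of the Hankel integral with $|t|$ large but $|t|=o(\sqrt{N})$ is integrable since $\Re\myp(at)\to+\infty$ on the two rays, so it can be freely extended to the full Hankel contour at negligible cost; (iii) the part where $|t|$ is of order $N$ glues onto the outer arc controlled by Lemma~\ref{lm:NA1}; (iv) the implicit constants in the $O$- and $o$-terms depend only on $\varDelta_0$ and on uniform bounds for $f$, $g_\theta$, $g_\kappa$ on $\varDelta_0$, while $s$ and $a$ range over compact subsets of $(0,\infty)$ as $v$ ranges over $[v_1,v_2]$, giving the required uniformity. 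These verifications, while routine in singularity analysis, require the contour to be chosen within $\varDelta_0$ so as to interface cleanly with Lemma~\ref{lm:NA1}.
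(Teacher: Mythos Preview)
Your proposal is correct and follows essentially the same approach as the paper: a Hankel-type contour inside $\varDelta_0$, with the outer arc controlled via Lemma~\ref{lm:NA1}, the local substitution $z=R(1+t/N)$ near the singularity, and the Hankel representation of $1/\Gamma$ to evaluate the resulting integral. The paper makes the contour explicit (two segments at angle $\pm\varphi$ from $R$, a small circular arc of radius $R/N$ around $R$, and the outer circle), and splits the segment contribution into three ranges to track the error terms, but the substance is exactly what you outline---your closing remark that the segments must lie in $\varDelta_0$ (hence at angle $\varphi$, not horizontal) is the only point where your verbal description needs tightening.
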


\begin{proof}
In view of the identity \eqref{eq:Hn_generatingN1}, formula
\eqref{eq:HN-2} is obtained from \eqref{eq:th3.3} by setting
$f(z)\equiv1$ (so that $\beta=0$) and $v=1$. Let us also observe
that, according to \eqref{eq:g_theta_as} and \eqref{eq:f_as},
$$
v\myp g_\theta(z)+\log
f(z)=-(v\myp\theta^*\myn+\beta)\log\myn(1-z/R\myp)
+O\bigl((1-z/R\myp)^{\delta}\bigr).
$$
Thus, accounting for the pre-exponential factor $f(z)$ just leads to
the change\, $v\myp\theta^*\mapsto v\myp\theta^*\myn+\beta$. With
this in mind, it suffices to consider the basic case $\beta=0$ (but
now with $\theta^*\myn>0$).

Without loss of generality, we may and will assume (by slightly
reducing the original domain $\varDelta_0$ if necessary) that both
$g_\theta(z)$ and $g_\kappa(z)$ are continuous on the boundary of
$\varDelta_0$ except at $z=R$\myp. By virtue of Lemma \ref{lm:NA1}
(and again reducing $\varDelta_0$ as appropriate), we may also
assume that the inequality \eqref{eq:log|z|} is fulfilled for all
$z\in\varDelta_0$ such that $R\le |z|\le R^{\myp\prime}$ and
$|\myn\arg\myn(z)|\ge\delta_0$, with some $\delta_0>0$, where
$R^{\myp\prime}=R\,|1+\eta\mypp\rme^{\myp\rmi\varphi}|$ (see
Definition~\ref{def:Delta0}).

\begin{figure}[h]
\centering
\subfigure[\,$\gamma=\gamma_1\cup\gamma_2\cup\gamma_3\cup\gamma_4$]{\hspace{-1pc}
\hspace{2pc}\includegraphics[height=.2\textheight]{flajolet4c.eps}
  \label{fig:gamma}
  \put(-75,57){\mbox{\scriptsize$0$}}
  \put(-83,102){\mbox{\scriptsize$|z|\mynn=\mynn R$}}
  \put(-126,123){\mbox{\scriptsize$|z|\mynn=\mynn R^{\myp\prime}$}}
   \put(-29.5,44.5){\mbox{\scriptsize$\gamma_1$}}
   \put(-54.5,64){\mbox{\scriptsize$\gamma_2$}}
   \put(-31.5,83.5){\mbox{\scriptsize$\gamma_3$}}
   \put(-142,65){\mbox{\scriptsize$\gamma_4$}}
 }
\hspace{2.5pc}\subfigure[\,$\gamma'=\gamma'_1\cup\gamma'_2\cup\gamma'_3$]{
   \includegraphics[height=.2\textheight]{flajolet5c.eps}
   \label{fig:gamma'}
  \put(-75,57){\mbox{\scriptsize$0$}}
    \put(-46,24){\mbox{\scriptsize$\gamma'_1$}}
   \put(-110,64){\mbox{\scriptsize$\gamma'_2$}}
   \put(-43,106.5){\mbox{\scriptsize$\gamma'_3$}}
 }
\hspace{2.5pc}\subfigure[\,$\gamma''=\gamma''_1\cup\gamma''_2\cup\gamma''_3$]{
   \includegraphics[height=.2\textheight]{flajolet8c.eps}
   \label{fig:gamma''}
   \put(-98.5,59){\mbox{\scriptsize$0$}}
   \put(-56.5,41.5){\mbox{\scriptsize$\gamma''_1$}}
   \put(-118,64){\mbox{\scriptsize$\gamma''_2$}}
   \put(-59.5,86.0){\mbox{\scriptsize$\gamma''_3$}}
 }
\caption{Contours used in the proof of
Theorem~\ref{thm:aux_asypmtotic_2}.}
\end{figure}Consider a continuous closed contour
$\gamma=\gamma_1\cup\gamma_2\cup\gamma_3\cup\gamma_4$ (see
Fig.~\ref{fig:gamma}), where
\begin{equation}\label{eq:gamma1-4}
\begin{aligned}
\gamma_1:={}&\bigl\{z= R\mypp(1+(\eta-x)\,\rme^{-\rmi\varphi}),\
\,x\in[0,\myp\eta-N^{-1}]\bigr\},\\[.2pc]
\gamma_2:={}&\bigl\{ z=R\mypp(1 +N^{-1} \rme^{-\rmi t}),\ \,
t\in[\varphi,\myp 2\pi -\varphi\myp]\bigr\},\\[.2pc]
 \gamma_3:={}&\bigl\{
z=R\mypp(1 + x\myp\rme^{\myp\rmi\varphi}),\ \, x\in[N^{-1}\mynn,\myp\eta\myp]\bigr\},\\[.2pc]
\gamma_4:={}&\bigl\{z=R^{\myp\prime}\rme^{\myp\rmi t},\ \,
t\in[\alpha, \myp 2\pi -\alpha\myp]\bigr\}.
\end{aligned}
\end{equation}
According to the chosen parameterization in \eqref{eq:gamma1-4}, the
contour $\gamma$ is traversed anti-clockwise. Then Cauchy's integral
formula yields
\begin{align}
\notag [z^{N}]\exp\myn\{v\myp \GN(z)\} &=\frac{1}{2
\pi\rmi}\left(\int_{\gamma_1}\!+\cdots+\int_{\gamma_4}\right)
\frac{\exp\myn\{v\myp \GN(z)\}}{z^{N+1}}
\,\rmd{z}\\[.3pc]
&=:\frac{1}{2
\pi\rmi}\left(\mathcal{I}_N^{\myp1}+\cdots+\mathcal{I}_N^{\mypp4}\right).
\label{eq:I1...I4}
\end{align}
Now, we estimate each of the integrals in~\eqref{eq:I1...I4}. Denote
for short
\begin{equation}\label{eq:chi}
d_1:=1-v\myp g_\kappa^{\{1\}}\myn(R\myp)>0.
\end{equation}

(i) Let us first show that the integral $\mathcal{I}_N^{\mypp4}$
over the circular arc $\gamma_4$ (see~\eqref{eq:gamma1-4}) is
negligible as $N\to\infty$. Indeed, by an absolute value inequality
we have
\begin{align}
\notag |\mathcal{I}_N^{\mypp4}|&\le \int_{\gamma_4}
\frac{\exp\myn\{v\mypp\Re\myp(\GN(z))\}}
{|z|^{N+1}}\,\rmd|z|\\
\label{eq:I2<} &\le
\frac{2\pi}{(R^{\myp\prime})^{N}}\,\exp\myn\bigl\{v\myp
\mu_\theta(R^{\myp\prime})+Nv\myp \mu_\kappa(R^{\myp\prime})\bigr\},
\end{align}
where $\mu_\theta(R^{\myp\prime}):=\max_{z\in\gamma_4}
\Re\myp(g_\theta(z))<\infty$ and, by Lemma \ref{lm:NA1},
\begin{equation}\label{eq:mu-kappa}
\mu_\kappa(R^{\myp\prime}):= \max_{z\in\gamma_4}
\Re\myp(g_\kappa(z))\le
g_\kappa(R\myp)+g_\kappa^{\{1\}}\myn(R\myp)\cdot\log\frac{R^{\myp\prime}}{R}\mypp.
\end{equation}
Hence, the right-hand side of the bound \eqref{eq:I2<} is further
estimated by
\begin{align}
\notag \frac{O(1)}{(R^{\myp\prime})^{N}}\,
\exp\myn\biggl\{Nv\biggl(g_\kappa(R\myp)&+
g_\kappa^{\{1\}}\myn(R\myp)\cdot\log\frac{R^{\myp\prime}}{R}\biggr)\biggr\}\\[.3pc]
&=\frac{O(1)\,\exp\myn\{Nv\myp
g_\kappa(R\myp)\}}{R^{N}}\,\exp\myn\biggl\{-Nd_1\log\frac{R^{\myp\prime}}{R}\biggr\}
\label{eq:d1_exp}
\end{align}
(see the notation~\eqref{eq:chi}), which is exponentially small as
compared to the right-hand side of \eqref{eq:th3.3}, thanks to the
inequalities $R^{\myp\prime}/R>1$ and $d_1>0$. Thus,
\begin{equation}\label{eq:I4}
\mathcal{I}_N^{\mypp4}=o\myp(1)\,\frac{\exp\myn\{Nv\myp
g_\kappa(R\myp)\}}{R^{N}N^{1-v\theta^*}}\mypp,\qquad N\to\infty.
\end{equation}

\smallskip
(ii) For $z\in\gamma_2$ (see~\eqref{eq:gamma1-4}), we have
\begin{equation}\label{eq:w2}
z=R\mypp(1+w N^{-1}),\qquad
w\in\gamma_2^{\myp\prime}:=\bigl\{\rme^{-\rmi
t},\:t\in[\varphi,\,2\pi -\varphi\myp]\bigr\}.
\end{equation}
Then formulas \eqref{eq:g_theta_as} and \eqref{eq:g_kappa_as} yield
the uniform asymptotics, as $N\to\infty$,
\begin{align}
\label{eq:g-theta<} g_\theta(z)&=-\theta^*\log\myn(-w
N^{-1})
+O\bigl((w/N)^{\delta}\bigr),\\[.1pc]
\label{eq:g-kappa<} g_\kappa(z)&=g_\kappa(R\myp) +
g_\kappa^{\{1\}}\myn(R\myp)\mypp w N^{-1} +
O\bigl((w/N)^{1+\delta}\bigr).
\end{align}
We also have
\begin{align}
\notag \frac{1}{z^{N+1}}&=\frac{1}{R^{N+1}}\,\exp\myn\bigl\{-(N+1)
\log\myn(1+w N^{-1})\bigr\}\\
&=\frac{1}{R^{N+1}}\,\exp\myn\bigl\{-w+O(w N^{-1})\bigr\},\qquad
N\to\infty. \label{eq:1/z^}
\end{align}
Collecting \eqref{eq:g-theta<}, \eqref{eq:g-kappa<} and
\eqref{eq:1/z^}, we obtain from \eqref{eq:I1...I4} via the change of
variables~\eqref{eq:w2}
\begin{equation}\label{eq:I2}
\mathcal{I}_N^{\mypp 2} \sim \frac{\exp\myn\{N v\myp g_\kappa(R\myp)
\}}{R^{N}N^{1-v\theta^*} } \int_{\gamma_2'}
(-w)^{-v\theta^*}\rme^{-d_1 w}\,\rmd{w}.
\end{equation}

(iii) By the symmetry between the contours $\gamma_1$ and
$\gamma_3$, it is easy to see that the corresponding integrals
$-\mathcal{I}_N^{\myp 1}$ and $\mathcal{I}_N^{\mypp 3}$ are complex
conjugate to one another. Hence, it suffices to consider, say,
$\mathcal{I}_N^{\mypp 3}$. Similarly to \eqref{eq:w2}, we
reparameterize the contour $\gamma_3$ (see~\eqref{eq:gamma1-4}) as
\begin{equation}\label{eq:w3}
z=R\mypp(1+wN^{-1}),\qquad
w\in\gamma_{3,N}^{\myp\prime}:=\bigl\{x\myp\rme^{\myp\rmi
\varphi},\:x\in[1,\myp N\eta\myp]\bigr\}.
\end{equation}
Let us split the contour $\gamma^{\myp\prime}_{3,N}$ into three
parts corresponding to
$x\in[1,x_N]\cup[x_N,N\eta_0]\cup[N\eta_0,N\eta\myp]$, with
$x_N\to\infty$, $x_N=o\myp(N)$ as $N\to\infty$, and denote the
respective parts of the integral $\mathcal{I}_N^{\mypp3}$ by
$\mathcal{I}_N^{\mypp3^{\myp\prime}}$,
$\mathcal{I}_N^{\mypp3^{\myp\prime\prime}}$ and
$\mathcal{I}_N^{\mypp3^{\myp\prime\prime\prime\!}}$. Because the
substitutions \eqref{eq:w2} and \eqref{eq:w3} are formally identical
to one another, it is clear that the estimates \eqref{eq:g-theta<},
\eqref{eq:g-kappa<} and \eqref{eq:1/z^} hold true for \eqref{eq:w3}
and, moreover, are uniform in $w$ such that $|w|\le N\eta_0$, with
$\eta_0>0$ small enough. Hence, the asymptotics of the integral
$\mathcal{I}_N^{\mypp3^{\myp\prime}}$ is given by a formula
analogous to~\eqref{eq:I2},
\begin{equation}\label{eq:I3'}
\mathcal{I}_N^{\mypp3^{\myp\prime}} \sim \frac{\exp\myn\{N v\myp
g_\kappa(R\myp)\}}{R^{N}N^{1-v\theta^*}} \int_{\gamma'_3}
(-w)^{-v\theta^*} \rme^{-d_1 w}\,\rmd{w},
\end{equation}
where (cf.~\eqref{eq:w3})
\begin{equation}\label{eq:gamma'3}
\gamma^{\myp\prime}_3:=\lim_{N\to\infty}\gamma'_{3,N}
=\bigl\{w=x\myp\rme^{\myp\rmi \varphi},\:x\in[1,\infty)\bigr\}.
\end{equation}
Similarly, the integral $\mathcal{I}_N^{\mypp3^{\myp\prime\prime}}$
is asymptotically estimated as
\begin{align}
\notag
\mathcal{I}_N^{\mypp3^{\myp\prime\prime}}&=\frac{O(1)\exp\myn\{N
v\myp g_\kappa(R\myp)\}}{R^{N} N^{1-v\theta^*} }
\int_{x_N}^{N\eta_0} \frac{\exp\myn\{-d_1 x\cos\varphi
\}}{x^{v\theta^*}}\,\rmd{x}\\[.2pc]
\notag &\quad=\frac{O(1)\exp\myn\{N v\myp g_\kappa(R\myp)\}}{R^{N}
N^{1-v\theta^*}} \cdot \frac{\exp\myn\{-d_1
x_N\myn\cos\varphi\}}{\displaystyle
(x_N)^{v\theta^*}}\\[.1pc]
\label{eq:I3''}
&\qquad=o\bigl(\mathcal{I}_N^{\mypp3^{\myp\prime}}\bigr),\qquad
N\to\infty,
\end{align}
provided that $x_N\to\infty$ and $d_1>0$ (see~\eqref{eq:chi}).

By the continuity of $g_\theta(\cdot)$, for $z$ in \eqref{eq:w3}
with $|w|\in [N\eta_0,N\eta\myp]$ there is a uniform bound
\begin{equation}\label{eq:g-theta<-1}
\Re\myp(g_\theta(z))=O(1).
\end{equation}
On the other hand, the estimate \eqref{eq:log|z|} of Lemma
\ref{lm:NA1} takes the form
\begin{equation}\label{eq:Re_g_kappa<}
\Re\myp(g_\kappa(z))\le
g_\kappa(R\myp)+g_\kappa^{\{1\}}\myn(R\myp)\cdot\log
|1+wN^{-1}|\myp,
\end{equation}
whereby
\begin{equation}\label{eq:log_w<}
\log\myn\Bigl|1+\frac{w}{N}\Bigr|=\frac12\log\myn
\biggl(1+\frac{2|w|\cos\varphi}{N}+\frac{|w|^2}{N^2}\biggr)
=\frac{|w|\cos\varphi}{N} + O(|w|^2 N^{-2}).
\end{equation}
Hence, using \eqref{eq:1/z^} and
\eqref{eq:g-theta<-1}\mypp--\mypp\eqref{eq:log_w<} we can adapt the
estimation in \eqref{eq:I3''} to obtain
\begin{align}
\notag \mathcal{I}_N^{\mypp
3^{\myp\prime\prime\prime}}&=\frac{O(1)\exp\myn\{N v\myp
g_\kappa(R\myp)\}}{R^{N} N} \int_{N\eta_0}^{N\eta} \exp\myn\{-d_1
x\cos\varphi\}\,\rmd{x}\\[.2pc]
\notag &\quad=\frac{O(1)\exp\myn\{N v\myp g_\kappa(R\myp)\}}{R^{N}N}
\, \exp\myn\{-d_1N\eta_0\cos\varphi
\}\\[.1pc]
\label{eq:I3'''}
&\qquad=o\bigl(\mathcal{I}_N^{\mypp3^{\myp\prime}}\bigr),\qquad
N\to\infty,
\end{align}
because $\cos\varphi>0$ and $d_1>0$ (see~\eqref{eq:chi}).

As a result, combining the asymptotic formulas \eqref{eq:I3'},
\eqref{eq:I3''} and \eqref{eq:I3'''} we get
\begin{equation}\label{eq:I3}
\mathcal{I}_N^{\mypp3} \sim \frac{\exp\myn\{N v\myp g_\kappa(R\myp)
\}}{R^{N}N^{1-v\theta^*}} \int_{\gamma'_3} (-w)^{-v\theta^*}
\rme^{-d_1w}\,\rmd{w}.
\end{equation}

Finally, collecting the contributions from
$\mathcal{I}_N^{\myp1},\dots,\mathcal{I}_N^{\mypp4}$ (see
\eqref{eq:I4}, \eqref{eq:I2} and~\eqref{eq:I3}) and returning to
\eqref{eq:I1...I4} yields, upon the change of the integration
variable $d_1w\mapsto w$,
\begin{equation}\label{eq:Hankel}
\oint_{\gamma}\,\frac{\exp\myn\{v\myp
\GN(z)\}}{z^{N+1}}\,\rmd{z}\sim \frac{\exp\myn\{N v\myp
g_\kappa(R\myp)\}}{R^{N}N^{1-v\theta^*} } \, (d_1)^{\myp
v\theta^*\myn-1} \int_{\widetilde\gamma^{\myp\prime}}
(-w)^{-v\theta^*} \rme^{-w}\,\rmd{w},
\end{equation}
with $\widetilde\gamma^{\myp\prime}\mynn:= d_1\gamma^{\myp\prime}$,
where
$\gamma^{\myp\prime}\myn=\gamma^{\myp\prime}_1\cup\gamma^{\myp\prime}_2\cup\gamma^{\myp\prime}_3$
is defined via \eqref{eq:w2} and \eqref{eq:gamma'3} (see
Fig.~\ref{fig:gamma'}).

The integral on the right-hand side of \eqref{eq:Hankel} can be
explicitly computed. Indeed, by virtue of a simple estimate
$$
\bigl|(-w)^{-v\theta^*} \rme^{-w}\bigr|\le |w|^{-v\theta^*}
\mynn\exp\myn\{-|w|\cos \arg\myn(w)\},
$$
one can apply a standard contour transformation argument to replace
the contour $\widetilde\gamma^{\myp\prime}$ in \eqref{eq:Hankel} by
the ``loop'' contour $\gamma^{\myp\prime\prime}$ starting from
$+\infty-\rmi c$, winding clockwise about the origin and proceeding
towards $+\infty+\rmi c$ (see Fig.~\ref{fig:gamma''}), which leads
to the equality
\begin{equation}\label{eq:Hankel-based}
\int_{\gamma'} (-w)^{-v\theta^*} \rme^{-w}\,\rmd{w} =
\int_{\gamma''} (-w)^{-v\theta^*} \rme^{-w}\,\rmd{w} =\frac{2\pi
\rmi}{\Gamma(v\myp\theta^*)}\mypp,
\end{equation}
according to the well-known Hankel's loop representation of the
reciprocal gamma function (see, e.g., \cite[\S\myp B.3, Theorem~B.1,
p.~745]{FlSe09}). This completes the proof of
Theorem~\ref{thm:aux_asypmtotic_2}.
\end{proof}

\subsubsection{Case $\theta^*\myn=0$}
Note that the deceptively simple case $\theta_j\equiv0$ (leading to
$\theta^*\myn=0$ in \eqref{eq:g_theta_as}) is not covered by
Theorem~\ref{thm:aux_asypmtotic_2}, unless $\beta>0$. The reason is
that, in the lack of a logarithmic singularity of $g_\theta(z)$ at
$z=R$\myp, the main term in the asymptotic formula \eqref{eq:th3.3}
vanishes (as suggested by the formal equality $1/\Gamma(0)=0$).
Thus, in the case $\theta^*\myn=0$ the singularity of $g_\kappa(z)$
at $z=R$ should become more prominent in the asymptotics. The full
analysis of the competing contributions from the singularities of
$g_\theta(z)$ and $g_\kappa(z)$ can be complicated (however, see
Remark~\ref{rm:sing_g_kappa_and_theta} below), so for simplicity let
us focus on the role of $g_\kappa(z)$ assuming that $g_\theta(z)$ is
regular (e.g., $g_\theta(z)\equiv 0$).
\begin{theorem}\label{thm:aux_asypmtotic_2a}
Let the generating function $g_\kappa(z)$ have radius of convergence
$R>0$ and be holomorphic in a domain $\varDelta_0$ as in
Definition~\textup{\ref{def:Delta0}}. Assume that
$g_\kappa^{\{1\}}\myn(R\myp)<1$ and, furthermore, there is a
non-integer $s>1$ such that $g_\kappa^{\{n\}}\myn(R\myp)<\infty$ for
all $n<s$ while $g_\kappa^{\{n\}}\myn(R\myp)=\infty$ for\/ $n>s$,
and the following asymptotic expansion holds as $z\to R$
\,\textup{(}$z\in \varDelta_0$\textup{)},
\begin{equation}\label{eq:g_kappa_as=0}
g_\kappa(z) = \sum_{0\le n<s} \frac{(-1)^n\myp
g_\kappa^{\{n\}}\myn(R\myp)}{n!}\,(1-z/R\myp)^n
 + a_s\mypp(1-z/R\myp)^s
+O\bigl((1-z/R\myp)^{s+\delta}\bigr),
\end{equation}
with some $a_s>0$, \,$\delta>0$. As for the generating function
$g_\theta(z)$, it is assumed to be holomorphic in the domain
$\varDelta_0$ and, moreover, regular at point $z=R$\myp. Finally,
let $f\colon\varDelta_0 \to \CC$ be a holomorphic function such
that, with some $\beta>0$ and $c_\beta>0$, as $z\to R$
\,\textup{(}$z\in \varDelta_0$\textup{)},
\begin{equation}\label{eq:ftheta=0}
f(z) = 1+ \sum_{1\le n<\beta} \frac{(-1)^n
f^{\{n\}}\myn(R\myp)}{n!}\,(1-z/R\myp)^n +
c_\beta\mypp(1-z/R\myp)^\beta
+O\bigl((1-z/R\myp)^{\beta+\delta}\bigr).
\end{equation}
Then, depending on the relationship between $s$ and $\beta$, the
following asymptotics hold as $N\to\infty$, uniformly in
$v\in[v_1,v_2]$ for any $0<v_1<1<v_2<1/g_\kappa^{\{1\}}\myn(R\myp)$.

\smallskip
\textup{(i)} \myp{}If\/ $\beta>s-1$ then
\begin{equation}\label{eq:th3.3=0(i)}
[z^{N}] \bigl[f(z)\,\rme^{\myp v\myp \GN(z)}\bigr]
\sim\frac{\rme^{\myp v\myp \GN(R\myp)}\, v\myp a_s\mypp
\bigl\{N\myp(1-v\myp g_\kappa^{\{1\}}\myn(R\myp))\bigr\}^{-s}}{R^{N}
\,\Gamma(-s)\mypp \bigl(1-v\myp
g_\kappa^{\{1\}}\myn(R\myp)\bigr)}\mypp.
\end{equation}

\textup{(ii)} \myp{}If\/ $\beta$ is non-integer and $\beta<s-1$ then
\begin{equation}\label{eq:th3.3=0(ii)}
[z^{N}] \bigl[f(z)\,\rme^{\myp v\myp \GN(z)}\bigr]
\sim\frac{\rme^{\myp v\myp \GN(R\myp)}\, c_\beta\mypp
\bigl\{N\myp(1-v\myp
g_\kappa^{\{1\}}\myn(R\myp))\bigr\}^{-\beta-1}}{R^{N}
\,\Gamma(-\beta)}\mypp.
\end{equation}

\textup{(iii)} \myp{}If\/ $\beta=s-1$ then
\begin{align}
\notag [z^{N}] \bigl[f(z)\,\rme^{\myp v\myp \GN(z)}\bigr]
&\sim\frac{\rme^{\myp v\myp \GN(R\myp)}\,\bigl\{N\myp(1-v\myp
g_\kappa^{\{1\}}\myn(R\myp))\bigr\}^{-s}}{R^{N}}\\
\label{eq:th3.3=0(iii)} &\qquad\times
\left(\frac{c_\beta}{\Gamma(1-s)}+\frac{v a_s}{\Gamma(-s)\mypp
\bigl(1-v\myp g_\kappa^{\{1\}}\myn(R\myp)\bigr)}\right).
\end{align}

\smallskip
In particular,
\begin{equation}\label{eq:HN-2=0}
H_N\sim \frac{\rme^{\myp \GN(R\myp)}\, a_s\mypp
\bigl\{N\myp(1-g_\kappa^{\{1\}}\myn(R\myp))\bigr\}^{-s}}{R^{N}
\,\Gamma(-s)\mypp\bigl(1-g_\kappa^{\{1\}}\myn(R\myp)\bigr)}\mypp,\qquad
N\to\infty.
\end{equation}
\end{theorem}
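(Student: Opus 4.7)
The plan is to adapt the contour-integral approach of Theorem~\ref{thm:aux_asypmtotic_2}, but with a more refined local expansion near $z=R$ that captures the subleading contributions which now drive the asymptotics. Indeed, in the absence of a logarithmic singularity of $g_\theta$ at $R$, the formal leading term of the previous proof (proportional to $1/\Gamma(v\theta^*)$ with $\theta^*\myn=0$) vanishes, so the correction terms in \eqref{eq:g_kappa_as=0} and \eqref{eq:ftheta=0} become the main actors. After shrinking $\varDelta_0$ if necessary so that the inequality \eqref{eq:log|z|} of Lemma~\ref{lm:NA1} applies and both $g_\theta$ and $f$ are continuous up to the boundary away from $R$, I would use exactly the same contour $\gamma=\gamma_1\cup\gamma_2\cup\gamma_3\cup\gamma_4$ from \eqref{eq:gamma1-4} in Cauchy's formula. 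The estimates of $\mathcal{I}_N^{\mypp4}$ (and the ``far'' pieces $\mathcal{I}_N^{\mypp3''}$, $\mathcal{I}_N^{\mypp3'''}$ and their $\gamma_1$-mirrors) from \eqref{eq:I4} and \eqref{eq:I3'''} carry over verbatim, since they hinge only on Lemma~\ref{lm:NA1} and on $d_1:=1-v\myp g_\kappa^{\{1\}}\myn(R\myp)>0$.

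For the surviving local integral, I perform the substitution $z=R\mypp(1+w/N)$ on $\gamma_2$ and on the near-$R$ parts of $\gamma_1$, $\gamma_3$, whose image is a contour $\gamma^{\myp\prime}_N$ tending to the loop $\gamma^{\myp\prime}$ of Fig.~\ref{fig:gamma'}. Plugging in \eqref{eq:g_kappa_as=0}, the regular Taylor terms $(1-z/R\myp)^n$ with integer $1\le n<s$ in $Ng_\kappa(z)$ produce corrections of magnitude $N^{1-n}$ with integer-power dependence on $w$; combined with $g_\theta$ regular at $R$ (contributing $e^{vg_\theta(R)}(1+O(1/N))$) and with $z^{-N-1}=R^{-N-1}e^{-w}(1+O(|w|/N))$, these supply the prefactor $e^{v\myp \GN(R)}R^{-N-1}e^{-d_1 w}$. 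Expanding the exponential of the $a_s$-term yields a factor $1+v\myp a_s(-w)^s N^{1-s}+\cdots$, and \eqref{eq:ftheta=0} contributes $1+c_\beta(-w)^\beta N^{-\beta}+\cdots$ from $f(z)$. Crucially, all the integer-power terms in $w$ integrate to zero against the Hankel loop $\gamma''$, because $1/\Gamma(-n)=0$ for $n\in\NN_0$; only the non-integer powers $(-w)^\beta$ and $(-w)^s$ survive, and by Hankel's loop representation \eqref{eq:Hankel-based} each contributes a factor $2\pi\rmi/\Gamma(-\beta)$ (resp.\ $2\pi\rmi/\Gamma(-s)$) together with the scaling factor $d_1^{-\beta-1}$ (resp.\ $d_1^{-s-1}$) arising from the change of variables $d_1 w\mapsto w$.

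Comparing the magnitudes $N^{-\beta}$ and $N^{1-s}$ of the two surviving terms immediately yields the trichotomy (i)--(iii) of the theorem, and the explicit constants follow from the Hankel computation above; the particular formula \eqref{eq:HN-2=0} is obtained by setting $v=1$, $f\equiv1$, in which case only the $a_s$-contribution survives and one is formally in case~(i). The main obstacle I anticipate is the uniform bookkeeping of all error terms: after combining the expansions of $g_\kappa$, $g_\theta$ and $f$, one must show that the remainder in the local integrand is uniformly $o\myp(N^{-s}+N^{-\beta-1})$ along $\gamma^{\myp\prime}_N$ and uniformly in $v\in[v_1,v_2]$ (which in particular keeps $d_1$ bounded away from $0$). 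A secondary technical point is the boundary case $\beta=s-1$, where the two non-integer terms contribute at the same order and must be added without double-counting, and the non-integrality of both $s$ and $\beta$ is precisely what allows deformation to $\gamma''$ without picking up residue contributions at integer points.
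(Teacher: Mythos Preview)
Your proposal is correct and follows essentially the same approach as the paper: the same contour and change of variables $z=R(1+w/N)$ are used, the integer-power terms are collected into a polynomial that vanishes under the Hankel loop integral, and the two surviving non-integer contributions $v a_s N^{1-s}(-w)^s$ and $c_\beta N^{-\beta}(-w)^\beta$ are evaluated via Hankel's identity and compared to yield the trichotomy. The paper organizes the bookkeeping slightly more explicitly by introducing named polynomials $\mathcal{P}_N(w)$, $\mathcal{Q}_N(w)$, $\mathcal{R}_N(w)$ for the regular pieces, but the substance is the same.
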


\begin{proof}
The proof proceeds along the lines of the proof of Theorem
\ref{thm:aux_asypmtotic_2}, with suitable modifications indicated
below. In what follows, we may and will assume that $0<\delta<1$.

Under the change of variables $z=R\myp(1+w N^{-1})$, with
$w\in\gamma_2^{\myp\prime}$ (see \eqref{eq:w2}), by virtue of the
expansions \eqref{eq:g_kappa_as=0}, \eqref{eq:ftheta=0} and the
regularity of $g_\theta(z)$ we obtain the uniform asymptotics (cf.\
\eqref{eq:g-theta<}, \eqref{eq:g-kappa<})
\begin{align}\label{eq:g-theta<=0}
g_\theta(z)&= g_\theta(R\myp)+ \sum_{n=1}^{q}
\frac{g_\theta^{\{n\}}\myn(R\myp)}{n!\mypp N^n }\,w^n
+O(N^{-q-1}),\\
\label{eq:g-kappa<theta=0} g_\kappa(z)&= g_\kappa(R\myp) +
\sum_{n=1}^q \frac{g_\kappa^{\{n\}}\myn(R\myp)}{n!\mypp N^n }\,w^n +
    \frac{a_s\myp (-w)^s}{N^{s} }
    +O(N^{-s-\delta}),\\
\notag
f(z) &= 1+ \sum_{1\le n<\beta} \frac{
f^{\{n\}}\myn(R\myp)}{n!\mypp N^n}\,w^n  + \frac{c_\beta\mypp
(-w)^\beta}{N^\beta} +O(N^{-\beta-\delta}),
\end{align}
with $q=\lfloor s\rfloor\ge1$. We can also write
(cf.~\eqref{eq:1/z^})
\begin{equation*}
\frac{1}{z^{N+1}}=\frac{1}{R^{N+1}}\,\exp\mynn\left\{\sum_{n=1}^{q+1}
\frac{(-w)^n }{n N^{n-1}}+O(N^{-q-1})\right\}.
\end{equation*}
Substituting these expansions into the integral
$\mathcal{I}_N^{\mypp 2}$ over the contour $\gamma_2$ (see
\eqref{eq:I1...I4}), we obtain similarly to \eqref{eq:I2}
\begin{align}
\notag \mathcal{I}_N^{\mypp 2} \sim \frac{\rme^{\myp v\myp
\GN(R\myp)}}{R^{N}N}\int_{\gamma_2'} &\bigl[1+\mathcal{P}_N(w)+
c_\beta N^{-\beta}(-w)^\beta
    +O(N^{-\beta-\delta})\bigr]\\
\label{eq:I2_theta=0}  &\begin{aligned}[t] \
\times\exp\myn\bigl\{\mathcal{Q}_N(w)+
 v\myp a_s N^{1-s}\myp(-w)^s
    &+O(N^{-s-\delta})\bigr\}\\
&\times\exp\myn\{-d_1 w\}
    \,\rmd{w},
\end{aligned}\end{align}
where $d_1=1-v\myp g_\kappa^{\{1\}}\myn(R\myp)>0$
(see~\eqref{eq:chi}) and $\mathcal{P}_N(w)$, $\mathcal{Q}_N(w)$ are
polynomials in $w$,
\begin{align*}
\mathcal{P}_N(w)&=\sum_{1\le n<\beta} \frac{
f^{\{n\}}\myn(R\myp)}{n!\mypp
N^n}\,w^n,\\
\mathcal{Q}_N(w)&=v\sum_{n=1}^{q}
\frac{g_\theta^{\{n\}}\myn(R\myp)}{n!\mypp N^n}\,w^n+ v\sum_{n=2}^q
\frac{g_\kappa^{\{n\}}\myn(R\myp)}{n!\mypp N^{n-1}
}\,w^n+v\sum_{n=2}^q \frac{(-w)^n}{n N^{n-1}}\mypp.
\end{align*}
Noting that $\mathcal{Q}_N(w)=O(N^{-1})$ uniformly in
$w\in\gamma_2^{\myp\prime}$, we can Taylor expand the exponential
under the integral in \eqref{eq:I2_theta=0} keeping the terms up to
the order $N^{1-s}$. Thus, we obtain
\begin{equation*}
\mathcal{I}_N^{\mypp 2} \sim \frac{\rme^{\myp v\myp
\GN(R\myp)}}{R^{N}N}\int_{\gamma_2'} \left[\mathcal{R}_N(w)+
 v\myp a_s N^{1-s}\,(-w)^s +c_\beta N^{-\beta}(-w)^\beta
    \right]\rme^{-d_1 w}\,\rmd{w},
\end{equation*}
where $\mathcal{R}_N(w)$ is the resulting polynomial in~$w$.

A similar estimation holds for the integral $\mathcal{I}_N^{\mypp
3}$ (cf.~\eqref{eq:I3}). As a result, we get (cf.~\eqref{eq:Hankel})
\begin{equation}\label{eq:Cauchy-theta=0}
\oint_{\gamma}\,\frac{\rme^{\myp v\myp
\GN(z)}}{z^{N+1}}\,\rmd{z}\sim \frac{\rme^{\myp v\myp
\GN(R\myp)}}{R^{N}\myp N} \int_{\gamma^{\myp\prime}}
\left[\mathcal{R}_N(w)+
 v\myp a_s N^{1-s}\,(-w)^s+c_\beta N^{-\beta}(-w)^\beta
    \right]\rme^{-d_1 w}\,\rmd{w}.
\end{equation}
Since $\mathcal{R}_N(w)$ is an entire function of $w\in\CC$, its
contribution to the integral \eqref{eq:Cauchy-theta=0} vanishes,
\begin{equation}\label{eq:int-R}
\int_{\gamma^{\myp\prime}} \mathcal{R}_N(w) \,\rme^{-d_1
w}\,\rmd{w}=0.
\end{equation}
Furthermore, changing the integration variable $d_1w\mapsto w$ and
transforming the contour $\gamma^{\myp\prime}$ to the loop contour
$\gamma^{\myp\prime\prime}$ (see before
equation~\eqref{eq:Hankel-based}), we obtain
\begin{equation}\label{eq:int(-w)1}
\int_{\gamma^{\myp\prime}}  (-w)^s
    \,\rme^{-d_1 w}\,\rmd{w}= (d_1)^{-1-s}\int_{\gamma^{\myp\prime\prime}}(-w)^s
    \,\rme^{-w}\,\rmd{w}=(d_1)^{-1-s}\,\frac{2\pi\rmi}{\Gamma(-s)}\mypp,
\end{equation}
according to Hankel's identity akin to \eqref{eq:Hankel-based} (see
\cite[\S\myp B.3, Theorem~B.1, p.~745]{FlSe09}). As for the term
$(-w)^\beta$ in \eqref{eq:Cauchy-theta=0}, if $\beta\in\NN$ then its
contribution also vanishes (cf.~\eqref{eq:int-R}), but if $\beta$ is
non-integer then, similarly to \eqref{eq:int(-w)1}, we have
\begin{equation}\label{eq:int(-w)2}
\int_{\gamma^{\myp\prime}}  (-w)^\beta \,\rme^{-d_1 w}\,\rmd{w}=
(d_1)^{-1-\beta}\,\frac{2\pi\rmi}{\Gamma(-\beta)}\mypp.
\end{equation}
It remains to note that the contributions \eqref{eq:int(-w)1} and
\eqref{eq:int(-w)2} enter the asymptotic formula
\eqref{eq:Cauchy-theta=0} with the weights $N^{1-s}$ and
$N^{-\beta}$, respectively, so the relationship between the
exponents $s-1$ and $\beta$ determines which of these two terms is
the principal one in the limit as $N\to\infty$. Accordingly,
retaining one of the power terms in \eqref{eq:Cauchy-theta=0} (or
both, if $\beta=s-1$) and dividing everything by $2\pi\rmi$
(cf.~\eqref{eq:I1...I4}) , we arrive at formulas
\eqref{eq:th3.3=0(i)}, \eqref{eq:th3.3=0(ii)} and
\eqref{eq:th3.3=0(iii)}, respectively.

Finally, formula \eqref{eq:HN-2=0} follows from
\eqref{eq:th3.3=0(i)} by setting $f(z)\equiv1$, $v=1$; note that in
this case the parameter $\beta$ in \eqref{eq:ftheta=0} can be
formally chosen to be any positive (integer) number, e.g.\ bigger
than $s-1$, so formula \eqref{eq:th3.3=0(i)} applies.
\end{proof}

\begin{remark}\label{rm:sing_g_kappa_and_theta}
Analogous considerations as in the proof of Theorem
\ref{thm:aux_asypmtotic_2a} can be used to handle the case with a
\emph{power-logarithmic} term
$b_s\myp(1-z/R\myp)^s\log\myn(1-z/R\myp)$ \,($b_s\ne0$) added to the
expansion \eqref{eq:g_kappa_as=0} of $g_\kappa(z)$, where the index
$s>1$ is now allowed to be \emph{integer}; this is motivated by the
choice $\kappa_j:=\kappa^* j^{-q}$ with $q\in\NN$\myp, leading to
the polylogarithm $g_\kappa(z)=\kappa^*\Li_{q+1}(z)$ with the
asymptotics \eqref{eq:Li-q+1} (see Section \ref{sec:polylog} below).
Furthermore, the condition of regularity of $g_\theta(z)$ at $z=R$
imposed in Theorem \ref{thm:aux_asypmtotic_2a} is also not
essential, and may be extended to include a power singularity of the
form $\tilde{a}_{s_1}(1-z/R\myp)^{s_1}$ (with
$\tilde{a}_{s_1}\myn\ne0$ and some non-integer $s_1\myn>0$) and,
possibly, a power-logarithmic singularity, similarly to what was
said above about $g_\kappa(z)$ (with any $s_1\myn>0$). We leave the
details to the interested reader.
\end{remark}



\subsection{The critical case}\label{sec:critical}

\subsubsection{First theorems}

Here $g_\kappa^{\{1\}}\myn(R\myp)=1$, and the equation
$g_\kappa^{\{1\}}\myn(r)=v^{-1}$ is still solvable for all $v\ge1$,
with the unique root $r_v\le R$ (see~\eqref{eq:r_v}). Thus, the same
argumentation may be applied as in the proof of
Theorem~\ref{thm:aux_asypmtotic}, but for this to cover the case
$v=1$ one has to assume that $g_\theta(R\myp)<\infty$, because the
circle $z=r_v \mypp\rme^{\myp\rmi t}$ to be used in Cauchy's
integral formula passes through the singularity $z=R$ if $v=1$ (with
$r_1=R$). Recall that the function $\GN(z)$ is defined in
\eqref{eq:generating_h(v)}, and $b_2(r)$ is given by~\eqref{eq:b12}.
\begin{theorem}\label{thm:aux_asypmtotic_3}
Assume that both $g_\theta(z)$ and $g_\kappa(z)$ have radius of
convergence $R>0$, with $g_\theta(R\myp)<\infty$,
\,$g_\kappa(R\myp)<\infty$ and, moreover,
$g_\kappa^{\{1\}}\myn(R\myp) = 1$,
\,$0<g_\kappa^{\{2\}}\myn(R\myp)<\infty$. Let $f(z)$ be a function
holomorphic in the open disk $|z|<R$ and continuous up to the
boundary $|z|=R$\myp. Then, uniformly in $v\in[1,v_0]$ with an
arbitrary constant $v_0\ge1$, we have
\begin{equation}\label{eq:=1,sub0}
[z^{N}]\bigl[f(z)\exp\myn\{v\myp \GN(z)\}\bigr] \sim
\frac{f(r_v)\exp\myn\{v \myp \GN(r_v)\}}{r_v^{N}\sqrt{2\pi N v\myp
b_2(r_v)}}\mypp,\qquad N\to\infty.
\end{equation}
In particular,
\begin{equation}\label{eq:HN-3}
H_N\sim \frac{\exp\myn\{\GN(R\myp)\}}{R^{N}\sqrt{2\pi N\myp
b_2(R\myp)}}\mypp,\qquad N\to\infty.
\end{equation}
\end{theorem}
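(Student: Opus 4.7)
The plan is to run the same saddle-point argument as in the proof of Theorem~\ref{thm:aux_asypmtotic}, with the contour chosen to be the circle $\gamma = \{z = r_v\,\rme^{\myp\rmi t}:\,t\in[-\pi,\pi]\}$ where $r_v\le R$ is the unique solution of $g_\kappa^{\{1\}}\myn(r)=v^{-1}$ (which exists for every $v\in[1,v_0]$ because $g_\kappa^{\{1\}}\myn(R\myp)=1$ and $g_\kappa^{\{1\}}$ is continuous and strictly increasing on $[0,R\myp]$). By Cauchy's formula,
\begin{equation*}
[z^N]\bigl[f(z)\exp\myn\{v\myp\GN(z)\}\bigr]
= \frac{1}{2\pi\myp r_v^N}\int_{-\pi}^{\pi} f(r_v\rme^{\myp\rmi t})\,\exp\myn\bigl\{v\myp\GN(r_v\rme^{\myp\rmi t})-\rmi\myp N t\bigr\}\,\rmd{t}.
\end{equation*}
Split this integral into the three pieces $|t|\le t_N$, \,$t_N<|t|\le\delta$ and $|t|>\delta$, with $t_N=N^{-\beta}$ for some $\frac13<\beta<\frac12$ and $\delta>0$ small.

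On the central piece, Taylor expand (cf.~\eqref{eq:admissible_expansion})
\[
v\myp g_\kappa(r_v\rme^{\myp\rmi t})-\rmi\myp t= v\myp g_\kappa(r_v)+\rmi\myp t\myp(v\myp b_1(r_v)-1)-\tfrac12 t^2 v\myp b_2(r_v)+o\myp(t^2),
\]
where the linear term vanishes thanks to $b_1(r_v)=v^{-1}$ (see~\eqref{eq:r_v}), while continuity of $f$ and of $g_\theta$ up to $|z|=R$ permits replacing $f(r_v\rme^{\myp\rmi t})$ and $\exp\myn\{v\myp g_\theta(r_v\rme^{\myp\rmi t})\}$ by $f(r_v)$ and $\exp\myn\{v\myp g_\theta(r_v)\}$ up to a $1+o\myp(1)$ factor. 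The Gaussian integration after the change $s=\sqrt{N}\mypp t$ then yields the claimed main term, exactly as in step~(i) of the proof of Theorem~\ref{thm:aux_asypmtotic}. The intermediate piece is bounded by Gaussian tails and is negligible, mirroring step~(ii) there.

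The only substantive new issue is the tail piece $|t|>\delta$, because at $v=1$ the contour passes through the singularity $z=R$, so one must replace the argument of step~(iii) with a uniform-in-$v$ version. Here I would use Lemma~\ref{lm:NA}\myp(a), which applies at $r=R$ whenever $g_\kappa(R\myp)<\infty$, to conclude that $t\mapsto\Re\myp(g_\kappa(R\rme^{\myp\rmi t}))$ attains its maximum on $[\delta,\pi]$ at a value strictly below $g_\kappa(R\myp)$. A compactness and continuity argument on the set $\{r\in[r_{v_0},R\myp]\}\times\{|t|\in[\delta,\pi]\}$ upgrades this to a uniform bound
\[
\sup_{v\in[1,v_0]}\;\sup_{|t|\in[\delta,\myp\pi]}\bigl(\Re\myp(g_\kappa(r_v\rme^{\myp\rmi t}))-g_\kappa(r_v)\bigr)\le -c<0,
\]
which together with the boundedness of $|f|$ and $\Re\myp(g_\theta)$ on the closed disk makes the tail contribution exponentially small compared with the main term.

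The main obstacle, therefore, is precisely this uniformity near $v=1$: since the contour approaches the boundary of convergence, one cannot invoke the strict-interior arguments of Theorem~\ref{thm:aux_asypmtotic} verbatim, and a continuity/compactness extension of Lemma~\ref{lm:NA}\myp(a) up to $r=R$ is required. Once that is in place, specializing to $v=1$, $f\equiv1$ (so $r_1=R$) and invoking \eqref{eq:Hn_generatingN1} yields~\eqref{eq:HN-3}.
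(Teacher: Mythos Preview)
Your proposal is correct and follows essentially the same route as the paper: the paper's proof is just a two-sentence remark that Taylor's expansion \eqref{eq:admissible_expansion} extends to $r=R$ (since $g_\kappa^{\{2\}}\myn(R\myp)<\infty$) and that, given the continuity of $f$ and $g_\theta$ on the closed disk, the saddle-point argument of Theorem~\ref{thm:aux_asypmtotic} carries over verbatim to any $r_v\le R$. You have spelled out more of the details---in particular, you correctly isolate the one place where the boundary case $r_v=R$ needs attention (the tail estimate in step~(iii)) and supply the compactness/continuity argument on $[r_{v_0},R\myp]\times\{|t|\in[\delta,\pi]\}$ that makes Lemma~\ref{lm:NA}\myp(a) uniform in~$v$; the paper simply asserts this is ``easy to check.''
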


\begin{proof}
Note that Taylor's expansion \eqref{eq:admissible_expansion} extends
to $r=R$\myp, with $b_1(R\myp)=g_\kappa^{\{1\}}\myn(R\myp)=1$,
\,$b_2(R\myp)=1+g_\kappa^{\{2\}}\myn(R\myp)<\infty$
(see~\eqref{eq:b12}). Then, on account of the continuity of
$g_\theta(z)$ and $f(z)$ on each circle $|z|=r_v\le R$\myp, it is
easy to check that the proof of Theorem~\ref{thm:aux_asypmtotic} is
valid with any $r_v\le R$\myp, thus leading to \eqref{eq:=1,sub0}
(cf.~\eqref{eq:th3.2}). As before, formula \eqref{eq:HN-3} follows
from \eqref{eq:=1,sub0} by setting $f(z)\equiv1$ and $v=1$, making
$r_v|_{v=1}=R$\myp.
\end{proof}

If $g_\theta(z)$ has a logarithmic singularity at $z=R$\myp, we can
still use the same argumentation as in
Theorem~\ref{thm:aux_asypmtotic_2} as long as $v\ne 1$.

\begin{theorem}\label{thm:aux_asypmtotic_4}
Assume that $g_\theta(z)$ and $g_\kappa(z)$ both have radius of
convergence $R>0$ and are holomorphic in some domain $\varDelta_0$
as in Definition~\textup{\ref{def:Delta0}}. Let
$g_\kappa^{\{1\}}\myn(R\myp)=1$,
\,$0<g_\kappa^{\{2\}}\myn(R\myp)<\infty$, and suppose that for some
$\theta^*\myn\ge 0$, \,$\delta>0$, as $z\to R$ \,\textup{(}$z\in
\varDelta_0$\textup{)},
\begin{align}\label{eq:theta*}
    g_\theta(z) &= -\theta^* \log\myn(1-z/R\myp)
    +O\bigl((1-z/R\myp)^\delta\bigr), \\
    \label{eq:g-expansion}
    g_\kappa(z) &= g_\kappa(R\myp) - (1-z/R\myp) +
    \frac{g_\kappa^{\{2\}}\myn(R\myp)}{2}\mypp(1-z/R\myp)^2+O\bigl((1-z/R\myp)^
    {2+\delta}\bigr).
\end{align}
Finally, let $f\colon\varDelta_0 \to \CC$ be a holomorphic function
such that for some $\beta\ge0$
\begin{equation*}
f(z) =
(1-z/R\myp)^{-\beta}\bigl\{1+O\bigl((1-z/R\myp)^\delta\bigr)\bigr\},\qquad
z\to R\ \ \ (z\in\varDelta_0).
\end{equation*}
Then, as $N\to\infty$,

\smallskip
\textup{(a)} \myp{}uniformly in $v\in[v_1,v_2]$ with arbitrary
$v_2\ge v_1>1$,
\begin{equation}\label{eq:=1,sub}
[z^{N}]\bigl[f(z)\exp\myn\{v\myp \GN(z)\}\bigr] \sim
\frac{f(r_v)\exp\myn\{v\myp \GN(r_v)\}}{r_v^{N}\sqrt{2\pi N v\myp
b_2(r_v)}}\mypp;
\end{equation}

\textup{(b)} \myp{}uniformly in $v\in[v_1,v_2]$ with arbitrary
$0<v_1\le v_2<1$, provided that\/ $v\myp\theta^*\myn+\beta>0$,
\begin{equation}\label{eq:=1,sup}
[z^{N}]\bigl[f(z)\exp\myn\{v\myp \GN(z)\}\bigr] \sim \frac{
\exp\myn\{N v\myp g_\kappa(R\myp) \}\cdot
\{N\myp(1-v)\}^{v\theta^*\myn+\beta-1}}{R^{N}
\,\Gamma(v\myp\theta^*\myn+\beta)}\mypp.
\end{equation}
\end{theorem}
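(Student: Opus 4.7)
The plan is to observe that Theorem~\ref{thm:aux_asypmtotic_4} is essentially a ``bilateral'' re-run of Theorems~\ref{thm:aux_asypmtotic} and~\ref{thm:aux_asypmtotic_2} applied to the scaled exponent $v\mypp\GN(z)$. Since $g_\kappa^{\{1\}}\myn(R\myp)=1$, the scaled function $z\mapsto v\myp g_\kappa(z)$ satisfies $v\myp g_\kappa^{\{1\}}\myn(R\myp)=v$, so the regime $v>1$ of part~(a) is subcritical \emph{relative to the scaling}, while the regime $v<1$ of part~(b) is supercritical. The two parts should therefore reduce, respectively, to the inside-disk saddle-point analysis of Theorem~\ref{thm:aux_asypmtotic} and to the Hankel-contour analysis of Theorem~\ref{thm:aux_asypmtotic_2}.

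For part~(a), the uniformity assumption $v\ge v_1>1$ guarantees that $g_\kappa^{\{1\}}\myn(r)=1/v<1/v_1<1=g_\kappa^{\{1\}}\myn(R\myp)$ has a unique solution $r_v\in(0,R)$ with $r_v\le r_{v_1}<R$ uniformly in $v\in[v_1,v_2]$. Hence the saddle-point circle $|z|=r_v$ lies strictly inside the disk of convergence, and on this circle $f(z)$, $g_\theta(z)$ and $g_\kappa(z)$ are all holomorphic and bounded. The three-piece splitting of Cauchy's integral used in the proof of Theorem~\ref{thm:aux_asypmtotic} (via Taylor's expansion \eqref{eq:admissible_expansion} at $r_v$, the Gaussian window $|t|\le t_N=N^{-\beta}$ with $\tfrac13<\beta<\tfrac12$, and the non-arithmeticity bound from Lemma~\ref{lm:NA}) then carries over verbatim, yielding \eqref{eq:=1,sub} with the same uniformity.

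For part~(b), the uniformity assumption $v\le v_2<1$ gives $v\myp g_\kappa^{\{1\}}\myn(R\myp)=v<1$ uniformly, so the key positivity $d_1:=1-v\myp g_\kappa^{\{1\}}\myn(R\myp)=1-v\ge 1-v_2>0$ of the proof of Theorem~\ref{thm:aux_asypmtotic_2} holds uniformly in $v\in[v_1,v_2]$. The hypotheses \eqref{eq:theta*} and \eqref{eq:g-expansion} of the present theorem are exactly the statements \eqref{eq:g_theta_as} and \eqref{eq:g_kappa_as} of Theorem~\ref{thm:aux_asypmtotic_2} specialized to $g_\kappa^{\{1\}}\myn(R\myp)=1$ (the extra quadratic term in \eqref{eq:g-expansion} is absorbed into the $O$-remainder for any $\delta<1$), and the assumption on $f$ is identical to \eqref{eq:f_as}. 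Hence Theorem~\ref{thm:aux_asypmtotic_2} applies directly, producing the right-hand side of \eqref{eq:=1,sup} upon substituting $g_\kappa^{\{1\}}\myn(R\myp)=1$ into formula~\eqref{eq:th3.3}.

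The main point requiring care is the \emph{uniformity in $v$}, not any new analytic estimate: one must check that the bounds on the three contour pieces in the proofs of Theorems~\ref{thm:aux_asypmtotic} and~\ref{thm:aux_asypmtotic_2} depend on $v$ only through the quantities $r_v$, $b_2(r_v)$, and $d_1$, all of which lie in a compact set bounded away from the degenerate values ($r_v=R$, \,$d_1=0$) thanks to the strict inequalities $v_1>1$ in~(a) and $v_2<1$ in~(b). The case $v=1$ itself is excluded precisely because the saddle point $r_v$ collides with the singularity of $g_\theta$ at $z=R$, which is exactly the phase transition the theorem is describing and is treated separately in Theorem~\ref{thm:aux_asypmtotic_3}.
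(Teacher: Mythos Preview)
Your proposal is correct and follows exactly the same approach as the paper: part~(a) reduces to the saddle-point argument of Theorem~\ref{thm:aux_asypmtotic} since $v>1$ forces $r_v<R$, and part~(b) reduces to the Hankel-contour argument of Theorem~\ref{thm:aux_asypmtotic_2} since $v<1=1/g_\kappa^{\{1\}}\myn(R\myp)$ keeps $d_1=1-v>0$. The paper's own proof states precisely this in two sentences, while your additional remarks on uniformity and the absorption of the quadratic term into the $O$-remainder make the reduction more explicit but add nothing essentially new.
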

\begin{proof} (a) Like in Theorem \ref{thm:aux_asypmtotic_3},
we can use the proof of Theorem~\ref{thm:aux_asypmtotic} as long as
$r_v<R$\myp, which is guaranteed by the condition $v>1$.

\smallskip
(b) Here the proof of Theorem~\ref{thm:aux_asypmtotic_2} applies as
long as $v<1/g_\kappa^{\{1\}}\myn(R\myp)=1$ (cf.~\eqref{eq:th3.3}).
\end{proof}

Note that, unlike Theorem~\ref{thm:aux_asypmtotic_3}, both parts (a)
and (b) of Theorem~\ref{thm:aux_asypmtotic_4} do not cover the value
$v=1$ and so do not provide the asymptotics of $H_N$. Moreover, the
asymptotic expressions on the right-hand side of \eqref{eq:=1,sub}
and \eqref{eq:=1,sup} vanish as $v\to1$, which suggests that no
uniform statements are possible on intervals (even one-sided) that
contain the point $v=1$. Nevertheless, the case $v=1$ can be handled
by a more careful adaptation of the proof of
Theorem~\ref{thm:aux_asypmtotic_2}. First, we need to compute some
complex integrals emerging in the asymptotics.

\subsubsection{Two auxiliary integrals}\label{sec:J(s)}
Let
$\gamma^{\myp\prime}=\gamma_1^{\myp\prime}\cup\gamma_2^{\myp\prime}\cup\gamma_3^{\myp\prime}$
be a continuous contour composed of the parts defined as follows
(see Fig.~\ref{fig:gamma'}),
\begin{equation}\label{eq:gamma(4)-supercr}
\begin{aligned}
\gamma^{\myp\prime}_1 :={}&\bigl\{w=-y\myp\rme^{-\rmi \varphi},\
\,y\in
(-\infty, -\epsilon\myp]\bigr\},\\[.2pc]
\gamma^{\myp\prime}_2 :={}&\bigl\{
w=\epsilon\mypp\rme^{-\rmi t},\ \, t\in[\varphi,\myp2\pi-\varphi\myp]\bigr\},\\[.2pc]
\gamma^{\myp\prime}_3 :={}&\bigl\{ w= y\myp\rme^{\myp\rmi \varphi},\
\, y\in [\epsilon,+\infty)\bigr\},
\end{aligned}
\end{equation}
with $\varphi\in (\pi/4,\myp\pi/2)$ and $\epsilon>0$. For any real
parameter $\xi$, let us consider the contour integral
\begin{equation}\label{eq:J}
J_\xi(\sigma):= \int_{\gamma'} (-w)^{-\sigma}\exp\myn\{-\xi w+w^2\}
\,\rmd{w},\qquad \sigma\in\CC.
\end{equation}
The determination of $(-w)^{-\sigma}$ in \eqref{eq:J} is defined by
the principal branch when $w$ is negative real, and then extended
uniquely by continuity along the contour. An absolute value estimate
gives for $w\in\gamma^{\myp\prime}_1\cup\gamma^{\myp\prime}_3$
\begin{equation}\label{eq:Jordan}
\left| (-w)^{-\sigma}\exp\myn\{-\xi w+w^2\}\right|\le
|w|^{-\Re(\sigma)}\,\rme^{\myp|\sigma|\myp\pi}
\exp\myn\{-\xi|w|\cos\varphi+|w|^2\cos 2\varphi\},
\end{equation}
where $\cos 2\varphi <0$ according to the chosen range of~$\varphi$.
Hence, the integral \eqref{eq:J} is absolutely convergent for all
complex $s$ and, moreover, the function $\sigma\mapsto
J_\xi(\sigma)$ is holomorphic in the entire complex plane $\CC$. An
explicit analytic continuation from the domain $\Re\myp(\sigma)<0$
is furnished through the following functional equation (which can be
easily obtained from \eqref{eq:J} by integration by parts),
\begin{equation*}
2J_\xi(\sigma)=(\sigma+1)\mypp J_\xi(\sigma+2)-\xi
J_\xi(\sigma+1),\qquad \sigma\in\CC.
\end{equation*}
From the estimate \eqref{eq:Jordan} it also follows that the
integral \eqref{eq:J} does not depend on the choice of the angle
$\varphi$ and the arc radius $\epsilon>0$; in the computation below,
we will choose $\varphi=\pi/2$ and take the limit $\epsilon\to0$.

It is straightforward to calculate a few values of $J_\xi(\sigma)$,
such as
\begin{equation*}
J_0(-1)=0,\qquad J_0(1)=\rmi\myp\pi, \qquad
J_\xi(0)=\rmi\mypp\sqrt{\pi}\,\rme^{-\xi^2/4}\mypp.
\end{equation*}
A general formula for the integral \eqref{eq:J} is established in
the next lemma.

\begin{lemma}\label{lm:J}
For any $\xi\in\RR$ and all $\sigma\in\CC$, there is the identity
\begin{equation}\label{eq:J=Gamma}
J_\xi(\sigma)=
\frac{\rmi\myp\pi\,\exp\myn\{-\xi^2/4\}}{\Gamma\bigl((\sigma+1)/2\bigr)\,\Gamma(\sigma/2)}
\,\sum_{n=0}^\infty
\Gamma\mynn\left(\frac{\sigma+n}{2}\right)\frac{\,\xi^n}{n!}\mypp .
\end{equation}
In particular, for $\xi=0$ we have
\begin{equation}\label{eq:J=Gamma0}
J_0(\sigma)=\frac{\rmi\myp\pi}
{\Gamma\bigl((\sigma+1)/2\bigr)}\mypp,\qquad \sigma\in\CC.
\end{equation}
\end{lemma}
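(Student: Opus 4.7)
The plan is in three steps: first evaluate $J_0(\sigma)$ explicitly, then expand the exponential $\rme^{-\xi w}$ as a power series in $\xi$ and integrate termwise, and finally identify the resulting series with the right-hand side of \eqref{eq:J=Gamma} via Kummer's transformation of the confluent hypergeometric function. All manipulations will be carried out in the strip $\Re(\sigma)<1$, where the contribution of the small arc around the origin is negligible and Fubini applies cleanly; the general case $\sigma\in\CC$ then follows by analytic continuation, using that $J_\xi(\sigma)$ is entire in $\sigma$ by virtue of the bound \eqref{eq:Jordan}, and that the apparent poles of the stated right-hand side at non-positive integers are cancelled by the prefactor $1/[\Gamma((\sigma+1)/2)\myp\Gamma(\sigma/2)]$.

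To prove \eqref{eq:J=Gamma0}, I take $\varphi=\pi/2$ and let $\epsilon\to 0$, so that $\gamma'$ collapses onto the imaginary axis. Continuous tracking of the principal branch of $(-w)^{-\sigma}$ starting from $w=-\epsilon$ forces $(-w)^{-\sigma}=t^{-\sigma}\rme^{\rmi\pi\sigma/2}$ on $\gamma'_3$ and $(-w)^{-\sigma}=|t|^{-\sigma}\rme^{-\rmi\pi\sigma/2}$ on $\gamma'_1$ under $w=\rmi\myp t$, while the arc $\gamma'_2$ contributes $O(\epsilon^{1-\Re(\sigma)})\to 0$. Combining the two rays yields
\begin{equation*}
J_0(\sigma)=2\rmi\cos(\pi\sigma/2)\int_0^\infty t^{-\sigma}\rme^{-t^2}\,\rmd{t}=\rmi\cos(\pi\sigma/2)\,\Gamma\bigl((1-\sigma)/2\bigr),
\end{equation*}
and Euler's reflection formula $\Gamma\bigl((1-\sigma)/2\bigr)\myp\Gamma\bigl((1+\sigma)/2\bigr)=\pi/\cos(\pi\sigma/2)$ reduces this at once to $J_0(\sigma)=\rmi\pi/\Gamma((\sigma+1)/2)$.

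For general $\xi$, the estimate \eqref{eq:Jordan} provides an integrable Gaussian-type majorant on $\gamma'$ summable in $n$, so termwise integration of $\rme^{-\xi w}=\sum_{n\ge 0}(-\xi w)^n/n!$ is valid by Fubini. Using $w^n(-w)^{-\sigma}=(-1)^n(-w)^{n-\sigma}$ together with the formula for $J_0$ from the previous step, I obtain
\begin{equation*}
J_\xi(\sigma)=\sum_{n=0}^\infty\frac{\xi^n}{n!}\,J_0(\sigma-n)=\rmi\pi\sum_{n=0}^\infty\frac{\xi^n}{n!\,\Gamma\bigl((\sigma+1-n)/2\bigr)}.
\end{equation*}

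The remaining task is to match this series with the right-hand side of \eqref{eq:J=Gamma}. Splitting each expression by the parity of $n$ and using the Legendre duplication relations $(2k)!=4^k\myp k!\myp(1/2)_k$, $(2k+1)!=4^k\myp k!\myp(3/2)_k$, both sides become linear combinations of the confluent hypergeometric series ${}_1F_1(a;b;\pm\xi^2/4)$ with parameter pairs $(a,b)=(\sigma/2,1/2)$ and $(a,b)=((\sigma+1)/2,3/2)$. The required equality is then exactly Kummer's transformation $M(a,b,z)=\rme^{z}\myp M(b-a,b,-z)$ applied to each of these two pairs, with the elementary identity $1/\Gamma(z-k)=(-1)^k(1-z)_k/\Gamma(z)$ converting the negative-shift reciprocals $1/\Gamma((\sigma+1-n)/2)$ in my series into the positive-shift gamma values $\Gamma((\sigma+n)/2)$ appearing in \eqref{eq:J=Gamma}. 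The main obstacle is essentially bookkeeping: one must track the branch of $(-w)^{-\sigma}$ carefully through the rotation $w\mapsto\rmi t$, and reassemble the four parity components so that the exponential factor $\rme^{-\xi^2/4}$ and the gamma ratios in \eqref{eq:J=Gamma} emerge simultaneously from the two Kummer identities.
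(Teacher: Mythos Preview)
Your proof is correct and takes a genuinely different route from the paper's. The paper collapses $\gamma'$ onto the imaginary axis for the full integral $J_\xi(\sigma)$ at once, obtaining the Fourier-type integrals $A_1(\xi,\sigma)=\int_0^\infty y^{-\sigma}\cos(\xi y)\,\rme^{-y^2}\,\rmd y$ and its sine analogue, which are then read off from Gradshteyn--Ryzhik as confluent hypergeometric functions; the reflection formula for $\Gamma$ and the series definition of ${}_1F_1$ then yield~\eqref{eq:J=Gamma}. You instead isolate the $\xi=0$ case first (getting~\eqref{eq:J=Gamma0} directly), expand $\rme^{-\xi w}$ termwise to obtain $J_\xi(\sigma)=\sum_n \xi^n J_0(\sigma-n)/n!$, and then resum via the parity split and Kummer's transformation $M(a,b,z)=\rme^{z}M(b-a,b,-z)$. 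Both routes land on the same pair of ${}_1F_1$ functions with parameters $(\sigma/2,1/2)$ and $((\sigma+1)/2,3/2)$; the paper reaches them through an integral table, you through a series identity.

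Your approach has the advantage of being self-contained (no external table lookup) and of producing the special case~\eqref{eq:J=Gamma0} --- which is what the paper actually uses downstream --- as an immediate first step rather than a corollary. The paper's approach is more ``one-shot'' and avoids the resummation step, at the cost of invoking the Gradshteyn--Ryzhik formulas~3.952(7,8). The bookkeeping you flag in Step~3 does work out cleanly: the identity $1/\Gamma(z-k)=(-1)^k(1-z)_k/\Gamma(z)$ together with $(2k)!=4^k k!\,(1/2)_k$ and $(2k+1)!=4^k k!\,(3/2)_k$ converts each parity component of your series into the corresponding ${}_1F_1$ at $-\xi^2/4$, and Kummer then supplies exactly the factor $\rme^{-\xi^2/4}$ and the sign flip in the argument needed to match~\eqref{eq:J=Gamma}.
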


\begin{proof} It suffices to prove \eqref{eq:J=Gamma} for real $\sigma<0$
(an extension to arbitrary $\sigma\in\CC$ will follow by analytic
continuation). As already mentioned, we can choose $\varphi=\pi/2$
in \eqref{eq:gamma(4)-supercr}, so that the parts of the original
contour $\gamma^{\myp\prime}=\gamma^{\myp\prime}_1 \cup
\gamma^{\myp\prime}_2\cup\gamma^{\myp\prime}_3$ take the form
\begin{equation}\label{eq:gamma(4)'}
\begin{aligned}
\gamma^{\myp\prime}_1 &=\bigl\{ w= \rmi y,\ \, y\in [-\infty,
-\epsilon\myp]\bigr\},\\[.2pc]
\gamma^{\myp\prime}_2 &=\bigl\{
w=\epsilon\,\rme^{-\rmi t},\ \, t\in[\pi/2,\myp3\myp\pi/2\myp]\bigr\}, \\[.2pc]
\gamma^{\myp\prime}_3 &=\bigl\{ w= \rmi y,\ \, y\in
[\epsilon,+\infty]\bigr\}.
\end{aligned}
\end{equation}
Here the parameter $\epsilon>0$ is arbitrary, and the idea is to
send it to zero.

First of all, since we have assumed that $-\sigma>0$, the integral
over $\gamma^{\myp\prime}_2$ vanishes as $\epsilon\to0$. Next, using
the parameterization \eqref{eq:gamma(4)'} of the contour
$\gamma^{\myp\prime}_1$ we compute
\begin{align}
\notag \int_{\gamma^{\myp\prime}_1}  (-w)^{-\sigma} \exp\myn\{-\xi
w+w^2\} \,\rmd{w} & =
\rmi\int_{-\infty}^{-\epsilon} (-\rmi\myp y)^{-\sigma} \exp\myn\{-\rmi\myp \xi\myp y-y^2\}\, \rmd{y}\\
\notag &=
\rmi\int_{\epsilon}^{\infty} (\rmi\myp y)^{-\sigma}\exp\myn\{\rmi\myp \xi\myp y-y^2\}\, \rmd{y}\\
\notag &= \rmi\myp\bigl(\rme^{\myp\rmi \pi/2}\bigr)^{-\sigma}
\int_\epsilon^\infty y^{-\sigma}\exp\myn\{\rmi\myp \xi\myp y-y^2\}\, \rmd{y}\\
\label{eq:determin1}&\to\rmi \int_0^\infty
y^{-\sigma}\exp\mynn\left\{\rmi\left(\xi\myp y-\frac{\pi
\sigma}{2}\right)-y^2\right\}\myp\rmd{y},\qquad \epsilon \to 0.
\end{align}
A similar computation for $\gamma^{\myp\prime}_3$  gives
\begin{align}
\notag \int_{\gamma'_3}  (-w)^{-\sigma} \exp\myn\{-\xi w+w^2\}
\,\rmd{w} & =
\rmi\int_{\epsilon}^{\infty} (-\rmi\myp y)^{-\sigma}\exp\myn\{-\rmi\myp \xi\myp y-y^2\}\, \rmd{y}\\
\notag &= \rmi\myp\bigl(\rme^{-\rmi \pi/2}\bigr)^{-\sigma}
\int_\epsilon^\infty y^{-\sigma}\exp\myn\{-\rmi\myp \xi\myp y-y^2\}\, \rmd{y}\\
\label{eq:determin2} &\to \rmi \int_0^\infty
y^{-\sigma}\exp\mynn\left\{-\rmi\left(\xi\myp y-\frac{\pi
\sigma}{2}\right)-y^2\right\}\myp\rmd{y},\qquad \epsilon \to 0.
\end{align}
Combining \eqref{eq:determin1} and \eqref{eq:determin2}, by using
Euler's formula $\rme^{\myp\rmi\myp\phi}+\rme^{-\rmi\myp\phi}
=2\cos\phi$ and some elementary trigonometric identities we obtain
\begin{align}
\notag \frac12\myp (-\rmi\myp) J_\xi(\sigma) &= \int_0^\infty
y^{-\sigma}\cos\mynn\left(\xi\myp y-\frac{\pi
\sigma}{2}\right)\myp\rme^{-y^2}\, \rmd{y}\\
\label{eq:A1+A2} &=  A_1(\xi,\sigma)\mypp\sin \frac{\pi
(1-\sigma)}{2} +  A_2(\xi,\sigma)\mypp\sin \frac{\pi \sigma}{2},
\end{align}
where
\begin{equation}\label{eq:A12}
A_1(\xi,\sigma):=\int_0^\infty y^{-\sigma}\cos\myn(\xi\myp
y)\,\rme^{-y^2}\, \rmd{y},\ \ \quad A_2(\xi,\sigma):=\int_0^\infty
y^{-\sigma}\sin\myn(\xi\myp y)\,\rme^{-y^2}\, \rmd{y}.
\end{equation}
For $-\sigma>0$ the integrals \eqref{eq:A12} are given by (see
\cite[\#\myp3.952\myp(7,\,8), p.~503]{GR})
\begin{align}\label{eq:A1_F}
A_1(\xi,\sigma)&=\frac12\,\Gamma\mynn\left(\frac{1-\sigma}{2}\right)\,\rme^{-\xi^2/4}\,
{}_1F_1\mynn\left(\frac{\sigma}{2},\frac{1}{2},\frac{\,\xi^2}{4}\right),\\[.2pc]
\label{eq:A2_F}
A_2(\xi,\sigma)&=\frac12\mypp\Gamma\mynn\left(1-\frac{\sigma}{2}\right)\;\xi\,\rme^{-\xi^2/4}\,{}_1
F_1\mynn\left(\frac{\sigma+1}{2},\frac{3}{2},\frac{\,\xi^2}{4}\right),
\end{align}
where ${}_1 F_1(a,b,z)$ is the \emph{confluent hypergeometric
function} (see \cite[\#\myp9.210\myp(1), p.\:1023]{GR}),
\begin{equation}\label{eq:hypergeometric}
{}_1 F_1(a,b,z):= 1+\sum_{n=1}^\infty
\frac{\Gamma(a+n)\myp\Gamma(b)}{\Gamma(a)\myp\Gamma(b+n)}\,\frac{z^n}{n!}\mypp,
\qquad z\in\CC,
\end{equation}
with $b\notin -\NN_0$. It is easy to see (e.g., using the ratio
test) that the series \eqref{eq:hypergeometric} converges for all
$z\in\CC$, and hence ${}_1 F_1(a,b,z)$ is an entire function of~$z$.
In particular, ${}_1 F_1(0,b,z)\equiv1$.

Substituting the expressions \eqref{eq:A1_F}, \eqref{eq:A2_F} into
\eqref{eq:A1+A2} and using (twice) the well-known complement formula
for the gamma function (see, e.g., \cite[\S\myp{}B.3, pp.\
745--746]{FlSe09})
\begin{equation}\label{eq:Gamma-compl}
\Gamma(z)\,\Gamma(1-z)=\frac{\pi}{\sin\myn(\pi z)}\mypp,\qquad
z\in\CC,
\end{equation}
we obtain
\begin{equation}\label{eq:J=A1+A2}
(-\rmi\myp) J_\xi(\sigma)=\pi\,\rme^{-\xi^2/4}\left\{\frac{
{}_1F_1\mynn\bigl(\frac{\sigma}{2},\frac{1}{2},\frac{\,\xi^2}{4}\bigr)}{\Gamma\bigl(\frac{\sigma}{2}+\frac12\bigr)}
+\xi\,\frac{ {}_1F_1\mynn\bigl(\frac{\sigma}{2}+\frac12,\frac{3}{2},
\frac{\,\xi^2}{4}\bigr)}{\Gamma\bigl(\frac{\sigma}{2}\bigr)}\right\}.
\end{equation}
Furthermore, observe from \eqref{eq:hypergeometric} that
\begin{align}
\notag {}_1
F_1\mynn\left(\frac{\sigma}{2},\frac{1}{2},\frac{\,\xi^2}{4}\right)
&=\frac{1}{\Gamma\bigl(\frac{\sigma}{2}\bigr)}\sum_{k=0}^\infty
\frac{\Gamma\bigl(\frac{\sigma}{2}+k\bigr)\,\xi^{2k}}{\frac12
\cdot\frac32\cdots\bigl(\frac12+k-1\bigr)\,2^{2k}k!}\\
\label{eq:1F1_1}&=
\frac{1}{\Gamma\bigl(\frac{\sigma}{2}\bigr)}\,\sum_{k=0}^\infty
\frac{\Gamma\bigl(\frac{\sigma}{2}+k\bigr)\,\xi^{2k}}{(2k)!}\mypp,
\end{align}
and similarly
\begin{equation}\label{eq:1F1_2}
\xi\cdot
{}_1F_1\mynn\left(\frac{\sigma+1}{2},\frac{3}{2},\frac{\,\xi^2}{4}\right)
=\frac{1}{\Gamma\bigl(\tfrac{\sigma+1}{2}\bigr)}\,\sum_{k=0}^\infty
\frac{\Gamma\bigl(\tfrac{\sigma}{2}+\frac12+k\bigr)\,\xi^{2k+1}}{(2k+1)!}\mypp.
\end{equation}
Now, returning to \eqref{eq:J=A1+A2} and combining the sums in
\eqref{eq:1F1_1} and \eqref{eq:1F1_2} we arrive
at~\eqref{eq:J=Gamma}.

Finally, formula \eqref{eq:J=Gamma0} readily follows from
\eqref{eq:J=Gamma} by setting $\xi=0$.
\end{proof}

There is a simple probabilistic representation of the power series
part of the expression \eqref{eq:J=Gamma}, which will be helpful in
applications (see the proof of
Theorem~\ref{thm:asymptotic_Kon}\myp(c-iii) below).

\begin{lemma}\label{lm:Gamma} Let $X\ge 0$ be a random variable with
gamma distribution $\Gammad(\sigma/2)$ \,\textup{($\sigma>0$)}, that
is, with probability density
$f(x)=(1/\Gamma(\sigma/2))\,x^{\sigma/2-1}\mypp\rme^{-x}$, \,$x>0$.
Then the moment generating function of\/ $\sqrt{X}$ is given by
\begin{equation}\label{eq:Gamma-dist}
\EE\bigl(\rme^{\mypp\xi\myp\sqrt{X}}\myp\bigr)=\frac{1}{\Gamma(\sigma/2)}
\,\sum_{n=0}^\infty
\Gamma\mynn\left(\frac{\sigma+n}{2}\right)\frac{\,\xi^n}{n!}\mypp,\qquad
\xi\in\RR.
\end{equation}
\end{lemma}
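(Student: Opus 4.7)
The plan is to compute the expectation by writing it as an integral against the gamma density, Taylor-expanding the exponential $\rme^{\xi\sqrt{X}}$ in powers of $\xi$, and then integrating term by term so that each term is recognised as a gamma function.

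More precisely, I would start from the definition
\begin{equation*}
\EE\bigl[\rme^{\xi\sqrt{X}}\bigr]=\frac{1}{\Gamma(\sigma/2)}\int_0^\infty \rme^{\xi\sqrt{x}}\mypp x^{\sigma/2-1}\myp\rme^{-x}\,\rmd{x}.
\end{equation*}
Expanding $\rme^{\xi\sqrt{x}}=\sum_{n=0}^\infty \xi^n x^{n/2}/n!$ and interchanging sum and integral gives
\begin{equation*}
\EE\bigl[\rme^{\xi\sqrt{X}}\bigr]=\frac{1}{\Gamma(\sigma/2)}\sum_{n=0}^\infty \frac{\xi^n}{n!}\int_0^\infty x^{(\sigma+n)/2-1}\myp\rme^{-x}\,\rmd{x}=\frac{1}{\Gamma(\sigma/2)}\sum_{n=0}^\infty \Gamma\!\left(\frac{\sigma+n}{2}\right)\frac{\xi^n}{n!},
\end{equation*}
which is exactly \eqref{eq:Gamma-dist}.

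The only step that needs a word of justification is the interchange of summation and integration. For $\xi\ge 0$ all integrands are non-negative, so Tonelli's theorem applies unconditionally. For $\xi<0$ I would invoke dominated convergence: the partial sums of the Taylor series are bounded in absolute value by $\rme^{|\xi|\sqrt{x}}$, and the integral $\int_0^\infty \rme^{|\xi|\sqrt{x}}x^{\sigma/2-1}\rme^{-x}\,\rmd x$ is finite (since $|\xi|\sqrt{x}-x\to-\infty$ as $x\to\infty$), which is itself a consequence of the Tonelli calculation already performed with $|\xi|$ in place of $\xi$. This is routine rather than a genuine obstacle, and once it is in place the computation is immediate.
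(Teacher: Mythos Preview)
Your proof is correct and follows essentially the same approach as the paper's: expand $\rme^{\xi\sqrt{X}}$ as a power series in $\xi$, interchange with the integral, and identify each resulting integral as $\Gamma((\sigma+n)/2)$. If anything, you are more careful than the paper, which silently performs the interchange without the Tonelli/dominated convergence justification you supply.
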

\begin{proof}
The left-hand side of \eqref{eq:Gamma-dist} can be expanded in a
series
\begin{equation}\label{eq:Laplace-Gamma}
\EE\bigl(\rme^{\mypp\xi\myp\sqrt{X}}\myp\bigr)=\sum_{n=0}^\infty
\frac{\,\xi^n}{n!}\,\EE\bigl(X^{n/2}\bigr).
\end{equation}
The moments of $X$ in \eqref{eq:Laplace-Gamma} are easily computed,
$$
\EE\bigl(X^{n/2})=\frac{1}{\Gamma(\sigma/2)}\int_0^\infty
x^{n/2+\sigma/2-1}\mypp\rme^{-x}\,\rmd{x}=\frac{\Gamma\bigl((\sigma+n)/2\bigr)}{\Gamma(\sigma/2)}\mypp,
$$
and returning to \eqref{eq:Laplace-Gamma} we
obtain~\eqref{eq:Gamma-dist}.
\end{proof}

A similar argumentation as in Lemma \ref{lm:J} may be applied to the
integral
\begin{equation}\label{eq:J-tilde}
\tilde{J}_0(\sigma;s):=\int_{\gamma'}
(-w)^{-\sigma}\exp\myn\{(-w)^s\} \,\rmd{w},\qquad \sigma\in\CC,\ \
1<s\le2,
\end{equation}
with the contour
\strut{}$\gamma^{\myp\prime}=\gamma_1^{\myp\prime}\cup\gamma_2^{\myp\prime}\cup\gamma_3^{\myp\prime}$
as defined in~\eqref{eq:gamma(4)-supercr}. Note that for $s=2$ the
integral \eqref{eq:J-tilde} is reduced to \eqref{eq:J=Gamma0}:
$\tilde{J}_0(\sigma;2)=J_0(\sigma)$. The general case is handled in
the next lemma.
\begin{lemma}\label{lm:J-tilde}
For any  $1<s\le 2$, the following identity holds
\begin{equation}\label{eq:J=Gamma0-tilde}
\tilde{J}_0(\sigma;s)=\frac{2\pi\rmi}{\displaystyle
s\,\Gamma\mynn\left(\frac{s-1+\sigma}{s}\right)}\mypp,\qquad
\sigma\in\CC.
\end{equation}
\end{lemma}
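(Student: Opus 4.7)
The plan is to reduce $\tilde{J}_0(\sigma;s)$ to the classical Hankel representation of $1/\Gamma$ via two successive substitutions, $u=-w$ and then $\tau=u^s$. First, by the same absolute-value estimate as in \eqref{eq:Jordan}, the integral \eqref{eq:J-tilde} converges absolutely for every $\sigma\in\CC$ and defines an entire function of $\sigma$; convergence on the infinite rays $\gamma'_1,\gamma'_3$ uses the inequality $\cos(s(\pi-\varphi))<0$, which is guaranteed by $\varphi\in(\pi/4,\pi/2)$ and $1<s\le 2$. Both sides of \eqref{eq:J=Gamma0-tilde} being entire in $\sigma$, it suffices to establish the identity on any convenient range (say $\Re(\sigma)<1$), the general case following by analytic continuation.

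Under the change of variables $u=-w$, the contour $\gamma'$ is mapped to a loop $C_u$ that winds once clockwise around $0$, with infinite rays at the angles $\arg u=\pi-\varphi$ (incoming) and $\arg u=\varphi-\pi$ (outgoing); using $\rmd w=-\rmd u$,
\begin{equation*}
\tilde{J}_0(\sigma;s)=-\int_{C_u}u^{-\sigma}\exp\myn\{u^s\}\,\rmd u.
\end{equation*}
The integrand is holomorphic on $\CC\setminus\RR_{\le 0}$ (principal branch of $u^{-\sigma}$ and $u^s$) and decays exponentially on any ray $\arg u=\theta$ with $s\theta\in(\pi/2,3\pi/2)$. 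Hence $C_u$ may be deformed, by standard Jordan-type estimates on truncating arcs at infinity, to a loop $C_u^\star$ with rays at angles $\pm(\pi/s-\epsilon')$ for arbitrarily small $\epsilon'>0$.

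Now substitute $\tau=u^s$, which gives $\rmd u=s^{-1}\tau^{1/s-1}\,\rmd\tau$ and $u^{-\sigma}=\tau^{-\sigma/s}$, so the full integrand transforms to $s^{-1}\tau^{-z}\rme^{\myp\tau}\,\rmd\tau$ with $z:=(s-1+\sigma)/s$. The loop $C_u^\star$ maps bijectively onto a loop $\tilde{C}_\tau$ winding once clockwise around $0$ with infinite rays at angles $\pm(\pi-s\epsilon')$; letting $\epsilon'\to 0$ this tends to the reversal of the classical Hankel contour $H$, which winds counterclockwise around $0$ with rays hugging the negative real axis. Hankel's loop formula, already invoked in the derivation of \eqref{eq:Hankel-based}, gives $\int_H \rme^{\myp\tau}\,\tau^{-z}\,\rmd\tau=2\pi\rmi/\Gamma(z)$, so that combining the three scalar factors --- a $-1$ from $\rmd w=-\rmd u$, a $-1$ from the reversed orientation of $\tilde{C}_\tau$ relative to $H$, and a $1/s$ from the substitution --- yields \eqref{eq:J=Gamma0-tilde}; as a consistency check, for $s=2$ this collapses to \eqref{eq:J=Gamma0}. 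The main obstacle is purely bookkeeping: one must track carefully the branch of $(-w)^s$ as $w$ traverses $\gamma'$, together with the winding number and orientation of the successive image contours $C_u$, $C_u^\star$, $\tilde{C}_\tau$, so that the two sign changes and the factor $1/s$ combine to give exactly the stated right-hand side.
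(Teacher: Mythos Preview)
Your argument is correct and is genuinely different from the paper's route. The paper fixes $\varphi=\pi/2$, writes the integral over the two imaginary half-axes as a real integral, expands via $\cos(\pi\sigma/2-y^s\sin(\pi s/2))$ into two pieces $B_1,B_2$, evaluates these by table lookup (Gradshteyn--Ryzhik \#3.944(5,6)), and finishes with the complement formula \eqref{eq:Gamma-compl}. You instead perform the two substitutions $u=-w$ and $\tau=u^s$ to reduce directly to Hankel's loop representation of $1/\Gamma$, already quoted at \eqref{eq:Hankel-based}; no external tables or reflection formula are needed.

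What each approach buys: the paper's computation is completely explicit and avoids any contour-deformation bookkeeping, at the cost of relying on tabulated integrals. Your method is self-contained within the paper (Hankel's formula is already in use) and conceptually cleaner --- it makes transparent \emph{why} the exponent $(s-1+\sigma)/s$ appears, namely from $u^{-\sigma}\cdot u^{1-s}\,\rmd\tau=\tau^{-(s-1+\sigma)/s}\,\rmd\tau$. The price is exactly the bookkeeping you flag at the end: one must verify that the deformation from rays at angles $\pm(\pi-\varphi)$ to $\pm(\pi/s-\epsilon')$ stays in the decay sector $s|\theta|\in(\pi/2,3\pi/2)$ (it does, since both endpoints lie there and the sector is convex in $\theta$), that the small arc is correspondingly adjusted so the whole of $C_u^\star$ lies in $|\arg u|<\pi/s$ where $u\mapsto u^s$ is bijective, and that the two sign flips combine correctly. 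Your sanity check against \eqref{eq:J=Gamma0} at $s=2$ confirms the signs.
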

\begin{proof}
Like in the proof of Lemma \ref{lm:J}, it suffices to consider the
case $-\sigma>0$. Then, similarly to \eqref{eq:determin1} and
\eqref{eq:determin2}, we obtain (cf.~\eqref{eq:A1+A2})
\begin{equation}\label{eq:B12}
\tfrac12\myp(-\rmi\myp)\myp \tilde{J}_0(\sigma;s) = \int_0^\infty
y^{-\sigma}\cos\mynn\mynn\left(\frac{\pi\sigma}{2}-y^s \sin\frac{\pi
s}{2}\right)\myn\exp\mynn\mynn\left\{y^s \cos\frac{\pi
s}{2}\right\}\, \rmd{y}.
\end{equation}
Note that, according to the condition $1<s\le2$, we have
$\cos\myn(\pi s/2)<0$. By the change of variable $y^s=x$ and with
the help of some standard trigonometric identities, the integral on
the right-hand side of \eqref{eq:B12} is rewritten as
\begin{align}\label{eq:B1+B2}
\frac{1}{s}\left(B_1(\sigma;s)\myp\cos \frac{\pi \sigma}{2} +
B_2(\sigma;s)\myp\sin \frac{\pi \sigma}{2}\right),
\end{align}
where
\begin{align*}
B_1(\sigma;s):={}&\int_0^\infty x^{-1+(1-\sigma)/s}
\cos\mynn\mynn\left(x\sin \frac{\pi s}{2}\right)\exp\mynn\mynn\left\{x \cos\frac{\pi s}{2}\right\}\myp \rmd{y},\\
B_2(\sigma;s):={}&\int_0^\infty
x^{-1+(1-\sigma)/s}\,\sin\mynn\mynn\left(x\sin \frac{\pi
s}{2}\right)\myn\exp\mynn\mynn\left\{x \cos\frac{\pi
s}{2}\right\}\myp \rmd{y}.
\end{align*}
These integrals can be explicitly computed as follows (see
\cite[\#\myp3.944\mypp(5,\,6), p.~498]{GR})
\begin{align*}
B_1(\sigma;s)=\Gamma\mynn\left(\frac{1-\sigma}{s}\right)\myn\cos\psi,\qquad
B_2(\sigma;s)=\Gamma\mynn\left(\frac{1-\sigma}{s}\right)\myn\sin\psi,
\end{align*}
where
$$
\psi:=\frac{\pi\myp(1-\sigma)(2-s)}{2s}\mypp.
$$
Substituting this into \eqref{eq:B12} and \eqref{eq:B1+B2} we get
\begin{align*}
\tfrac12\myp(-\rmi\myp)\myp \tilde{J}_0(\sigma;s)
&=\frac{1}{s}\,\Gamma\mynn\left(\frac{1-\sigma}{s}\right)\left(\cos\psi\mypp\cos
\frac{\pi \sigma}{2} + \sin\psi\mypp\sin \frac{\pi
\sigma}{2}\right)\\
&=\frac{1}{s}\,\Gamma\mynn\left(\frac{1-\sigma}{s}\right)\myn
\cos\left(\psi- \frac{\pi \sigma}{2}\right)\\
&=\frac{1}{s}\,\Gamma\mynn\left(\frac{1-\sigma}{s}\right)\myn\sin\frac{\pi(1-\sigma)}{s}\\
&=\frac{\pi}{\displaystyle
s\,\Gamma\mynn\left(\frac{s-1+\sigma}{s}\right)}\mypp,
\end{align*}
again using the complement formula \eqref{eq:Gamma-compl}. Hence,
the result \eqref{eq:J=Gamma0-tilde} follows.
\end{proof}

\subsubsection{Asymptotic theorems with $v\approx 1$}
We are now in a position to obtain ``dynamic'' asymptotic results
for the critical case in the neighbourhood of $v=1$.
For simplicity, we omit the pre-exponential factor $f(z)$ (which
will not be needed). First, let us consider a ``regular'' case where
the second derivative of $g_\kappa(z)$ at $z=R$ is finite.

\begin{theorem}\label{thm:aux_asypmtotic_5}
Let $g_\theta(z)$ and $g_\kappa(z)$ satisfy the conditions of\/
Theorem~\textup{\ref{thm:aux_asypmtotic_4}} with
${\theta^*\mynn\ge0}$ \textup{(}see \eqref{eq:theta*}\textup{)},
\strut{}for a suitable domain of holomorphicity $\varDelta_0$.
Assume also that $g_\kappa^{\{1\}}\myn(R\myp)=1$ and
$0<g_\kappa^{\{2\}}\myn(R\myp)<\infty$. Finally, let
$f\colon\varDelta_0 \to \CC$ be a holomorphic function such that
\begin{equation*}
f(z) = 1+O\bigl((1-z/R\myp)^{\delta}\bigr), \qquad z\to R\ \ \
(z\in\varDelta_0).
\end{equation*}
Then for any $u\ge0$, as $N\to\infty$,
\begin{align}
\notag [z^{N}]\bigl[f(z)\exp\myn\{\rme^{-u/\sqrt{N}}\GN(z)\}\bigr]
&\sim\frac{\exp\myn\bigl\{Ng_\kappa(R\myp)-\sqrt{N}\myp u\mypp
    g_\kappa(R\myp)+\tfrac{1}{2}\myp u^2 g_\kappa(R\myp)\bigr\}}{R^{N}}\\
\label{eq:th3.3-dyn} &\qquad \times\bigl(\tfrac{1}{2}\myp N\myp
b_2(R\myp)\bigr)^{(\theta^*\myn-1)/2}\,\frac{J_{\tilde{u}}(\theta^*)}{2\pi\rmi}\mypp,
\end{align}
where $\tilde{u}:=u\,\sqrt{2/b_2(R\myp)}$ and
$J_{\tilde{u}}(\theta^*)$ is given by formula~\eqref{eq:J=Gamma}. In
particular,
\begin{equation}\label{eq:HN-4}
H_N\sim  \frac{\exp\myn\{N g_\kappa(R\myp) \}\cdot
\bigl(\tfrac{1}{2}\myp N\myp b_2(R\myp)\bigr)^{(\theta^*\myn-1)/2}
}{2R^{N}\,\Gamma\bigl(\frac{1+\theta^*}{2}\bigr)}\mypp, \qquad
N\to\infty.
\end{equation}
\end{theorem}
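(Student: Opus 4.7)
The plan is to adapt the Hankel-contour argument from the proof of Theorem~\ref{thm:aux_asypmtotic_2} to the critical regime, with the key modification being a $\sqrt{N}$-scale substitution $z = R\mypp(1 + w/\sqrt{N})$ in place of the $N$-scale substitution $z = R\mypp(1 + w/N)$. This rescaling is dictated by criticality: since $g_\kappa^{\{1\}}\myn(R\myp)=1$, the linear term of $g_\kappa(z)$ near $z=R$ exactly cancels the linear term of $-\log z$ in the exponent of the Cauchy integrand, so the Gaussian term coming from the quadratic expansion of $g_\kappa$ (with coefficient $g_\kappa^{\{2\}}\myn(R\myp)>0$) sets the natural scale of the problem.

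I would start by writing, via Cauchy's integral formula on a composite contour $\gamma=\gamma_1\cup\gamma_2\cup\gamma_3\cup\gamma_4$ of the same topology as in Figure~\ref{fig:gamma}, but now with the small circular piece $\gamma_2$ having radius $R/\sqrt{N}$ around $z=R$ and the straight segments $\gamma_1,\gamma_3$ emanating from a neighbourhood of $R$ at angles $\pm\varphi$ with $\varphi\in(\pi/4,\myp\pi/2)$. The substitution $z = R\mypp(1+w/\sqrt{N})$ together with the expansions \eqref{eq:theta*}--\eqref{eq:g-expansion} and the hypothesis on $f$ yields, uniformly on the truncated contour $|w|\le x_N$ (with $x_N\to\infty$, $x_N=o(\sqrt{N})$), the local forms
\begin{align*}
    v\myp g_\theta(z) &= -v\myp\theta^*\log\myn(-w/\sqrt{N}) + o(1),\\
    Nv\myp g_\kappa(z) - (N+1)\log\myn(z/R\myp) &= N g_\kappa(R\myp) - \sqrt{N}\myp u\myp g_\kappa(R\myp) + \tfrac12 u^2 g_\kappa(R\myp) \\
    &\quad + \tfrac12 \myp b_2(R\myp)\myp w^2 - u\myp w + o(1),
\end{align*}
where the expansion $v = \rme^{-u/\sqrt{N}} = 1-u/\sqrt{N}+u^2/(2N)+O(N^{-3/2})$ has been used, and the critical cancellation of the $\sqrt{N}\,w$ terms comes from $g_\kappa^{\{1\}}\myn(R\myp)=1$. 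The cross-product between $-u/\sqrt{N}$ and the linear piece of $g_\kappa$ is precisely what produces the $-u\myp w$ term that seeds the $\xi$-dependence of $J_\xi$. A short check shows that the factor $N^{v\theta^*\myn/2}$ differs from $N^{\theta^*\myn/2}$ by $\exp\myn\{O(\log N/\sqrt{N})\}=1+o(1)$, so the $v$-dependence of the exponent in $(-w/\sqrt{N})^{-v\theta^*}$ is absorbed into a $1+o(1)$ factor on the relevant range of $w$.

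The remaining steps are routine adaptations of the Theorem~\ref{thm:aux_asypmtotic_2} argument: one shows that the contribution from the outer arc $\gamma_4$ is exponentially small by Lemma~\ref{lm:NA1}, and that the segments of $\gamma_1,\gamma_3$ with $|w|\in[x_N,\sqrt{N}\eta_0]$ are negligible, using $\Re(w^2)=|w|^2\cos(2\varphi)<0$ for Gaussian damping. After these estimates and the change of measure $\rmd z = (R/\sqrt{N})\,\rmd w$, the Cauchy integral reduces, up to $1+o(1)$, to
\begin{equation*}
\frac{\exp\myn\bigl\{Ng_\kappa(R\myp)-\sqrt{N}\mypp u\myp g_\kappa(R\myp)+\tfrac12 u^2 g_\kappa(R\myp)\bigr\}}{R^{N}}\,N^{(\theta^*\myn-1)/2}\int_{\gamma'}(-w)^{-\theta^*}\exp\myn\bigl\{\tfrac12 b_2(R\myp)\myp w^2 - u\myp w\bigr\}\rmd w,
\end{equation*}
where $\gamma'$ is the Hankel-type contour of \eqref{eq:gamma(4)-supercr}. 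The rescaling $w=\tau\sqrt{2/b_2(R\myp)}$ turns this integral into $(b_2(R\myp)/2)^{(\theta^*\myn-1)/2}J_{\tilde u}(\theta^*)$ with $\tilde u=u\sqrt{2/b_2(R\myp)}$, yielding \eqref{eq:th3.3-dyn} after division by $2\pi\rmi$. The formula \eqref{eq:HN-4} for $H_N$ then follows by setting $f\equiv1$ and $u=0$ (so $v=1$) and invoking \eqref{eq:J=Gamma0}, which gives $J_0(\theta^*)/(2\pi\rmi)=1/(2\myp\Gamma((\theta^*\myn+1)/2))$. The main obstacle is the bookkeeping around the non-constant exponent $v\theta^*$: it must be verified that the distinction between $(-w)^{-v\theta^*}$ and $(-w)^{-\theta^*}$ on the $w$-contour, as well as the factor $N^{(v\theta^*\myn-\theta^*)/2}$, both disappear in the limit, so that the surviving integral is genuinely $J_{\tilde u}(\theta^*)$ rather than some $v$-dependent deformation.
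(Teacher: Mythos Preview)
Your proposal is correct and follows essentially the same approach as the paper: the $\sqrt{N}$-scale substitution $z=R\mypp(1+w/\sqrt{N})$ with $\varphi\in(\pi/4,\pi/2)$, the use of Lemma~\ref{lm:NA1} to salvage the estimate on $\gamma_4$ (since $d_1=0$ at criticality), the second-order expansion producing $\tfrac12 b_2(R\myp)\,w^2-u\myp w$ after the critical cancellation, and the final identification of $J_{\tilde u}(\theta^*)$ all match the paper's argument. Your explicit remark that $N^{(v\theta^*-\theta^*)/2}=1+o(1)$ and $(-w)^{-v\theta^*}=(-w)^{-\theta^*}(1+o(1))$ uniformly on the relevant $w$-range is a point the paper leaves implicit, so in that respect your write-up is slightly more careful.
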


\begin{proof}
Setting $v=\rme^{-u\sqrt{N}}$, we adapt the proof of
Theorem~\ref{thm:aux_asypmtotic_2} by making the following
modifications:
\begin{itemize}
\item[(i)] the angle $\varphi$ in the
specification \eqref{eq:gamma1-4} of the contour $\gamma$ is chosen
in the range $\varphi\in(\pi/4,\myp\pi/2)$, so that $\cos
2\varphi<0$;
\item[(ii)] the rescaling coefficient $N^{-1}$
throughout \eqref{eq:gamma1-4} is \strut{}replaced by $N^{-1/2}$;
\item[(iii)]
under the change of variables $z=R\mypp(1+w N^{-1/2})$ (cf.\
\eqref{eq:w2}), Taylor's formulas \eqref{eq:g-kappa<} and
\eqref{eq:1/z^} are extended to the corresponding second-order
expansions (in particular, using \eqref{eq:g_kappa_as=0}).
\end{itemize}

Note that due to the criticality assumption, the quantity
$d_1=1-v\myp g_\kappa^{\{1\}}\myn(R\myp)$ (see~\eqref{eq:chi}) is
reduced to zero for $v=1$, so that the estimate \eqref{eq:d1_exp} of
the asymptotic contribution of the integral $\mathcal{I}_N^{\mypp
4}$ becomes void. This can be fixed by taking advantage of an
enhanced version of the inequality \eqref{eq:mu-kappa} provided by
Lemma \ref{lm:NA1} (see \eqref{eq:log|z|}), leading to an improved
bound
$$
|\mathcal{I}_N^{\mypp 4}| =\frac{O(1)\,\exp\myn\{N
g_\kappa(R\myp)\}}{R^{N}}\,\exp\myn\biggl\{-N\myp\varepsilon\log\frac{R^{\myp\prime}}{R}\biggr\},\qquad
N\to\infty.
$$

Next, under the substitution $z=R\mypp(1+w N^{-1/2})$, using
\eqref{eq:g-expansion} and the expansion
\begin{equation*}
\rme^{-u/\sqrt{N}}=1-\frac{u}{\sqrt{N}}+\frac{\,u^2}{2N}+O(N^{-3/2}),\qquad
N\to\infty,
\end{equation*}
we have, as $N\to\infty$,
\begin{align}
\notag \rme^{-u/\sqrt{N}} Ng_\kappa(z)
    &=Ng_\kappa(R\myp)-\sqrt{N}\myp u\mypp
    g_\kappa(R\myp)+\tfrac{1}{2}\myp u^2 g_\kappa(R\myp)\\
    &\quad +\sqrt{N}\myp w -u\myp
    w+\tfrac{1}{2}\mypp g_\kappa^{\{2\}}\myn(R\myp)\mypp w^2 + O\bigl(w^{2+\delta}
    N^{-\delta/2}\bigr).
    \label{eq:uxg}
\end{align}
Combined with the expansion (cf.~\eqref{eq:1/z^})
\begin{equation}\label{eq:1/zN}
\frac{1}{z^{N+1}}=\frac{1}{R^{N+1}}\,\exp\myn\bigl\{-\sqrt{N}\myp
w+\tfrac{1}{2}\myp w^2+O(w^3 N^{-3/2})\bigr\},\qquad N\to\infty,
\end{equation}
this leads to the following asymptotics of the integral
$\mathcal{I}_N^{\mypp 2}$ as $N\to\infty$ (cf.~\eqref{eq:I2})
\begin{align}
\notag\mathcal{I}_N^{\mypp 2}
&\sim\frac{\exp\myn\bigl\{Ng_\kappa(R\myp)-\sqrt{N}\myp u\mypp
    g_\kappa(R\myp)+\tfrac{1}{2}\myp u^2 g_\kappa(R\myp)\bigr\}}{R^{N}} \\
\label{eq:I2-cr-dynamic} &\hspace{7pc}\times \bigl(\tfrac{1}{2}\myp
N\myp b_2(R\myp)\bigr)^{(\theta^*\myn-1)/2}
\int_{\widetilde{\gamma}^{\myp\prime}_2} (-w)^{-\theta^*}
\mynn\exp\myn\{-\tilde{u}\myp w+w^2\}\,\rmd{w},
\end{align}
where
$\widetilde{\gamma}^{\myp\prime}_2:=\gamma^{\myp\prime}_2\,\sqrt{b_2(R\myp)/2}$
and $\tilde{u}:=u\,\sqrt{2/b_2(R\myp)}$.

The integral $\mathcal{I}_N^{\mypp 3}$ is asymptotically evaluated
in a similar fashion (cf.~\eqref{eq:I3}), leading to the same
formula as \eqref{eq:I2-cr-dynamic} but with the contour of
integration $\gamma^{\myp\prime}_3$ defined in \eqref{eq:gamma'3}.
Note that the error terms \eqref{eq:I3''} and \eqref{eq:I3'''},
formally invalidated by $d_1=0$, can be adapted by using
Lemma~\ref{lm:NA1} as described above.

As a result, collecting the principal asymptotic terms from formula
\eqref{eq:I2-cr-dynamic} and its other counterparts
(cf.~\eqref{eq:I3'}) produces the integral
$J_{\tilde{u}}(\theta^*)$, according to the notation~\eqref{eq:J}.
Hence, we arrive at the expression \eqref{eq:th3.3-dyn}, as claimed.
Finally, \eqref{eq:HN-4} immediately follows from
\eqref{eq:th3.3-dyn} by setting $u=0$ and on using
formula~\eqref{eq:J=Gamma0}.
\end{proof}

\begin{remark}\label{rm:super->cr2}
For $\theta^*\myn=0$, formula \eqref{eq:HN-4} gives the same
asymptotics of $H_N$ as formula \eqref{eq:HN-3} of Theorem
\ref{thm:aux_asypmtotic_3} (note that $\Gamma(1/2)=\sqrt{\pi}$\,).
Moreover, formula \eqref{eq:th3.3-dyn} with $\theta^*\myn=0$
coincides with the dynamic version obtained from Theorem
\ref{thm:aux_asypmtotic_3} by setting $v=\rme^{-u/\sqrt{N}}$; this
can be shown using calculations carried out in the proof of Theorem
\ref{thm:asymptotic_Kon}\myp(a) below.
\end{remark}

Let us now study the case with an infinite second derivative of
$g_\kappa(z)$ at $z=R$\myp.

\begin{theorem}\label{thm:aux_asypmtotic_6}
Let $g_\theta(z)$ and $g_\kappa(z)$ satisfy the assumptions of\/
Theorem~\textup{\ref{thm:aux_asypmtotic_4}} with
${\theta^*\mynn\ge0}$ \textup{(}see \eqref{eq:theta*}\textup{)},
including the condition $g_\kappa^{\{1\}}\myn(R\myp)=1$, but with
the expansion \eqref{eq:g-expansion} replaced by
\begin{equation*}
g_\kappa(z) = g_\kappa(R\myp) -(1-z/R\myp) +
 a_s\myp(1-z/R\myp)^s+O\bigl((1-z/R\myp)^{s+\delta}\bigr),
\end{equation*}
with $s\in(1,2)$, $a_s>0$ and $\delta>0$. Let $f\colon\varDelta_0
\to \CC$ be a holomorphic function satisfying
\begin{equation*}
f(z) = 1+O\bigl((1-z/R\myp)^{\delta}\bigr), \qquad z\to R\ \ \
(z\in\varDelta_0).
\end{equation*}
Then for any
$u\ge0$, as $N\to\infty$,
\begin{align}
\notag [z^{N}]\bigl[f(z)\exp\myn\{\rme^{-u/\sqrt{N}}\GN(z)\}\bigr]
&\sim\frac{\exp\myn\bigl\{Ng_\kappa(R\myp)-\sqrt{N}\myp u\mypp
    g_\kappa(R\myp)+\tfrac{1}{2}\myp u^2 g_\kappa(R\myp)\bigr\}}{R^{N}}\\
\label{eq:th3.3-dyn-s} &\qquad
\times\frac{(Na_s)^{(\theta^*\myn-1)/s}}{
s\,\Gamma\mynn\left(\frac{s-1+\theta^*}{s}\right)}\mypp.
\end{align}
In particular,
\begin{equation}\label{eq:HN-5}
H_N\sim  \frac{\exp\myn\{N g_\kappa(R\myp)\}\cdot
(Na_s)^{(\theta^*\myn-1)/s}
}{sR^{N}\,\Gamma\mynn\left(\frac{s-1+\theta^*}{s}\right)}\mypp,
\qquad N\to\infty.
\end{equation}
\end{theorem}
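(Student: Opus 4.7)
The plan is to follow the Hankel-contour saddle-point argument used in the proof of Theorem~\ref{thm:aux_asypmtotic_5}, with the Gaussian scaling $N^{-1/2}$ replaced by $N^{-1/s}$, which is the natural one to balance the singular term $a_s(1-z/R)^s$ against the constraint $|1-z/R| = O(N^{-1/s})$ on the portion of the contour that hugs $z=R$. Concretely, I would apply Cauchy's formula on the same four-piece contour $\gamma=\gamma_1\cup\gamma_2\cup\gamma_3\cup\gamma_4$ used in Theorem~\ref{thm:aux_asypmtotic_2}, but with the inner radius taken to be $RN^{-1/s}$ and the opening angle $\varphi\in(\pi/4,\pi/2)$ as in Theorem~\ref{thm:aux_asypmtotic_5}; the substitution $z=R(1+wN^{-1/s})$ then maps $\gamma_2$ to a small circular arc and $\gamma_1,\gamma_3$ to two straight rays extending to infinity as $N\to\infty$.

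The key calculation is to track how the various powers of $N$ combine under this substitution. Writing $v=\rme^{-u/\sqrt N}$ and using the hypothesis $g_\kappa^{\{1\}}\myn(R\myp)=1$ together with the new expansion of $g_\kappa$, the linear-in-$w$ contribution from $-N(1-z/R\myp)$ cancels against the leading linear term of $-(N+1)\log z$. The residual linear-in-$w$ term is $(v-1)wN^{1-1/s}=O(wN^{1/2-1/s})$, which vanishes because $s<2$; every quadratic or higher contribution from $-(N+1)\log z$ scales as $w^kN^{1-k/s}$ with $k\ge 2$ and $s<2$, so it also vanishes. What survives is the singular term $a_s(-w)^s$ from $vNg_\kappa(z)$, the constants $-\sqrt N\myp u\myp g_\kappa(R\myp)+\tfrac12u^2g_\kappa(R\myp)$ coming from the expansion of $\rme^{-u/\sqrt N}$ applied to $Ng_\kappa(R\myp)$, and the logarithmic contribution $-\theta^*\log(-w)+(\theta^*/s)\log N$ produced by $vg_\theta(z)$ under the substitution.

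Assembling these pieces, the inner integral $\mathcal{I}_N^{\mypp2}+\mathcal{I}_N^{\mypp3}$ becomes asymptotic to the universal prefactor
\[
\frac{\exp\myn\bigl\{Ng_\kappa(R\myp)-\sqrt{N}\mypp u\myp g_\kappa(R\myp)+\tfrac12u^2g_\kappa(R\myp)\bigr\}}{R^N}\cdot N^{(\theta^*\myn-1)/s}
\]
times the contour integral $\int_{\gamma^{\myp\prime}}(-w)^{-\theta^*}\exp\myn\{a_s(-w)^s\}\,\rmd w$. The holomorphic rescaling $w\mapsto a_s^{-1/s}w$ (which by Cauchy's theorem preserves the Hankel shape) reduces this to $a_s^{(\theta^*\myn-1)/s}\,\tilde J_0(\theta^*;s)$, and Lemma~\ref{lm:J-tilde} evaluates the latter to $2\pi\rmi/[s\,\Gamma((s-1+\theta^*)/s)]$. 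Dividing by $2\pi\rmi$ yields \eqref{eq:th3.3-dyn-s}, and the specialization $u=0$, $f(z)\equiv 1$ produces \eqref{eq:HN-5}.

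The main obstacle is the arc integral $\mathcal{I}_N^{\mypp4}$: in Theorem~\ref{thm:aux_asypmtotic_2} its smallness was powered by $d_1=1-vg_\kappa^{\{1\}}\myn(R\myp)>0$, whereas here $d_1\to 0$ as $v\to 1$ and is at most $O(1/\sqrt N)$. As in the proof of Theorem~\ref{thm:aux_asypmtotic_5}, one circumvents this by invoking the strict inequality from Lemma~\ref{lm:NA1}, which gives $\Re\myp(g_\kappa(z))\le g_\kappa(R\myp)+(1-\varepsilon)\log(|z|/R)$ on $\gamma_4$; this yields an exponentially small bound on $|\mathcal{I}_N^{\mypp4}|$ independently of the value of $d_1$. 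Similar care is needed to control the outer portions of $\gamma_2,\gamma_3$ (where the local Taylor expansion of $g_\kappa$ no longer applies), using the same argument together with $\cos 2\varphi<0$ to absorb the leading $w^2$ term in the non-local region. With these estimates in place, all remaining error terms are $o(1)$ relative to the principal asymptotics, completing the proof.
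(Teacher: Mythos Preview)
Your proposal is correct and follows essentially the same route as the paper: adapt the proof of Theorem~\ref{thm:aux_asypmtotic_5} by replacing the rescaling $N^{-1/2}$ with $N^{-1/s}$, so that the linear terms $wN^{1-1/s}$ from $vNg_\kappa$ and $z^{-(N+1)}$ cancel and the surviving contour integral is $a_s^{(\theta^*-1)/s}\tilde J_0(\theta^*;s)$, evaluated via Lemma~\ref{lm:J-tilde}. One small correction: in the tail of $\gamma_1,\gamma_3$ the dominant decaying factor is $\exp\{a_s(-w)^s\}$ rather than a $w^2$ term (the quadratic from $\log z$ is of lower order $N^{1-2/s}$ since $s<2$), though the angle choice $\varphi\in(\pi/4,\pi/2)$ you impose does ensure $\Re((-w)^s)\to-\infty$ along those rays and is therefore sufficient.
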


\begin{proof}
In what follows, we can assume that $0<\delta\le 2-s$. Again setting
$v=\rme^{-u\sqrt{N}}$, we adapt the proof of
Theorem~\ref{thm:aux_asypmtotic_5} by using the change of variables
$z=R\mypp(1+w N^{-1/s})$. Hence, the expansions \eqref{eq:uxg} and
\eqref{eq:1/zN} are replaced, respectively, by
\begin{align*}
\rme^{-u/\sqrt{N}} Ng_\kappa(z)
    &=Ng_\kappa(R\myp)-\sqrt{N}\myp u\mypp
    g_\kappa(R\myp)+\tfrac{1}{2}\myp u^2 g_\kappa(R\myp)\\
    &\quad +N^{1-1/s}\, w
    +a_s\mypp(-w)^s  + O\bigl(w^{s+\delta}
    N^{-\delta/s}\bigr)
\end{align*}
and
\begin{equation*}
\frac{1}{z^{N+1}}=\frac{1}{R^{N+1}}\,\exp\myn\bigl\{-N^{1-1/s}\,w+O(w^2
N^{1-2/s})\bigr\}.
\end{equation*}
Similarly as in the proof of Theorem \ref{thm:aux_asypmtotic_6},
this leads to the asymptotics (cf.~\eqref{eq:I2-cr-dynamic})
\begin{align*}
\mathcal{I}_N^{\mypp 2}
&\sim\frac{\exp\myn\bigl\{Ng_\kappa(R\myp)-\sqrt{N}\myp u\mypp
    g_\kappa(R\myp)+\tfrac{1}{2}\myp u^2 g_\kappa(R\myp)\bigr\}}{R^{N}} \\
&\hspace{7pc}\times
(Na_s)^{(\theta^*\myn-1)/s} \int_{\widetilde{\gamma}^{\myp\prime}_2}
(-w)^{-\theta^*} \mynn\exp\myn\{(-w)^s\}\,\rmd{w},
\end{align*}
with the rescaled contour
$\widetilde{\gamma}^{\myp\prime}_2:=a_s^{1/s}\gamma^{\myp\prime}_2$.
The integral $\mathcal{I}_N^{\mypp 3}$ is estimated similarly
(cf.~\eqref{eq:I3}). As a result, recalling the notation
\eqref{eq:J-tilde} we obtain
\begin{align*}
\notag [z^{N}]\bigl[\exp\myn\{\rme^{-u/\sqrt{N}}\GN(z)\}\bigr]
&\sim\frac{\exp\myn\bigl\{Ng_\kappa(R\myp)-\sqrt{N}\myp u\mypp
    g_\kappa(R\myp)+\tfrac{1}{2}\myp u^2 g_\kappa(R\myp)\bigr\}}{R^{N}}\\
&\qquad \times(Na_s)^{(\theta^*\myn-1)/s}
\,\frac{\tilde{J}_0(\theta^*\myn;s)}{2\pi\rmi}\mypp,
\end{align*}
and \eqref{eq:th3.3-dyn-s} follows on using
formula~\eqref{eq:J=Gamma0-tilde}.

Finally, formula \eqref{eq:HN-5} follows by setting $u=0$
in~\eqref{eq:th3.3-dyn-s}.
\end{proof}

\subsection{Examples}\label{sec:examples}
Let us give a few simple examples to illustrate the conditions of
the asymptotic theorems proved in Sections
\ref{sec:weak_admissible}\mypp--\mypp\ref{sec:critical}.

\subsubsection{Constant coefficients} \label{sec:example_constant}

For all $j\in\NN$\myp, let $\theta_j=\theta^*\ge0$, \,$\kappa_j =
\kappa^*\myn>0$. Then the corresponding generating functions
specialize to
\begin{equation}\label{eq:ex1g}
g_\theta(z)=-\theta^*\myn\log\myn(1-z), \qquad
g_\kappa(z)=-\kappa^*\myn\log\myn(1-z).
\end{equation}
Recalling the expression \eqref{eq:Hn_generatingN} and using the
binomial series expansion, we have explicitly
\begin{align}
\notag h_n(N) &= [z^n]\exp\myn\{\GN(z)\}=
[z^n]\,(1-z)^{-(\theta^*\myn+N\kappa^*\myn)}\\
\label{eq:Hn_generating_in_1}&= [z^n]\,\sum_{j=0}^{\infty}
\binom{\theta^*\myn+N\kappa^*\myn+j-1}{j} \myp z^j =
\binom{\theta^*\myn+N\kappa^*\myn+n-1}{n},
\end{align}
and with Stirling's formula for the gamma function this yields, for
any $n=N+O(1)$,
\begin{equation}\label{eq:asmptotic_Hn_simple_1}
h_{n}(N) \sim \frac{1}{\sqrt{2\pi
N}}\left(\frac{\kappa^*\myn+1}{\kappa^*}\right)^{\theta^*\myn+
N\kappa^*\myn-1/2} (\kappa^*\myn+1)^{n},\qquad N\to\infty.
\end{equation}

To apply the general machinery developed in
Section~\ref{sec:weak_admissible}, from \eqref{eq:ex1g} we find
\begin{equation}\label{eq:g-kappa-ex1}
g_\kappa^{\{1\}}\myn(z) = \frac{\kappa^* z}{1-z}\mypp, \qquad
g_\kappa^{\{2\}}\myn(z) = \frac{\kappa^* z^2}{(1-z)^2}\mypp,
\end{equation}
so that the expressions \eqref{eq:b12} specialize to
\begin{equation}\label{eq:b1b2-ex1}
b_1(r) = \frac{\kappa^*\myp r}{1-r}\mypp,\qquad  b_2(r)
=\frac{\kappa^*\myp r}{1-r}+\frac{\kappa^*\myp r^2}{(1-r)^2}=
\frac{\kappa^*\myp r}{(1-r)^2}\mypp.
\end{equation}
Here $R=1$ and $g_\kappa(1) = +\infty$, $g_\kappa^{\{1\}}\myn(1) =
+\infty$, so that, according to our terminological convention in
Section~\ref{sec:3.1} (see Definition~\ref{def:sub-sup}), we are
always in the subcritical regime. In view of \eqref{eq:g-kappa-ex1},
the solution $r_v$ of the equation \eqref{eq:r_v} is explicitly
given by
\begin{equation}\label{eq:r_v_ex1}
r_v = \frac{1}{v\kappa^*\myn+1}\mypp,\qquad v>0.
\end{equation}
Setting $v=1$, from \eqref{eq:ex1g}, \eqref{eq:b1b2-ex1} and
\eqref{eq:r_v_ex1} we find
$$
g_\theta(r_1)=\theta^*\log\frac{\kappa^*\myn+1}{\kappa^*}\mypp,\qquad
g_\kappa(r_1)=\kappa^*\log\frac{\kappa^*\myn+1}{\kappa^*}\mypp,\qquad
b_2(r_1)=\frac{\kappa^*\myn+1}{\kappa^*}\mypp,
$$
and Theorem~\ref{thm:aux_asypmtotic} (with $v=1$ and $f(z)=z^{N-n}$
readily yields the same result as~\eqref{eq:asmptotic_Hn_simple_1}.

\begin{remark}
In the degenerate case with $\kappa^*=0$, \,$\theta^*>0$ (which is
nothing more than the Ewens model), a direct calculation using
\eqref{eq:Hn_generating_in_1} gives
\begin{equation}\label{eq:h_theta}
h_n(N)= \binom{\theta^*\myn+n-1}{n}=
\frac{\Gamma(\theta^*\myn+n)}{n!\,\Gamma(\theta^*\myn)}\sim
\frac{n^{\theta^*\myn-1}}{\Gamma(\theta^*\myn)}\mypp,\qquad
n\to\infty.
\end{equation}
\end{remark}

\subsubsection{Polylogarithm} \label{sec:polylog}
To illustrate the supercritical and critical regimes, we need
examples with $g_\kappa^{\{1\}}\myn(R\myp)<\infty$. To this end, let
us set $\kappa_j:=\kappa^* j^{-s}$ ($j\in\NN$) with
$\kappa^*\myn>0$, $s\in\RR$\myp, so that the corresponding
generating function is proportional to the polylogarithm (see,
e.g.,~\cite{Lewin})
\begin{equation}\label{eq:g=Li}
g_\kappa(z)=\kappa^*\myn\Li_{s+1}(z)= \kappa^* \sum_{j=1}^\infty
\frac{z^j}{j^{s+1}}\mypp.
\end{equation}
For $s=0$ the model \eqref{eq:g=Li} is reduced to the case
$\kappa_j\equiv\kappa^*$ considered in
Section~\ref{sec:example_constant}. Clearly, $\Li_{s}(z)$ has the
radius of convergence $R=1$ for any $s\in\RR$\myp, and
$\Li_{s}(1)\equiv\zeta(s)< \infty$ for $s>1$, where $\zeta(\cdot)$
is the Riemann zeta function. Differentiating \eqref{eq:g=Li} we get
\begin{equation}\label{eq:g'-ex2}
g_\kappa^{\{1\}}\myn(z) = \kappa^* z\myp
\Li^{\myp\prime}_{s+1}(z)=\kappa^*\myn \Li_{s}(z),
\end{equation}
and so the supercriticality condition reads
\begin{equation}\label{eq:Li-super}
g_\kappa^{\{1\}}\myn(1)=\kappa^*\myn
\Li_{s}(1)=\kappa^*\zeta(s)<1\quad\Leftrightarrow\quad s>1,\quad
\kappa^*\myn<1/\zeta(s).
\end{equation}
Similarly,
\begin{equation}\label{eq:g'-ex3}
g_\kappa^{\{2\}}\myn(z)=\kappa^*\bigl(\Li_{s-1}(z)-\Li_s(z)\bigr)
\end{equation}
and so
$$
g_\kappa^{\{2\}}\myn(1)=\kappa^*\bigl(\zeta(s-1)-\zeta(s)\bigr)<\infty
\quad\Leftrightarrow\quad s>2.
$$
Hence, substituting \eqref{eq:g'-ex2} and \eqref{eq:g'-ex3} into
\eqref{eq:b12} we obtain
$$
b_1(r)=\kappa^*\myn \Li_{s}(r),\qquad b_2(r)=\kappa^*\Li_{s-1}(r),
\qquad 0\le r\le 1.
$$

Furthermore, from \eqref{eq:g'-ex2} we find that the root of the
equation \eqref{eq:r_v} is given by
\begin{equation}\label{eq:r_v-ex2}
r_v=\Li_s^{-1}\bigl((\kappa^*v)^{-1}\bigr),\qquad v\ge
\frac{1}{\kappa^*\zeta(s)}\mypp,
\end{equation}
where $\Li_s^{-1}$ is the inverse of $\Li_s$. Differentiating the
identity \eqref{eq:r_v-ex2} we also obtain
$$
\frac{r'_v}{r_v}=\frac{-\kappa^* v^{-2}}{\Li_{s-1}(r_v)}\mypp.
$$
In particular,
$$
r_1=\Li_s^{-1}(1/\kappa^*),\qquad
\frac{r'_1}{r_1}=\frac{-\kappa^*}{\Li_{s-1}(r_1)}\mypp.
$$

The supercritical case $\kappa^*\myn<1/\zeta(s)$ is more
interesting. It is known (see, e.g., \cite[\S\myp{}IV.9, p.~237, and
\S\myp{}VI.8, p.~408]{FlSe09} that the polylogarithm
$\Li_{s}(\cdot)$ can be analytically continued to the complex plane
$\CC$ slit along the ray $[1,+\infty)$. The asymptotic behaviour of
$\Li_{s}(z)$ near $z_0=1$ is specified as follows (see \cite[Theorem
VI.7, p.~408, and \S\myp{}VI.20, p.~411]{FlSe09}).
\begin{lemma}\label{lm:polylog-sing}
With the notation
\begin{equation}\label{eq:Li_w}
 \varpi:=-\log z\sim
\sum_{n=1}^\infty \frac{(1-z)^n}{n}\mypp,\qquad z\to1,
\end{equation}
the polylogarithm $\Li_s(z)$ satisfies the following asymptotic
expansion as $z\to1$\textup{:}

\smallskip \textup{(a)} \myp{}if\/ $s \in \RR\setminus\NN$ then
\begin{equation}\label{eq:Li-noninteger}
\Li_{s}(z)\sim\Gamma(1-s)\, \varpi^{s-1} +\sum_{n=0}^\infty
\frac{(-1)^n\myp\zeta(s-n)}{n!}\,\varpi^n\myp;
\end{equation}

\textup{(b)} \myp{}if\/ $s=q \in \NN$ then
\begin{equation}\label{eq:Li-integer}
\Li_{q}(z)\sim\frac{(-1)^q}{(q-1)!}\,\varpi^{q-1}\,(\log
\varpi-{\mathrm H}_{q-1}) +\sum_{n\ge 0,\,n\ne q-1} \frac{(-1)^n\myp
\zeta(q-n)}{n!}\,\varpi^n,
\end{equation}
where ${\mathrm H}_{q-1}:=\sum_{n=1}^{q-1}n^{-1}$ \textup{(}with
${\mathrm H}_{0}:=0$\textup{)}.
\end{lemma}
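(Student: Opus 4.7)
The plan is to obtain both expansions from a Mellin--Barnes representation. Writing $z = \rme^{-\varpi}$ with $\varpi > 0$ small, so that the equivalence $\varpi \sim 1-z$ stated in \eqref{eq:Li_w} converts expansions in $\varpi$ into expansions in $1-z$, I would start from the termwise computation
$$
\int_0^\infty \Li_s(\rme^{-\varpi})\,\varpi^{t-1}\,\rmd\varpi = \sum_{k=1}^\infty k^{-s}\int_0^\infty \rme^{-k\varpi}\,\varpi^{t-1}\,\rmd\varpi = \Gamma(t)\,\zeta(s+t),
$$
valid in the strip $\Re\myp(t)>\max\myn(0,\myp 1-s)$. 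Mellin inversion then gives
$$
\Li_s(\rme^{-\varpi}) = \frac{1}{2\pi\rmi}\int_{c-\rmi\infty}^{c+\rmi\infty} \Gamma(t)\,\zeta(s+t)\,\varpi^{-t}\,\rmd{t},
$$
and the asymptotic behaviour as $\varpi\to 0^+$ will be read off by shifting the contour leftwards and summing the residues encountered, the vertical contributions being controlled by the classical Stirling bound $|\Gamma(\sigma+\rmi y)| = O\bigl(|y|^{\sigma-1/2}\rme^{-\pi|y|/2}\bigr)$ together with polynomial growth of $\zeta(\cdot)$ in vertical strips. Truncating past $M$ poles yields an asymptotic expansion with remainder $O\myp(\varpi^M)$.

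The integrand has simple poles at $t = -n$ \,($n\in\NN_0$) from $\Gamma(t)$, with residues $(-1)^n/n!$, and a simple pole at $t = 1-s$ from $\zeta(s+t)$ with residue $1$. In case (a), the assumption $s\notin\NN$ forces $1-s\notin\{0,-1,-2,\dots\}$, so all poles are distinct and simple; the residue at $t=1-s$ is $\Gamma(1-s)\mypp\varpi^{s-1}$, and the residue at $t=-n$ contributes $\frac{(-1)^n}{n!}\,\zeta(s-n)\,\varpi^n$, which together give \eqref{eq:Li-noninteger}. In case (b), with $s=q\in\NN$, the pole of $\zeta(q+t)$ at $t = 1-q$ collides with the pole of $\Gamma(t)$ at $t = -(q-1)$, producing a double pole; this collision is the source of the $\log\varpi$ term in \eqref{eq:Li-integer}, while the remaining poles at $t = -n$ \,($n\ne q-1$) are simple and contribute as in case~(a).

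To compute the double-pole residue in case (b), I would set $u := t-(1-q)$ and expand locally. Iterating $\Gamma(t+1)=t\,\Gamma(t)$ to write $\Gamma(t) = \Gamma(u+1)/\prod_{j=0}^{q-1}(t+j)$, and combining $\Gamma(1+u) = 1-\gamma u +O(u^2)$ with $\prod_{j=1}^{q-1}(1-u/j) = 1 - u\,{\mathrm H}_{q-1} + O(u^2)$, yields
$$
\Gamma(t) = \frac{(-1)^{q-1}}{(q-1)!\,u}\bigl[1+({\mathrm H}_{q-1}-\gamma)\mypp u+O(u^2)\bigr].
$$
Combined with $\zeta(q+t) = u^{-1}+\gamma+O(u)$ and $\varpi^{-t} = \varpi^{q-1}\bigl[1-u\log\varpi+O(u^2)\bigr]$, the coefficient of $u^{-1}$ in the product simplifies, after the two $\gamma$'s cancel, to
$$
\frac{(-1)^q}{(q-1)!}\,\varpi^{q-1}(\log\varpi-{\mathrm H}_{q-1}),
$$
which is exactly the singular part of \eqref{eq:Li-integer}. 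The main obstacle is precisely this bookkeeping at the colliding poles: one must retain both the singular and constant parts of the Laurent expansions of $\Gamma$ and $\zeta$ at $t=1-q$, Taylor-expand $\varpi^{-t}$ to first order in $u$, and verify that the Euler--Mascheroni constants contributed by $\Gamma$ and by $\zeta$ cancel, leaving the harmonic number ${\mathrm H}_{q-1}$ as the unique companion of $\log\varpi$. Everything else reduces to applying the residue theorem and collecting terms, and the passage between expansions in $\varpi$ and in $1-z$ is supplied by \eqref{eq:Li_w}.
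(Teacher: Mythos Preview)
Your proposal is correct. Note, however, that the paper does not actually prove this lemma: it is stated with a direct citation to Flajolet and Sedgewick, \textit{Analytic Combinatorics} (Theorem~VI.7 and \S{}VI.20), so there is no in-paper argument to compare against. Your Mellin--Barnes derivation is in fact essentially the proof given in that reference, so you have reproduced the standard argument rather than found an alternative route; the pole bookkeeping at the collision $t=1-q$, including the cancellation of the Euler--Mascheroni constants, is handled correctly.
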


\begin{remark}
The restriction $n\ne q-1$ in the sum \eqref{eq:Li-integer} respects
the fact that the zeta function $\zeta(z)$ has a pole at $z=1$.
\end{remark}

\begin{remark}
In the simplest case $q=1$ the expansion \eqref{eq:Li-integer}
specializes to
$$
\Li_{1}(z)\sim -\log \varpi +\sum_{n=1}^\infty
\frac{(-1)^n\,\zeta(1-n)}{n!}\,\varpi^n,\qquad \varpi\to0,
$$
which should be contrasted with the explicit expression $\Li_1(z)
=-\log\myn(1-\rme^{-\varpi})$ \,($z=\rme^{-\varpi}$). Of course,
there is no contradiction; the corresponding identity
\begin{equation}\label{eq:log_varpi}
\log\frac{\varpi}{1-\rme^{-\varpi}}=\sum_{n=1}^\infty
\frac{(-1)^n\,\zeta(1-n)}{n!}\,\varpi^n,
\end{equation}
with the left-hand side defined at $\varpi=0$ by continuity as
$\log\myn(0/0):=\log 1=0$, can be verified using that
$\zeta(0)=-1/2$, \,$\zeta(1-n)=0$ for odd $n\ge3$ and
$\zeta(1-n)=-B_{n}/n$ for even $n\ge 2$ (see, e.g.,
\cite[\S\myp B\mypp11, p.\;747]{FlSe09}), where $B_n$ are the
\emph{Bernoulli numbers} (see, e.g.,
\cite[p.~268]{FlSe09}) defined by the generating function
$\varpi/(\rme^{\myp\varpi}-1)=\sum_{n=0}^\infty B_n\mypp\varpi^n/n!$
(note that $B_0=1$, $B_1=-1/2$ and $B_n=0$ for odd $n\ge 3$).
Indeed, \eqref{eq:log_varpi} is then rewritten as
$$
\log\frac{\varpi}{1-\rme^{-\varpi}}=
\frac{\varpi}{2}-\sum_{n=1}^\infty \frac{B_{n}}{n!\,n}\,\varpi^n,
$$
and the proof is completed by differentiation of both sides with
respect to $\varpi$.
\end{remark}

Explicit asymptotic expansion of $\Li_s(z)$ in terms of $1-z$, to
any required order, can be obtained by substituting the series
\eqref{eq:Li_w} into \eqref{eq:Li-noninteger} or
\eqref{eq:Li-integer} as appropriate. In particular, for the
generating function $g_\kappa(z)$ of the form \eqref{eq:g=Li} with
$q<s<q+1$ \,($q\in\NN_0$), from formulas \eqref{eq:Li_w} and
\eqref{eq:Li-noninteger} we get the Taylor-type expansion
\begin{equation}\label{eq:Li-s+1}
g_\kappa(z)= \sum_{n=0}^q \frac{(-1)^{n}\myp
g_\kappa^{\{n\}}(1)}{n!}\,(1-z)^n +\kappa^*\myp\Gamma(-s)\myp(1-z)^s
+O\bigl((1-z)^{q+1}\bigr),
\end{equation}
where the coefficients
\begin{equation*}
g_\kappa^{\{n\}}(1) =\kappa^*
\Li_{s+1}^{\{n\}}(1)=\kappa^*\sum_{j=1}^\infty
\frac{(j)_{n}}{j^{s+1}}\mypp,\qquad n=0,\dots,q,
\end{equation*}
are expressible as linear combinations of the zeta functions
\mypp$\zeta(s+1-k)$ \mypp($k=1,\dots,n$),
\begin{gather*}
\Li_{s+1}^{\{0\}}(1)=\zeta(s+1),\quad
\Li_{s+1}^{\{1\}}(1)=\zeta(s),\quad
\Li_{s+1}^{\{2\}}(1)=\zeta(s-1)-\zeta(s),\\[.2pc]
\Li_{s+1}^{\{3\}}(1)=\zeta(s-2)-3\mypp\zeta(s-1)+2\mypp\zeta(s),\quad
\text{etc.}
\end{gather*}

Furthermore, using similar arguments it is easy to see that the
expansion \eqref{eq:Li-s+1} can be differentiated any number of
times, yielding the asymptotics
\begin{alignat}{2}\label{eq:assum_derivative_p_Upsilon-low-Li}
g_\kappa^{\{n\}}\myn(z)&=
g_\kappa^{\{n\}}\myn(1)\mypp\bigl\{1+O\bigl((1-z)^{s-q}\bigr)\bigr\}
&&(n<s),\\[.2pc]
\label{eq:assum_derivative_p_Upsilon-Li} g_\kappa^{\{n\}}\myn(z)&=
\kappa^*\myp\Gamma(n-s)\,(1-z)^{s-n}\mypp\bigl\{1+O\bigl((1-z)^{q+1-s}\bigr)\bigr\}
\qquad &&(n>s).
\end{alignat}

By virtue of formula \eqref{eq:Li-s+1}, and by choosing suitable
(non-integer) values of the parameter $s>0$ in \eqref{eq:g=Li}, one
can easily construct examples matching the assumptions of each of
the asymptotic theorems in Sections \ref{sec:weak_admissible_2} and
\ref{sec:critical}. Moreover, the asymptotic formulas
\eqref{eq:assum_derivative_p_Upsilon-low-Li} and
\eqref{eq:assum_derivative_p_Upsilon-Li} make the polylogarithm
example \eqref{eq:g=Li} suitable for the setting of
Section~\ref{sec:long} below (see the assumptions
\eqref{eq:assum_derivative_p_Upsilon-low},~\eqref{eq:assum_derivative_p_Upsilon}).

The case with integer $s=q\in\NN_0$ leads to a logarithmic term in
the singular part of the asymptotic expansion at $z=1$. Indeed, if
$q=0$ then formula \eqref{eq:g=Li} is reduced to
\[
g_\kappa(z)=\kappa^*\Li_1(z)=\kappa^*\sum_{j=1}^\infty
\frac{z^j}{j}=-\kappa^*\log\myn(1-z),
\]
while $q\in\NN$ corresponds to $g_\kappa(z)=\kappa^*\Li_{q+1}(z)$,
so that using \eqref{eq:Li_w} and \eqref{eq:Li-integer} one obtains
with an arbitrary $\delta\in(0,1)$ (cf.~\eqref{eq:Li-s+1})
\begin{align}
\notag g_\kappa(z)= \sum_{n=0}^{q-1} \frac{(-1)^{n}\myp
g_\kappa^{\{n\}}\myn(1)}{n!}\,(1-z)^n
&-\kappa^*\myp\frac{(-1)^{q}}{q!}\,(1-z)^q \log\myn(1-z)\\[-.3pc]
\label{eq:Li-q+1} &+\kappa^* A_q\myp
(1-z)^q+O\bigl((1-z)^{q+\delta}\bigr),
\end{align}
where $A_q$ is the coefficient of the term $(1-z)^q$ arising from
the expansion \eqref{eq:Li-integer} (with $q$ replaced by $q+1$)
upon the substitution \eqref{eq:Li_w}.

\subsubsection{Perturbed polylogarithm}\label{sec:pert_polylog}
A natural extension of the polylogarithm example considered in the
previous subsection
is furnished by setting $\kappa_j:=\kappa^*
j^{-s}\left(1+\xi(j)\right)$ ($j\in\NN$), where $\kappa^*\myn>0$,
$s\ge0$, $1+\xi(j)\ge 0$ for all $j\in\NN$\myp, and the perturbation
function $z\mapsto\xi(z)$ is assumed to be analytic in the
half-plane $\Re\myp(z)>\frac14$ and to satisfy there the estimate
$\xi(z)=O(z^{-\epsilon})$, with some $\epsilon>0$. It follows that
the corresponding generating function
\begin{equation}\label{eq:xi-series}
g_\kappa(z) = \kappa^*\sum_{j=1}^\infty
\frac{1+\xi(j)}{j^{s+1}}\,z^j=\kappa^*\Li_{s+1}(z) +
\kappa^*\sum_{j=1}^\infty \frac{\xi(j)}{j^{s+1}}\, z^j
\end{equation}
is analytic in the disk $|z|<1$, with singularity at $z=1$.
Furthermore (see \eqref{eq:g'-ex2}),
\begin{equation}\label{eq:xi-series-derivarive}
g_\kappa^{\{1\}}\myn(z) = \kappa^*\myn \Li_{s}(z) +
\kappa^*\sum_{j=1}^\infty \frac{\xi(j)}{j^{s}}\, z^j.
\end{equation}
As suggested by the principal term $\Li_{s}(z)$ in
\eqref{eq:xi-series-derivarive}, the criticality occurs if $s>1$
(cf.~\eqref{eq:Li-super}); indeed, with a suitable constant $c>0$ we
have for sufficiently small $\kappa^*$
\begin{equation}\label{eq:xi-series-derivarive<1}
g_\kappa^{\{1\}}\myn(1) = \kappa^*\myn \Li_{s}(1) +
\kappa^*\sum_{j=1}^\infty \frac{\xi(j)}{j^{s}}\le
\kappa^*\bigl\{\zeta(s)+c\,\zeta(s+\epsilon)\bigr\}<1.
\end{equation}

\begin{lemma}\label{lm:polylog-extension}
Under the above conditions on $\xi(z)$ \textup{(}with
$s\ge0$\textup{)}, the function $g_\kappa(z)$ can be analytically
continued to the slit complex plane $\CC\setminus [1,+\infty)$.
\end{lemma}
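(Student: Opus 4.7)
Since the polylogarithm $\Li_{s+1}(z)$ appearing in \eqref{eq:xi-series} is already known to extend analytically to $\CC\setminus[1,+\infty)$ (see the discussion of $\Li_s$ in Section~\ref{sec:polylog}), it suffices to show that the remainder series
$$F(z) := \sum_{j=1}^\infty \frac{\xi(j)}{j^{s+1}}\,z^j,$$
initially holomorphic on $|z|<1$ by the bound $\xi(j)=O(j^{-\epsilon})$ (and absolute convergence inherited from $\sum j^{-s-1-\epsilon}<\infty$ if $s\ge 0$, or after use of \eqref{eq:xi-series-derivarive<1}), admits the same analytic continuation.

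My approach is a Lindel\"of-type interpolation. Set $h(w):=\xi(w)/w^{s+1}$; by hypothesis $h$ is holomorphic in the half-plane $\Re(w)>\tfrac14$ and satisfies $h(w)=O(|w|^{-s-1-\epsilon})$ there, uniformly in $\arg w$. Fixing $c\in(\tfrac12,1)$ (so that the vertical line $\Re(w)=c$ avoids all positive integers), I would establish the contour integral representation
$$F(z) = -\frac{1}{2\rmi}\int_{c-\rmi\infty}^{\mypp c+\rmi\infty} h(w)\,\frac{(-z)^w}{\sin(\pi w)}\,\rmd w, \qquad |z|<1,\ z\notin[0,1),$$
for an appropriate branch of $(-z)^w$. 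The identity is proved by closing the vertical line via a large rectangle extending to the right, applying the residue theorem at the simple poles $w=n\in\NN$ of $\pi/\sin(\pi w)$ (with residues $(-1)^n$, so that the sum of residues reproduces $\sum h(n)(-1)^n(-z)^n=\sum h(n)z^n$), and then letting the height and width tend to infinity; the polynomial decay of $h$ on vertical lines combined with the exponential decay of $1/\sin(\pi w)$ as $|\Im w|\to\infty$ kills the remaining boundary contributions.

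Having the integral representation, the analytic continuation of $F$ follows by deforming the line of integration into a Hankel-type loop winding around the ray $[1,+\infty)$ (precisely the same deformation used for $\Li_{s+1}$ in Section~\ref{sec:polylog}) and choosing the branch of $(-z)^w$ accordingly, so that its only $z$-singularity lies along that ray. Convergence of the deformed integral on compact subsets of $\CC\setminus[1,+\infty)$ follows from the same decay estimates for $h$ and $1/\sin(\pi w)$, and differentiation under the integral sign shows that the result is holomorphic in $z$ on $\CC\setminus[1,+\infty)$. By the identity theorem this function agrees with $F(z)$ for $|z|<1$, yielding the desired continuation of $g_\kappa$.

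The main obstacle is the careful bookkeeping of branches and contours: one must arrange $(-z)^w$ to be single-valued exactly on $\CC\setminus[1,+\infty)$ and verify that the Hankel-type deformation of the vertical line does not cross any singularity of $h(w)/\sin(\pi w)$. The hypothesis that $\xi$ is analytic on the slightly enlarged half-plane $\Re(w)>\tfrac14$ (rather than merely $\Re(w)>1$) is precisely what allows the contour to be pushed past the potential issue at $w=1$, while the polynomial decay of $\xi$ guarantees the required uniform estimates on arcs tending to infinity.
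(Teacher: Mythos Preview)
Your approach is the same as the paper's---both reduce to the remainder series $F(z)=\sum_j \xi(j)\myp j^{-s-1} z^j$ and invoke the Lindel\"of integral representation along a vertical line $\Re(w)=c\in(\tfrac14,1)$. The paper simply cites the Lindel\"of theorem from \cite[\S\mypp{}IV.8]{FlSe09} rather than rederiving it via residues, but your sketch of that derivation is fine.

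However, your paragraph on analytic continuation is confused. No deformation of the $w$-contour is needed, and the phrase ``Hankel-type loop winding around the ray $[1,+\infty)$'' conflates the integration variable $w$ with the function argument $z$. The Lindel\"of integral on the \emph{fixed} vertical line already defines a holomorphic function of $z$ wherever it converges absolutely: writing $w=c+\rmi t$, one has $|(-z)^w|=|z|^c\exp\{-t\arg(-z)\}$ and $|1/\sin(\pi w)|=O(\rme^{-\pi|t|})$, so the integral converges for every $z$ with $|\arg(-z)|<\pi$, i.e.\ on $\CC\setminus[0,+\infty)$. That is the continuation the integral delivers directly. The step you are missing is then purely set-theoretic: the original power series is holomorphic on the open disk $|z|<1$, and $\{|z|<1\}\cup\bigl(\CC\setminus[0,+\infty)\bigr)=\CC\setminus[1,+\infty)$, which is exactly how the paper concludes. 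Your final remark about the hypothesis $\Re(w)>\tfrac14$ is also slightly off: its role is simply to allow the vertical contour to sit strictly to the \emph{left} of the first pole at $w=1$ (so that all residues lie on one side), not to ``push past'' it.
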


\begin{proof} Since the claim is valid for the polylogarithm $\Li_{s+1}(z)$
(see Section~\ref{sec:polylog}), from \eqref{eq:xi-series} we see
that it suffices to prove the lemma for the series
$\sum_{j=1}^\infty a(j)\,z^j$, where $a(w):=\xi(w)/ w^{s+1}$.
Clearly, for any $\delta\in(0,\pi)$ we have the estimate
$a(w)=O\bigl(\rme^{\myp(\pi-\delta)\myp|w|}\bigr)$. Hence, the
Lindel\"of theorem (see \cite[\S\mypp{}IV.8, p.~237]{FlSe09} yields
the integral representation
\begin{equation}\label{eq_lindelof_integral_representation}
\sum_{j=1}^\infty a(j)\mypp z^j = -\frac{1}{2\pi \rmi} \int_{(1/2)-
\rmi\infty}^{(1/2)+\rmi\infty} a(w)\, (-z)^w \frac{\pi}{\sin\pi w}
\,\rmd{w},
\end{equation}
which provides an analytic continuation of the series to the domain
$\CC\setminus\{z: |\myn\arg\myn(z)|\le \delta\}$. Since $\delta>0$
can be taken arbitrarily small, this implies the analyticity in
$\CC$ slit along the ray $[0,+\infty)$. To complete the proof, it
remains to recall that $\sum_{j} a(j)\mypp z^j$ is analytic in
$|z|<1$.
\end{proof}

The next question is the asymptotic behaviour of the function
\eqref{eq:xi-series} as $z\to1$ with $z\in\CC\setminus [1,+\infty)$.
Application of the asymptotic formulas \eqref{eq:Li-noninteger} and
\eqref{eq:Li-integer} in the particular case $\xi(z)=z^{-\epsilon}$
(leading to $g_\kappa(z)=\kappa^*\myp\{\Li_{s+1}(z)
+\Li_{s+\epsilon+1}(z)\}$) motivates and illustrates the following
expansions, which now have to be finite (up to the order of
$(1-z)^s$) due to the limited information about the perturbation
function $\xi(z)$.

\begin{lemma}\label{lm:pert_polylog}
Suppose that $\xi(z)$ satisfies the same conditions as in Lemma
\textup{\ref{lm:polylog-extension}}, and let $q\in\NN_0$ be such
that $q\le s<q+1$. Then, as $z\to 1$ so that
$z\in\CC\setminus[1,+\infty)$,

\smallskip \textup{(a)} \myp{}for $s\notin\NN_0$ \,\textup{(}i.e., $q<s<q+1$\textup{)},
\begin{equation}\label{eq:expansion_xi}
g_\kappa(z) = \sum_{n=0}^q
\frac{(-1)^ng_\kappa^{\{n\}}\myn(1)}{n!}\,(1-z)^n+\kappa^*\myp\Gamma(-s)\myp(1-z)^s
+O\bigl((1-z)^{s+\epsilon_*}\bigr),
\end{equation}
where $\epsilon_*=\epsilon$ if\/ $s+\epsilon<q+1$, $\epsilon_*=1$
if\/ $s+\epsilon>q+1$, and $\epsilon_*$ is any number in
$(0,\epsilon)$ if\/ $s+\epsilon=q+1$\myp\textup{;}

\smallskip
\textup{(b)} \myp{}for $s=q\in\NN_0$\myp,
\begin{align} \notag
g_\kappa(z) = \sum_{n=0}^{q-1}
\frac{(-1)^ng_\kappa^{\{n\}}\myn(1)}{n!}\,(1-z)^n
&-\kappa^*\myp\frac{(-1)^{q}}{q!}\,(1-z)^q \log\myn(1-z)\\
\label{eq:Li-q+1*} &+\kappa^*B_q\,(1-z)^q
+O\bigl((1-z)^{q+\epsilon_*}\bigr),
\end{align}
where $B_q$ is some constant, $\epsilon_*=\epsilon$ if\/
$\epsilon<1$ and $\epsilon_*$ is any number in $(0,\epsilon)$ if\/
$\epsilon\ge 1$.

The expansions \eqref{eq:expansion_xi} and \eqref{eq:Li-q+1*} can be
differentiated $q$ times.
\end{lemma}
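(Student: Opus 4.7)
The plan is to split $g_\kappa(z) = \kappa^*\Li_{s+1}(z) + \kappa^* P(z)$, where $P(z) := \sum_{j=1}^\infty \xi(j)\mypp j^{-s-1}\mypp z^j$, and to analyse the two pieces separately. Analyticity of both on $\CC\setminus[1,+\infty)$ has already been established by the Lindelöf integral representation argument in the proof of Lemma~\ref{lm:polylog-extension}. For the polylogarithmic piece I would apply Lemma~\ref{lm:polylog-sing} with parameter $s+1$ and substitute the series \eqref{eq:Li_w} for $\varpi=-\log z$ into either \eqref{eq:Li-noninteger} (when $s\notin\NN_0$) or \eqref{eq:Li-integer} (when $s=q\in\NN_0$); regrouping by powers of $(1-z)$ yields an expansion of $\kappa^*\Li_{s+1}(z)$ of exactly the form required, with Taylor coefficients contributing $(-1)^n\kappa^*\myp\Li_{s+1}^{(n)}(1)/n!$ for $n\le q$, the explicit singular piece $\kappa^*\Gamma(-s)(1-z)^s$ (resp.\ the power-logarithmic term together with a smooth $A_q\myp(1-z)^q$ contribution in the integer case), and a remainder of order $O((1-z)^{q+1})$ (resp.\ $O((1-z)^{q+\delta})$ for any $\delta\in(0,1)$), exactly as in \eqref{eq:Li-s+1}/\eqref{eq:Li-q+1}.

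The main technical step is producing a comparable expansion for $P(z)$. Since $a(j):=\xi(j)\mypp j^{-s-1}=O(j^{-s-1-\epsilon})$, the termwise differentiated series $P^{(k)}(z)=\sum_{j\ge k} a(j)\mypp (j)_{k}\mypp z^{j-k}$ converges absolutely at $z=1$ whenever $k<s+\epsilon$; in particular, since $q\le s<s+\epsilon$, the derivatives $P^{(n)}(1)$ are well-defined and finite for $n=0,1,\ldots,q$. Write
\begin{equation*}
P(z) = \sum_{n=0}^q \frac{P^{(n)}(1)}{n!}\,(z-1)^n + \rho_P(z),\qquad \rho_P(z) = \sum_{j>q} a(j)\mypp r_{j,q}(z),
\end{equation*}
with the Taylor remainder $r_{j,q}(z):=z^j-\sum_{k=0}^q\binom{j}{k}(z-1)^k$. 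To estimate $\rho_P(z)$ as $z\to 1$ in the slit plane I would split the $j$-sum at the scale $j_*\asymp 1/|1-z|$: for $j\le j_*$ the integral form of Taylor's remainder applied to $w\mapsto w^j$, integrated along the segment from $1$ to $z$ (where $|1+t(z-1)|\le 1 + O(|1-z|)$ for $t\in[0,1]$), yields $|r_{j,q}(z)|=O(|1-z|^{q+1}\binom{j}{q+1})$, while for $j>j_*$ the individual summand satisfies $|r_{j,q}(z)|=O((j|1-z|)^q)$ by direct estimation. Routine summation then produces the advertised bound $O((1-z)^{s+\epsilon_*})$ (resp.\ $O((1-z)^{q+\epsilon_*})$ in the integer case), with the logarithmic factors arising in the borderline cases absorbed by strictly lowering $\epsilon_*$ below~$\epsilon$.

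It remains to add the two expansions. The Taylor coefficients up to order $q$ combine into $(-1)^n g_\kappa^{\{n\}}(1)/n!$ (using that $g_\kappa^{\{n\}}(1)=g_\kappa^{(n)}(1)$, finite for $n\le q$); the singular contribution $\kappa^*\Gamma(-s)(1-z)^s$ is inherited from the polylogarithm; and in the integer case $s=q$ the constant $B_q$ absorbs the polylog constant $A_q$ of \eqref{eq:Li-q+1} together with the Taylor contribution $(-1)^q\kappa^* P^{(q)}(1)/q!$ coming from $P$. The assertion of $q$-fold differentiability then follows from the analyticity on $\CC\setminus[1,+\infty)$: Lemma~\ref{lm:polylog-sing} is obtained from an analytic identity which may be differentiated term-wise inside its domain of validity, while the Lindelöf integral representation \eqref{eq_lindelof_integral_representation} of $P$ admits differentiation under the integral sign, the polynomial factor in $w$ of degree at most $q$ picked up from successive differentiations of $(-z)^w$ being dominated by the exponential decay of $1/\sin(\pi w)$ along the contour. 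The principal technical obstacle is the careful bookkeeping of the remainder exponent $\epsilon_*$ in the various borderline regimes, especially at $s+\epsilon=q+1$ (and its integer analogue), where the splitting argument generates a logarithmic factor that forces a strict reduction of $\epsilon_*$.
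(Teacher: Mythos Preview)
Your approach is correct but genuinely different from the paper's. The paper gives only a sketch: it first reduces to real $z\uparrow 1$ (deferring the complex extension to the Lindel\"of representation), then reduces further to the base case $s\in(0,1)$ with the intention of recovering general $s$ by $q$-fold integration over $[z,1]$; in that base case it writes $g_\kappa(1)-g_\kappa(z)$ as $\kappa^*\sum_j (1-\rme^{-j\varpi})\,j^{-s-1}(1+\xi(j))$ with $\varpi=-\log z$ and evaluates the two resulting sums as Riemann sums via an Euler--Maclaurin type argument, computing the main one exactly as $\varpi^s\int_0^\infty(1-\rme^{-x})\,x^{-s-1}\,\rmd x=-\Gamma(-s)\,\varpi^s$ and bounding the $\xi$-sum by $O(\varpi^{s+\epsilon})$. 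Your route avoids both reductions: you split off $\kappa^*\Li_{s+1}(z)$ and quote Lemma~\ref{lm:polylog-sing} for it, then control the perturbation $P(z)=\sum_j\xi(j)\,j^{-s-1}z^j$ by a finite Taylor jet at $z=1$ plus a remainder estimated by splitting the $j$-sum at scale $1/|1-z|$. This is more elementary (no integration-in-$s$ trick, no Euler--Maclaurin) and handles complex $z$ in one pass.

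One point to make explicit: your dyadic estimate of $\rho_P(z)$ via $|r_{j,q}(z)|$ tacitly requires $|z|\le 1$, since for $|z|>1$ the series $\sum_j a(j)z^j$ diverges and the bound $|z^j|=O((j|1-z|)^q)$ fails for large $j$. So, just like the paper, you are really proving the expansion first in the disk (or on the real segment) and then invoking the Lindel\"of representation~\eqref{eq_lindelof_integral_representation} to propagate it to $\CC\setminus[1,+\infty)$; it would be worth saying this in one sentence. With that caveat, your argument goes through and gives the same remainder bookkeeping: $\rho_P(z)=O\bigl((1-z)^{\min(s+\epsilon,\,q+1)}\bigr)$ (with a harmless $\log$ at equality), which combined with the polylog remainder $O((1-z)^{q+1})$ yields the stated~$\epsilon_*$.
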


\begin{proof}[Sketch of\/ proof] This result is of marginal significance for
our purposes, as it will only be used for illustration in Section
\ref{sec:comparison_1}. Its full proof is quite tedious but follows
very closely the proof of a similar result for the polylogarithm
$\Li_{s+1}(z)$ (see details in \cite[\S\myp{}VI.8]{FlSe09}). Thus,
we opt to derive the expansion \eqref{eq:expansion_xi} only for
\emph{real} $z\uparrow 1$; an extension to complex
$z\in\CC\setminus[1,+\infty)$ is based on the Lindel\"of integral
representation~\eqref{eq_lindelof_integral_representation}.

Observe that it suffices to prove \eqref{eq:expansion_xi} for
$s\in(0,1)$; the case of an arbitrary (non-integer) $s>0$ may then
be handled via a suitable ($q$-fold) integration over the interval
$[z,1]$. To this end, using the substitution $z=\rme^{-\varpi}$
(with $\varpi\downarrow 0$ as $z\uparrow1$), from
\eqref{eq:xi-series} we obtain
\begin{equation}\label{eq:1+xi_series}
g_\kappa(z)=g_\kappa(1)-\kappa^*\sum_{j=1}^\infty
\frac{1-\rme^{-j\varpi}}{j^{s+1}}-\kappa^*\sum_{j=1}^\infty
\frac{\xi(j)\myp(1-\rme^{-j\varpi})}{j^{s+1}}\mypp.
\end{equation}
The first series in \eqref{eq:1+xi_series} may be rewritten as a
Riemann integral sum
\begin{equation}\label{eq:varpi^s}
\varpi^s\sum_{j=1}^\infty
\frac{1-\rme^{-j\varpi}}{(j\varpi)^{s+1}}\,\varpi=\varpi^s
\int_0^\infty \frac{1-\rme^{-x}}{x^{s+1}}\,\rmd{x}+O(\varpi),\qquad
\varpi\downarrow0,
\end{equation}
where the asymptotics on the right-hand side can be obtained using
Euler--Maclaurin's summation formula. The integral in
\eqref{eq:varpi^s} is easily computed via integration by parts,
\begin{equation}\label{eq:varpi^s*int}
\frac{1}{(-s)}\int_0^\infty
(1-\rme^{-x})\,\rmd\myp({x^{-s}})=\frac{1}{s}\int_0^\infty
x^{-s}\mypp\rme^{-x}\,\rmd{x}=\frac{\Gamma(1-s)}{s}=-\Gamma(-s).
\end{equation}
Next, using the estimate $\xi(j)=O(j^{-\epsilon})$ and assuming that
$s+\epsilon<1$, the second series in \eqref{eq:1+xi_series} is
estimated, similarly to \eqref{eq:varpi^s} and
\eqref{eq:varpi^s*int}, by
\begin{equation}\label{eq:varpi^s-correction}
O(1)\,\sum_{j=1}^\infty
\frac{1-\rme^{-j\varpi}}{j^{s+\epsilon+1}}=O(\varpi^{s+\epsilon}),\qquad
\varpi\downarrow0.
\end{equation}

Finally, collecting \eqref{eq:1+xi_series}, \eqref{eq:varpi^s},
\eqref{eq:varpi^s*int} and \eqref{eq:varpi^s-correction}, we obtain
$$
g_\kappa(z)=g_\kappa(1)+\kappa^*\Gamma(-s)\,\varpi^s +
O(\varpi^{s+\epsilon}),
$$
and the formula \eqref{eq:expansion_xi} (with $0<s<1$ and real
$z\uparrow1$) immediately follows by the substitution $\varpi =
-\log z = 1-z + O\bigl((1-z)^2\bigr)$.
\end{proof}

\section{Asymptotic Statistics of Cycles}\label{sec:cycles}

Throughout this section, we assume that the generating functions
$g_\theta(z)$ and $g_\kappa(z)$ satisfy the hypotheses of a suitable
asymptotic theorem in Section~\ref{sec:asymptotic_theorem}
--- namely, Theorem~\ref{thm:aux_asypmtotic} for the subcritical
case ($g_\kappa^{\{1\}}\myn(R\myp)>1$), Theorems
\ref{thm:aux_asypmtotic_2} or \ref{thm:aux_asypmtotic_2a} for the
supercritical case ($g_\kappa^{\{1\}}\myn(R\myp)<1$), and
Theorems~\ref{thm:aux_asypmtotic_3}, \ref{thm:aux_asypmtotic_5}
or~\ref{thm:aux_asypmtotic_6} for the critical case
($g_\kappa^{\{1\}}\myn(R\myp)=1)$.

\begin{remark}
The analytic conditions on the generating functions $g_\theta(z)$
and $g_\kappa(z)$ employed in Section~\ref{sec:asymptotic_theorem}
are difficult to convert into general sufficient conditions on the
underlying coefficients $(\theta_j)$ and $(\kappa_j)$, respectively.
For a better orientation in the asymptotic results below, it should
be helpful for the reader to bear in mind the examples (especially
the polylogarithm) considered in Section~\ref{sec:examples}.
\end{remark}

Let us define the quantity $r_{*}>0$ as
\begin{equation}\label{eq:r*}
r_{*}:=\left\{\begin{array}{ll} r_1,&\ \ g_\kappa^{\{1\}}\myn(R\myp)
\ge 1,\\[.2pc]
R\myp,&\ \ g_\kappa^{\{1\}}\myn(R\myp) \le 1,
\end{array}
\right.
\end{equation}
where  $r_1= r_v|_{v=1}$ is the (unique) root of the equation
\eqref{eq:r_v} with $v=1$, that is, $g_\kappa^{\{1\}}\myn(r_1)=1$.
Thus, we have
\begin{equation}\label{eq:r*1}
\begin{aligned}
g_\kappa^{\{1\}}\myn(r_{*}) &=1 \quad \text{if}\ \ \
g_\kappa^{\{1\}}\myn(R\myp)\ge 1,
\\[.1pc]
g_\kappa^{\{1\}}\myn(r_{*})&<1 \quad \text{if}\ \ \
g_\kappa^{\{1\}}\myn(R\myp)< 1.
\end{aligned}
\end{equation}

\subsection{Cycle counts}\label{sec:cyclecounts}

Our first result treats the asymptotics of the cycle counts $C_j$
(i.e., the numbers of cycles of length $j\in\NN$\myp, respectively,
in a random permutation $\sigma\in\SN$).

\begin{theorem}\label{thm:cycle_counts_normalized}
Let $r_{*}$ be as defined in~\eqref{eq:r*}.

\smallskip
\textup{(a)} \myp{}For each $m\in\NN$ and any integers
$n_1,\dots,n_m\ge0$, we have
\begin{equation}\label{eq:factorial_moments_Cm}
\lim_{N\to\infty}N^{-(n_1+\dots+n_m)}\:\Ens\!\mynn\left(\mynn\prod_{j=1}^m
(C_j)_{n_j}\right) = \prod_{j=1}^m\biggl(\frac{\kappa_j\mypp
r_{*}^j}{j}\biggr)^{\myn n_j}.
\end{equation}
In particular, the random variables $(C_j/N)$ are asymptotically
independent and, for each $j\in\NN$\myp, there is the convergence in
probability
\begin{equation}\label{eq:limits_Cm}
\frac{C_j}{N}\stackrel{p}{\longrightarrow}\frac{\kappa_j\mypp
r_{*}^j}{j}\mypp,\qquad N\to\infty.
\end{equation}

\smallskip
\textup{(b)} \myp{}If, for some $j\in\NN$\myp, $\kappa_j=0$ but
$\theta_j>0$ then for any integer $n\ge0$
\begin{equation}\label{eq:factorial_moments_Cm1}
\lim_{N\to\infty}\Ens\myn\bigl[(C_j)_{n}\bigr] =\biggl(
\frac{\theta_j\mypp r_{*}^j}{j }\biggr)^{\myn n}.
\end{equation}
Hence, $C_j$ converges weakly to a Poisson law with parameter
$\theta_j\mypp r_{*}^j/j$\mypp. The asymptotic independence of\/
$C_j$ with other cycle counts \textup{(}normalized or not, as
appropriate\textup{)} is preserved.
\end{theorem}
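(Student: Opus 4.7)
The plan is to use Lemma~\ref{lem:generatin_moments_factorial_Cm} as the single engine driving both parts. That lemma gives
$$\Ens\!\left[\prod_{j=1}^m(C_j)_{n_j}\right]=\frac{h_{N-K_m}(N)}{H_N}\prod_{j=1}^m\left(\frac{\theta_j+N\kappa_j}{j}\right)^{\myn n_j},$$
with $K_m=\sum_j j\myp n_j$, so everything reduces to understanding the ratio $h_{N-K_m}(N)/H_N$. I would first rewrite
$$h_{N-K_m}(N)=[z^{N-K_m}]\exp\{\GN(z)\}=[z^N]\bigl[z^{K_m}\exp\{\GN(z)\}\bigr],$$
which shows that this ratio is precisely the output of the coefficient-extraction theorems of Section~\ref{sec:asymptotic_theorem} with pre-factor $f(z):=z^{K_m}$ and $v=1$, divided by the same output with $f\equiv 1$.

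The second step is to verify the key estimate
$$\frac{h_{N-K_m}(N)}{H_N}\longrightarrow r_*^{K_m},\qquad N\to\infty,$$
in each of the three regimes by invoking the appropriate theorem. Because $f(z)=z^{K_m}$ is entire and $f(r_*)=r_*^{K_m}\ne0$, each theorem applies directly. Subcritically (Theorem~\ref{thm:aux_asypmtotic}) the pre-factor enters as $f(r_1)=r_1^{K_m}$; critically (Theorems~\ref{thm:aux_asypmtotic_3}, \ref{thm:aux_asypmtotic_5}, \ref{thm:aux_asypmtotic_6}) it enters as $f(R)$; supercritically with $\theta^*>0$ (Theorem~\ref{thm:aux_asypmtotic_2}) it enters as $f(R)$ via the choice $\beta=0$. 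The delicate subcase is supercritical with $\theta^*=0$ (Theorem~\ref{thm:aux_asypmtotic_2a}), where I would rewrite $f(z)=R^{K_m}\tilde{f}(z)$ with $\tilde{f}(R)=1$; since $\tilde{f}$ is entire, it contributes no singularity at $R$, so case~(i) of that theorem applies and the $a_s$-term of $g_\kappa$ still yields the leading asymptotic, again with an overall factor $R^{K_m}$.

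Granted the ratio asymptotic, part~(a) would follow by substituting into \eqref{eq:E(C)} and using $(\theta_j+N\kappa_j)/j\sim N\kappa_j/j$ (valid since $\kappa_j>0$): dividing by $N^{n_1+\dots+n_m}$ gives \eqref{eq:factorial_moments_Cm}. The convergence \eqref{eq:limits_Cm} in probability then follows from Chebyshev's inequality, since \eqref{eq:factorial_moments_Cm} applied with $m=1$, $n_1\in\{1,2\}$ yields $\Ens[C_j/N]\to\kappa_j r_*^j/j$ and $\mathrm{Var}(C_j/N)\to 0$; the asymptotic independence is then automatic because the joint limit is a product of point masses. For part~(b), when $\kappa_j=0$ but $\theta_j>0$ the factor $(\theta_j+N\kappa_j)/j=\theta_j/j$ does not scale with $N$, so the unnormalised factorial moments satisfy $\Ens[(C_j)_n]\to(\theta_j r_*^j/j)^n$ directly; these are the factorial moments of $\mathrm{Poisson}(\theta_j r_*^j/j)$, and the method of moments (which is conclusive for Poisson limits) then gives weak convergence. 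Joint asymptotic independence from other cycle counts (normalised or not) follows by applying exactly the same computation to mixed factorial moments in \eqref{eq:E(C)}: the limiting expression factorises as a product of the respective marginal limits.

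The only genuine obstacle I anticipate is the supercritical $\theta^*=0$ subcase, where one must carefully match the analytic hypotheses of Theorem~\ref{thm:aux_asypmtotic_2a} for a polynomial pre-factor and check that it does not alter the singularity structure used to extract the leading term. Beyond that bookkeeping, the argument is a mechanical application of the machinery already developed in Section~\ref{sec:asymptotic_theorem}, combined with the standard method of factorial moments.
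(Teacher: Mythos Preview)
Your proposal is correct and follows essentially the same route as the paper: use Lemma~\ref{lem:generatin_moments_factorial_Cm}, rewrite $h_{N-K_m}(N)=[z^N]\bigl[z^{K_m}\rme^{\GN(z)}\bigr]$, and then apply the relevant asymptotic theorem of Section~\ref{sec:asymptotic_theorem} with pre-factor $f(z)=z^{K_m}$ to deduce $h_{N-K_m}(N)/H_N\to r_*^{K_m}$. Your extra care with the supercritical $\theta^*=0$ subcase (Theorem~\ref{thm:aux_asypmtotic_2a}) is well placed; the paper handles this only implicitly, and your Chebyshev argument for \eqref{eq:limits_Cm} is a slightly more elementary variant of the paper's ``method of moments'' remark.
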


\begin{remark}
If both $\kappa_j=0$ and $\theta_j=0$ then, by the definition
\eqref{eq:def_near_spatial} of the measure $\Pns$, $C_j=0$ almost
surely (a.s.).
\end{remark}

\begin{proof}[Proof\/ of\/ Theorem
\textup{\ref{thm:cycle_counts_normalized}}] If all $\kappa_j>0$ then
Lemma \ref{lem:generatin_moments_factorial_Cm} implies that for any
$n_1,\dots,n_m\in\NN_0$
\begin{equation}\label{eq:prod-C}
N^{-(n_1+\dots+n_m)}\,\Ens\!\mynn\left(\mynn\prod_{j=1}^m(C_j)_{n_j}\right)\sim
\frac{h_{N-K_m}(N)}{H_N}\prod_{j=1}^m\left(
\frac{\kappa_j}{j}\right)^{\myn n_j},\qquad N\to\infty,
\end{equation}
where $K_m=\sum_{j=1}^m j\myp n_j$. Furthermore, note that
\begin{equation}\label{eq:N-Km}
h_{N-K_m}(N)=[z^{N-K_m}]\,\rme^{\myp\GN(z)}=[z^{N}]\bigl[z^{K_m}\mypp\rme^{\myp\GN(z)}\bigr].
\end{equation}
Hence, applying one of Theorems \ref{thm:aux_asypmtotic},
\ref{thm:aux_asypmtotic_2}, \ref{thm:aux_asypmtotic_2a},
\ref{thm:aux_asypmtotic_3}, \ref{thm:aux_asypmtotic_5} or
\ref{thm:aux_asypmtotic_6} as appropriate (each one with the
pre-exponential function $f(z)=z^{K_m}$), we get
\begin{equation}\label{eq:prod-h/H}
\frac{h_{N-K_m}(N)}{H_N}\to r_{*}^{K_m},\qquad N\to\infty.
\end{equation}
Combining \eqref{eq:prod-C} and \eqref{eq:prod-h/H} we obtain
formula \eqref{eq:factorial_moments_Cm}, which also entails the
asymptotic independence. Finally, the convergence
\eqref{eq:limits_Cm} follows from \eqref{eq:factorial_moments_Cm} by
the method of moments.

Similarly, for $\kappa_j=0$, \,$\theta_j>0$ we have
\begin{align*}
\Ens\myn\bigl[(C_j)_n\bigr] \to \biggl(\frac{\theta_j \mypp
r_{*}^j}{j}\biggr)^{\myn n},\qquad N\to\infty,
\end{align*}
where the limit is the $n$-th factorial moment of the corresponding
Poisson distribution.
\end{proof}

\begin{remark}
If $\kappa_j=0$ for \emph{all}  $j\in\NN$, then $r_{*}$ is
ill-defined (see \eqref{eq:r*}) and our result does not apply. In
this case, it has been proved by Nikeghbali and Zeindler
\cite[Corollary~3.2]{NiZe13} that, under suitable analytic
conditions on the generating function $g_\theta(z)$ in the spirit of
those used in Section \ref{sec:asymptotic_theorem}, the cycle counts
$C_j$ converge to mutually independent Poisson random variables with
parameter $\theta_j/j$, respectively. The latter is easy to see in a
simple particular case with $\theta_j=\theta^*=\const$ (the Ewens
model), where by the asymptotics \eqref{eq:h_theta} and Lemma
\ref{lem:generatin_moments_factorial_Cm}
\begin{equation*}
\Ens\!\mynn\left(\mynn\prod_{j=1}^m(C_j)_{n_j}\right)=
\frac{h_{N-K_m}(N)}{H_N}\prod_{j=1}^m\left(
\frac{\theta_j}{j}\right)^{\myn n_j} \to \prod_{j=1}^m\left(
\frac{\theta^*}{j}\right)^{\myn n_j}\!,\qquad N\to\infty.
\end{equation*}
See also Ercolani and Ueltschi \cite[Theorem~6.1]{ErUe12}, where the
asymptotically Ewens case $\lim_{j\to\infty}\theta_j=\theta^*\myn>0$
has been studied.
\end{remark}

\begin{remark} Formally, the limiting result
\eqref{eq:limits_Cm} suggests that the total proportion of points
contained in \emph{finite} cycles, i.e., $N^{-1}\sum_{j=1}^\infty
j\mypp C_j$, is asymptotically given by (see~\eqref{eq:r*1})
$$
\sum_{j=1}^\infty \kappa_j\mypp
r_{*}^j=g_\kappa^{\{1\}}\myn(r_{*})\,\left\{\begin{array}{ll}
=1&\text{if} \ \ \ g_\kappa^{\{1\}}\myn(R)\ge1,\\[.3pc]
<1&\text{if} \ \ \ g_\kappa^{\{1\}}\myn(R)<1,\end{array}\right.
$$
which indicates the emergence of an ``infinite'' cycle in the
supercritical case (i.e., $g_\kappa^{\{1\}}\myn(R)<1$, see
Definition~\ref{def:sub-sup}). This observation is elaborated below
(see Theorems \ref{thm:fractional_infinite}
and~\ref{thm:fractional_infinite_random}).
\end{remark}

\subsection{Fraction of points in long cycles}
\label{sec:cyclefraction}

By analogy with the spatial case (see \eqref{eq:nu}), let us define
the similar quantities in the surrogate-spatial model to capture the
expected fraction of points in long cycles,
\begin{equation}\label{eq:def_v_near_spatial}
\tilde{\nu}_K:= \liminf_{N \to \infty} \frac{1}{N}
\:\Ens\!\mynn\left(\myn\sum_{j>K} j\mypp C_j\right),\qquad
\tilde{\nu} := \lim_{K\to \infty} \tilde{\nu}_K.
\end{equation}

\begin{theorem}\label{thm:fractional_infinite}
The quantity $\tilde{\nu}_K$ \textup{($K\in\NN$)} defined in
\eqref{eq:def_v_near_spatial} exists as a limit and is explicitly
given by
\begin{equation}\label{eq:tilde-nu-M}
\tilde{\nu}_K= \lim_{N \to \infty} \frac{1}{N}
\:\Ens\!\mynn\left(\myn\sum_{j>K} j\mypp C_j\right)=1 -
\sum_{j=1}^{K} \kappa_j\mypp r_{*}^j\,,
\end{equation}
where $r_{*}$ is defined in~\eqref{eq:r*}. Moreover,
\begin{equation}\label{eq:tilde-nu}
\tilde\nu=\lim_{K\to\infty} \tilde{\nu}_K=\left\{\begin{array}{cl}
0,&\quad g_\kappa^{\{1\}}\myn(R\myp)\ge 1,\\[.3pc]
1- g_\kappa^{\{1\}}\myn(R\myp)> 0,&\quad
g_\kappa^{\{1\}}\myn(R\myp)<1.
\end{array}
\right.
\end{equation}
\end{theorem}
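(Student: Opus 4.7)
The key observation is that the statement follows almost immediately by combining the deterministic conservation law $\sum_{j=1}^N j\mypp C_j = N$ from \eqref{eq:sumC} with the convergence results already obtained in Theorem~\ref{thm:cycle_counts_normalized}. The point is that instead of analysing the tail sum $\sum_{j>K} j\mypp C_j$ directly (which would require controlling cycles up to length $N$), we flip it into a finite sum using the constraint and handle only $K$ terms.

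Concretely, my first step would be to write, for every permutation and every fixed $K\in\NN$,
\begin{equation*}
\sum_{j>K} j\mypp C_j \;=\; N \;-\; \sum_{j=1}^{K} j\mypp C_j\mypp,
\end{equation*}
which holds as an identity. Taking expectation under $\Pns$ and dividing by $N$,
\begin{equation*}
\frac{1}{N}\,\Ens\!\left(\sum_{j>K} j\mypp C_j\right) \;=\; 1 \;-\; \sum_{j=1}^{K} \frac{j\,\Ens[C_j]}{N}\mypp.
\end{equation*}
Now I would invoke Theorem~\ref{thm:cycle_counts_normalized}(a) with $m=1$, $n_1=1$ (or its part~(b) when $\kappa_j=0$) to conclude that for each fixed $j\in\NN$,
\begin{equation*}
\lim_{N\to\infty} \frac{\Ens[C_j]}{N} \;=\; \frac{\kappa_j\, r_{*}^{\,j}}{j}\mypp,
\end{equation*}
the case $\kappa_j=0$ being covered by \eqref{eq:factorial_moments_Cm1}, which yields a $\Ens[C_j]$ bounded in $N$, hence with vanishing normalized limit (consistent with $\kappa_j\myp r_*^{\,j}=0$). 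Since the sum above is finite, interchanging limit and sum is trivial, and one obtains
\begin{equation*}
\lim_{N\to\infty}\frac{1}{N}\,\Ens\!\left(\sum_{j>K} j\mypp C_j\right) \;=\; 1 \;-\; \sum_{j=1}^{K}\kappa_j\mypp r_{*}^{\,j}\mypp,
\end{equation*}
which in particular shows that the $\liminf$ in the definition~\eqref{eq:def_v_near_spatial} is an honest limit and equals the expression~\eqref{eq:tilde-nu-M}.

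For the second claim, I would let $K\to\infty$ in this formula. By monotone convergence applied to the nonnegative series with coefficients $\kappa_j\mypp r_*^{\,j}$,
\begin{equation*}
\sum_{j=1}^{K} \kappa_j\mypp r_{*}^{\,j} \;\longrightarrow\; \sum_{j=1}^{\infty} \kappa_j\mypp r_{*}^{\,j} \;=\; g_\kappa^{\{1\}}\myn(r_{*})\mypp,
\end{equation*}
the last equality being the defining expansion~\eqref{eq:a} of $g_\kappa^{\{1\}}$ at $z=r_*$. It remains to read off $g_\kappa^{\{1\}}\myn(r_*)$ from~\eqref{eq:r*1}: in the subcritical and critical cases (where $g_\kappa^{\{1\}}\myn(R\myp)\ge1$) one has $g_\kappa^{\{1\}}\myn(r_*)=1$, giving $\tilde\nu=0$; in the supercritical case ($g_\kappa^{\{1\}}\myn(R\myp)<1$) one has $r_*=R$, so $g_\kappa^{\{1\}}\myn(r_*)=g_\kappa^{\{1\}}\myn(R\myp)<1$, giving $\tilde\nu = 1-g_\kappa^{\{1\}}\myn(R\myp)>0$, as claimed in~\eqref{eq:tilde-nu}.

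There is essentially no serious obstacle: the argument is a two-line consequence of the conservation law together with Theorem~\ref{thm:cycle_counts_normalized}. The only subtlety worth flagging is that the $\liminf$ in \eqref{eq:def_v_near_spatial} upgrades automatically to a genuine $\lim$ because the $K$-th partial sum involves only finitely many cycle counts whose normalized expectations already have limits; hence one does not need any uniform control over long cycles at this stage (long cycles are studied separately in Section~\ref{sec:long}).
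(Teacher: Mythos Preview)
Your proposal is correct and follows essentially the same approach as the paper: use the conservation law $\sum_j j\,C_j=N$ to convert the tail sum into a finite head sum, apply Theorem~\ref{thm:cycle_counts_normalized} termwise, and then let $K\to\infty$ using~\eqref{eq:r*1}. The paper's proof is the same two-step argument, only written more tersely and without separately discussing the case $\kappa_j=0$.
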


\begin{proof} Noting that $\sum_{j=1}^\infty j\mypp C_j=N$ and applying
Theorem~\ref{thm:cycle_counts_normalized}\myp(a), we get
\begin{equation*}
\tilde{\nu}_K= 1 - \lim_{N\to\infty}\frac{1}{N}\:
\Ens\!\mynn\left(\myn\sum_{j\le K} j\mypp C_j \right) = 1 -
\sum_{j=1}^{K} \kappa_j\mypp r_{*}^j,
\end{equation*}
which proves \eqref{eq:tilde-nu-M}. Hence, using \eqref{eq:r*1} we
obtain
\begin{equation*}
\tilde{\nu} =\lim_{K\to\infty}\tilde{\nu}_K= 1 - \sum_{j=1}^\infty
\kappa_j\mypp r_{*}^j = 1-g_\kappa^{\{1\}}\myn(r_{*}),
\end{equation*}
which is reduced to \eqref{eq:tilde-nu} thanks to~\eqref{eq:r*1}.
\end{proof}

Theorem \ref{thm:fractional_infinite} can be complemented by a
similar statement about the convergence of the (random) proportion
of points in long cycles, rather than its expected value.

\begin{theorem}\label{thm:fractional_infinite_random}
Under the sequence of probability measures $\Pns$, for any finite
$K\in\NN$ there is the convergence in probability
\begin{equation}\label{eq:tilde-nu-M-random}
\frac{1}{N}\sum_{j>K} j\mypp
C_j\stackrel{p}{\longrightarrow}\tilde{\nu}_K\myp,\qquad N\to\infty,
\end{equation}
where $\tilde{\nu}_K$ is identified in~\eqref{eq:tilde-nu-M}.
\end{theorem}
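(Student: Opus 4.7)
The plan is to exploit the basic identity $\sum_{j=1}^{N} j\mypp C_j = N$ (from \eqref{eq:sumC}) to rewrite
\[
\frac{1}{N}\sum_{j>K} j\mypp C_j \;=\; 1 \;-\; \frac{1}{N}\sum_{j=1}^{K} j\mypp C_j,
\]
so the claim \eqref{eq:tilde-nu-M-random} reduces to showing that, for each fixed $K\in\NN$,
\[
\frac{1}{N}\sum_{j=1}^{K} j\mypp C_j \;\stackrel{p}{\longrightarrow}\; \sum_{j=1}^{K} \kappa_j\mypp r_{*}^j, \qquad N\to\infty.
\]
Since $K$ is finite, this is an assertion about a finite linear combination of the variables $C_j/N$, \,$j=1,\dots,K$.

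The main input will be Theorem~\ref{thm:cycle_counts_normalized}. For each $j\le K$ with $\kappa_j>0$, part~(a) of that theorem gives the convergence in probability
\[
\frac{C_j}{N}\;\stackrel{p}{\longrightarrow}\;\frac{\kappa_j \mypp r_{*}^j}{j},
\]
and hence $j\mypp C_j/N \stackrel{p}{\to}\kappa_j\mypp r_{*}^j$. For the remaining indices, either $\kappa_j=\theta_j=0$ (in which case $C_j\equiv 0$ almost surely, so the contribution is trivially zero), or $\kappa_j=0$ but $\theta_j>0$; in the latter case, part~(b) of Theorem~\ref{thm:cycle_counts_normalized} shows that $C_j$ converges in distribution to a Poisson random variable, and in particular $C_j = O_p(1)$, so $j\mypp C_j/N \stackrel{p}{\to} 0 = \kappa_j\mypp r_{*}^j$. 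In either situation, each term of the finite sum converges in probability to the correct limit $\kappa_j\mypp r_{*}^j$.

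Combining these $K$ convergences in probability (using, e.g., the continuous mapping theorem applied to the map $(x_1,\dots,x_K)\mapsto \sum_{j=1}^K x_j$), we obtain
\[
\frac{1}{N}\sum_{j=1}^{K} j\mypp C_j \;\stackrel{p}{\longrightarrow}\; \sum_{j=1}^{K}\kappa_j \mypp r_{*}^j,
\]
which, in view of the explicit formula \eqref{eq:tilde-nu-M}, yields $N^{-1}\sum_{j>K} j\mypp C_j \stackrel{p}{\to} 1-\sum_{j=1}^K \kappa_j\mypp r_{*}^j = \tilde{\nu}_K$ as required. There is no genuine obstacle here: the result is essentially a corollary of Theorem~\ref{thm:cycle_counts_normalized} combined with the conservation law $\sum_j j\mypp C_j=N$. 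The only point worth being slightly careful about is the bookkeeping in the boundary cases $\kappa_j=0$, which is handled uniformly by noting that in each such case the limit $\kappa_j\mypp r_{*}^j$ is also zero.
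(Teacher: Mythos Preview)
Your proof is correct and follows essentially the same approach as the paper: rewrite $N^{-1}\sum_{j>K} j\,C_j = 1 - N^{-1}\sum_{j\le K} j\,C_j$ using the identity $\sum_j j\,C_j = N$, then apply the convergence in probability from Theorem~\ref{thm:cycle_counts_normalized} term by term to the finite sum. If anything, you are slightly more careful than the paper in explicitly treating the boundary cases $\kappa_j=0$ via part~(b) of Theorem~\ref{thm:cycle_counts_normalized}.
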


\begin{proof} Similarly to the proof of
Theorem~\ref{thm:fractional_infinite}, by the probability
convergence part of Theorem~\ref{thm:cycle_counts_normalized}\myp(a)
and according to~\eqref{eq:tilde-nu-M} we have
$$
\frac{1}{N}\sum_{j>K} j\mypp C_j=1-\frac{1}{N}\sum_{j=1}^{K} j\mypp
C_j\stackrel{p}{\longrightarrow} 1-\sum_{j=1}^{K} \kappa_j\mypp
r_{*}^j=\tilde{\nu}_K\myp,
$$
and the limit \eqref{eq:tilde-nu-M-random} is proved.
\end{proof}

\subsection{Total number of cycles}\label{sec:cycletotal}
The next result is a series of weak limit theorems (in the
subcritical, supercritical and critical cases, respectively) for
fluctuations of the total number of cycles $T_N$
(see~\eqref{eq:T_N}). As stipulated at the beginning of Section
\ref{sec:cycles}, we work under the conditions of suitable
asymptotic theorems from Section \ref{sec:asymptotic_theorem}, which
will be applied without the pre-exponential factor (i.e., with
$f(z)\equiv 1$). Note that in all but one case the limiting
distribution is normal, whereas in the critical case with
$\theta^*\myn>0$ (part (c-iii)) the answer is more complicated (and
more interesting).

In what follows, the notation $\stackrel{d\,}{\rightarrow}$
indicates convergence in distribution (with respect to the sequence
of measures $\Pns$), and $\mathcal{N}(0,1)$ denotes the standard
normal law (i.e., with mean $0$ and variance~$1$).

\begin{theorem}\label{thm:asymptotic_Kon}
\textup{(a)} \myp{}Let\/ $1<g_\kappa^{\{1\}}\myn(R\myp)\le\infty$.
Then, under the conditions of\/
Theorem~\textup{\ref{thm:aux_asypmtotic}},
\begin{equation}\label{eq:subCLTforT}
 \frac{T_N - N g_\kappa(r_1)}  {\sqrt{N \bigl[g_\kappa(r_1) -1/b_2(r_1)\bigr]}}
\stackrel{d}{\longrightarrow} \mathcal{N}(0,1),\qquad N\to\infty,
\end{equation}
where $r_1$ is the root of \eqref{eq:r_v} with $v=1$, $b_2(r)$ is
defined in \eqref{eq:b12}, and $g_\kappa(r_1) -1/b_2(r_1)>0$.

\smallskip \textup{(b)} \myp{}If\/
$g_\kappa^{\{1\}}\myn(R\myp)<1$ then, under the conditions of either
of\/ Theorems~\textup{\ref{thm:aux_asypmtotic_2}}
or~\textup{\ref{thm:aux_asypmtotic_2a}},
\begin{equation}\label{eq:supCLTforT}
\frac{T_N - N g_\kappa(R\myp)}{\sqrt{N g_\kappa(R\myp)}}
\stackrel{d}{\longrightarrow} \mathcal{N}(0,1),\qquad N\to\infty.
\end{equation}

\smallskip
\textup{(c)} \myp{}Let\/ $g_\kappa^{\{1\}}\myn(R\myp)=1$.

\smallskip
\textup{(c-i)} \myp{}Under the conditions of\/
Theorem~\textup{\ref{thm:aux_asypmtotic_3}},
\begin{equation*}
 \frac{T_N - N g_\kappa(R\myp)} {\sqrt{N \bigl[g_\kappa(R\myp) -1/b_2(R\myp)\bigr]}}
\stackrel{d}{\longrightarrow} \mathcal{N}(0,1),\qquad N\to\infty,
\end{equation*}
where $g_\kappa(R\myp) -1/b_2(R\myp)>0$.

\smallskip
\textup{(c-ii)} \myp{}Under the conditions of\/
Theorem~\textup{\ref{thm:aux_asypmtotic_6}},
\begin{equation}\label{eq:crCLTforT2}
\frac{T_N - N g_\kappa(R\myp)}{\sqrt{N g_\kappa(R\myp)}}
\stackrel{d}{\longrightarrow} \mathcal{N}(0,1),\qquad N\to\infty.
\end{equation}

\smallskip \textup{(c-iii)} \myp{}Under the conditions of\/
Theorem~\textup{\ref{thm:aux_asypmtotic_5}} with $\theta^*\myn>0$,
\begin{equation}\label{eq:crCLTforT3}
 \frac{T_N - N g_\kappa(R\myp)}{\sqrt{N \bigl[g_\kappa(R\myp) -1/b_2(R\myp)\bigr]}}
\stackrel{d}{\longrightarrow} Z -\sqrt{\frac{2
X}{g_\kappa(R\myp)\,b_2(R\myp)-1}}\,,\qquad N\to\infty,
\end{equation}
where $Z$ is a standard normal random variable and $X$ is an
independent random variable with gamma distribution
$\Gammad(\theta^*\myn/2)$.
\end{theorem}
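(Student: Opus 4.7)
The plan is, in every case, to identify the limit of the normalized Laplace transform $\Ens\bigl[\rme^{-u\myp T_N/\sqrt{N}}\bigr]$ and invoke Curtiss's (L\'evy's) continuity theorem. By Lemma~\ref{lem:generating_K0n}, setting $v:=\rme^{-u/\sqrt{N}}$,
\begin{equation*}
\Ens\bigl[v^{\myp T_N}\bigr]= \frac{[z^{N}]\exp\{v\myp\GN(z)\}}{H_N}\mypp,
\end{equation*}
so each statement reduces to inserting the appropriate asymptotics from Section~\ref{sec:asymptotic_theorem} (with $f(z)\equiv1$) into numerator and denominator, and Taylor-expanding the resulting ratio up to leading order in $1/\sqrt{N}$. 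Since each Section~\ref{sec:asymptotic_theorem} theorem is uniform in $v$ on a compact interval containing $1$, this insertion is legitimate for any fixed $u\in\RR$ once $N$ is large enough.

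For part (a), Theorem~\ref{thm:aux_asypmtotic} yields
\begin{equation*}
\Ens\bigl[v^{\myp T_N}\bigr]\sim\frac{\exp\{v\myp\GN(r_v)-\GN(r_1)\}}{(r_v/r_1)^N\sqrt{v\myp b_2(r_v)/b_2(r_1)}}\mypp.
\end{equation*}
Using $g_\kappa^{\{1\}}\myn(r_v)=1/v$ and $(g_\kappa^{\{1\}})'(r)=b_2(r)/r$, the saddle expands as $r_v=r_1+u\myp r_1/\bigl(\sqrt{N}\myp b_2(r_1)\bigr)+O(1/N)$; substituting, recalling $g_\kappa^{\{1\}}\myn(r_1)=1$, and tracking all contributions of order $u^2$, the $O(\sqrt{N})$-terms from $v\myp Ng_\kappa(r_v)$ and from $-N\log\myn(r_v/r_1)$ cancel, while the surviving quadratic part simplifies (via $b_2(r_1)=1+g_\kappa^{\{2\}}\myn(r_1)$) to $\tfrac12\myp u^2\bigl[g_\kappa(r_1)-1/b_2(r_1)\bigr]$. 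Hence
\begin{equation*}
\log\Ens\bigl[\rme^{-u\myp T_N/\sqrt{N}}\bigr]=-u\sqrt{N}\,g_\kappa(r_1)+\tfrac{1}{2}\myp u^2\bigl[g_\kappa(r_1)-1/b_2(r_1)\bigr]+o(1),
\end{equation*}
which is the Laplace transform of the Gaussian law in \eqref{eq:subCLTforT}; positivity of the variance is secured by Lemma~\ref{lm:g<g} together with the non-degeneracy clause of Assumption~\ref{as:NA}.

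Part (b) is parallel but simpler: in the supercritical asymptotics \eqref{eq:th3.3} or \eqref{eq:th3.3=0(i)}--\eqref{eq:th3.3=0(iii)}, the pre-exponential $v$-dependence is subleading, so the ratio collapses to $\exp\{N(v-1)g_\kappa(R\myp)\}\cdot(1+o(1))$, and expansion of $v=\rme^{-u/\sqrt{N}}$ produces the Gaussian Laplace transform of variance $g_\kappa(R\myp)$. In (c-i), Theorem~\ref{thm:aux_asypmtotic_3} is directly applicable when $u\le0$ (so $v\ge 1$, $r_v\le R$); for $u>0$ the theorem does not cover $v<1$, but its proof adapts verbatim by running the saddle-point argument along the contour $|z|=R$ itself, since $g_\theta$ is continuous up to that circle and the expansion \eqref{eq:admissible_expansion} holds at $r=R$ thanks to $g_\kappa^{\{2\}}\myn(R\myp)<\infty$; the algebra of part (a) then applies with $r_1$ replaced by $R$. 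In (c-ii), Theorem~\ref{thm:aux_asypmtotic_6} gives a pre-factor $(Na_s)^{(\theta^*\myn-1)/s}$ that is independent of $u$, cancels in the ratio, and leaves only the scalar $\exp\{-\sqrt{N}u\myp g_\kappa(R\myp)+\tfrac12\myp u^2 g_\kappa(R\myp)\}$, yielding~\eqref{eq:crCLTforT2}.

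The most delicate case is (c-iii). Dividing \eqref{eq:th3.3-dyn} by \eqref{eq:HN-4} gives
\begin{equation*}
\Ens\bigl[\rme^{-u\myp T_N/\sqrt{N}}\bigr]\sim\exp\bigl\{-\sqrt{N}u\myp g_\kappa(R\myp)+\tfrac{1}{2}\myp u^2\myp g_\kappa(R\myp)\bigr\}\cdot\frac{J_{\tilde u}(\theta^*)\,\Gamma\bigl((\theta^*\myn+1)/2\bigr)}{\pi\rmi}\mypp,
\end{equation*}
with $\tilde u=u\sqrt{2/b_2(R\myp)}$. By Lemma~\ref{lm:J}, the right-hand factor equals $\bigl(1/\Gamma(\theta^*\myn/2)\bigr)\exp\{-\tilde u^2/4\}\sum_{n\ge0}\Gamma\bigl((\theta^*\myn+n)/2\bigr)\tilde u^n/n!$, which by Lemma~\ref{lm:Gamma} coincides with $\exp\{-\tilde u^2/4\}\,\EE\bigl[\rme^{\myp\tilde u\sqrt{X}}\bigr]$ for $X\sim\Gammad(\theta^*\myn/2)$. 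Since $\tilde u^2/4=u^2/(2\myp b_2(R\myp))$, the limiting Laplace transform of $(T_N-Ng_\kappa(R\myp))/\sqrt{N}$ reads
\begin{equation*}
\exp\bigl\{\tfrac{1}{2}\myp u^2\bigl[g_\kappa(R\myp)-1/b_2(R\myp)\bigr]\bigr\}\cdot\EE\bigl[\rme^{\myp u\sqrt{2X/b_2(R\myp)}}\bigr],
\end{equation*}
which, after rescaling $u\mapsto u/\sqrt{g_\kappa(R\myp)-1/b_2(R\myp)}$, matches the Laplace transform of $Z-\sqrt{2X/(g_\kappa(R\myp)\myp b_2(R\myp)-1)}$ with $Z$ and $X$ independent, as claimed. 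The principal obstacles are the careful $1/\sqrt{N}$-bookkeeping in (a), the boundary-saddle extension for $v<1$ in (c-i), and the probabilistic identification of $J_{\tilde u}(\theta^*)$ via Lemmas~\ref{lm:J} and~\ref{lm:Gamma} in (c-iii).
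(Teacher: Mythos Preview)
Your proposal is correct and follows essentially the same approach as the paper: both apply Lemma~\ref{lem:generating_K0n} to write $\Ens[v^{T_N}]$ as a ratio of coefficients, insert the uniform asymptotics from the relevant Section~\ref{sec:asymptotic_theorem} theorem with $v=\rme^{\pm u/\sqrt{N}}$, and read off the limiting Laplace/moment generating function. The only differences are cosmetic. In part~(a) you Taylor-expand $r_v$ directly and substitute, whereas the paper packages the same computation more cleanly by setting $\varPsi(v):=v\myp g_\kappa(r_v)-\log r_v$ and computing $\varPsi'(1)=g_\kappa(r_1)$, $\varPsi''(1)=-1/b_2(r_1)$ via implicit differentiation; this avoids tracking the individual $O(\sqrt{N})$ cancellations you allude to. In part~(c-i) the paper simply restricts to $v\ge1$ (one-sided Laplace transform suffices), while you go further and sketch how to handle $v<1$ by keeping the contour at $|z|=R$; your remark that ``the algebra of part~(a) then applies'' is slightly imprecise, since for $v<1$ there is no real saddle $r_v\le R$ and one must instead evaluate the Gaussian integral on the fixed circle $|z|=R$ with a non-vanishing linear term $-iu s$, but the outcome is indeed the same variance $g_\kappa(R)-1/b_2(R)$.
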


\begin{proof} (a) Using Lemma~\ref{lem:generating_K0n} and applying
Theorem~\ref{thm:aux_asypmtotic} (see \eqref{eq:th3.2}) we obtain as
$N\to\infty$, uniformly in $v$ in a neighbourhood of $v_0=1$,
\begin{equation}\label{eq:exp/exp}
\Ens(v^{\myp T_N}) = \frac{[z^N]\exp\myn\{v\myp \GN(z)\}}
{[z^N]\exp\myn\{\GN(z)\}}\sim
\frac{\varPhi(v)\exp\myn\{N\myp\varPsi(v)\}}{\varPhi(1)
\exp\myn\{N\myp\varPsi(1)\}}\mypp,
\end{equation}
where we set for short
\begin{equation}\label{eq:FG}
\varPhi(v):=\frac{\exp\myn\{v\myp g_\theta(r_v)\}}{\sqrt{v\myp
b_2(r_v)}}\mypp,\qquad \varPsi(v):=v\myp g_\kappa(r_v) - \log r_v.
\end{equation}
Using the definition of $r_v$ (see \eqref{eq:r_v}), from
\eqref{eq:FG} we find
\begin{align}
\notag \varPsi'(v)&= g_\kappa(r_v) +v\myp g'_\kappa(r_v)\mypp
r'_v-\frac{r'_v}{r_v}\\
\label{eq:d/dv} & = g_\kappa(r_v)+\frac{r'_v}{r_v}\,\bigl(v\myp
g_\kappa^{\{1\}}\myn(r_v)-1\bigr)=g_\kappa(r_v).
\end{align}
Differentiating \eqref{eq:d/dv} once more gives
\begin{equation}\label{eq:d2/dv2}
\varPsi''(v) = g'_\kappa(r_v) \mypp r'_v
=g_\kappa^{\{1\}}\myn(r_v)\,\frac{r'_v}{r_v}=\frac{r'_v}{v\myp
r_v}\mypp.
\end{equation}
On the other hand, differentiating the identity $r_v\myp
g'_\kappa(r_v)=v^{-1}$ (see equation \eqref{eq:r_v}), we get
\[
r'_v\mypp g'_\kappa(r_v)+r_v\mypp g''_\kappa(r_v)\mypp r'_v=-v^{-2},
\]
which yields, on account of \eqref{eq:mod_d} and \eqref{eq:b12},
\begin{equation}\label{eq:r'/r}
\frac{r_v'}{r_v}=-\frac{1}{v^{2}\myp b_2(r_v)}\mypp.
\end{equation}

Hence, using \eqref{eq:d/dv}, \eqref{eq:d2/dv2} and \eqref{eq:r'/r}
we obtain the expansion of $\varPsi(v)$ around $v_0=1$,
\begin{equation}\label{eq:G(u)near1}
\varPsi(v)=\varPsi(1)+ g_\kappa(r_1)(v-1)-\frac{1}{2\myp
b_2(r_1)}\,(v-1)^2+o\bigl((v-1)^2\bigr).
\end{equation}
Substituting $v=\rme^{\myp u/\sqrt{N}}=1+u\myp N^{-1/2}+\frac12\myp
u^2 N^{-1}+O(N^{-3/2})$ into \eqref{eq:G(u)near1} gives
\begin{equation*}
N\myp \varPsi\bigl(\rme^{\myp u/\sqrt{N}}\bigr)=N\mypp \varPsi(1) +
\sqrt{N}\myp u\mypp g_\kappa(r_1) + \tfrac12\myp
u^2\bigl(g_\kappa(r_1) - 1/b_2(r_v)\bigr) + O(N^{-1/2}).
\end{equation*}
Besides, for the function $\varPhi(\cdot)$ from \eqref{eq:FG} we
have, for any $u$,
\[
\varPhi\bigl(\rme^{\myp u/\sqrt{N}}\bigr)\to \varPhi(1),\qquad
N\to\infty.
\]
Therefore, returning to \eqref{eq:exp/exp} we get, as $N\to\infty$,
\begin{align*}
\Ens\myn\bigl(\rme^{\myp u\myp T_N/\sqrt{N}\myp}\bigr)
&\sim \exp\myn\bigl\{N\bigl[\varPsi(\rme^{\myp u/\sqrt{N}}\myp)-\varPsi(1)\bigr]\bigr\}\\
&\sim \exp\myn\bigl\{\sqrt{N}\myp u\mypp g_\kappa(r_1) +
\tfrac12\myp u^2\bigl(g_\kappa(r_1) -1/b_2(r_1)\bigr)\bigr\}.
\end{align*}
The statement \eqref{eq:subCLTforT} now follows by a standard
convergence theorem based on convergence of moment generating
functions and the well-known fact that if a random variable $Z$ is
standard normal then its moment generating function is given by $\EE
\myn\bigl[\exp\myn\{u\myp Z\}\bigr]=\exp\myn\{u^2/2\}$.

It remains to check that the limit variance $g_\kappa(r_1)
-1/b_2(r_1)$ is positive. But this follows by Lemma \ref{lm:g<g},
yielding
$$
g_\kappa(r_1)\mypp
b_2(r_1)>\bigl(g_\kappa^{\{1\}}\myn(r_1)\bigr)^2\equiv 1,
$$
according to the definition of $r_1$ as the root of the equation
\eqref{eq:r_v} with $v=1$.

\smallskip
(b) \,If $\theta^*\myn>0$ then, similarly as in \eqref{eq:exp/exp},
\eqref{eq:FG}, we use Lemma~\ref{lem:generating_K0n} and
Theorem~\ref{thm:aux_asypmtotic_2} to obtain as $N\to\infty$,
uniformly in $v$ in a neighbourhood of\/ $v_0=1$,
\begin{equation*}
\Ens(v^{\myp T_N})\sim N^{\theta^*\!(v-1)} \exp\myn\bigl\{N(v -1)\,
g_\kappa(R\myp) \bigr\}\, \frac{\Gamma(\theta^*)(1-v\myp
g_\kappa^{\{1\}}\myn(R\myp))^{v\theta^*\myn-1}}{\Gamma(v\myp\theta^*)
(1-g_\kappa^{\{1\}}\myn(R\myp))^{\theta^*\myn-1}}\mypp.
\end{equation*}
Again substituting $v=\rme^{\myp u/\sqrt{N}}$, the rest of the proof
proceeds similarly as in part~(a), giving
\begin{align}
\notag \Ens\myn\bigl(\rme^{\myp u\myp T_N/\sqrt{N}\myp}\bigr) &\sim
\exp\myn\bigl\{N\bigl(\rme^{\myp u/\sqrt{N}}-1\bigr)\mypp
g_\kappa(R\myp)\bigr\}\\
\label{eq:superCLT1} &\sim \exp\myn\bigl\{\sqrt{N}\myp u\mypp
g_\kappa(R\myp) + \tfrac12\myp u^2 g_\kappa(R\myp)\bigr\},
\end{align}
and \eqref{eq:supCLTforT} follows.

Likewise, if $\theta^*\myn=0$ then by
Theorem~\ref{thm:aux_asypmtotic_2a} we have from
\eqref{eq:th3.3=0(i)} and~\eqref{eq:HN-2=0}
\begin{equation}\label{eq:CLT_total_cycles_super-}
\Ens(v^{\myp T_N})\sim v \exp\myn\bigl\{(v -1)\,\GN(R\myp)
\bigr\}\left(\frac{1-v\myp
g_\kappa^{\{1\}}\myn(R\myp)}{1-g_\kappa^{\{1\}}\myn(R\myp)}\right)^{-s-1},\qquad
N\to\infty,
\end{equation}
and the substitution $v=\exp\myn\bigl\{u/\sqrt{N}\mypp\bigr\}$ in
\eqref{eq:CLT_total_cycles_super-} yields the same
asymptotics~\eqref{eq:superCLT1}.

\smallskip
(c-i) From \eqref{eq:=1,sub0}, similarly as in part (a), we obtain
the asymptotic relation \eqref{eq:exp/exp} that holds uniformly in a
right neighbourhood of $v_0=1$. The rest of the proof is an exact
copy of that in part (a), except that we must use the substitution
$v=\rme^{\myp u/\sqrt{N}}$ with $u\ge0$.

\smallskip
(c-ii) We use  Lemma~\ref{lem:generating_K0n} (with
$v=\rme^{-u/\sqrt{N}}$, \,$u\ge0$) and the dynamic asymptotics
\eqref{eq:th3.3-dyn-s} to obtain
\begin{equation*}
\Ens\myn\bigl(\rme^{-u\myp T_N/\sqrt{N}\myp}\bigr) \sim
\exp\myn\bigl\{-\sqrt{N}\myp u\mypp g_\kappa(R\myp) + \tfrac12\myp
u^2 g_\kappa(R\myp)\bigr\},
\end{equation*}
which immediately implies~\eqref{eq:crCLTforT2}.

\smallskip
(c-iii) Again applying Lemma~\ref{lem:generating_K0n} with
$v=\rme^{-u/\sqrt{N}}$ and $u\ge0$, from the asymptotic relation
\eqref{eq:th3.3-dyn} and with the help of Lemma~\ref{lm:Gamma} we
get
\begin{align*}
\Ens\myn\bigl(\rme^{-u\myp T_N/\sqrt{N}\myp}\bigr) &\sim
\exp\myn\bigl\{-\sqrt{N}\myp u\mypp g_\kappa(R\myp) + \tfrac12\myp
u^2\bigl(g_\kappa(R\myp)
-1/b_2(R\myp)\bigr)\bigr\}\\
&\qquad \times
\EE\myp\bigl[\exp\myn\bigl\{-u\sqrt{2/b_2(R\myp)}\,(-\sqrt{X}\mypp)\bigr\}\bigr],
\end{align*}
where $X$ has the distribution $\Gammad(\theta^*\mynn/2)$. Hence,
the result \eqref{eq:crCLTforT3} follows.
\end{proof}

\begin{remark}
To summarize the content of Theorem~\ref{thm:asymptotic_Kon}, the
sequence of ``phase transitions'' manifested by the limit
distribution of $T_N^*:=N^{-1/2}\bigl(T_N-Ng_\kappa(R\myp)\bigr)$ is
as follows. In the subcritical domain
($g_\kappa^{\{1\}}\myn(R\myp)>1$), $T_N^*$ is asymptotically normal
with asymptotic variance $g_\kappa(r_1) -1/b_2(r_1)$
(Theorem~\ref{thm:asymptotic_Kon}\myp(a)). This is consistent with
the critical case ($g_\kappa^{\{1\}}\myn(R\myp)=1$) with finite
$g_\theta(R\myp)$ and $g_\kappa^{\{2\}}\myn(R\myp)$, whereby $r_1=R$
(Theorem~\ref{thm:asymptotic_Kon}\myp(c-i)). Quite surprisingly, if
$g_\theta(z)$ acquires logarithmic singularity with exponent
$\theta^*\myn>0$, the central limit theorem breaks down as the
existing normal component of the limit is reduced by the square root
of an independent gamma-distributed random variable
(Theorem~\ref{thm:asymptotic_Kon}\myp(c-iii)). In particular, the
limit distribution of $T_N^*$ gets negatively skewed; for instance,
its expected value equals
$-\Gamma((\theta^*\myn+1)/2)/\Gamma(\theta^*\myn/2)<0$ whilst
$\EE\myp(T_N^*)\equiv0$. Thus, although we prove weak convergence
using the moment generating functions, the limit in this case cannot
be established by the plain method of moments. However, when the
function $g_\kappa(z)$ becomes more singular at $z=R$\myp, with
$g_\kappa^{\{2\}}\myn(R\myp)=\infty$, the limit of $T_N^*$ reverts
to a normal distribution but with a bigger variance,
$g_\kappa(R\myp)$ (Theorem~\ref{thm:asymptotic_Kon}\myp(c-ii)),
which continues to hold in the supercritical regime
$g_\kappa^{\{1\}}\myn(R\myp)<1$
(Theorem~\ref{thm:asymptotic_Kon}\myp(b)).
\end{remark}

Despite a complicated structure of the weak limit, the total number
of\/ cycles $T_N$ in all cases satisfies the following simple law of
large numbers, as $N\to\infty$.
\begin{corollary}\label{cor:T-LLN-sub}
Under the sequence of probability measures $\Pns$, in each of the
cases in Theorem \textup{\ref{thm:asymptotic_Kon}} there is the
convergence in probability
\begin{equation}\label{eq:T-LLN-sub}
\frac{T_N}{N}\stackrel{p}{\longrightarrow} g_\kappa(r_{*}),\qquad
N\to\infty,
\end{equation}
where $r_{*}$\/ is given by\/ \eqref{eq:r*}.
\end{corollary}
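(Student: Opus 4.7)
The plan is to read the corollary straight off Theorem \ref{thm:asymptotic_Kon} by a routine tightness argument, with essentially no extra work. First I would note that in each of the five cases (a), (b), (c-i), (c-ii), (c-iii) of Theorem \ref{thm:asymptotic_Kon}, the centering used for $T_N$ is exactly $N g_\kappa(r_*)$ in the notation \eqref{eq:r*}: in case (a), where $g_\kappa^{\{1\}}\myn(R\myp)>1$, we use $r_*=r_1$; in cases (b), (c-i), (c-ii), (c-iii), where $g_\kappa^{\{1\}}\myn(R\myp)\le 1$ and hence $r_1=R$, we use $r_*=R$ with $g_\kappa(R\myp)<\infty$ by hypothesis. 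Thus the statements of Theorem \ref{thm:asymptotic_Kon} can be summarized uniformly as
\begin{equation*}
\frac{T_N - N g_\kappa(r_*)}{\sqrt{N}\mypp c_N}\stackrel{d}{\longrightarrow} Y,\qquad N\to\infty,
\end{equation*}
where $c_N$ converges to a positive constant depending on the regime and $Y$ is a proper random variable (standard normal in cases (a), (b), (c-i), (c-ii); the normal-minus-gamma-root combination in case (c-iii)).

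Next I would invoke the elementary fact that convergence in distribution to a proper, and in particular tight, random variable implies that the sequence is bounded in probability. Consequently $T_N - N g_\kappa(r_*) = O_p\bigl(\sqrt{N}\bigr)$, and dividing by $N$ gives
\begin{equation*}
\frac{T_N}{N} - g_\kappa(r_*) = O_p\!\left(N^{-1/2}\right)=o_p(1),\qquad N\to\infty,
\end{equation*}
which is exactly the claimed convergence in probability \eqref{eq:T-LLN-sub}.

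There is no real obstacle; the only point that requires a moment of care is to verify that the five cases of Theorem \ref{thm:asymptotic_Kon} exhaust all configurations allowed under the blanket hypotheses of Section \ref{sec:cycles}, so that the argument above applies without a gap. Once this is checked, the proof is complete. One could also give a more direct, moment-based derivation via Lemma \ref{lem:generating_K0n} by computing $\Ens[T_N]/N\to g_\kappa(r_*)$ and $\mathrm{Var}(T_N)=O(N)$ and appealing to Chebyshev, but this is essentially already embedded in the asymptotic analysis used to prove Theorem \ref{thm:asymptotic_Kon}, so the tightness route above is the most economical.
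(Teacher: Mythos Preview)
Your proposal is correct and is exactly what the paper has in mind: the corollary is stated without proof because it follows immediately from the weak-limit statements in Theorem~\ref{thm:asymptotic_Kon} via the tightness argument you describe. Your identification of the centering as $N g_\kappa(r_*)$ in all five cases and the passage from $O_p(\sqrt{N})$ fluctuations to the law of large numbers is precisely the intended (one-line) derivation.
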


\begin{remark}
The result \eqref{eq:T-LLN-sub} for $T_N=\sum_j C_j$ is formally
consistent with Theorem \ref{thm:cycle_counts_normalized}(a), since
the limits \eqref{eq:limits_Cm} of the normalized cycle counts
$C_j/N$ sum up exactly to the right-hand side of
\eqref{eq:T-LLN-sub} (cf.\ \eqref{eq:def_g_theta_and_p_tau}):
$$
\sum_{j}\frac{\kappa_j\mypp r_{*}^j}{j}=g_\kappa(r_{*}).
$$
\end{remark}

\begin{remark}
The uniform asymptotic formula \eqref{eq:exp/exp} can also be used
to derive large deviation results for $T_N$ (cf.\
\cite[\S\myp4]{MaNiZe12}), but we do not enter into further details
here.
\end{remark}


\subsection{Lexicographic ordering of cycles}
\label{sec:unordered_cycles_sub}

We can now find the asymptotic (finite-dimensional) distribution of
the cycle lengths $L_j$ (see Definition~\ref{def:L-lex}). Note that
no normalization is needed.

\begin{theorem}\label{thm:unordered_sub}
For each $m\in\NN$\myp, the random variables $L_1,\dots,L_m$ are
asymptotically independent as $N\to\infty$ and, moreover, for any
$\ell_1,\dots,\ell_m\in\NN$
\begin{equation}\label{eq:Li}
\lim_{N\to\infty} \Pns\{L_1= \ell_1,\dots,L_m= \ell_m\} =
\prod_{j=1}^m \kappa_{\ell_j}\mypp r_{*}^{\ell_j},
\end{equation}
where $r_{*}$ is defined in~\eqref{eq:r*}.
\end{theorem}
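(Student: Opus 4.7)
The starting point is the exact finite-$N$ formula of Lemma~\ref{lem:distribution_of_ell_1,ell_2}, which decomposes the joint probability into two factors:
\begin{equation*}
\Pns\{L_1=\ell_1,\dots,L_m=\ell_m\}
=\underbrace{\prod_{j=1}^m \frac{\theta_{\ell_j}+N\kappa_{\ell_j}}{N-\ell_1-\dots-\ell_{j-1}}}_{(\mathrm{I})}
\;\cdot\;\underbrace{\frac{h_{N-K}(N)}{H_N}}_{(\mathrm{II})},
\end{equation*}
with $K:=\ell_1+\dots+\ell_m$. Since each $\ell_j$ is held fixed while $N\to\infty$, the plan is to compute the limits of the factors $(\mathrm{I})$ and $(\mathrm{II})$ separately, show they multiply to $\prod_{j=1}^m \kappa_{\ell_j}\mypp r_*^{\ell_j}$, and then deduce asymptotic independence from the product form of the limit.

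For factor $(\mathrm{I})$, the denominators clearly satisfy $N-\ell_1-\dots-\ell_{j-1}\sim N$, while $\theta_{\ell_j}+N\kappa_{\ell_j}\sim N\kappa_{\ell_j}$ when $\kappa_{\ell_j}>0$, so that each ratio tends to $\kappa_{\ell_j}$; if $\kappa_{\ell_j}=0$ the ratio is $O(N^{-1})\to 0$, which is consistent with the target expression $\kappa_{\ell_j}\mypp r_*^{\ell_j}=0$. Thus $(\mathrm{I})\to\prod_{j=1}^m \kappa_{\ell_j}$ in all cases.

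For factor $(\mathrm{II})$, I rewrite $h_{N-K}(N)=[z^{N-K}]\,\rme^{\myp\GN(z)}=[z^N]\bigl[z^K\,\rme^{\myp\GN(z)}\bigr]$ and apply whichever of Theorems~\ref{thm:aux_asypmtotic}, \ref{thm:aux_asypmtotic_2}, \ref{thm:aux_asypmtotic_2a}, \ref{thm:aux_asypmtotic_3}, \ref{thm:aux_asypmtotic_5}, \ref{thm:aux_asypmtotic_6} is appropriate for the regime at hand, with pre-exponential factor $f(z)=z^K$ and $v=1$. In each theorem the pre-exponential factor enters the leading-order asymptotics of $[z^N][f(z)\,\rme^{\myp v\GN(z)}]$ as the evaluation $f(r_*)=r_*^K$ (compare with the asymptotics of $H_N=[z^N]\,\rme^{\myp\GN(z)}$ obtained by setting $f\equiv1$ in the same theorem), so the quotient gives $(\mathrm{II})\to r_*^K=\prod_{j=1}^m r_*^{\ell_j}$. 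This is exactly the step carried out in the proof of Theorem~\ref{thm:cycle_counts_normalized} (see the identities \eqref{eq:N-Km}--\eqref{eq:prod-h/H}), and I would simply cite that computation rather than repeating it for each regime.

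Combining the two limits gives \eqref{eq:Li}. The asymptotic independence is then automatic: the joint limiting mass function factors as a product of the one-dimensional limits $\Pns\{L_j=\ell_j\}\to\kappa_{\ell_j}\mypp r_*^{\ell_j}$, the latter being the special case $m=1$ (already obtainable from \eqref{eq:dist_L1} by the same two-factor argument). The only mild delicacy\,---\,and the sole place where one has to be careful\,---\,is the joint invocation of the appropriate Section~\ref{sec:asymptotic_theorem} theorem with $f(z)=z^K$; one must check that $f(z)=z^K$ satisfies the hypotheses ($f$ holomorphic in the relevant domain, $f(z)=1+O((1-z/R)^\delta)$ in the supercritical/critical cases with $\theta^*\myn=0$ or $\theta^*\myn>0$). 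This is immediate since $z^K$ is entire and its expansion at $z=R$ is smooth, so no extra work is required.
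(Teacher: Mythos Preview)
Your proposal is correct and follows essentially the same approach as the paper: start from Lemma~\ref{lem:distribution_of_ell_1,ell_2}, let the prefactor $\prod_j(\theta_{\ell_j}+N\kappa_{\ell_j})/(N-\cdots)$ tend to $\prod_j\kappa_{\ell_j}$, rewrite $h_{N-K}(N)=[z^N][z^K\rme^{\GN(z)}]$, and apply the appropriate asymptotic theorem with $f(z)=z^K$ to get the ratio $h_{N-K}(N)/H_N\to r_*^K$. Your write-up is in fact slightly more careful than the paper's (you explicitly handle $\kappa_{\ell_j}=0$ and verify that $f(z)=z^K$ meets the hypotheses), but the argument is the same.
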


\begin{proof} By Lemma \ref{lem:distribution_of_ell_1,ell_2} we have, as
$N\to\infty$,
\begin{equation*}
\Pns\{L_1= \ell_1,\dots,L_m= \ell_m\}\sim \prod_{j=1}^m
\kappa_j\cdot \frac{h_{N-\ell_1-\dots-\ell_m}(N)}{H_N}\mypp.
\end{equation*}
Similarly to \eqref{eq:N-Km}, with $K_m:=\ell_1+\cdots+\ell_m$ we
have
\begin{equation*}
h_{N-K_m}(N)=[z^{N}]\bigl[z^{K_m}\myp\rme^{\myp\GN(z)}\bigr].
\end{equation*}
Hence, by a suitable asymptotic theorem from Section
\ref{sec:asymptotic_theorem} (i.e., one of
Theorems~\ref{thm:aux_asypmtotic}, \ref{thm:aux_asypmtotic_2},
\ref{thm:aux_asypmtotic_2a}, \ref{thm:aux_asypmtotic_3},
\ref{thm:aux_asypmtotic_4} or \ref{thm:aux_asypmtotic_5} as
appropriate, each with $f(z)=z^{K_m}$), we get
\[
\frac{h_{N-K_m}(N)}{H_N} \sim r_{*}^{\myp K_m},\qquad N\to\infty.
\]
Substituting this into \eqref{eq:N-Km} we obtain the
limit~\eqref{eq:Li}.
\end{proof}

Note from \eqref{eq:Li} that the limiting distribution of each $L_j$
has the probability generating function
\begin{equation}\label{eq:pgf_L}
\lim_{N\to\infty}\Ens(v^{L_j})=\sum_{\ell=1}^\infty \kappa_\ell\,
r_{*}^{\ell}\mypp v^\ell=g_\kappa^{\{1\}}\myn(r_{*}\myp v),\qquad
0<v\le 1.
\end{equation}
The right-hand side of \eqref{eq:pgf_L} defines a proper probability
distribution if $g_\kappa^{\{1\}}\myn(R\myp)\ge 1$ (i.e., in the
subcritical and critical cases), because then its total mass is
given by $g_\kappa^{\{1\}}\myn(r_{*}\myp
v)|_{v=1}=g_\kappa^{\{1\}}\myn(r_{*})=1$ (see~\eqref{eq:r*1}). But
in the supercritical regime this distribution is \emph{deficient},
since $g_\kappa^{\{1\}}\myn(r_{*})=g_\kappa^{\{1\}}\myn(R\myp)<1$.
Clearly, the reason for this is the emergence of infinite cycles as
$N\to\infty$; see Theorems \ref{thm:fractional_infinite} and
\ref{thm:fractional_infinite_random}, where the defect
$\tilde{\nu}=1-g_\kappa^{\{1\}}\myn(R\myp)>0$ is identified
precisely as the limiting fraction of points contained in infinite
cycles.

However, conditioning on $\{L_j\le K\}$ and passing to the limit as
$K\to\infty$ gives
\begin{align*}
\lim_{K\to\infty} \lim_{N\to\infty} &\Pns\{L_j= \ell\,|\,L_j\le
K\}=\lim_{K\to\infty} \lim_{N\to\infty}\frac{\Pns\{L_j=
\ell\}}{\Pns\{L_j\le K\}}=\frac{\kappa_{\ell}
R^{\ell}}{g_\kappa^{\{1\}}\myn(R\myp)},
\end{align*}
which defines a proper distribution.

\section{Long Cycles}\label{sec:long}

The ultimate goal of Section \ref{sec:long} is to characterize the
supercritical asymptotics of longest cycles (i.e., containing the
fraction of points $\tilde{\nu}=1-g_\kappa^{\{1\}}\myn(R\myp)>0$,
see Section~\ref{sec:cyclefraction}). Following the classical
approach of Kingman \cite{Kingman} and Vershik and Shmidt
\cite{VeSh77}, we study first the asymptotic extreme value
statistics of cycles under \emph{lexicographic ordering} (see
Definition~\ref{def:L-lex}) and then deduce from this the limit
distribution for cycles arranged in the decreasing order of their
length.

More specifically, we will show (Theorem~\ref{thm:large_cycles_2})
that if the generating function $g_\theta(z)$ has a non-vanishing
logarithmic singularity at $z=R$ \,(i.e., with $\theta^*\myn>0$ in
formula \eqref{eq:g_theta_as}), then the lengths $(L_j)$ of
lexicographically ordered cycles normalized by $N\tilde{\nu}$,
converge (in the sense of finite-dimensional weak convergence) to
the \emph{modified GEM distribution} with parameter $\theta^*\myn$
(cf.\ \cite[p.\:107]{ABT03}). The latter is constructed via the
usual stick-breaking process but applied only to the breakable part
$[0,\tilde{\nu}\myp]$ of the unit stick $[0,1]$, whereas the
remaining part $[\tilde{\nu},1]$ causes delays that contribute atoms
at zero to the distribution of the output random sequence (see more
details in Section~\ref{sec:stick}). Using the well-known link
between the descending extreme values of the $\GEM$ and the
Poisson--Dirichlet distribution (see \cite[\S\myp5.7]{ABT03}), and
noting that delays in the stick-breaking process do not affect the
upper order statistics of cycle lengths,
Theorem~\ref{thm:large_cycles_2} will readily imply that the ordered
cycle lengths $L^{(1)}\ge L^{(2)}\ge \cdots$ weakly converge
to the corresponding Poisson--Dirichlet distribution
$\PD(\theta^*\myn)$ (Theorem~\ref{thm:large_cycles_1}).

In the case $\theta^*\myn=0$, a similar argumentation works as well
(see Theorem~\ref{th:law_ell_j_theta0}) but here the stick-breaking
process of Section~\ref{sec:stick} is reduced to removal of the
entire breakable part $[0,\tilde{\nu}\myp]$ at first success in a
chain of Bernoulli trials with success probability $\tilde{\nu}$.
Translated back to the language of descending order statistics
$(L^{(j)})$, this means that there is a \emph{single giant cycle},
with length $N\tilde{\nu}\mypp(1+o(1))$, emerging in the limit
$N\to\infty$ (Theorem~\ref{thm:large_cycles_0}).

\begin{remark}
Let us point out that although the supercritical regime is defined
in terms of the generating function $g_\kappa(z)$ of the sequence
$(\kappa_j)$, the limit distribution of long cycles (and in
particular the distinction between the Poisson--Dirichlet
distribution $\PD(\theta^*\myn)$ of Theorem~\ref{thm:large_cycles_1}
and the degenerate distribution of Theorem~\ref{thm:large_cycles_0})
is determined entirely by the generating function $g_\theta(x)$ of
the sequence $(\theta_j)$.
\end{remark}

\subsection{Analytic conditions on the generating
functions}\label{sec:5.1} For the most part (until Section
\ref{sec:theta*=0}), the generating functions $g_\theta(z)$ and
$g_\kappa(z)$ will be assumed to satisfy the hypotheses of
Theorem~\ref{thm:aux_asypmtotic_2}, including the asymptotic
formulas \eqref{eq:g_theta_as} and \eqref{eq:g_kappa_as} (with some
$\theta^*\myn>0$). In particular, $g_\kappa^{\{1\}}\myn(R\myp)<1$ so
we are in the supercritical regime. Furthermore, let us assume that
there is a non-integer $s>1$ such that
$g_\kappa^{\{n\}}\myn(R\myp)<\infty$ for all $n<s$ while
$g_\kappa^{\{n\}}\myn(R\myp)=\infty$ for $n>s$\textup{;} moreover,
there exist a constant $a_s>0$ and a sequence $\delta_n>0$ such
that, as $z \to R$\myp, $z\in \varDelta_0$ (see
Definition~\ref{def:Delta0}),
\begin{alignat}{2}\label{eq:assum_derivative_p_Upsilon-low}
g_\kappa^{\{n\}}\myn(z)&=
g_\kappa^{\{n\}}\myn(R\myp)\mypp\bigl\{1+O\bigl((1-z/R\myp)^{\delta_n}\bigr)\bigr\}
&(n< s),\\[.2pc]
\label{eq:assum_derivative_p_Upsilon}
g_\kappa^{\{n\}}\myn(z)&=\frac{\Gamma(-s+n)\,a_s}{\Gamma(-s)}
\,(1-z/R\myp)^{s-n}\mypp\bigl\{1+O\bigl((1-z/R\myp)^{\delta_n}\bigr)\bigr\}
\qquad &(n> s).
\end{alignat}
In addition to \eqref{eq:g_theta_as}, we also assume that
\begin{equation}\label{eq:assum_derivative_g_Theta}
g_\theta^{\{n\}}\myn(z)= \frac{\theta^*(n-1)!}{(1-z/R\myp)^n}
\mypp\bigl\{1+O\bigl((1-z/R\myp)^{\delta_n}\bigr)\bigr\},\qquad
n\in\NN\myp.
\end{equation}
In Section~\ref{sec:theta*=0} we will also consider the case
$\theta^*\myn=0$, using the results of
Theorem~\ref{thm:aux_asypmtotic_2a}.

\begin{remark} The principal term in the asymptotic expression
\eqref{eq:assum_derivative_g_Theta} may be formally obtained by
differentiating the condition \eqref{eq:g_theta_as}. Similarly, the
asymptotic formulas \eqref{eq:assum_derivative_p_Upsilon-low} and
\eqref{eq:assum_derivative_p_Upsilon} are underpinned by the
expansion \eqref{eq:g_kappa_as=0}.
\end{remark}

\subsection{Reminder: Poisson--Dirichlet distribution}\label{sec:PD}

Our aim is to show that, under the assumptions stated in Section
\ref{sec:5.1} (most importantly, with $\theta^*\myn>0$), the
descending order statistics of the cycle lengths converge to the
so-called \emph{Poisson--Dirichlet distribution} $\PD(\theta^*)$.

Let us recall that the Poisson--Dirichlet distribution $\PD(\theta)$
with parameter $\theta>0$ was introduced by Kingman
\cite[\S\myp5]{Kingman1975} as the weak limit of order statistics of
a symmetric Dirichlet distribution (with parameter $\alpha$) on
$N$-dimensional simplex $\Delta_N$ as $N\to\infty$, \,$\alpha
N\to\theta>0$. Such a limit can be identified as the distribution
law $\mathcal{L}$ of the normalized points of a Poisson process
$\sigma_1>\sigma_2>\cdots$ on $(0,\infty)$ with rate $\theta\mypp
x^{-1}\rme^{-x}$,
$$
\PD(\theta)=\mathcal{L}(\sigma_1/\sigma,\,\sigma_2/\sigma,{}\dots\myp),
$$
where $\sigma=\sigma_1+\sigma_2+\cdots<\infty$ (a.s.); note that the
random variable $\sigma$ has the gamma distribution
$\Gammad(\theta)$ and is independent of the sequence
$(\sigma_j/\sigma)$ (see, e.g., \cite[\S\myp9.3]{Kingman-Poisson} or
\cite[\S\myp5.7]{ABT03} for more details).

An explicit formula for the finite-dimensional probability density
of $\PD(\theta)$, first obtained by Watterson
\cite[\S\myp2]{Watterson} (see also \cite[p.\:113]{ABT03}), is quite
involved but is of no particular interest to us. There is, however,
an equivalent descriptive definition of the Poisson-Dirichlet
distribution $\PD(\theta)$ through the so-called $\GEM(\theta)$
distribution (named after Griffiths, Engen and McCloskey; see, e.g.,
\cite[p.\:107]{ABT03}), which is the joint distribution of the
random variables
\begin{equation}\label{eq:GEM}
Y_1:=B_1,\qquad Y_n:=B_n\prod_{j=1}^{n}(1-B_j)\ \ \quad (n\ge2),
\end{equation}
where $(B_n)$ is a sequence of independent identically distributed
(i.i.d.)\ random variables with the beta distribution
$\Beta(1,\theta)$ (i.e., with the probability density
$\theta(1-x)^{\theta-1}$, \,$0\le x\le 1$). From the definition
\eqref{eq:GEM}, it is straightforward to obtain the
finite-dimensional probability density of $\GEM(\theta)$ (see
\cite[p.\:107]{ABT03}),
$$
f_\theta(x_1,\dots,x_m)=\frac{\theta^{\myp m}\myp
(1-x_1-\dots-x_m)^{\theta-1}}{\prod_{j=1}^{m-1}(1-x_1-\dots-x_j)}\mypp,\
\ \quad x_1,\dots,x_m\ge 0,\quad x_1+\dots+x_m<1.
$$
\begin{remark}\label{rm:stick}
The sequence \eqref{eq:GEM} has a geometric interpretation as a
\emph{stick-breaking process}, whereby at each step an i.i.d.\
$\Beta(1,\theta)$-distributed fraction is removed from the remaining
``stick'' (see, e.g., \cite[\S\myp3]{IsZa} for the background and
further references).
\end{remark}
The remarkable link of the $\GEM(\theta)$ with the $\PD(\theta)$,
discovered by Tavar\'e \cite[Theorems 4 and~6]{Tavare} (see also
\cite[\S\myp5.7]{ABT03}), is as follows.
\begin{lemma}\label{lm:GEM}
The descending order statistics $Y^{(1)}\mynn\ge Y^{(2)}\mynn\ge
\cdots$ of the $\GEM(\theta)$ sequence \eqref{eq:GEM} have precisely
the $\PD(\theta)$ distribution.
\end{lemma}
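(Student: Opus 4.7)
The plan is to establish the classical identification by showing that \emph{size-biased sampling} from the $\PD(\theta)$ distribution produces a sequence with the $\GEM(\theta)$ distribution. Since both the decreasing rearrangement and the size-biased permutation are functions of the same underlying unordered random multiset, this identifies $\PD(\theta)$ as the law of the decreasing rearrangement of a $\GEM(\theta)$ sequence. I would carry this out through the Poisson point process construction of $\PD(\theta)$ recalled just above the lemma.

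First, I would fix the Poisson process $\Pi$ on $(0,\infty)$ with intensity $\theta\mypp x^{-1}\rme^{-x}\,\rmd{x}$, with ranked atoms $\sigma_1>\sigma_2>\cdots$ and total mass $\sigma:=\sum_i\sigma_i\sim\Gammad(\theta)$ independent of $P_i:=\sigma_i/\sigma$. The first step is to define the size-biased pick $V_1=P_{I_1}$, where $\Pr(I_1=i\mid (P_j))=P_i$, and to show that $V_1\sim\Beta(1,\theta)$. This follows by a Campbell/Palm computation: conditioning on an atom of $\Pi$ at position $x$ and integrating against $\sigma^{-1}\,\theta\mypp x^{-1}\rme^{-x}$, together with the representation $\sigma=x+\sigma'$ where $\sigma'$ is an independent $\Gammad(\theta)$-variable, one obtains
\begin{equation*}
\Pr\{V_1\in\rmd{v}\}=\theta\mypp(1-v)^{\theta-1}\,\rmd{v},\qquad v\in(0,1).
\end{equation*}

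Secondly, I would verify the key \emph{regeneration} property: conditionally on $V_1=v$, the rescaled sequence $\{P_j/(1-v):j\ne I_1\}$, arranged in decreasing order, is again distributed as $\PD(\theta)$ and is independent of $V_1$. The cleanest proof uses the Poisson process once more: removing the sampled atom from $\Pi$ leaves a Poisson process with the same intensity (by the Slivnyak--Mecke formula), and the scaling $x\mapsto x/((1-v)\sigma)$ recovers the $\PD(\theta)$ structure independently of $v$. Iterating this step, the size-biased picks $V_1,V_2,\dots$ satisfy $V_n=B_n\prod_{j=1}^{n-1}(1-B_j)$ with $B_n$ i.i.d.\ $\Beta(1,\theta)$, which is exactly the $\GEM(\theta)$ definition \eqref{eq:GEM}. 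Finally, since the decreasing rearrangement of $(V_n)$ coincides almost surely with the decreasing rearrangement of $(P_n)$ (both enumerate the same random multiset of atomic masses), the decreasing order statistics $Y^{(1)}\ge Y^{(2)}\ge\cdots$ of a $\GEM(\theta)$ sequence have distribution $\PD(\theta)$.

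The main obstacle is the regeneration step: one must rigorously justify that after a size-biased removal the \emph{normalized} residual point set is again $\PD(\theta)$ and, crucially, is independent of the mass removed. This requires either Palm calculus for the Poisson process $\Pi$ combined with the independence of $\sigma$ and $(P_i)$, or an equivalent direct argument via the residual allocation (McCloskey) representation. Once this invariance is in place, the identification with the i.i.d.\ $\Beta(1,\theta)$ stick-breaking variables is automatic by induction, and the passage from size-biased to decreasing order is a deterministic rearrangement that preserves the law of the underlying multiset.
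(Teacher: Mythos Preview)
The paper does not actually prove this lemma: it is stated as a known result attributed to Tavar\'e \cite[Theorems 4 and~6]{Tavare} (with a pointer also to \cite[\S\myp5.7]{ABT03}), and no argument is given in the text. Your size-biased sampling approach via the Poisson process construction is precisely the classical route taken in those references, so your proposal is in line with the literature the paper defers to. The identification of the main obstacle --- the regeneration/invariance step after a size-biased deletion, and the independence of the removed mass from the normalized residual --- is accurate; once that is established (via Palm/Slivnyak--Mecke together with the independence of $\sigma$ and $(\sigma_i/\sigma)$ noted just before the lemma), the rest is the routine induction you describe. One small point worth making explicit in your write-up is that size-biased sampling is a.s.\ exhaustive (every atom is eventually selected), which is what justifies saying the decreasing rearrangements of $(V_n)$ and $(P_n)$ coincide.
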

Note that since the joint distribution of $Y_n$'s is continuous, the
order statistics $Y^{(n)}$ are in fact distinct (a.s.).

\subsection{Modified stick-breaking process}\label{sec:stick}
Let $(U_n)$, $(B_n)$ be two sequences of i.i.d.\ random variables
each, also independent of one another, where $U_n$'s have uniform
distribution on $[0,1]$ and $B_n$'s have beta distribution
$\Beta(1,\theta^*)$ with parameter $\theta^*\myn>0$. Setting
$\eta_0:=1$, let us define inductively the random variables
\begin{equation}\label{eq:xi-D-eta}
\xi_{n}:=\bfOne_{\{U_n\le u^*(\eta_{n-1})\}}, \ \ \quad
D_n:=\xi_nB_n,\ \ \quad \eta_n:=\prod_{j=1}^{n}(1-D_j)\ \ \  \quad
(n\in\NN),
\end{equation}
where $u^*(x):=\tilde{\nu}x/(1 -\tilde{\nu} +\tilde{\nu}x)$
\,($x\in[0,1]$) and $\bfOne_{A}$ denotes the indicator of event $A$.
Note that $\xi_n$'s are Bernoulli random variables (with values $0$
and $1$), adapted to the filtration
$\mathcal{F}_{n}:=\sigma\{(U_j,B_j),\,1\le j\le n\}$ and with the
conditional distribution
\begin{equation}\label{eq:def_A_j}
\PP\{\xi_{n} = 1\mypp|\,\mathcal{F}_{n-1}\} =
u^*(\eta_{n-1})\equiv \frac{\tilde{\nu}\mypp \eta_{n-1}}{1
-\tilde{\nu} +\tilde{\nu}\mypp\eta_{n-1}}\qquad (n\in\NN),
\end{equation}
where $\mathcal{F}_0$ is a trivial $\sigma$-algebra. In particular,
$\PP\{\xi_1=1\}=\tilde{\nu}$, \,$\PP\{\xi_1=0\}=1-\tilde{\nu}$.

Finally, let us consider the random variables
\begin{equation}\label{eq:X}
X_n:=\eta_{n-1}D_{n}\qquad (n\in\NN).
\end{equation}
Noting that, for all $n\in\NN$\myp, we have $0<B_n<1$ (a.s.), from
\eqref{eq:xi-D-eta} it is clear that
\begin{equation}\label{eq:D<1}
\forall\myp n\in\NN\myp,\qquad 0\le D_n<1\quad\text{and}\quad
0<\eta_n\le 1\qquad (\text{a.s.)},
\end{equation}
which also implies, due to \eqref{eq:X}, that
\begin{equation*}
\forall\myp n\in\NN\myp,\qquad 0\le X_n<1\qquad (\text{a.s.)}.
\end{equation*}

\begin{remark}\label{rm:stick_mod}
The random sequence $(X_n)$ may be interpreted as a \emph{modified
stick-breaking process with delays} (cf.\ Remark~\ref{rm:stick}),
whereby the original interval $[0,1]$ (``stick'') is divided into
two parts: (i) \,$[0,\tilde{\nu}\myp]$ which is subject to a
subsequent breaking, and (ii) \,$[\tilde\nu,1]$ which stays intact.
At each step, the breaking is only enabled if an independent point
chosen at random in the remaining stick falls in the breakable part,
otherwise the process is idle; the breaking, when it occurs, acts as
the removal of a fraction of the current breakable part,
independently sampled from $\Beta(1,\theta^*)$.
\end{remark}

\begin{lemma}\label{lm:sum-Xn}
The random sequence $(X_n)_{n\in\NN}$ satisfies the a.s.-identities
\begin{equation}\label{eq:X_id}
\sum_{j=1}^n X_j=1-\eta_n \quad (n\in\NN),\qquad
\sum_{j=1}^\infty X_j=1.
\end{equation}
\end{lemma}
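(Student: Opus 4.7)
The first identity in \eqref{eq:X_id} is essentially a telescoping identity. From the definitions \eqref{eq:xi-D-eta} and \eqref{eq:X} one has
\[
\eta_n=\eta_{n-1}(1-D_n)=\eta_{n-1}-\eta_{n-1}D_n=\eta_{n-1}-X_n,
\]
so $X_n=\eta_{n-1}-\eta_n$, and summing from $j=1$ to $n$ (with $\eta_0=1$) yields $\sum_{j=1}^n X_j=1-\eta_n$. This needs only the bound \eqref{eq:D<1} to make sense of each step.

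For the second identity, it therefore suffices to show that the (pathwise non-increasing, bounded) sequence $\eta_n$ converges to $0$ almost surely. The plan is to argue by contradiction: suppose that the event $A:=\{\eta_\infty>0\}$ has positive probability, where $\eta_\infty:=\lim_{n\to\infty}\eta_n$ exists a.s. by monotonicity. On $A$ we have $\eta_n\ge\eta_\infty>0$ for all $n$, and since $x\mapsto u^*(x)=\tilde\nu x/(1-\tilde\nu+\tilde\nu x)$ is strictly increasing on $[0,1]$ with $u^*(x)>0$ for $x>0$, the conditional probabilities \eqref{eq:def_A_j} are bounded below by $u^*(\eta_\infty)>0$.

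The key step is then an application of the conditional Borel--Cantelli lemma to a thinned event. Let $c_0:=\PP\{B_1>\tfrac12\}>0$ and, for $n\in\NN$, set $A_n:=\{\xi_n=1,\,B_n>\tfrac12\}$. Since $B_n$ is independent of $\mathcal{F}_{n-1}\vee\sigma(U_n)$, we have
\[
\PP\{A_n\mid\mathcal{F}_{n-1}\}=u^*(\eta_{n-1})\cdot c_0\ge u^*(\eta_\infty)\, c_0>0
\qquad\text{on }A.
\]
Hence $\sum_{n=1}^\infty\PP\{A_n\mid\mathcal{F}_{n-1}\}=\infty$ on $A$, so by the conditional Borel--Cantelli lemma, $A_n$ occurs for infinitely many $n$ almost surely on $A$. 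But on each $A_n$, $D_n=B_n>\tfrac12$, which forces $\eta_n=\prod_{j=1}^n(1-D_j)\to 0$, contradicting $\eta_\infty>0$. Therefore $\PP(A)=0$, which means $\eta_n\to 0$ a.s., and the second identity in \eqref{eq:X_id} follows from the first by passing to the limit.

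The main obstacle is the second part, and the subtlety to navigate there is that the random Bernoulli multipliers $\xi_n$ are not independent of the past, so one cannot directly apply the classical Borel--Cantelli lemma to the $B_n$'s along the subsequence $\{n:\xi_n=1\}$. The thinning device above (requiring not just $\xi_n=1$ but also $B_n>\tfrac12$) produces a conditionally lower-bounded sequence of probabilities and simultaneously ensures that each occurrence contributes at least $\tfrac12$ to $\sum D_n$, delivering the required contradiction in one stroke.
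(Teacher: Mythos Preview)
Your argument is correct. The telescoping proof of the first identity is identical to the paper's. For the second identity, however, you take a genuinely different route.

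The paper argues via expectations: from $\EE[\eta_n\mid\mathcal{F}_{n-1}]=\eta_{n-1}\bigl(1-\EE[D_n\mid\mathcal{F}_{n-1}]\bigr)$ and the lower bound $\EE[D_n\mid\mathcal{F}_{n-1}]\ge\tilde\nu\,\eta_{n-1}\,\EE[B_1]$ (obtained by replacing the denominator $1-\tilde\nu+\tilde\nu\eta_{n-1}$ with its upper bound $1$), one gets the recursion $\EE[\eta_n]\le\EE[\eta_{n-1}]-\tilde\nu\,\EE[B_1]\,\EE[\eta_{n-1}^2]$. Passing to the limit and using monotone convergence forces $\EE[\eta^2]\le 0$, hence $\eta=0$ a.s. Your approach instead invokes the conditional (L\'evy) Borel--Cantelli lemma after thinning to the events $A_n=\{\xi_n=1,\,B_n>\tfrac12\}$, which guarantees both a uniform lower bound on the conditional probabilities on $\{\eta_\infty>0\}$ and a definite multiplicative drop of $\eta_n$ at each occurrence. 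The paper's argument is shorter and uses only elementary moment manipulations; yours avoids any moment computation and is arguably more robust, since it would work equally well if $B_n$ were replaced by any nondegenerate $[0,1)$-valued law (not necessarily with an explicit mean), at the cost of citing the conditional Borel--Cantelli lemma.
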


\begin{proof} Recalling \eqref{eq:X}, we have
\begin{equation}\label{eq:pi_n}
\sum_{j=1}^n X_j = \sum_{j=1}^n(\eta_{j-1}-\eta_j)=1-\eta_n,
\end{equation}
which proves the first formula in \eqref{eq:X_id}. To establish the
second one, in view of \eqref{eq:pi_n} we only need to check that
$\lim_{n\to\infty}\eta_n=0$ a.s. To this end, note that $0\le D_j\le
1$, so that the sequence $(\eta_n)$ is non-increasing and therefore
converges to a (possibly random) limit $\eta\ge0$. To show that
$\eta=0$ a.s., consider
\begin{equation}\label{eq:E|Fn}
\EE\mypp[\eta_{n}\myp|\,\mathcal{F}_{n-1}]=\eta_{n-1}
\bigl(1-\EE\mypp[D_{n}\myp|\,\mathcal{F}_{n-1}]\bigr).
\end{equation}
Recalling the definition of $(D_{n})$ (see \eqref{eq:xi-D-eta}) and
noting that $\xi_{n}$ and $B_{n}$ are mutually independent, with
$B_{n}$ also independent of $\mathcal{F}_{n-1}$, we obtain, using
\eqref{eq:def_A_j} and replacing $\eta_{n-1}$ in the denominator
with its upper bound $1$,
\begin{align*}
\EE\mypp[D_{n}\,|\,\mathcal{F}_{n-1}]=\EE\mypp[\xi_{n}B_{n}\mypp|\,\mathcal{F}_{n-1}]
&=\EE\mypp[\xi_{n}\mypp|\,\mathcal{F}_{n-1}]\myn\cdot
\EE\mypp[B_{n}]\\
&=\frac{\tilde{\nu}\mypp \eta_{n-1}}{1 -\tilde{\nu}+\tilde{\nu}\mypp
\eta_{n-1}}\, \EE\mypp[B_{1}]\ge\tilde{\nu}\mypp\eta_{n-1}
\mypp\EE\mypp[B_{1}].
\end{align*}
Returning to \eqref{eq:E|Fn} and taking the expectation, we obtain
\begin{align*}
\EE\mypp[\eta_{n}]&\le
\EE\mypp[\eta_{n-1}]-\tilde{\nu}\,\EE\mypp[B_{1}]\cdot\EE\mypp[\eta_{n-1}^2].
\end{align*}
Passing to the limit as $n\to\infty$ and using the monotone
convergence theorem, we deduce that $\EE\myp[\eta^2]\le 0$, hence
$\eta=0$ a.s. This completes the proof of the lemma.
\end{proof}

\begin{remark} In terms of the modified stick-breaking process (see
Remark~\ref{rm:stick}), the second equality in \eqref{eq:X_id} means
that, with probability~$1$, the total fraction removed from the
breakable part of the stick has full measure. This is a
generalization of the similar property of the standard
stick-breaking process~\eqref{eq:GEM}.
\end{remark}

\begin{lemma}\label{lm:D>0i.o.}
With probability $1$, infinitely many $X_n$'s are non-zero.
\end{lemma}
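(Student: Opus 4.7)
The plan is to reduce the claim to a statement about the Bernoulli random variables $\xi_n$, and then derive a contradiction with the almost-sure convergence $\eta_n\to 0$ established inside the proof of Lemma~\ref{lm:sum-Xn}.

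First, observe that by \eqref{eq:D<1} we have $\eta_{n-1}>0$ almost surely for every $n\in\NN$, and since $B_n$ has a $\Beta(1,\theta^*)$ distribution we also have $B_n>0$ almost surely. Therefore, in view of the definitions \eqref{eq:xi-D-eta} and \eqref{eq:X},
\[
X_n\ne 0\quad\Longleftrightarrow\quad \xi_n=1\qquad(\text{a.s.}),
\]
so the lemma is equivalent to the statement that, with probability~$1$, infinitely many of the Bernoulli variables $\xi_n$ equal~$1$.

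Suppose, for contradiction, that the event $A:=\{\xi_n=1 \text{ for only finitely many } n\}$ has positive probability. Since $A=\bigcup_{n_0\ge 1}A_{n_0}$, where $A_{n_0}:=\{\xi_n=0\ \text{for all}\ n\ge n_0\}$, there must exist some $n_0\in\NN$ with $\PP(A_{n_0})>0$. On the event $A_{n_0}$ we have $D_n=\xi_nB_n=0$ for every $n\ge n_0$, hence by \eqref{eq:xi-D-eta}
\[
\eta_n=\eta_{n-1}(1-D_n)=\eta_{n-1}\qquad(n\ge n_0),
\]
so $\eta_n=\eta_{n_0-1}$ for all $n\ge n_0$. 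In particular, on $A_{n_0}$ the limit $\lim_{n\to\infty}\eta_n$ equals $\eta_{n_0-1}$, which is strictly positive almost surely.

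On the other hand, the proof of Lemma~\ref{lm:sum-Xn} (specifically, the martingale-type estimate leading to $\EE[\eta^2]\le 0$) shows that $\eta_n\to 0$ almost surely. This contradicts the positivity of $\eta_{n_0-1}$ on $A_{n_0}$, so $\PP(A_{n_0})=0$ for every $n_0$, and therefore $\PP(A)=0$. The only mildly delicate point is to make sure one is quoting the intrinsic convergence $\eta_n\to 0$ (already proved in Lemma~\ref{lm:sum-Xn}) rather than deriving it anew; once this is in hand the argument is a one-line contradiction.
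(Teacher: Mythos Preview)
Your proof is correct and follows essentially the same approach as the paper: both arguments hinge on the almost-sure convergence $\eta_n\to 0$ established in Lemma~\ref{lm:sum-Xn}. The only cosmetic difference is that the paper deduces $\sum_n D_n=+\infty$ directly (via $\sum_n\log(1-D_n)=-\infty$) and hence $D_n>0$ infinitely often, whereas you run an equivalent contradiction argument by observing that if $\xi_n=0$ eventually then $\eta_n$ would stabilise at a strictly positive value.
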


\begin{proof} By Lemma \ref{lm:sum-Xn}, $\eta_n\to0$ (a.s.) as
$n\to\infty$. In view of the last formula in \eqref{eq:xi-D-eta},
this implies that $\sum_{n}\log\myn(1-D_n)=-\infty$ (a.s.), which is
equivalent to $\sum_{n}D_n=+\infty$ (a.s.). Hence, $D_n>0$
infinitely often (a.s.), and the claim of the lemma now readily
follows from \eqref{eq:X} since $\eta_n>0$ a.s.\
(see~\eqref{eq:D<1}).

The fact that $D_n>0$ infinitely often (a.s.) can be established
more directly as follows. Put $\tau_0:=0$ and define inductively the
successive hitting times
\begin{equation}\label{eq:def_stoptime_T_j}
\tau_n:= \min\myn\{j>\tau_{n-1}\mynn:\ \xi_j = 1\},\qquad
n\in\NN\myp,
\end{equation}
with the usual convention $\min \emptyset:=+\infty$. Since $B_n>0$
(a.s.), from \eqref{eq:xi-D-eta} we see that $D_{\tau_n}\mynn>0$
(a.s.), while $D_j=0$ for $\tau_{n-1}<j<\tau_n$, so it remains to
verify that $(\tau_n)$ is an a.s.-infinite sequence. Indeed, from
the definition of $\eta_n$ it follows that
\begin{equation*}
\eta_j\equiv \eta_{\tau_{n-1}} \ \ \ (\tau_{n-1}\le j<\tau_n),\qquad
\eta_{\tau_n}< \eta_{\tau_{n-1}}\myn.
\end{equation*}
Together with formulas \eqref{eq:xi-D-eta} and \eqref{eq:def_A_j},
this implies that, conditionally on $\mathcal{F}_{\tau_{n-1}}$, the
random variable $\tau_{n}-\tau_{n-1}$ is time to first success in a
sequence of independent Bernoulli trials $\xi_{j}$ with success
probability $u^*(\eta_{\tau_{n-1}})>0$ (see~\eqref{eq:def_A_j}), so
it has the geometric distribution
\begin{align*}
\PP\{\tau_{n} -\tau_{n-1}=k \,|\,\mathcal{F}_{\tau_{n-1}}\}&=
\left(1- u^*(\eta_{\tau_{n-1}})\right)^{k-1} u^*(\eta_{\tau_{n-1}}),
\qquad k=1,2,\dots,
\end{align*}
and in particular $\tau_{n}-\tau_{n-1}<\infty$ a.s. From the product
structure of the filtration $(\mathcal{F}_n)$ it is also clear that
the waiting times $\tau_{n} -\tau_{n-1}$ are mutually independent
($n\in\NN$). Hence, it follows that all $\tau_{n}$ are a.s.-finite,
as required.
\end{proof}

Let $X^{(1)}\mynn\ge X^{(2)}\mynn\ge \cdots$ be the order statistics
built from the sequence $(X_n)$ (see~\eqref{eq:X}) by arranging the
entries in decreasing order. Recall that $X_n\ge0$; moreover, by
Lemma~\ref{lm:D>0i.o.} infinitely many entries are positive (a.s.)\
and, in addition, $\sum_n X_n=1$ according to Lemma~\ref{lm:sum-Xn}.
It follows that, with probability~$1$, there is an infinite sequence
of order statistics $X^{(n)}\mynn>0$, each well defined up to
possible ties of at most finite multiplicities (however, it will be
clear from the next lemma and the continuity of the
Poisson--Dirichlet distribution that positive order statistics
$X^{(n)}$ are in fact a.s.-distinct).

In particular, the sequence $X^{(1)}\mynn\ge
X^{(2)}\mynn\ge\cdots>0$ is not affected by any zero entries among
$X_n$'s, which therefore can be removed from $(X_n)$ prior to
ordering. But, according to the definition of the random times
$\tau_{n}$ (see~\eqref{eq:def_stoptime_T_j}) and by formulas
\eqref{eq:xi-D-eta} and \eqref{eq:X}, successive non-zero entries
among $X_1,X_2,\dots$ are precisely given by
\begin{equation}\label{eq:GEM1}
X_{\tau_1}\myn=B_{\tau_1}\myn,\qquad
X_{\tau_n}=B_{\tau_n}\prod_{j=1}^{n-1}(1-B_{\tau_j})\ \ \quad
(n\ge2),
\end{equation}
where $(B_{\tau_n})$ are i.i.d.\ random variables with beta
distribution $\Beta(1,\theta^*)$. The latter claim can be easily
verified using the total probability formula and mutual independence
of the waiting times $\tau_{n} -\tau_{n-1}$ pointed out in the
alternative proof of Lemma~\ref{lm:D>0i.o.}; for instance,
\begin{align*}
\PP\{B_{\tau_1}\mynnn>x_1,B_{\tau_2}\mynn>x_2\}&=\sum_{k_1=1}^\infty\sum_{k_2>k_1}
\PP\{\tau_1=k_1,\,\tau_{2}-\tau_{1}=k_2-k_1\}\,\PP\{B_{k_1}\mynnn>x_1,B_{k_2}\mynn>x_2\}\\
&=(1-x_1)^{\theta^*}(1-x_2)^{\theta^*}\sum_{k_1=1}^\infty
\PP\{\tau_1=k_1\}\sum_{\ell=1}^\infty\PP\{\tau_2-\tau_1=\ell\}\\
&=(1-x_1)^{\theta^*}(1-x_2)^{\theta^*}.
\end{align*}

Now, comparing \eqref{eq:GEM1} with \eqref{eq:GEM} and using
Lemma~\ref{lm:GEM}, we arrive at the following result.

\begin{lemma}\label{lm:large_cycles_1}
The sequence of\/ the descending order statistics $(X^{(n)})$ has
the Poisson--Dirichlet distribution $\PD(\theta^*)$ with parameter
$\theta^*\myn>0$.
\end{lemma}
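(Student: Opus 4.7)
The plan is to reduce Lemma~\ref{lm:large_cycles_1} to Tavaré's identity (Lemma~\ref{lm:GEM}) by identifying the non-zero sub-sequence of $(X_n)$ with a standard $\GEM(\theta^*)$ stick-breaking sequence. The overall strategy proceeds in three conceptual steps, each of which is essentially laid out in the preparatory material just above the lemma.

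First, since the positive descending order statistics $(X^{(n)})$ are insensitive to removal of zero entries, and by Lemma~\ref{lm:D>0i.o.} the sequence $(X_n)$ contains infinitely many non-zero entries almost surely, it suffices to study the sub-sequence $(X_{\tau_n})_{n\in\NN}$ indexed by the hitting times from~\eqref{eq:def_stoptime_T_j}. Unfolding~\eqref{eq:X} and~\eqref{eq:xi-D-eta}, one has $X_{\tau_n} = \eta_{\tau_n-1}\myp B_{\tau_n}$; moreover, the factor $\eta_{\tau_n-1}$ telescopes to $\prod_{j=1}^{n-1}(1-B_{\tau_j})$ because $D_j = 0$ for every index $j$ strictly between two consecutive hitting times. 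This matches the $\GEM$ recursion~\eqref{eq:GEM} term-for-term, provided that $(B_{\tau_n})_{n\in\NN}$ is i.i.d.\ with common distribution $\Beta(1,\theta^*)$.

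Second, the verification of this distributional claim is the substantive content of the proof. The bivariate computation displayed in the excerpt extends by straightforward induction to arbitrarily many indices. The key observation is that the event $\{\tau_1 = k_1,\,\tau_2-\tau_1 = k_2-k_1,\,\dots,\,\tau_m-\tau_{m-1} = k_m-k_{m-1}\}$ is a function of $\xi_1,\dots,\xi_{k_m}$, and hence of $\{U_j\}_{j\le k_m}$ and $\{B_j\}_{j<k_m}$, but \emph{not} of the values $B_{k_1},\dots,B_{k_m}$ themselves. Consequently these random variables remain independent of the conditioning event and mutually independent, each distributed as $\Beta(1,\theta^*)$. Summing over the mutually exclusive time configurations (and using the mutual independence of the waiting times $\tau_n-\tau_{n-1}$ recorded in the alternative proof of Lemma~\ref{lm:D>0i.o.}) then yields the required product form
\[
\PP\{B_{\tau_1}>x_1,\dots,B_{\tau_m}>x_m\} = \prod_{i=1}^m (1-x_i)^{\theta^*}.
\]

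Third, once $(B_{\tau_n})$ has been identified as an i.i.d.\ $\Beta(1,\theta^*)$ sequence, formula~\eqref{eq:GEM1} exhibits $(X_{\tau_n})$ as a $\GEM(\theta^*)$ sequence, and Lemma~\ref{lm:GEM} immediately gives that the decreasing order statistics $(X^{(n)})$ follow $\PD(\theta^*)$. The main conceptual obstacle lies in Step~2: one must be alert to the possibility of size-bias when selecting $B_{\tau_n}$ via a stopping time. No such bias arises here, because the indicator $\xi_{\tau_n}$ that decides whether $\tau_n$ coincides with the current index depends only on $U_{\tau_n}$ (and on $\eta_{\tau_n-1}$, which is measurable with respect to the past), and is therefore independent of $B_{\tau_n}$. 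This clean separation between the ``activation'' and ``amplitude'' randomness is exactly what makes the induction go through.
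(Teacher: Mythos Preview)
Your approach is the paper's: strip out zero entries, identify the surviving subsequence $(X_{\tau_n})$ with a $\GEM(\theta^*)$ stick-breaking via~\eqref{eq:GEM1}, verify that $(B_{\tau_n})$ is i.i.d.\ $\Beta(1,\theta^*)$, and invoke Lemma~\ref{lm:GEM}. The paper lays out exactly this in the paragraphs immediately preceding the lemma, including the bivariate calculation you extend by induction.

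There is, however, a slip in your Step~2. You assert that the event $\{\tau_1=k_1,\dots,\tau_m=k_m\}$ is ``not [a function] of the values $B_{k_1},\dots,B_{k_m}$''; this is false for $B_{k_1},\dots,B_{k_{m-1}}$. Once $\tau_1=k_1$, one has $\eta_{k_1}=1-B_{k_1}$, and the threshold $u^*(\eta_{k_1})$ governing $\xi_{k_1+1},\dots,\xi_{k_2}$ depends on $B_{k_1}$; hence so does $\{\tau_2=k_2\}$. (The paper's displayed factorization for $m=2$ has the same defect, so you are in good company.) What \emph{is} true---and what your third paragraph correctly isolates---is that, conditionally on $\mathcal{F}_{\tau_{n-1}}$, the event $\{\tau_n=j\}$ depends only on $\eta_{\tau_{n-1}}$ and $U_{\tau_{n-1}+1},\dots,U_j$, and is therefore independent of $B_j$; so $B_{\tau_n}$ is $\Beta(1,\theta^*)$ and independent of $\mathcal{F}_{\tau_{n-1}}\supset\sigma(B_{\tau_1},\dots,B_{\tau_{n-1}})$. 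Induct on $n$. Equivalently, in your double sum first sum over the last waiting time (the geometric series collapses to~$1$ regardless of the value of $B_{k_{m-1}}$), then integrate out $B_{k_{m-1}}$, and repeat. Either route repairs Step~2 without changing the architecture of the argument.
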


In conclusion of this subsection, let us prove some moment
identities for the random variables $X_j$.
\begin{lemma}\label{lm:D}
For each $n\in\NN$\myp, as $N\to\infty$,
\begin{equation}\label{eq:ED1}
\EE\myp[X_1^n]=\tilde{\nu}\,\frac{B(n+1,\theta^*)}{B(1,\theta^*)}
\equiv\frac{\tilde{\nu}\,n!\,\Gamma(\theta^*\myn+1)}{\Gamma(\theta^*\myn+n+1)}\mypp.
\end{equation}
Furthermore, for all $n_1,n_2\in\NN$
\begin{align}
\notag \EE\myp\bigl[X_1^{n_1}\myn X_2^{n_2}\myp
(1-\tilde{\nu}X_1)\bigr] &=\frac{\tilde{\nu}^{\mypp 2}\myp
B(n_1+1,\theta^*\myn+n_2+1)\,B(n_2+1,\theta^*)}{(B(1,\theta^*))^2}\\
\label{eq:ED2} &\equiv \frac{\theta^*\mypp\tilde{\nu}^{\mypp2}\mypp
n_1\myn!\,n_2!\,\Gamma(\theta^*\myn+1)}{\Gamma(\theta^*\myn+n_1+n_2+2)}\mypp,
\end{align}
while for $n_1=0$ and any $n_2\in\NN$
\begin{align}
\notag \EE\myp\bigl[X_2^{n_2}\myp (1-\tilde{\nu}X_1)\bigr]
&=\left(1-\tilde{\nu}+\tilde{\nu}\,\frac{B(1,n_2+1+\theta^*)}{B(1,\theta^*)}\right)
\frac{\tilde{\nu}\mypp B(n_2+1,\theta^*)}{B(1,\theta^*)}\\
\label{eq:ED3} &\equiv \bigl\{\theta^*\myn+(n_2
+1)(1-\tilde{\nu})\bigr\}\,\frac{\tilde{\nu}\mypp
n_2!\,\Gamma(\theta^*\myn+1)}{\Gamma(\theta^*\myn+n_2+2)}\mypp.
\end{align}
\end{lemma}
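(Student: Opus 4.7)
The plan is to compute all three expectations by direct calculation, exploiting two structural facts: (i) $\xi_j\in\{0,1\}$, so $\xi_j^{\mypp n}=\xi_j$ for $n\ge1$; and (ii) the key algebraic identity $\eta_1=1-D_1=1-X_1$ (since $\eta_0=1$), which makes $u^*(\eta_1)=\tilde{\nu}(1-X_1)/(1-\tilde{\nu}X_1)$ and so produces a clean cancellation against the factor $(1-\tilde{\nu}X_1)$ sitting inside the expectations \eqref{eq:ED2} and \eqref{eq:ED3}. This is what turns what looks like a messy fraction into a product of beta integrals, and I expect this cancellation to be the only nontrivial step; everything else is routine.

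For \eqref{eq:ED1}, I will write $X_1=\xi_1B_1$ and use independence of $\xi_1$ and $B_1$. Since $\xi_1=\bfOne_{\{U_1\le u^*(1)\}}=\bfOne_{\{U_1\le\tilde{\nu}\}}$, one has $\EE[\xi_1]=\tilde{\nu}$, and a direct beta integral gives $\EE[B_1^n]=\theta^*B(n+1,\theta^*)=B(n+1,\theta^*)/B(1,\theta^*)$. Multiplying the two yields \eqref{eq:ED1}.

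For \eqref{eq:ED2}, the plan is to condition on $\mathcal{F}_1$. Since $X_2=\eta_1\xi_2B_2=(1-X_1)\xi_2B_2$ and $B_2$ is independent of $\mathcal{F}_1$, the tower property gives, for $n_1,n_2\ge1$,
\begin{align*}
\EE\bigl[X_1^{n_1}X_2^{n_2}(1-\tilde{\nu}X_1)\bigr]
&=\EE[B_2^{n_2}]\cdot\EE\bigl[X_1^{n_1}(1-X_1)^{n_2}(1-\tilde{\nu}X_1)\,\EE[\xi_2\mypp|\mypp\mathcal{F}_1]\bigr]\\
&=\tilde{\nu}\,\EE[B_2^{n_2}]\cdot\EE\bigl[X_1^{n_1}(1-X_1)^{n_2+1}\bigr],
\end{align*}
where in the last step I substitute $\EE[\xi_2\mypp|\mypp\mathcal{F}_1]=\tilde{\nu}(1-X_1)/(1-\tilde{\nu}X_1)$ and cancel. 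Since $n_1\ge1$, the factor $X_1^{n_1}=(\xi_1B_1)^{n_1}$ vanishes on $\{\xi_1=0\}$, so $\EE\bigl[X_1^{n_1}(1-X_1)^{n_2+1}\bigr]=\tilde{\nu}\,\EE[B_1^{n_1}(1-B_1)^{n_2+1}]=\tilde{\nu}\,\theta^*B(n_1+1,\theta^*+n_2+1)$. Assembling and using $\EE[B_2^{n_2}]=\theta^*B(n_2+1,\theta^*)$ gives \eqref{eq:ED2}; converting to the gamma form uses $B(a,b)=\Gamma(a)\Gamma(b)/\Gamma(a+b)$ and $B(1,\theta^*)=1/\theta^*$.

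For \eqref{eq:ED3}, the same conditioning works, but with $n_1=0$ the indicator trick no longer removes the $\{\xi_1=0\}$ contribution. Instead I expand
\[
\EE\bigl[(1-X_1)^{n_2+1}\bigr]=(1-\tilde{\nu})\cdot 1+\tilde{\nu}\,\EE[(1-B_1)^{n_2+1}]=(1-\tilde{\nu})+\tilde{\nu}\,\frac{B(1,n_2+1+\theta^*)}{B(1,\theta^*)},
\]
and multiply by $\tilde{\nu}\EE[B_2^{n_2}]=\tilde{\nu}B(n_2+1,\theta^*)/B(1,\theta^*)$ to obtain the first line of \eqref{eq:ED3}. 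The simplification to the gamma form uses $\tilde{\nu}B(1,n_2+1+\theta^*)/B(1,\theta^*)=\tilde{\nu}\theta^*/(\theta^*+n_2+1)$, which combines with $1-\tilde{\nu}$ to produce the bracketed factor $\theta^*+(n_2+1)(1-\tilde{\nu})$ after clearing the denominator $\theta^*+n_2+1$ against $\Gamma(\theta^*+n_2+1)$ to land on $\Gamma(\theta^*+n_2+2)$.
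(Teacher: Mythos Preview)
Your proof is correct and follows essentially the same route as the paper's: condition on $\mathcal{F}_1$, use $\EE[\xi_2\mid\mathcal{F}_1]=\tilde{\nu}(1-X_1)/(1-\tilde{\nu}X_1)$ to cancel the factor $(1-\tilde{\nu}X_1)$, and then evaluate the remaining beta integrals, splitting on $\{\xi_1=0\}$ versus $\{\xi_1=1\}$ (with the distinction between $n_1\ge1$ and $n_1=0$ handled exactly as you describe). The only cosmetic difference is that the paper works with $D_1$ rather than $X_1$, but since $X_1=D_1$ these are the same computation.
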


\begin{proof} Using the definitions \eqref{eq:xi-D-eta} and \eqref{eq:X},
we obtain
\begin{align*}
\EE\myp[X_1^n]=\EE\myp[\xi_1B_1^n]
&=\EE\myp[\xi_1]\cdot\EE\myp[B_1^n]\\
&=\frac{\tilde{\nu}}{B(1,\theta^*)}\int_0^1
x^n(1-x)^{\theta^*\myn-1}\,\rmd{x}=\frac{\tilde{\nu}\mypp
B(n+1,\theta^*)}{B(1,\theta^*)}\mypp,
\end{align*}
which proves the first part of formula \eqref{eq:ED1}. The second
part follows by substituting the well-known representation
$B(a,b)=\Gamma(a)\myp \Gamma(b)/\Gamma(a+b)$.

To prove \eqref{eq:ED2}, let us first compute the conditional
expectation
\begin{equation}\label{eq:condED}
\EE\myp\bigl[X_1^{n_1}\myn X_2^{n_2}\myp (1-\tilde{\nu}
X_1)\mypp|\,\mathcal{F}_1\bigr]=D_1^{n_1}(1-D_1)^{n_2}\myp
(1-\tilde{\nu}
D_1)\,\EE\myp\bigl[D_2^{n_2}\myp|\,\mathcal{F}_1\bigr],
\end{equation}
where, according to \eqref{eq:xi-D-eta} and \eqref{eq:def_A_j},
\begin{align*}
\EE\myp\bigl[D_2^{n_2}\myp|\,\mathcal{F}_1\bigr]=\EE\myp\bigl[\xi_2
B_2^{n_2}\myp|\,\mathcal{F}_1\bigr]&=\frac{\tilde{\nu}\mypp
\eta_{1}}{1 -\tilde{\nu} +\tilde{\nu}\mypp\eta_{1}}\cdot
\EE\myp[B_2^{n_2}]\\
&=\frac{\tilde{\nu}\mypp (1-D_1)}{1 -\tilde{\nu}D_1}\cdot
\frac{B(n_2+1,\theta^*)}{B(1,\theta^*)}\mypp.
\end{align*}
Hence, on taking the expectation of \eqref{eq:condED}, we obtain
\begin{align}
\notag \EE\myp\bigl[X_1^{n_1}\myn X_2^{n_2}\myp
(1-\tilde{\nu}X_1)\bigr] &=\frac{\tilde{\nu}\mypp
B(n_2+1,\theta^*)}{B(1,\theta^*)}\,
\EE\myp\bigl[D_1^{n_1}(1-D_1)^{n_2+1}\bigr]\\
&=\frac{\tilde{\nu}\mypp B(n_2+1,\theta^*)}{B(1,\theta^*)}\,
\EE\myp\bigl[\xi_1^{n_1}\myn B_1^{n_1}(1-\xi_1B_1)^{n_2+1}\bigr].
\label{eq:ED2a}
\end{align}
For $n_1\ge 1$ we have $\xi_1^{n_1}=\xi_1$, leading to
\begin{align}
\notag \EE\myp\bigl[\xi_1^{n_1}\myn
B_1^{n_1}(1-\xi_1B_1)^{n_2+1}\bigr]&=\PP\{\xi_1=1\}
\,\EE\myp\bigl[B_1^{n_1}(1-B_1)^{n_2+1}\bigr]\\
&=\frac{\tilde{\nu}}{B(1,\theta^*)}\int_0^1
x^{n_1}(1-x)^{\theta^*\myn+n_2}\,\rmd{x}=\frac{\tilde{\nu}\mypp
B(n_1+1,\theta^*\myn+n_2+1)}{B(1,\theta^*)}\mypp, \label{eq:ED2b}
\end{align}
and the substitution of \eqref{eq:ED2b} into \eqref{eq:ED2a} gives
the first line of formula~\eqref{eq:ED2}. The second line can again
be obtained by expressing the beta function through the gamma
function.

For $n_1=0$, formula \eqref{eq:ED2a} is reduced to
\begin{equation}\label{eq:ED2c}
\EE\myp\bigl[X_2^{n_2}\myp (1-\tilde{\nu}
X_1)\bigr]=\frac{\tilde{\nu}\mypp
B(n_2+1,\theta^*)}{B(1,\theta^*)}\,
\EE\myp\bigl[(1-\xi_1B_1)^{n_2+1}\bigr],
\end{equation}
and similarly as before we find
\begin{align*}
\EE\myp\bigl[(1-\xi_1B_1)^{n_2+1}\bigr]&=\PP\{\xi_1=0\}
+\PP\{\xi_1=1\}\,\EE\myp\bigl[(1-B_1)^{n_2+1}\bigr]\\
&=(1-\tilde{\nu})+\tilde{\nu}\int_0^1
(1-x)^{\theta^*\myn+n_2}\,\rmd{x}\\
&=1-\tilde{\nu}+\frac{\tilde{\nu}\mypp
B(1,\theta^*\myn+n_2+1)}{B(1,\theta^*)}\mypp.
\end{align*}
Together with \eqref{eq:ED2c} this proves the first line
of~\eqref{eq:ED3}, while the second line then follows by usual
manipulations with the beta function.
\end{proof}

\subsection{Cycles under lexicographic ordering}
The next theorem characterizes the asymptotic (finite-dimensional)
distributions of normalized cycle lengths $(L_j)$ under the
lexicographic ordering introduced in Definition~\ref{def:L-lex}.
Owing to the normalization proportional to $N$, only long cycles
(i.e., of length comparable to $N$) survive in the limit as
$N\to\infty$. This result should be contrasted with
Theorem~\ref{thm:unordered_sub} that deals with non-normalized cycle
lengths, thus revealing an asymptotic loss of mass in the
supercritical regime due to the emergence of long cycles (see a
comment after the proof of Theorem~\ref{thm:unordered_sub}).
\begin{theorem}\label{thm:large_cycles_2}
For each $m\in\NN$\myp,
\begin{equation}\label{eq:weak_conv_ell_j}
\frac{1}{N\tilde{\nu}}\,(L_1,\dots,L_m)
\stackrel{d}{\longrightarrow} (X_1,\dots,X_m),\qquad N\to\infty,
\end{equation}
where $\tilde{\nu}>0$ is given by \eqref{eq:tilde-nu} and the random
variables $X_j$ are as defined in~\eqref{eq:X}. In particular,
$L_1/(N\tilde{\nu})$ converges in distribution to a random variable
$X_1$ with atom $1-\tilde{\nu}$ at zero and an absolutely continuous
component on $(0,1)$ with density \,$\tilde{\nu}
\mypp\theta^*(1-x)^{\theta^*\myn-1}$.
\end{theorem}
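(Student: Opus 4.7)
The plan is to pass to the joint limit directly in the explicit formula \eqref{eq:L=l} of Lemma~\ref{lem:distribution_of_ell_1,ell_2}, combining two asymptotic inputs. First, applying the Flajolet--Odlyzko singularity transfer to $g_\theta(z)$ and $g_\kappa(z)$ in the domain $\varDelta_0$ (using the expansions \eqref{eq:g_theta_as} and \eqref{eq:g_kappa_as=0}), one obtains $\theta_\ell\sim\theta^*R^{-\ell}$ and $\kappa_\ell=O(R^{-\ell}\ell^{-s})$ as $\ell\to\infty$. Since $s>1$, for $\ell=N\tilde{\nu}x$ with $x>0$ the linear term $\theta_\ell$ dominates $N\kappa_\ell$ in the weight $\theta_\ell+N\kappa_\ell$, whereas for bounded $\ell$ the opposite holds. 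Second, we extend Theorem~\ref{thm:aux_asypmtotic_2} to a \emph{shifted} coefficient extraction $[z^{N-K}]\exp\myn\{\GN(z)\}$ with $K=O(N)$: reproducing the Hankel contour argument, but with the parameter $d_1$ of \eqref{eq:chi} replaced by $\tilde{\nu}-K/N$ (which remains positive for $K/N<\tilde{\nu}$), gives
\begin{equation*}
\frac{h_{N-K}(N)}{H_N} \sim R^{K}\left(1-\frac{K}{N\tilde{\nu}}\right)^{\theta^*\myn-1},\qquad N\to\infty,
\end{equation*}
uniformly in $K$ with $K/N$ bounded away from $\tilde{\nu}$ (and reducing to $R^K$ when $K=O(1)$, consistent with Theorem~\ref{thm:cycle_counts_normalized}).

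Substituting these into \eqref{eq:L=l} for $\ell_j=N\tilde{\nu}x_j$ with all $x_j>0$ and $x_1+\cdots+x_m<1$, the factors $R^{-\ell_j}$ from the weights cancel exactly against $R^{K_m}$ from the ratio, and multiplying through by the Jacobian $(N\tilde{\nu})^m$ yields the joint limiting density
\begin{equation*}
f(x_1,\dots,x_m) = \frac{(\tilde{\nu}\theta^*)^m\mypp(1-x_1-\cdots-x_m)^{\theta^*\myn-1}}{\prod_{j=1}^m\bigl(1-\tilde{\nu}(x_1+\cdots+x_{j-1})\bigr)}\mypp.
\end{equation*}
It remains to match this with the joint density of $(X_1,\dots,X_m)$ on the positive orthant. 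Writing $\rho_j:=1-\sum_{i<j}x_i$, the stick-breaking definition \eqref{eq:X} shows that, conditional on $X_1=x_1,\dots,X_{j-1}=x_{j-1}$ all positive, $\eta_{j-1}=\rho_j$, so that $\xi_j=1$ with conditional probability $u^*(\rho_j)=\tilde{\nu}\rho_j/(1-\tilde{\nu}\sum_{i<j}x_i)$, and on this event $X_j=\rho_j B_j$ has conditional density $\theta^*\rho_j^{-1}(1-x_j/\rho_j)^{\theta^*\myn-1}$; taking the product and using the telescoping identity $(1-x_j/\rho_j)\mypp\rho_j=\rho_{j+1}$ recovers precisely the formula above.

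For weak convergence of the full normalized vector (including the atoms at $x_j=0$), one decomposes $\Ens[\phi(L_1/(N\tilde{\nu}),\dots,L_m/(N\tilde{\nu}))]$ according to which coordinates fall below a slowly growing threshold $K(N)\to\infty$ with $K(N)=o(N)$: the bounded-regime contributions sum to $\sum_{\ell}\kappa_\ell R^\ell\to 1-\tilde{\nu}$ (cf.\ Theorem~\ref{thm:unordered_sub}), matching the event $\xi_j=0$ in the stick-breaking process, while the bulk coordinates follow the density just derived. The nested conditional structure of \eqref{eq:L=l} is compatible with the Markov-style construction of \eqref{eq:X}, so mixed cases (some $L_j$ bounded, others proportional to $N$) reproduce the correct joint law of $(X_1,\dots,X_m)$. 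The principal technical obstacle is the uniform version of the shifted asymptotic in the first step: because the Hankel contour in the proof of Theorem~\ref{thm:aux_asypmtotic_2} is anchored at the boundary singularity $z=R$ rather than at a true saddle, one must verify that the error bounds on each of the three arcs $\gamma_1,\gamma_2,\gamma_3$ remain uniform as $K/N$ ranges over $[0,\tilde{\nu}-\varepsilon]$, a property needed both for the bulk density calculation and for a seamless interpolation between the atomic and continuous regimes of the $L_j$'s.
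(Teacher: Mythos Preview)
Your approach is correct in outline and genuinely different from the paper's. You work directly with the explicit distribution formula \eqref{eq:L=l}, combining a coefficient-wise transfer for $\theta_\ell,\kappa_\ell$ with a uniform shifted asymptotic $h_{N-K}(N)/H_N\sim R^K(1-K/(N\tilde\nu))^{\theta^*-1}$, and then match the resulting limiting density to that of the stick-breaking vector $(X_1,\dots,X_m)$. The density match you write down is correct, including the mixed atomic/continuous cases you sketch.

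The paper proceeds entirely differently, via the \emph{method of moments}. It never extracts $\theta_\ell$ or $\kappa_\ell$ individually, and never proves a uniform shifted coefficient asymptotic. Instead, it observes that the factorial moment $\Ens[(L_1-1)_{n_1}(L_2-1)_{n_2}(1-L_1/N)]$ equals $N^{-2}H_N^{-1}\,[z^N]\bigl[\GN^{\{n_1+1\}}(z)\,\GN^{\{n_2+1\}}(z)\,\rme^{\GN(z)}\bigr]$ (the factor $1-L_1/N$ cancels the denominator $N-\ell_1$ in \eqref{eq:L=l}), and then applies Theorem~\ref{thm:aux_asypmtotic_2} with the pre-exponential $f(z)=\GN^{\{n_1+1\}}(z)\,\GN^{\{n_2+1\}}(z)$, using the derivative expansions \eqref{eq:assum_derivative_p_Upsilon-low}--\eqref{eq:assum_derivative_g_Theta}. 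This yields the limiting moments (Lemma~\ref{lem:ell_1_ell_2_super}), which are matched against the separately computed moments of $(X_1,X_2)$ (Lemma~\ref{lm:D}); an induction on the exponent $q$ in $(1-L_1/N)^q$ then gives the full joint moment convergence.

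The trade-off is this: your route is conceptually more direct and makes the Markov structure of \eqref{eq:L=l} visibly match that of \eqref{eq:X}, but it requires two genuine additional ingredients beyond the paper's toolkit --- the Flajolet--Odlyzko transfer for the individual coefficients, and the uniformity of the Hankel-contour estimates in the effective parameter $d_1'=\tilde\nu-K/N$ (which you correctly flag; the bounds \eqref{eq:I3''}, \eqref{eq:I3'''}, \eqref{eq:d1_exp} do carry over uniformly on $d_1'\ge\varepsilon$, but this must be checked). You would also need to close the gap between pointwise density convergence and weak convergence, e.g.\ via a Scheff\'e-type argument handling the atom at zero and the vanishing tail past $x_1+\dots+x_m=1$. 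The paper's moment method sidesteps all three issues at once: no individual coefficient asymptotics, no uniform shift, and the atoms at zero are automatically encoded in the moment formulas (note $\EE[X_1^n]=\tilde\nu\,\EE[B_1^n]$, not $\EE[B_1^n]$). The cost is the somewhat artificial auxiliary factors $(1-L_1/N)^q$ and the separate moment calculations of Lemma~\ref{lm:D}.
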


For the proof of the theorem, we first need to establish the
following lemma.

\begin{lemma}\label{lem:ell_1_ell_2_super}
For each $n\in\NN$\myp,
\begin{equation}\label{eq:moments_ell_1}
\lim_{N\to\infty}\frac{1}{(N\tilde{\nu})^{n}}\, \Ens(L_1^n) =
\frac{\tilde{\nu}\, n!\:
\Gamma(\theta^*\myn+1)}{\Gamma(\theta^*\myn+n+1)}\mypp.
\end{equation}
Furthermore, for any $n_1,n_2\in\NN$
\begin{equation}\label{eq:moments_ell_1(n-ell_1)l_2}
\lim_{N\to\infty} \frac{1}{(N\tilde{\nu})^{n_1 +
n_2}}\,\Ens\bigl[L_1^{n_1} L_2^{n_2}\myp (1-L_1/N)\bigr] =
 \frac{\theta^*\myp\tilde{\nu}^{\mypp2}\mypp
n_1\myn!\,n_2!\:\Gamma(\theta^*\myn+1)}{\Gamma(\theta^*\myn +n_1+n_2
+2)}\mypp,
\end{equation}
while for $n_1=0$ and any $n_2\in\NN$
\begin{equation}\label{eq:moments_(n-ell_1)l_2}
\lim_{N\to\infty}
\frac{1}{(N\tilde{\nu})^{n_2}}\,\Ens\myn\bigl[L_2^{n_2}\myp
(1-L_1/N)\bigr] = \frac{\tilde{\nu}\mypp\{\theta^*\myn+(n_2
+1)(1-\tilde{\nu})\}\,n_2!\: \Gamma(\theta^*\myn+1)}
{\Gamma(\theta^*\myn + n_2 +2)}\mypp.
\end{equation}
\end{lemma}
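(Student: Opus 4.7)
The plan is to substitute the explicit joint distribution of Lemma~\ref{lem:distribution_of_ell_1,ell_2} directly into each of the three moment expressions and to show that, after a suitable change of variables, the resulting sums behave to leading order as Riemann approximations of beta-type integrals whose values coincide with the stated right-hand sides. A useful preliminary observation is that the factor $1-L_1/N=(N-\ell_1)/N$ appearing in \eqref{eq:moments_ell_1(n-ell_1)l_2} and \eqref{eq:moments_(n-ell_1)l_2} cancels precisely the denominator $N-\ell_1$ in \eqref{eq:L=l} (cf.~\eqref{eq:N-L1}), producing the symmetric weight $(\theta_{\ell_1}+N\kappa_{\ell_1})(\theta_{\ell_2}+N\kappa_{\ell_2})/N^2$, which is ideally suited to passing to an integral.

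The central analytic input would be the uniform asymptotic
\[
\frac{h_{N-K}(N)}{H_N}\sim R^K\,(1-K/(N\tilde{\nu}))^{\theta^*\!-1},\qquad N\to\infty,
\]
valid uniformly for $K/N$ in any compact subset of $[0,\tilde{\nu})$. I would derive it by applying Theorem~\ref{thm:aux_asypmtotic_2} to $h_{N-K}(N)=[z^M]\exp\myn\{G_N(z)\}$ with $M:=N-K$ and $v:=N/M$, after the factorization
\[
\exp\myn\{G_N(z)\}=\exp\myn\{(1-v)\myp g_\theta(z)\}\cdot\exp\myn\{v\mypp(g_\theta(z)+M g_\kappa(z))\}\mypp,
\]
in which the first factor plays the role of the pre-exponential $f(z)$ in \eqref{eq:th3.3}. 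By \eqref{eq:g_theta_as} this factor has singular behaviour $(1-z/R)^{(v-1)\theta^*}$ near $z=R$, so the effective exponent $v\theta^*\myn+\beta$ in \eqref{eq:th3.3} reduces precisely to $\theta^*$. Combining with the asymptotics \eqref{eq:HN-2} for $H_N$ and observing that $M-Ng_\kappa^{\{1\}}(R)=N\tilde{\nu}-K$ yields the claimed formula, with uniformity in $v\in[1,v_2]$ for any $v_2<1/(1-\tilde{\nu})$ inherited directly from Theorem~\ref{thm:aux_asypmtotic_2}.

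The second step is to identify the leading behaviour of the weights. Singularity analysis (the transfer theorem) applied to \eqref{eq:assum_derivative_g_Theta} with $n=1$ gives $\theta_\ell\sim\theta^*R^{-\ell}$, while \eqref{eq:assum_derivative_p_Upsilon} yields a polynomial decay $\kappa_\ell R^\ell=O(\ell^{-s})$. Consequently $\theta_\ell+N\kappa_\ell\sim\theta_\ell$ for $\ell\asymp N$, and $\theta_\ell+N\kappa_\ell\sim N\kappa_\ell$ for $\ell=O(1)$. Substituting into each moment sum and rescaling through $y_j:=\ell_j/(N\tilde{\nu})$, the long-cycle contribution reduces to a Riemann approximation of the Dirichlet-type integral
\[
\int\!\!\cdots\!\!\int_{\substack{y_j>0\\ y_1+\cdots+y_m<1}}y_1^{n_1}\cdots y_m^{n_m}(1-y_1-\cdots-y_m)^{\theta^*-1}\,\rmd{y_1}\cdots\rmd{y_m}\mypp,
\]
times the appropriate prefactors in $\theta^*$, $\tilde{\nu}$; standard beta-function identities then produce the expressions on the right-hand sides of \eqref{eq:moments_ell_1} and \eqref{eq:moments_ell_1(n-ell_1)l_2}. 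For \eqref{eq:moments_(n-ell_1)l_2}, the short-$\ell_1$ regime contributes an additional term proportional to $\sum_{\ell_1}\kappa_{\ell_1}R^{\ell_1}=g_\kappa^{\{1\}}(R)=1-\tilde{\nu}$ times a one-dimensional beta integral; combining this with the long-long part produces, after elementary simplification, the extra $(n_2+1)(1-\tilde{\nu})$ summand in the numerator.

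The main obstacle will be the tail region $\ell>N\tilde{\nu}$, where $v=N/(N-\ell)$ lies outside the range of validity of Theorem~\ref{thm:aux_asypmtotic_2}. My strategy is to observe that the bulk Riemann-sum computation already exhausts the total probability mass: the short cycles sum (by Theorem~\ref{thm:unordered_sub}) to $\sum_\ell\kappa_\ell R^\ell=g_\kappa^{\{1\}}(R)=1-\tilde{\nu}$, while the long-cycle Riemann sum integrates to $\tilde{\nu}\theta^*\myn\int_0^1(1-y)^{\theta^*-1}\rmd{y}=\tilde{\nu}$, totalling $1$. Hence $\Pns\{L_1>N\tilde{\nu}\}\to0$, and since $L_1/(N\tilde{\nu})\le 1/\tilde{\nu}$ deterministically, the tail contribution to any moment is bounded by $\tilde{\nu}^{-n}\Pns\{L_1>N\tilde{\nu}\}=o(1)$; an analogous joint tail bound handles the two-variable moments. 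Finally, the small-$\ell$ region not covered by the bulk asymptotic (say $\ell\le\epsilon N$) contributes at most $(\epsilon/\tilde{\nu})^n$, which vanishes upon sending $\epsilon\to0$ after $N\to\infty$, so that $\liminf$ and $\limsup$ of the moment sandwich the Dirichlet integral and coincide in the limit.
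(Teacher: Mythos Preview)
Your approach is essentially correct but follows a genuinely different route from the paper. The paper never computes the ratio $h_{N-K}(N)/H_N$ pointwise, never invokes transfer theorems for the individual coefficients $\theta_\ell,\kappa_\ell$, and never passes through Riemann sums or a tail-mass argument. Instead it observes that summing the distribution \eqref{eq:dist_L1} against $(\ell-1)_n$ collapses, via the identity $\sum_\ell(\ell-1)_n(\theta_\ell+N\kappa_\ell)z^\ell=G_N^{\{n+1\}}(z)$, into a single coefficient extraction
\[
\Ens\bigl[(L_1-1)_n\bigr]=\frac{1}{NH_N}\,[z^N]\bigl[G_N^{\{n+1\}}(z)\,\rme^{G_N(z)}\bigr],
\]
and likewise for the joint case, with $f(z)=G_N^{\{n_1+1\}}(z)\,G_N^{\{n_2+1\}}(z)$. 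Theorem~\ref{thm:aux_asypmtotic_2} is then applied term by term to the expansion $G_N^{\{k\}}=g_\theta^{\{k\}}+Ng_\kappa^{\{k\}}$, where the singularity exponents $\beta$ are read off directly from the hypotheses \eqref{eq:assum_derivative_p_Upsilon-low}--\eqref{eq:assum_derivative_g_Theta}. The $g_\theta^{\{n_1+1\}}g_\theta^{\{n_2+1\}}$ term produces the leading contribution (exponent $\beta=n_1+n_2+2$); the cross and $g_\kappa g_\kappa$ terms are negligible when $n_1,n_2\ge1$, while for $n_1=0$ the extra surviving term $Ng_\kappa^{\{1\}}g_\theta^{\{n_2+1\}}$ yields the additional $(n_2+1)(1-\tilde\nu)$ summand. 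This avoids all the edge cases you must handle.

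Your route buys a transparent probabilistic picture---the limiting density $\tilde\nu\,\theta^*(1-y)^{\theta^*-1}$ of $L_1/(N\tilde\nu)$ is visible from the outset---at the cost of more delicate error control (boundary $y\uparrow1$ when $\theta^*<1$, the middle range where neither $\theta_\ell$ nor $N\kappa_\ell$ dominates, and a total-mass argument for the tail). One point to tighten: your application of Theorem~\ref{thm:aux_asypmtotic_2} uses a pre-exponential $f(z)=\exp\{(1-v)g_\theta(z)\}$ that depends on $v$, which is outside the theorem's literal scope; the cleanest fix is to note that $f(z)\exp\{vg_\theta(z)\}=\exp\{g_\theta(z)\}$ is $v$-independent with exponent $\theta^*$, so the proof of the theorem goes through verbatim with that combined factor.
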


\smallskip
\begin{remark}
The subtlety of Lemma \ref{lem:ell_1_ell_2_super} is hidden in the
fact that the asymptotics \eqref{eq:moments_ell_1},
\eqref{eq:moments_ell_1(n-ell_1)l_2},
\eqref{eq:moments_(n-ell_1)l_2} are invalidated if either of $n$,
$n_1$, $n_2$ takes the value~$0$; for instance, formula
\eqref{eq:moments_(n-ell_1)l_2} cannot be readily deduced from
\eqref{eq:moments_ell_1(n-ell_1)l_2} by setting $n_1=0$. An
explanation lies in the asymptotic separation of ``short'' and
``long'' cycles, leading to the emergence of an atom of mass
$1-\tilde{\nu}$ at zero in the limiting distribution of $(L_1/N)^n$
for $n>0$.
\end{remark}

\begin{proof}[Proof\/ of\/ Lemma~\textup{\ref{lem:ell_1_ell_2_super}}]
Let us first consider $\Ens\myn\bigl[(L_1-1)_{n}\bigr]$, where
$(\cdot)_{n}$ is the Pochhammer symbol defined
in~\eqref{eq:Pochhammer}. Using the distribution of $L_1$ obtained
in Lemma~\ref{lem:distribution_of_ell_1,ell_2} (see
\eqref{eq:dist_L1}) and recalling formulas \eqref{eq:Hn_generatingN}
and \eqref{eq:a}, we get for each $n\in\NN$
\begin{align}
\notag \Ens\myn\bigl[(L_1-1)_n\bigr] &= \frac{1}{N
H_N}\sum_{\ell=1}^\infty (\ell-1)_n\mypp (\theta_{\ell}
+N\kappa_{\ell})\mypp h_{N-\ell}(N)\\
\notag &= \frac{1}{N H_N}\sum_{\ell=1}^\infty
(\ell-1)_n\mypp(\theta_{\ell} +N\kappa_{\ell})\mypp
[z^{N-\ell\myp}]\bigl[\rme^{\myp\GN(z)}\bigr]\\
\notag &= \frac{1}{N H_N}\,[z^{N}]\left[\sum_{\ell=1}^\infty
(\ell-1)_n \mypp(\theta_{\ell} +N\kappa_{\ell})\mypp
z^\ell\,\rme^{\myp\GN(z)}\right]\\
\label{eq:moment_ell_1_with_gen}&=\frac{1}{N
H_N}\,[z^N]\bigl[\GN^{\{n+1\}}\myn(z) \,\rme^{\myp\GN(z)}\bigr].
\end{align}
In view of the formula \,$\tilde{\nu}=1-
g_\kappa^{\{1\}}\myn(R\myp)$ (see~\eqref{eq:tilde-nu}),
Theorem~\ref{thm:aux_asypmtotic_2} with function
$f(z)=g_\theta^{\{n+1\}}\myn(z)$ and $\beta=n+1$ (see
\eqref{eq:assum_derivative_g_Theta}) gives
\begin{equation}\label{eq:theta*1}
\frac{1}{N H_N}\,[z^N]\bigl[g_\theta^{\{n+1\}}\myn(z)
\,\rme^{\myp\GN(z)}\bigr]\sim (N\tilde{\nu})^{\myp n}
\frac{\theta^*n!\,\Gamma(\theta^*)}{\Gamma(\theta^*\myn+n+1)} \,\myp
\tilde{\nu}\myp.
\end{equation}
Furthermore, using the asymptotic expansions
\eqref{eq:assum_derivative_p_Upsilon-low},
\eqref{eq:assum_derivative_p_Upsilon}  and applying
Theorem~\ref{thm:aux_asypmtotic_2} with
$f(z)=g_\kappa^{\{n+1\}}\myn(z)$ and $\beta=\max\myn\{0,n+1-s\}$, we
obtain
\begin{equation}\label{eq:kappa*1<s}
\frac{1}{N H_N}\,[z^N]\bigl[Ng_\kappa^{\{n+1\}}\myn(z)
\,\rme^{\myp\GN(z)}\}\bigr]= O(1)+O(N^{n+1-s})=o\myp(N^{n}),
\end{equation}
due to the conditions $s>1$, $n>0$. Thus, substituting
\eqref{eq:theta*1} and \eqref{eq:kappa*1<s} into
\eqref{eq:moment_ell_1_with_gen} yields
\begin{equation*}
\Ens\myn\bigl[(L_1-1)_{n}\bigr] \sim (N\tilde{\nu})^n\myp\frac{n!\,
\Gamma(\theta^*\myn+1)}{\Gamma(\theta^*\myn+n+1)}\,\tilde{\nu},
\end{equation*}
which implies~\eqref{eq:moments_ell_1}.

We now turn to \eqref{eq:moments_(n-ell_1)l_2} and
\eqref{eq:moments_ell_1(n-ell_1)l_2}. Using the joint distribution
of $L_1$ and $L_2$ (see \eqref{eq:L=l}), it follows by a similar
computation as in \eqref{eq:moment_ell_1_with_gen} that, for any
integers $n_1\ge0$, $n_2>0$,
\begin{equation}\label{eq:GGexpG}
\Ens\bigl[(L_1-1)_{n_1} (L_2-1)_{n_2}\myp(1-L_1/N)\bigr] =
\frac{1}{N^2H_N}\,
[z^N]\bigl[\GN^{\{n_1+1\}}\myn(z)\,\GN^{\{n_2+1\}}\myn(z)
\,\rme^{\myp\GN(z)}\bigr].
\end{equation}
As before, we can work out the asymptotics of \eqref{eq:GGexpG} by
using Theorem~\ref{thm:aux_asypmtotic_2} with the function
\begin{align}
\notag
f(z)&=\GN^{\{n_1+1\}}\myn(z)\,\GN^{\{n_2+1\}}\myn(z)\\
\label{eq:Gexpansion} &\begin{aligned} & \hspace{-.16pc}
=g_\theta^{\{n_1+1\}}\myn(z)\,g_\theta^{\{n_2+1\}}\myn(z)
+N g_\kappa^{\{n_1+1\}}\myn(z)\,g_\theta^{\{n_2+1\}}\myn(z)\\
&\quad+N
g_\theta^{\{n_1+1\}}\myn(z)\,g_\kappa^{\{n_2+1\}}\myn(z)+N^2
g_\kappa^{\{n_1+1\}}\myn(z)\,g_\kappa^{\{n_2+1\}}\myn(z).
\end{aligned}
\end{align}
The singularity of each term in \eqref{eq:Gexpansion} (and the
respective index $\beta$, see \eqref{eq:f_as}) is specified from
formulas \eqref{eq:assum_derivative_p_Upsilon-low},
\eqref{eq:assum_derivative_p_Upsilon} and
\eqref{eq:assum_derivative_g_Theta}. First of all, using
\eqref{eq:assum_derivative_g_Theta} we have
\begin{equation*}
g_\theta^{\{n_1+1\}}\myn(z)\,g_\theta^{\{n_2+1\}}\myn(z)\sim\frac{(\theta^*)^2\mypp
n_1\myn!\,n_2!}{(1-z/R\myp)^{n_1+n_2+2}}
\end{equation*}
(i.e., $\beta=n_1+n_2+2$), so formulas \eqref{eq:th3.3},
\eqref{eq:HN-2} of Theorem~\ref{thm:aux_asypmtotic_2} give
\begin{equation}\label{eq:theta*1+2}
\frac{1}{N^2H_N}\,[z^N]
\bigl[g_\theta^{\{n_1+1\}}\myn(z)\,g_\theta^{\{n_2+1\}}\myn(z)
\,\rme^{\myp\GN(z)}\}\bigr]\sim
\frac{\theta^*\,\Gamma(\theta^*\myn+1)\,n_1\myn!\,n_2!}{
\Gamma(\theta^*\myn+n_1+n_2+2)} \,(N\tilde{\nu})^{n_1+n_2},
\end{equation}
which coincides with the
asymptotics~\eqref{eq:moments_ell_1(n-ell_1)l_2}.

For $n_1\ge1$, contributions from other terms in
\eqref{eq:Gexpansion} are negligible as compared to $N^{n_1+n_2}$.
Indeed, from \eqref{eq:assum_derivative_p_Upsilon-low} and
\eqref{eq:assum_derivative_p_Upsilon} we get
\begin{equation*}
g_\kappa^{\{n_1+1\}}\myn(z)\,g_\kappa^{\{n_2+1\}}\myn(z)=O\bigl((1-z/R\myp)^{-\beta}\bigr)
\end{equation*}
with
$$
\beta=\max\myn\{0,n_1+1-s,n_2+1-s,n_1+n_2+2-2s\}<n_1+n_2,
$$
thanks to the condition $s>1$. Hence, by
Theorem~\ref{thm:aux_asypmtotic_2} we have
\begin{equation}\label{eq:kappa+2}
\frac{1}{N^2H_N}
[z^N]\bigl[N^2g_\kappa^{\{n_1+1\}}\myn(z)\,g_\kappa^{\{n_2+1\}}\myn(z)
\,\rme^{\myp\GN(z)}\bigr]=o\myp(N^{n_1+n_2}).
\end{equation}

Similarly, using \eqref{eq:assum_derivative_p_Upsilon-low},
\eqref{eq:assum_derivative_p_Upsilon} and
\eqref{eq:assum_derivative_g_Theta} we get
\begin{equation*}
g_\theta^{\{n_1+1\}}\myn(z)\,g_\kappa^{\{n_2+1\}}\myn(z)=\frac{\theta^*
n_1\myn!}{(1-z/R\myp)^{n_1+1}}\left\{O(1)
+O\bigl((1-z/R\myp)^{s-n_2-1}\bigr)\right\},
\end{equation*}
and Theorem~\ref{thm:aux_asypmtotic_2} with
$$
\beta=\max\myn\{n_1+1,n_1+n_2+2-s\}<n_1+n_2+1
$$ again gives
\begin{equation}\label{eq:theta*2+2}
\frac{1}{N^2H_N} [z^N]\bigl[N
g_\theta^{\{n_1+1\}}\myn(z)\,g_\kappa^{\{n_2+1\}}\myn(z)
\,\rme^{\myp\GN(z)}\bigr]=o\myp(N^{n_1+n_2}).
\end{equation}
By symmetry, the same estimate \eqref{eq:theta*2+2} holds for the
term $g_\kappa^{\{n_1+1\}}\myn(z)\,g_\theta^{\{n_2+1\}}\myn(z)$ with
$n_1\ge1$, $n_2\ge 1$. Thus, substituting \eqref{eq:theta*1+2},
\eqref{eq:kappa+2} and \eqref{eq:theta*2+2} into \eqref{eq:GGexpG},
we obtain~\eqref{eq:moments_ell_1(n-ell_1)l_2}.

The case $n_1=0$ requires more care; here we have (see
\eqref{eq:assum_derivative_p_Upsilon-low} and
\eqref{eq:assum_derivative_g_Theta})
\begin{equation*}
g_\kappa^{\{1\}}\myn(z)\,g_\theta^{\{n_2+1\}}\myn(z)
\sim\frac{g_\kappa^{\{1\}}\myn(R\myp)\,\theta^*
n_2!}{(1-z/R\myp)^{n_2+1}}\mypp,
\end{equation*}
and so Theorem~\ref{thm:aux_asypmtotic_2} with $\beta=n_2+1$ yields
\begin{equation}\label{eq:theta*1+3}
\frac{1}{N^2H_N} [z^N]\bigl[N
g_\kappa^{\{1\}}\myn(z)\,g_\theta^{\{n_2+1\}}\myn(z)
\,\rme^{\myp\GN(z)}\bigr]\sim
\frac{g_\kappa^{\{1\}}\myn(R\myp)\,\Gamma(\theta^*\myn+1)\,n_2!}{
\Gamma(\theta^*\myn+n_2+1)} \,(N\tilde{\nu})^{n_2},
\end{equation}
which is of the same order as the right-hand side of
\eqref{eq:theta*1+2} (with $n_1=0$). Hence, adding up the
contributions \eqref{eq:theta*1+2} and \eqref{eq:theta*1+3} and
recalling that $g_\kappa^{\{1\}}\myn(R\myp)=1-\tilde{\nu}$, we
obtain~\eqref{eq:moments_(n-ell_1)l_2}.
\end{proof}

\begin{remark}\label{rm:N-L1}
As should be clear from the proof, the factor
$1-L_1/N=N^{-1}(N-L_1)$ is included in
\eqref{eq:moments_ell_1(n-ell_1)l_2} and
\eqref{eq:moments_(n-ell_1)l_2} in order to cancel the denominator
$N-\ell_1$ in formula \eqref{eq:N-L1} of the two-dimensional
distribution of $(L_1,L_2)$. As suggested by the general formula
\eqref{eq:L=l} of Lemma~\ref{lem:distribution_of_ell_1,ell_2}, an
extension of Lemma~\ref{lem:ell_1_ell_2_super} to the
$m$-dimensional case $L_1,\dots,L_m$ requires the inclusion of the
product $\prod_{j=1}^{m-1} (N-L_1-\cdots-L_{j})$. The corresponding
calculations are tedious but straightforward, and follow the same
pattern as for $m=2$. A suitable extension is also possible for
Lemma~\ref{lm:D}.
\end{remark}

\begin{proof}[Proof\/ of\/ Theorem~\textup{\ref{thm:large_cycles_2}}]
For the sake of clarity, we consider only the case $m=2$ (i.e.,
involving the joint distribution of $L_1,L_2$); computations in the
general case require an extension of Lemmas~\ref{lm:D}
and~\ref{lem:ell_1_ell_2_super} (see Remark~\ref{rm:N-L1}) and can
be carried out along the same lines.

By the continuous mapping theorem and according to the definitions
\eqref{eq:xi-D-eta} and \eqref{eq:X}, the convergence
\eqref{eq:weak_conv_ell_j} with $m=2$ is equivalent to
\begin{equation*}
\frac{1}{N\tilde{\nu}}\bigl(L_1,L_2(1-L_1/N)\bigr)
\stackrel{d}{\longrightarrow} \bigl(X_1, X_2\myp (1-\tilde{\nu} X_1)
\bigr),\qquad N\to\infty.
\end{equation*}
By the method of moments, it suffices to show that for any
$n_1,n_2\in\NN_0$, as $N\to\infty$,
\begin{equation}\label{eq:comaring_moments0}
\Ens\myn\bigl[L_1^{n_1} L_2^{n_2}\myp(1-L_1/N)^{n_2}\bigr] \sim
(N\tilde{\nu})^{n_1+n_2}\,\EE\bigl[X_1^{n_1}\myn X_2^{n_2}\myp
(1-\tilde{\nu} X_1)^{n_2}\bigr].
\end{equation}
First, for $n_1=n_2=0$ the relation \eqref{eq:comaring_moments0} is
trivial, since both sides are reduced to $1$. If $n_2=0$ and $n_1>0$
then \eqref{eq:comaring_moments0} readily follows from the relations
\eqref{eq:ED1} (Lemma~\ref{lm:D}) and \eqref{eq:moments_ell_1}
(Lemma~\ref{lem:ell_1_ell_2_super}).

To cover the case $n_2\ge1$, let us prove a more general asymptotic
relation
\begin{equation}\label{eq:comaring_moments}
\Ens\myn\bigl[L_1^{n_1} L_2^{n_2}\myp(1-L_1/N)^{q}\bigr] \sim
(N\tilde{\nu})^{n_1+n_2}\,\EE\bigl[X_1^{n_1}\myn X_2^{n_2}\myp
(1-\tilde{\nu} X_1)^{q}\bigr],
\end{equation}
valid for all $n_1\in\NN_0$ and $n_2,q\in\NN$\myp. We argue by
induction on~$q$. For $q=1$, the relation
\eqref{eq:comaring_moments} is verified by comparing formulas
\eqref{eq:ED2} and \eqref{eq:moments_ell_1(n-ell_1)l_2} (for
$n_1\ge1$) or \eqref{eq:ED3} and \eqref{eq:moments_(n-ell_1)l_2}
(for $n_1=0$). Now suppose that \eqref{eq:comaring_moments} is true
for some $q\ge1$. Expanding
$$
(1-L_1/N)^{q+1}=(1-L_1/N)^{q}-N^{-1}L_1(1-L_1/N)^{q}
$$
and using the induction hypothesis \eqref{eq:comaring_moments}, we
get
\begin{align*}
\Ens\myn\bigl[L_1^{n_1} L_2^{n_2}\myp&(1-L_1/N)^{q+1}\bigr]\\
&=\Ens\myn\bigl[L_1^{n_1} L_2^{n_2}\myp(1-L_1/N)^{q}\bigr]-
N^{-1}\,\Ens\myn\bigl[L_1^{n_1+1}
L_2^{n_2}\myp(1-L_1/N)^{q}\bigr]\\
&\sim (N\tilde{\nu})^{n_1+n_2}\begin{aligned}[t]
\Bigl\{\EE&\bigl[X_1^{n_1}\myn X_2^{n_2}\myp (1-\tilde{\nu} X_1)^{q}
\bigr]-\tilde{\nu}\,\EE\bigl[X_1^{n_1+1}\myn
X_2^{n_2} \myp (1-\tilde{\nu} X_1)^{q}\bigr]\Bigr\}
\end{aligned}\\[.3pc]
&=(N\tilde{\nu})^{n_1+n_2}\,\EE\bigl[X_1^{n_1}\myn X_2^{n_2}\myp
(1-\tilde{\nu} X_1)^{q+1}\bigr],
\end{align*}
which verifies \eqref{eq:comaring_moments} for $q+1$, and therefore
for all $q\ge1$. This completes the proof of
Theorem~\ref{thm:large_cycles_2}.
\end{proof}

\subsection{Poisson--Dirichlet distribution for the cycle order statistics}

Let us now consider the cycle lengths without the lexicographic
ordering, and arrange them in decreasing order.
\begin{definition} \label{def:longest_cycle} For a permutation
$\sigma\in\SN$, let $L^{(1)}=L^{(1)}(\sigma)$ be the length of the
longest cycle in $\sigma$, $L^{(2)}=L^{(2)}(\sigma)$ the length of
the second longest cycle in $\sigma$, etc.
\end{definition}

Let us first prove a suitable ``cut-off'' lemma for
lexicographically ordered cycles.
\begin{lemma}\label{lm:cut-off}
For any $\varepsilon>0$, we have
\begin{equation}\label{eq:cut-off}
\lim_{n\to\infty}\limsup_{N\to\infty}\,\Pns\!\left(\myn\bigcup_{j>n}
\bigl\{N^{-1} L_j>\varepsilon\bigr\}\right)=0.
\end{equation}
\end{lemma}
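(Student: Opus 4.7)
The plan is to bound the target probability by a purely combinatorial quantity insensitive to the fine structure of the weights $\theta_j,\kappa_j$, and then to control it via the conjugation-invariance of $\Pns$. Since the weights in \eqref{eq:def_near_spatial} depend only on the cycle type $(C_j(\sigma))$, the measure $\Pns$ is invariant under the conjugations $\sigma\mapsto\pi\sigma\pi^{-1}$ ($\pi\in\SN$); hence, conditional on the cycle sizes, the partition of $\{1,\dots,N\}$ into cycles is uniformly distributed among partitions with those sizes.

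The first step is an elementary deterministic inequality: for every cycle $C$ of $\sigma$, the lex-index of $C$ (the unique $j$ such that $L_j$ is the length of $C$) is at most $\min(C)$. Indeed, the $\min(C)-1$ elements strictly below $\min(C)$ lie in at most $\min(C)-1$ distinct cycles other than $C$, so when the lex-process first reaches element $\min(C)$, the cycle $C$ receives an index $\le\min(C)$. Consequently,
\begin{equation*}
\bigcup_{j>n}\{L_j>\varepsilon N\}\,\subseteq\,\bigl\{\exists\text{ cycle }C\text{ of }\sigma:\ |C|>\varepsilon N,\ \min(C)>n\bigr\}.
\end{equation*}

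The second step combines the pigeonhole bound $|\{j:L_j>\varepsilon N\}|\le\varepsilon^{-1}$ (which is immediate from $\sum_j L_j=N$) with exchangeability. By the latter, the element set of any cycle of length $\ell$ is marginally uniform over the $\binom{N}{\ell}$ subsets of $\{1,\dots,N\}$ of size $\ell$, so that $\Pns(A\text{ is a cycle of }\sigma)=\Ens[C_\ell]/\binom{N}{\ell}$ for each such $A$. Enumerating cycles by length and using that $\binom{N-n}{\ell}/\binom{N}{\ell}\le(1-\ell/N)^n\le(1-\varepsilon)^n$ for $\ell>\varepsilon N$, one obtains
\begin{equation*}
\Ens\!\left[\sum_{C\text{ cycle of }\sigma}\bfOne\{|C|>\varepsilon N,\,\min(C)>n\}\right]\le(1-\varepsilon)^n\,\Ens\bigl[|\{j:L_j>\varepsilon N\}|\bigr]\le\frac{(1-\varepsilon)^n}{\varepsilon},
\end{equation*}
which together with the inclusion above gives $\Pns(\bigcup_{j>n}\{L_j>\varepsilon N\})\le(1-\varepsilon)^n/\varepsilon$ and hence \eqref{eq:cut-off}.

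No substantial obstacle is expected: the argument rests only on the conjugation symmetry of $\Pns$ (manifest from \eqref{eq:def_near_spatial}) and on elementary combinatorics; in particular, none of the analytic hypotheses of Section~\ref{sec:5.1} enters the proof. The one item deserving explicit verification is the lex-index bound, which is the short counting argument indicated above. The resulting estimate decays exponentially in $n$ uniformly in $N$, which is much stronger than needed and reassuring given that the lemma will be invoked across the subcritical, critical, and supercritical regimes.
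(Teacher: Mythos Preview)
Your argument is correct and takes a genuinely different route from the paper's proof. The paper bounds the event $\bigcup_{j>n}\{N^{-1}L_j>\varepsilon\}$ by $\{N^{-1}\sum_{j>n}L_j\bfOne_{\{L_j>K\}}>\varepsilon\}$, then splits the sum into $\sum_{j\ge1}-\sum_{j\le n}$ and invokes the convergence results of Theorems~\ref{thm:fractional_infinite_random} and~\ref{thm:large_cycles_2} together with Lemma~\ref{lm:sum-Xn} (that $\sum_j X_j=1$ a.s.). Thus the paper's proof relies on the full analytic machinery of Section~\ref{sec:5.1} via the weak convergence $(L_1,\dots,L_n)/(N\tilde\nu)\Rightarrow(X_1,\dots,X_n)$, and yields no explicit rate.

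Your proof, by contrast, uses only the conjugation invariance of $\Pns$ (immediate from \eqref{eq:def_near_spatial}) and two elementary facts: the lex-index bound $j\le\min(C)$, and the pigeonhole estimate on the number of long cycles. This gives the uniform quantitative bound $\Pns(\bigcup_{j>n}\{N^{-1}L_j>\varepsilon\})\le\varepsilon^{-1}(1-\varepsilon)^n$, valid for \emph{every} $N$ and for \emph{any} conjugation-invariant probability on $\SN$---in particular independently of the sub/super/critical regime and of all the hypotheses on $g_\theta,g_\kappa$. This is both more general and more elementary; the only minor point to add is that one may assume $\varepsilon<1$ without loss (the event is empty for $\varepsilon\ge1$). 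The paper's approach, while heavier, has the advantage of being self-contained within the probabilistic framework already set up for Theorem~\ref{thm:large_cycles_1}, so it comes at no extra cost there; your argument would let the lemma stand on its own, decoupled from Section~\ref{sec:5.1}.
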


\begin{proof}[Proof\/ of\/ Lemma \textup{\ref{lm:cut-off}}]
Fix a $K\in\NN$ and note that, for all $N\ge K/\varepsilon$,
\begin{align}
\notag \Pns\!\left(\myn\bigcup_{j>n} \bigl\{N^{-1}
L_j>\varepsilon\bigr\}\right)&=\Pns\!\left(\myn\bigcup_{j>n}
\bigl\{N^{-1}
L_j\mypp\bfOne_{\{L_j>K\}}>\varepsilon\bigr\}\right)\\
\label{eq:>K} &\le \Pns\myn\left\{\frac{1}{N}\sum_{j>n}
L_j\mypp\bfOne_{\{L_j>K\}}>\varepsilon\right\}.
\end{align}
Furthermore, noting that
\begin{equation}\label{eq:sum-sum}
\sum_{j>n} L_j\mypp\bfOne_{\{L_j>K\}}=\sum_{j=1}^\infty
L_j\mypp\bfOne_{\{L_j>K\}}-\sum_{j=1}^{n}
L_j\mypp\bfOne_{\{L_j>K\}},
\end{equation}
by Theorems \ref{thm:fractional_infinite_random}
and~\ref{thm:large_cycles_2} we have, as $N\to\infty$,
\begin{align*}
\frac{1}{N}\sum_{j=1}^\infty
L_j\mypp\bfOne_{\{L_j>K\}}\stackrel{p}{\longrightarrow}
\tilde{\nu}_K,\qquad \frac{1}{N}\sum_{j=1}^{n}
L_j\mypp\bfOne_{\{L_j>K\}}&\stackrel{d}{\longrightarrow}
\tilde{\nu}\sum_{j=1}^{n} X_j\mypp.
\end{align*}
Returning to \eqref{eq:sum-sum}, this gives
\[
\sum_{j>n} L_j\mypp\bfOne_{\{L_j>K\}}\stackrel{d}{\longrightarrow}
\tilde{\nu}_K-\tilde{\nu}\sum_{j=1}^{n} X_j,\qquad N\to\infty.
\]
Hence, recalling that $\lim_{K\to\infty}\tilde{\nu}_K=\tilde{\nu}$
(see~\eqref{eq:def_v_near_spatial}), from \eqref{eq:>K} we obtain
\begin{align*}
\limsup_{N\to\infty}\,\Pns\!\left(\myn\bigcup_{j>n} \bigl\{N^{-1}
L_j>\varepsilon\bigr\}\right)&\le
\liminf_{K\to\infty}\,\PP\myn\left\{\tilde{\nu}\sum_{j=1}^{n} X_j\le
\tilde{\nu}_K-\varepsilon\right\}\\
&=\PP\myn\left\{\mynn\myn\sum_{j=1}^{n} X_j\le
1-\varepsilon/\tilde{\nu}\right\}.
\end{align*}
Finally, passing here to the limit as $n\to\infty$ and noting that,
by Lemma~\ref{lm:sum-Xn}, $\sum_{j=1}^{\infty} X_j =1$ (a.s.), we
arrive at~\eqref{eq:cut-off}, as claimed.
\end{proof}

The next theorem is our main result in this subsection. Recall that
the parameter $\theta^*\myn>0$ is involved in the
assumption~\eqref{eq:assum_derivative_g_Theta}.

\begin{theorem}\label{thm:large_cycles_1}
In the sense of convergence of finite-dimensional distributions,
\begin{equation*}
\frac{1}{N\tilde{\nu}}\,\bigl(L^{(1)}\myn,L^{(2)}\myn,\dots\bigr)
\stackrel{d}{\longrightarrow} \PD(\theta^*),\qquad N\to\infty,
\end{equation*}
where $\PD(\theta^*)$ denotes the Poisson--Dirichlet distribution
with parameter $\theta^*$.
\end{theorem}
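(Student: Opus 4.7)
The plan is to deduce the result from three ingredients already in hand: Theorem \ref{thm:large_cycles_2}, which identifies the joint weak limit of the lexicographically ordered lengths $L_1,\dots,L_m$ after normalization by $N\tilde{\nu}$ with the modified stick-breaking sequence $(X_1,\dots,X_m)$; Lemma \ref{lm:large_cycles_1}, which states that the descending order statistics $(X^{(n)})$ of the full sequence $(X_j)$ have distribution $\PD(\theta^*)$; and Lemma \ref{lm:cut-off}, which provides the tail cut-off needed to pass from finite to infinite lexicographic lists. The strategy is a standard two-parameter limit argument: pass to the limit first in $N$ (using the lex-ordered CLT), then in the truncation index $n$ (using the tail bound), with the order statistics providing a continuous bridge.

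Fix $m\in\NN$ and $n\ge m$. Write $L^{(1)}_n\ge\cdots\ge L^{(m)}_n$ for the top $m$ order statistics of the finite sample $\{L_1,\dots,L_n\}$, and analogously $X^{(1)}_n\ge\cdots\ge X^{(m)}_n$ from $\{X_1,\dots,X_n\}$. Since the map $\RR^n\to\RR^m$ sending a vector to its decreasingly ordered top-$m$ coordinates is continuous, Theorem \ref{thm:large_cycles_2} combined with the continuous mapping theorem yields
\begin{equation*}
\frac{1}{N\tilde{\nu}}\bigl(L^{(1)}_n,\dots,L^{(m)}_n\bigr)\stackrel{d}{\longrightarrow}\bigl(X^{(1)}_n,\dots,X^{(m)}_n\bigr),\qquad N\to\infty.
\end{equation*}
Next I would justify the approximation of the full order statistics by the truncated ones. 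The elementary observation is that, if $\max_{j>n} L_j\le \delta$, then for every $k=1,\dots,m$,
\begin{equation*}
0\le L^{(k)}-L^{(k)}_n\le \delta,
\end{equation*}
since enlarging the sample by elements all bounded by $\delta$ can only move the $k$-th order statistic up by at most $\delta$ (an easy case analysis depending on whether any of the new elements enter the top $k$). Consequently, for every $\varepsilon>0$,
\begin{equation*}
\Pns\!\left(\max_{1\le k\le m}\frac{|L^{(k)}-L^{(k)}_n|}{N\tilde{\nu}}>\varepsilon\right)\le \Pns\!\left(\max_{j>n}\frac{L_j}{N}>\tilde{\nu}\varepsilon\right),
\end{equation*}
and by Lemma \ref{lm:cut-off} the right-hand side tends to $0$ as $n\to\infty$ uniformly in $N$ in the $\limsup$ sense.

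On the limit side, since $X_j\ge 0$ and $\sum_j X_j=1$ a.s.\ (Lemma \ref{lm:sum-Xn}), only finitely many $X_j$'s exceed any given positive threshold, so $X^{(k)}_n\to X^{(k)}$ a.s.\ as $n\to\infty$ for every fixed $k$; in particular the convergence is in distribution. A standard three-term approximation argument (Slutsky/triangle inequality for the Prohorov or Lévy metric) then combines the three convergences above to give
\begin{equation*}
\frac{1}{N\tilde{\nu}}\bigl(L^{(1)},\dots,L^{(m)}\bigr)\stackrel{d}{\longrightarrow}\bigl(X^{(1)},\dots,X^{(m)}\bigr),\qquad N\to\infty,
\end{equation*}
and by Lemma \ref{lm:large_cycles_1} the right-hand vector is precisely the first $m$ coordinates of $\PD(\theta^*)$. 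Since $m$ is arbitrary, convergence of all finite-dimensional marginals follows.

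The main obstacle I foresee is the diagonalization step: verifying that the $n$-uniform cut-off provided by Lemma \ref{lm:cut-off} indeed justifies interchanging the limits $N\to\infty$ and $n\to\infty$. The elementary Lipschitz bound $|L^{(k)}-L^{(k)}_n|\le \max_{j>n} L_j$ is what makes this routine rather than delicate; once it is in place, the rest is bookkeeping and the Pochhammer/moment-based convergences done earlier do not have to be revisited.
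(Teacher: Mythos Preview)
Your proposal is correct and follows essentially the same approach as the paper: both combine Theorem~\ref{thm:large_cycles_2}, Lemma~\ref{lm:cut-off}, and Lemma~\ref{lm:large_cycles_1} via a two-parameter limit (first $N\to\infty$, then the truncation level $n\to\infty$). The only cosmetic difference is that the paper sandwiches the distribution function of $L^{(k)}$ between that of $\max_{j\le n}L_j$ and the same plus the tail event $\{\max_{j>n}L_j>\tilde\nu x\}$, whereas you package the same idea via the Lipschitz bound $0\le L^{(k)}-L^{(k)}_n\le \max_{j>n}L_j$ and a Slutsky-type argument; your version has the mild advantage of handling all $m$ uniformly without the paper's ``similarly for $m\ge2$'' hand-wave.
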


\begin{proof}
By virtue of Lemma \ref{lm:large_cycles_1}, it suffices to show
that, for each $m\in\NN$\myp,
\begin{equation}\label{eq:poisson_diriclet}
\frac{1}{N\tilde{\nu}} \,\bigl(L^{(1)}\myn,\dots,L^{(m)}\bigr)
\stackrel{d}{\longrightarrow}
\bigl(X^{(1)}\myn,\dots,X^{(m)}\bigr),\qquad N\to\infty.
\end{equation}
Let us first verify \eqref{eq:poisson_diriclet} for $m=1$. Fix an
integer $n\ge 1$ and observe that, for any $x\in(0,1)$,
$$
\Pns\bigl\{(N\tilde{\nu})^{-1}L^{(1)}\myn>x\bigr\}\ge
\Pns\Bigl\{(N\tilde{\nu})^{-1}\max_{j\le n} L_j>x\Bigr\}.
$$
Hence, by Theorem~\ref{thm:large_cycles_2} and the continuous
mapping theorem, it follows that
\begin{align}
\notag
\liminf_{N\to\infty}\,\Pns\bigl\{(N\tilde{\nu})^{-1}L^{(1)}\myn>x\bigr\}
&\ge \lim_{n\to\infty}\PP\Bigl\{\max_{j\le n} X_j>x\Bigr\}\\
\label{eq:LB} &\ge \PP\{X^{(1)}\myn>x\},
\end{align}
because $\max_{j\le n} X_j\uparrow X^{(1)}$ as $n\uparrow\infty$. On
the other hand, we have an upper bound
$$
\Pns\bigl\{(N\tilde{\nu})^{-1}L^{(1)}\myn>x\bigr\}\le
\Pns\Bigl\{(N\tilde{\nu})^{-1}\max_{j\le n}
L_j>x\Bigr\}+\Pns\Bigl\{N^{-1}\max_{j>n} L_j>\tilde{\nu} x\Bigr\},
$$
and by Theorem~\ref{thm:large_cycles_2} and Lemma~\ref{lm:cut-off}
this yields (cf.~\eqref{eq:LB})
\begin{align}
\notag
\limsup_{N\to\infty}\,\Pns\bigl\{(N\tilde{\nu})^{-1}L^{(1)}\myn>x\bigr\}&\le\lim_{n\to\infty}
\limsup_{N\to\infty}\,\Pns\Bigl\{(N\tilde{\nu})^{-1}\max_{j\le n}
L_j>x\Bigr\}\\
\notag &\le\lim_{n\to\infty} \PP\myp\Bigl\{\max_{j\le n}
X_j>x\Bigr\}\\
\label{eq:UB} &\le \PP\myp\bigl\{X^{(1)}\myn\ge x\bigr\}.
\end{align}
Combining \eqref{eq:LB} and \eqref{eq:UB} and assuming that
$x\in(0,1)$ is a point of continuity of the distribution of
$X^{(1)}$ (which is, in fact, automatically true owing to
Lemma~\ref{lm:large_cycles_1}), we obtain
$$
\lim_{N\to\infty}\Pns\bigl\{(N\tilde{\nu})^{-1}L^{(1)}\myn>x\bigr\}=\PP\{X^{(1)}\myn>x\},
$$
which proves \eqref{eq:poisson_diriclet} with $m=1$.

The general case $m\ge2$ is handled in a similar manner, by using
lower and upper estimates for the $m$-dimensional probability
$\Pns\bigl\{(N\tilde{\nu})^{-1}L^{(1)}\myn>x_1,\dots,
(N\tilde{\nu})^{-1}L^{(m)}\myn>x_m\bigr\}$ through the similar
probabilities for the order statistics of the truncated sample
$L_1,\dots,L_n$ \,($n\ge m$), where the ``discrepancy'' term due to
the contribution of the tail part $(L_{j},\,j>n)$ may be shown to be
asymptotically negligible as $n\to\infty$ by virtue of the cut-off
Lemma~\ref{lm:cut-off}.
\end{proof}

\begin{remark}\label{rm:thm5.11extension}
As already mentioned in Remark~\ref{rm:sing_g_kappa_and_theta}, the
requirement imposed in Section \ref{sec:5.1} that the asymptotic
expansion \eqref{eq:g_kappa_as=0} of $g_\kappa(z)$ holds with a
non-integer index $s>1$ may be extended to allow a
\emph{power-logarithmic} term
$b_{s}(1-z/R\myp)^{s}\log\myn(1-z/R\myp)$ (with \emph{any} $s>1$).
With the asymptotic formulas
\eqref{eq:assum_derivative_p_Upsilon-low},
\eqref{eq:assum_derivative_p_Upsilon} modified accordingly, the
proof of Lemma~\ref{lem:ell_1_ell_2_super} may be adapted as
appropriate, implying that Theorems \ref{thm:large_cycles_2} and
\ref{thm:large_cycles_1} remain true.
\end{remark}

\subsection{Case $\theta^*\myn = 0$}\label{sec:theta*=0}

Let us now turn to studying the asymptotic behaviour of cycles in
the case $\theta^*\myn = 0$ (see~\eqref{eq:g_theta_as}). More
precisely, throughout this subsection we suppose that, as in
Theorem~\ref{thm:aux_asypmtotic_2a}, the generating function
$g_\theta(z)$ is holomorphic in a suitable domain $\varDelta_0$ (see
Definition~\ref{def:Delta0}) and, moreover, is regular at point
$z=R$; in particular, the successive (modified) derivatives of
$g_\theta(z)$ have Taylor-type asymptotic expansions, as $z \to R$
\,($z\in \varDelta_0$),
\begin{equation*}
g_\theta^{\{n\}}\myn(z)\sim\left(\frac{z}{R}\right)^n\sum_{j=n}^\infty
\frac{(-1)^{j-n}\myp
g_\theta^{\{j\}}\myn(R\myp)}{(j-n)!}\,(1-z/R\myp)^{j-n}.
\end{equation*}
We also assume that the generating function $g_\kappa(z)$ admits the
asymptotic expansion~\eqref{eq:g_kappa_as=0} of
Theorem~\ref{thm:aux_asypmtotic_2a} (with a non-integer $s>1$),
which may be differentiated any number of times to yield a nested
family of expansions (with some $\delta_n>0$, \,$n\in\NN_0$),
\begin{align}\notag
g_\kappa^{\{n\}}\myn(z) = \left(\frac{z}{R}\right)^n
\Biggl\{&\sum_{n\le j<s} \frac{(-1)^{j-n}\myp
g_\kappa^{\{j\}}\myn(R\myp)}{(j-n)!}\left(1-\frac{z}{R}\right)^{j-n}\\
\label{eq:g_kappa_as=theta*=0}
&+\frac{\Gamma(-s+n)\,a_s}{\Gamma(-s)}\left(1-\frac{z}{R}\right)^{s-n}\Biggr\}
+O\mynn\left(\left(1-\frac{z}{R}\right)^{s-n+\delta_n}\right),
\end{align}
where the first sum is understood to vanish if $n>s$ (cf.\
\eqref{eq:assum_derivative_p_Upsilon-low},~\eqref{eq:assum_derivative_p_Upsilon}).

Let us now revisit the modified stick-breaking process underpinning
our argumentation in the case $\theta^*\myn>0$ in
Section~\ref{sec:stick}. Observe that if $\theta^*\myn\downarrow0$
then the beta distribution $\Beta(1,\theta^*)$ of the random
variables $B_n$ converges to Dirac delta measure
$\delta_1(\rmd{x})$, since for any $x\in[0,1)$
$$
\PP\{B_n>x\}=\int_x^1 \theta^* (1-u)^{\theta^*\myn-1}\,\rmd
u=(1-x)^{\theta^*}\to 1,\qquad \theta^*\to 0.
$$
It is easy to see that, under this limit, equations
\eqref{eq:xi-D-eta} and \eqref{eq:X} are greatly simplified to the
following. Let $(\xi_n)$ be a sequence of i.i.d.\ Bernoulli random
variables with success probability $\PP\{\xi_n=1\}=\tilde{\nu}>0$.
Let  $\tau_1:=\min\{n:\,\xi_n=1\}<\infty$ (a.s.)\ be the random time
until first success, with geometric distribution
\begin{equation}\label{eq:tau1=geometric}
\PP\{\tau_1=k\}=(1-\tilde{\nu})^{k-1}\myp\tilde{\nu},\qquad
k\in\NN\myp.
\end{equation}
Now, for $n\in\NN$ we set
\begin{equation}\label{eq:X/theta*=0}
X_n:=\bfOne_{\{\tau_1=n\}}=\left\{\begin{array}{ll}
0,&n\ne \tau_1,\\[.2pc]
1,&n=\tau_1.
\end{array}
\right.
\end{equation}

\begin{remark}
Formula \eqref{eq:X/theta*=0} shows that the modified stick-breaking
process $(X_n)$ described in Remark \ref{rm:stick} for
$\theta^*\myn>0$, reduces in the case $\theta^*\myn=0$ to removing
the entire breakable part $[0,\tilde{\nu}\myp]$ at once after
waiting time $\tau_1$.
\end{remark}

The following analogue of Lemma~\ref{lm:D} is formally obtained by
substituting $\theta^*\myn=0$; its proof is elementary by using the
definition \eqref{eq:X/theta*=0} and the distribution of $\tau_1$
(see~\eqref{eq:tau1=geometric}).
\begin{lemma}\label{lm:D2}
For any $n_1,n_2\in\NN$\myp,
\begin{gather*}
\EE\myp(X_1^{n_1})=\tilde{\nu},\\[.1pc]
\label{eq:ED11} \EE\myp\bigl[X_1^{n_1}\myn X_2^{n_2}\myp
(1-\tilde{\nu}
X_1)\bigr]=0,\\[.1pc]
\EE\myp\bigl[X_2^{n_2}\myp (1-\tilde{\nu}X_1)\bigr]
= (1-\tilde{\nu})\myp \tilde{\nu}\myp.
\end{gather*}
\end{lemma}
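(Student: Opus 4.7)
The plan is to exploit the indicator structure of $X_n=\bfOne_{\{\tau_1=n\}}$ from \eqref{eq:X/theta*=0}, which forces these random variables to take values in $\{0,1\}$ and, crucially, makes $X_mX_n=0$ almost surely whenever $m\ne n$ (since $\tau_1$ takes a single value). This degeneracy should reduce all three identities to elementary computations of probabilities of the event $\{\tau_1=n\}$, for which the geometric distribution \eqref{eq:tau1=geometric} gives the answer at once.

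First, for the single-variable identity, I would observe that $X_1\in\{0,1\}$ implies $X_1^{n_1}=X_1$ for any $n_1\ge1$, and hence
\begin{equation*}
\EE\myp[X_1^{n_1}]=\EE\myp[X_1]=\PP\{\tau_1=1\}=\tilde{\nu},
\end{equation*}
directly from \eqref{eq:tau1=geometric}. For the second identity, the key observation is that $X_1X_2=\bfOne_{\{\tau_1=1\}}\bfOne_{\{\tau_1=2\}}=0$ almost surely, because $\tau_1$ cannot take two different values at once. Since $X_1^{n_1}X_2^{n_2}\le X_1X_2=0$ for $n_1,n_2\ge1$, the product $X_1^{n_1}X_2^{n_2}(1-\tilde{\nu}X_1)$ vanishes almost surely, and its expectation is zero.

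For the third identity, I would again use $X_2^{n_2}=X_2=\bfOne_{\{\tau_1=2\}}$, and note that on the event $\{\tau_1=2\}$ we necessarily have $X_1=0$, so $1-\tilde{\nu}X_1=1$ on this event. Therefore $X_2^{n_2}(1-\tilde{\nu}X_1)=X_2$ almost surely, and
\begin{equation*}
\EE\myp\bigl[X_2^{n_2}(1-\tilde{\nu}X_1)\bigr]=\PP\{\tau_1=2\}=(1-\tilde{\nu})\mypp\tilde{\nu},
\end{equation*}
again by \eqref{eq:tau1=geometric}.

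There is no genuine obstacle in this proof; the lemma is essentially a tautology given the definitions. Its role is simply to record, in a form parallel to Lemma~\ref{lm:D}, the explicit moments needed to run the moment-method argument of Theorem~\ref{thm:large_cycles_2} in the degenerate regime $\theta^*\myn=0$. The contrast with Lemma~\ref{lm:D} is striking: whereas the beta-distributed fragments $B_n$ of the $\theta^*\myn>0$ case produced a genuinely absolutely continuous distribution for the $X_n$'s, here the entire distributional content collapses onto the geometric law of $\tau_1$, reflecting the fact that in the limit $\theta^*\myn\downarrow0$ the breakable portion of the stick is removed in one single step.
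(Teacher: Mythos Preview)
Your proof is correct and follows exactly the approach the paper indicates: the paper does not spell out a proof but simply remarks that it is ``elementary by using the definition \eqref{eq:X/theta*=0} and the distribution of $\tau_1$ (see~\eqref{eq:tau1=geometric}),'' which is precisely what you do. One cosmetic point: since $X_1,X_2\in\{0,1\}$ you actually have $X_1^{n_1}X_2^{n_2}=X_1X_2$ (equality, not just inequality), which is the cleaner way to state the second step.
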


Next, we prove an analogue of Lemma~\ref{lem:ell_1_ell_2_super},
which formally looks as its particular case with $\theta^*\myn=0$
(cf.\ \eqref{eq:moments_ell_1}, \eqref{eq:moments_ell_1(n-ell_1)l_2}
and~\eqref{eq:moments_(n-ell_1)l_2}).

\begin{lemma}\label{lem:ell_1_ell_2_super_2}
For any $n_1,n_2\in\NN$\myp,
\begin{gather}\label{eq:moments_ell_1_1}
\lim_{N\to\infty} \frac{1}{(N\tilde\nu)^{n_1}}\,
\Ens (L_1^{n_1}) = \tilde{\nu},\\
\label{eq:moments_ell_1(n-ell_1)l_2_2} \lim_{N\to\infty}
\frac{1}{(N\tilde\nu)^{n_1+n_2}}\, \Ens\myn\bigl[L_1^{n_1}\myn
L_2^{n_2}\myp(1-L_1/N)\bigr] = 0,\\
\label{eq:moments_(n-ell_1)l_2_3} \lim_{N\to\infty}
\frac{1}{(N\tilde\nu)^{n_2}} \,\Ens\myn\bigl[L_2^{n_2}\myp (1-L_1/N)
\bigr] =(1-\tilde{\nu})\myp\tilde{\nu}\myp.
\end{gather}
\end{lemma}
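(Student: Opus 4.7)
The plan is to parallel the proof of Lemma~\ref{lem:ell_1_ell_2_super} for the case $\theta^{*}=0$, using Theorem~\ref{thm:aux_asypmtotic_2a} in place of Theorem~\ref{thm:aux_asypmtotic_2}. The starting point is the same pair of generating-function identities
\[
\Ens\bigl[(L_1-1)_n\bigr] = \frac{1}{NH_N}\,[z^N]\bigl[G_N^{\{n+1\}}(z)\,\rme^{\myp G_N(z)}\bigr],
\]
\[
\Ens\bigl[(L_1-1)_{n_1}(L_2-1)_{n_2}(1-L_1/N)\bigr] = \frac{1}{N^2 H_N}\,[z^N]\bigl[G_N^{\{n_1+1\}}(z)\,G_N^{\{n_2+1\}}(z)\,\rme^{\myp G_N(z)}\bigr]
\]
for $n_1,n_2\geq1$, supplemented by the analogous $n_1=0$ variant
\[
\Ens\bigl[(L_2-1)_{n_2}(1-L_1/N)\bigr] = \frac{1}{N^2 H_N}\,[z^N]\bigl[G_N^{\{1\}}(z)\,G_N^{\{n_2+1\}}(z)\,\rme^{\myp G_N(z)}\bigr],
\]
obtained directly from Lemma~\ref{lem:distribution_of_ell_1,ell_2}. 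The analytic input for evaluating each of these $[z^N]$-extractions is the expansion \eqref{eq:g_kappa_as=theta*=0} of $g_\kappa^{\{n\}}(z)$ near $z=R$, together with the $H_N$-asymptotics \eqref{eq:HN-2=0}.

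For \eqref{eq:moments_ell_1_1}, since $g_\theta$ is regular at $R$ (because $\theta^{*}=0$), the $g_\theta^{\{n+1\}}$-part of $G_N^{\{n+1\}}$ contributes only $g_\theta^{\{n+1\}}(R)/N=o(1)$ via Theorem~\ref{thm:aux_asypmtotic_2a} applied with constant $f$. The main contribution comes from $N g_\kappa^{\{n+1\}}(z)$. By \eqref{eq:g_kappa_as=theta*=0}, the leading non-polynomial part of $g_\kappa^{\{n+1\}}$ at $R$ is $\frac{\Gamma(-s+n+1)\,a_s}{\Gamma(-s)}(1-z/R)^{s-n-1}$; applying Theorem~\ref{thm:aux_asypmtotic_2a}(ii) with $\beta=s-n-1$ when $n+1<s$, or Theorem~\ref{thm:aux_asypmtotic_2} with $\beta=n+1-s$ when $n+1>s$, one obtains in both regimes (using $\Gamma(-s+n+1)=\Gamma(n+1-s)$ and cancelling the factors $a_s/\Gamma(-s)$ against \eqref{eq:HN-2=0}) the uniform estimate
\[
\frac{1}{NH_N}\,[z^N]\bigl[N g_\kappa^{\{n+1\}}(z)\,\rme^{\myp G_N(z)}\bigr] \sim (N\tilde{\nu})^n\,\tilde{\nu}.
\]
Because $L_1^n-(L_1-1)_n$ is a polynomial in $L_1$ of degree $n-1$, a straightforward induction on $n$ converts this estimate for $\Ens[(L_1-1)_n]$ into \eqref{eq:moments_ell_1_1}.

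For \eqref{eq:moments_ell_1(n-ell_1)l_2_2} with $n_1,n_2\geq1$, the dominant term in $G_N^{\{n_1+1\}}G_N^{\{n_2+1\}}$ is $N^2 g_\kappa^{\{n_1+1\}}(z)\,g_\kappa^{\{n_2+1\}}(z)$. Expanding each factor via \eqref{eq:g_kappa_as=theta*=0} and multiplying out, the resulting singular contributions near $R$ are of the form $c_j\,b_i\,(1-z/R)^{s-n_i-1}$ (regular-times-singular cross terms) and $b_1 b_2\,(1-z/R)^{2s-n_1-n_2-2}$ (singular-times-singular); Theorems~\ref{thm:aux_asypmtotic_2a} and~\ref{thm:aux_asypmtotic_2}, applied according to the sign of each exponent, produce contributions of order $(N\tilde{\nu})^{\max(n_1,n_2)}$ and $(N\tilde{\nu})^{n_1+n_2+1-s}$ respectively, both of which are $o\bigl((N\tilde{\nu})^{n_1+n_2}\bigr)$ since $\min(n_1,n_2)\geq1$ and $s>1$. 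Mixed cross-terms involving a $g_\theta^{\{\cdot\}}$ factor are smaller by a further power of $N^{-1}$, proving \eqref{eq:moments_ell_1(n-ell_1)l_2_2}. For the case $n_1=0$ (equation~\eqref{eq:moments_(n-ell_1)l_2_3}), the factor $G_N^{\{1\}}(z)$ is regular at $R$ with finite value $G_N^{\{1\}}(R)=g_\theta^{\{1\}}(R)+N(1-\tilde{\nu})\sim N(1-\tilde{\nu})$; pulling this constant out in front of the integral reduces the computation to the one already performed for \eqref{eq:moments_ell_1_1}, giving the leading behaviour $(1-\tilde{\nu})\,\tilde{\nu}\,(N\tilde{\nu})^{n_2}$, while corrections arising from the $(1-z/R)^{s-1}$ singular part of $g_\kappa^{\{1\}}(z)$ interacting with $g_\kappa^{\{n_2+1\}}(z)$ contribute at most $(N\tilde{\nu})^{n_2+1-s}=o\bigl((N\tilde{\nu})^{n_2}\bigr)$.

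The hard part is the bookkeeping of competing singular contributions when multiplying out products of functions with non-smooth (power-type) singularities at $R$, and the verification that mixed $g_\theta$-$g_\kappa$ cross-terms and iterated singular cross-products are subdominant to the claimed leading terms. A simplifying feature specific to the case $\theta^{*}=0$ is the algebraic identity $\Gamma(-s+n+1)/\Gamma(n+1-s)\equiv 1$, which is what produces the clean constants $\tilde{\nu}$ and $(1-\tilde{\nu})\tilde{\nu}$ of Lemma~\ref{lem:ell_1_ell_2_super_2}, in contrast with the beta-function factors appearing in the $\theta^{*}>0$ setting of Lemma~\ref{lem:ell_1_ell_2_super}.
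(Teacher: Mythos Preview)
Your proposal is correct and follows essentially the same route as the paper: both reduce to the generating-function identities \eqref{eq:moment_ell_1_with_gen} and \eqref{eq:GGexpG}, expand $\GN^{\{n_i+1\}}$ via \eqref{eq:g_kappa_as=theta*=0}, and evaluate each contribution using Theorem~\ref{thm:aux_asypmtotic_2a} (your additional case-split $n+1\lessgtr s$ with recourse to Theorem~\ref{thm:aux_asypmtotic_2} is in fact slightly more careful than the paper, which tacitly applies Theorem~\ref{thm:aux_asypmtotic_2a}(ii) even when $\beta=s-n_1-1$ may be negative). One cosmetic remark: the ``identity'' $\Gamma(-s+n+1)/\Gamma(n+1-s)\equiv 1$ you highlight is a tautology---the real cancellation is that the coefficient $\Gamma(n+1-s)\,a_s/\Gamma(-s)$ from \eqref{eq:g_kappa_as=theta*=0} matches the factors $a_s/\Gamma(-s)$ and $\Gamma(-\beta)=\Gamma(n+1-s)$ in \eqref{eq:th3.3=0(ii)} and \eqref{eq:HN-2=0}.
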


\begin{proof}
We use similar argumentation as in the proof of
Lemma~\ref{lem:ell_1_ell_2_super}, but now exploiting
Theorem~\ref{thm:aux_asypmtotic_2a}. First of all, according to
\eqref{eq:moment_ell_1_with_gen} we have, for any $n_1\in\NN$\myp,
\begin{equation}\label{eq:moment_ell_1_with_gen-theta*=0}
\Ens\myn\bigl[(L_1-1)_{n_1}\bigr]=
\frac{1}{NH_N}\,[z^N]\!\left[\bigl(g_\theta^{\{n_1+1\}}
\myn(z)+Ng_\kappa^{\{n_1+1\}}\myn(z)\bigr)\,\rme^{\myp\GN(z)}\right].
\end{equation}
Applying Theorem~\ref{thm:aux_asypmtotic_2a}\myp(i) with
$f(z)=g_\theta^{\{n_1+1\}}\myn(z)$ and $\beta=\infty$ (see
\eqref{eq:th3.3=0(i)} and \eqref{eq:HN-2=0}) gives
\begin{equation}\label{eq:theta*=01}
\frac{1}{N H_N}\,[z^N]\bigl[g_\theta^{\{n+1\}}\myn(z)
\,\rme^{\myp\GN(z)}\bigr]\sim \frac{1}{N}\to 0,\qquad N\to\infty.
\end{equation}
On the other hand, on account of the asymptotic expansion
\eqref{eq:g_kappa_as=theta*=0} (with $n=n_1$), by
Theorem~\ref{thm:aux_asypmtotic_2a}\myp(ii) with
$f(z)=g_\kappa^{\{n_1+1\}}\myn(z)$ and $\beta=s-n_1-1<s-1$ we obtain
\begin{equation}\label{eq:kappa*1<stheta*=0}
\frac{1}{N H_N}\,[z^N]\bigl[Ng_\kappa^{\{n_1+1\}}\myn(z)
\,\rme^{\myp\GN(z)}\}\bigr]\sim (N\tilde{\nu})^{n_1}\myp\tilde{\nu},
\qquad N\to\infty,
\end{equation}
recalling that $\tilde{\nu} = 1-g_\kappa^{\{1\}}\myn(R\myp)$
(see~\eqref{eq:tilde-nu}). Hence, substituting \eqref{eq:theta*=01}
and \eqref{eq:kappa*1<stheta*=0} into
\eqref{eq:moment_ell_1_with_gen-theta*=0} yields
$\Ens\myn\bigl[(L_1-1)_{n_1}\bigr] \sim
(N\tilde{\nu})^{n_1}\myp\tilde{\nu}$, which
implies~\eqref{eq:moments_ell_1_1}.

We now turn to \eqref{eq:moments_ell_1(n-ell_1)l_2_2} and
\eqref{eq:moments_(n-ell_1)l_2_3}. Again considering the factorial
moments, by formula \eqref{eq:GGexpG} we have, for any integers
$n_1\ge0$, $n_2\ge 1$,
\begin{equation}\label{eq:GGexpG-theta*=0}
\Ens\bigl[(L_1-1)_{n_1} (L_2-1)_{n_2}\myp(1-L_1/N)\bigr]=
\frac{1}{N^2H_N}\,
[z^N]\bigl[\GN^{\{n_1+1\}}\myn(z)\,\GN^{\{n_2+1\}}\myn(z)
\,\rme^{\myp\GN(z)}\bigr],
\end{equation}
where the product $\GN^{\{n_1+1\}}\myn(z)\,\GN^{\{n_2+1\}}\myn(z)$
is expanded in \eqref{eq:Gexpansion}. Note that, like in
\eqref{eq:theta*=01},
\begin{equation}\label{eq:kappa*1<stheta*=011}
\frac{1}{N^2
H_N}\,[z^N]\bigl[g_\theta^{\{n_1+1\}}\myn(z)\,g_\theta^{\{n_2+1\}}\myn(z)
\,\rme^{\myp\GN(z)}\}\bigr]=O(N^{-2}), \qquad N\to\infty.
\end{equation}
Suppose that $n_1,n_2\ge 1$. Then, similarly to
\eqref{eq:kappa*1<stheta*=0}, we have as $N\to\infty$
\begin{align}
\label{eq:kappa*1<stheta*=0_2} \frac{1}{N^2
H_N}\,[z^N]\bigl[Ng_\kappa^{\{n_2+1\}}\myn(z)
\,g_\theta^{\{n_1+1\}}\myn(z)\,\rme^{\myp\GN(z)}\}\bigr]& =O(N^{n_2-1}),\\
\label{eq:kappa*1<stheta*=0_3} \frac{1}{N^2
H_N}\,[z^N]\bigl[Ng_\kappa^{\{n_1+1\}}\myn(z)\,g_\theta^{\{n_2+1\}}\myn(z)
\,\rme^{\myp\GN(z)}\}\bigr] &=O(N^{n_1-1}).
\end{align}
Furthermore, applying Theorem~\ref{thm:aux_asypmtotic_2a}\myp(ii)
with $f(z)=g_\kappa^{\{n_1+1\}}\myn(z)\,g_\kappa^{\{n_2+1\}}\myn(z)$
and
$$
\beta=\min\myp\{s-n_1-1,\,s-n_2-1\}<s-1,
$$
and noting that $\beta\ge s-n_1-n_2$ , we obtain
\begin{equation}\label{eq:kappa+3}
\frac{1}{N^2H_N}\,
[z^N]\bigl[N^2g_\kappa^{\{n_1+1\}}\myn(z)\,g_\kappa^{\{n_2+1\}}\myn(z)
\,\rme^{\myp\GN(z)}\bigr]=O(N^{s-\beta-1})=O\myp(N^{n_1+n_2-1}).
\end{equation}
Hence, collecting the estimates \eqref{eq:kappa*1<stheta*=011},
\eqref{eq:kappa*1<stheta*=0_2}, \eqref{eq:kappa*1<stheta*=0_3} and
\eqref{eq:kappa+3}, we obtain the
claim~\eqref{eq:moments_ell_1(n-ell_1)l_2_2}.

The same proof shows that the estimates
\eqref{eq:kappa*1<stheta*=011} and \eqref{eq:kappa*1<stheta*=0_2}
are valid with $n_1=0$; this is also true for
\eqref{eq:kappa*1<stheta*=0_3}, which can be proved using
Theorem~\ref{thm:aux_asypmtotic_2a}\myp(iii) with $\beta=s-1$.
Finally, applying Theorem~\ref{thm:aux_asypmtotic_2a}\myp(ii) with
$\beta=s-n_2-1<s-1$, the estimate \eqref{eq:kappa+3} is sharpened to
\begin{equation*}
\frac{1}{N^2H_N}
[z^N]\bigl[N^2g_\kappa^{\{1\}}\myn(z)\,g_\kappa^{\{n_2+1\}}\myn(z)
\,\rme^{\myp\GN(z)}\bigr]\sim (N\tilde{\nu})^{n_2}\mypp\tilde{\nu},
\end{equation*}
giving the leading term in the asymptotics of
\eqref{eq:GGexpG-theta*=0} (with $n_1=0$), which
proves~\eqref{eq:moments_(n-ell_1)l_2_3}.
\end{proof}

\begin{remark}
Lemmas~\ref{lm:D2} and~\ref{lem:ell_1_ell_2_super_2} can be extended
to the $m$-dimensional case, that is, with $X_1,\dots,X_m$ and
$L_1,\dots,L_m$, respectively (cf.\ Remark~\ref{rm:N-L1}).
\end{remark}

The next theorem is the corresponding version of
Theorem~\ref{thm:large_cycles_2}.
\begin{theorem}\label{th:law_ell_j_theta0}
For each $m\in\NN$\myp,
\begin{equation*}
\frac{1}{N\tilde{\nu}}\,(L_1,\dots,L_m)
\stackrel{d}{\longrightarrow} (X_1,\dots,X_m),\qquad N\to\infty,
\end{equation*}
where $X_j$'s are Bernoulli random variables defined
in~\eqref{eq:X/theta*=0}.
\end{theorem}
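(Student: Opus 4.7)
The plan is to mirror the proof of Theorem~\ref{thm:large_cycles_2}, substituting the moment asymptotics from Lemma~\ref{lem:ell_1_ell_2_super_2} for those of Lemma~\ref{lem:ell_1_ell_2_super}, and the degenerate moments of $(X_j)$ from Lemma~\ref{lm:D2} for those of Lemma~\ref{lm:D}. For clarity I would first treat $m=2$ in detail; the extension to $m\ge 3$ then follows via the $m$-dimensional versions of the two lemmas noted in the remark after Lemma~\ref{lem:ell_1_ell_2_super_2}. For $m=2$, the continuous mapping theorem reduces the claim to
$$
\frac{1}{N\tilde{\nu}}\bigl(L_1,\,L_2(1-L_1/N)\bigr)\stackrel{d}{\longrightarrow} \bigl(X_1,\,X_2(1-\tilde{\nu} X_1)\bigr),\qquad N\to\infty.
$$
Since the left-hand side is uniformly bounded (by $1/\tilde{\nu}$) and the limit variables take values in the finite set $\{(0,0),(1,0),(0,1)\}$, the method of moments is both applicable and sharp. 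Thus it suffices to verify, for all integers $n_1,n_2\ge 0$,
\begin{equation}\label{eq:plan-main}
\Ens\bigl[L_1^{n_1}L_2^{n_2}(1-L_1/N)^{n_2}\bigr] \sim (N\tilde{\nu})^{n_1+n_2}\,\EE\bigl[X_1^{n_1}X_2^{n_2}(1-\tilde{\nu} X_1)^{n_2}\bigr].
\end{equation}

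The case $n_1=n_2=0$ is immediate, and $n_2=0$, $n_1\ge 1$ follows by comparing \eqref{eq:moments_ell_1_1} with $\EE[X_1^{n_1}]=\tilde{\nu}$ from Lemma~\ref{lm:D2}. For the remaining ranges, exactly as in the proof of Theorem~\ref{thm:large_cycles_2}, I would establish by induction on $q\in\NN$ the generalised relation
$$
\Ens\bigl[L_1^{n_1}L_2^{n_2}(1-L_1/N)^{q}\bigr] \sim (N\tilde{\nu})^{n_1+n_2}\,\EE\bigl[X_1^{n_1}X_2^{n_2}(1-\tilde{\nu} X_1)^{q}\bigr]\qquad (n_1\ge 0,\ n_2\ge 1).
$$
The base case $q=1$ splits according to whether $n_1\ge 1$ (use \eqref{eq:moments_ell_1(n-ell_1)l_2_2} versus the vanishing $\EE[X_1^{n_1}X_2^{n_2}(1-\tilde{\nu} X_1)]=0$ from Lemma~\ref{lm:D2}) or $n_1=0$ (use \eqref{eq:moments_(n-ell_1)l_2_3} versus $\EE[X_2^{n_2}(1-\tilde{\nu} X_1)]=(1-\tilde{\nu})\tilde{\nu}$). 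The inductive step is algebraic: expand $(1-L_1/N)^{q+1}=(1-L_1/N)^q - N^{-1}L_1(1-L_1/N)^q$, use the parallel identity for $X_1$ with $\tilde{\nu}$, and apply the induction hypothesis to each summand.

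The one subtlety worth flagging is that several of the target moments on the right of \eqref{eq:plan-main} \emph{vanish}, while the corresponding $L$-moments on the left are merely $o\bigl((N\tilde{\nu})^{n_1+n_2}\bigr)$; the symbol $\sim$ in the display above is therefore to be read as ``both sides are of the same asymptotic order, including possibly zero''. Because the limiting law is supported on a \emph{finite} set, this degenerate matching of moments is still sufficient to determine the joint distribution uniquely, so no further refinement (such as a convergence rate for the vanishing moments) is required. The main expected obstacle is not conceptual but notational: keeping careful track of which terms in the expansion \eqref{eq:Gexpansion} (adapted to the present $\theta^*=0$ setting via \eqref{eq:g_kappa_as=theta*=0}) contribute to the leading order versus to the negligible remainder. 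Once the $m=2$ case is settled, the passage to general $m\ge 3$ reduces to bookkeeping with the $m$-dimensional analogues of Lemmas~\ref{lm:D2} and~\ref{lem:ell_1_ell_2_super_2}, exactly as in Remark~\ref{rm:N-L1}.
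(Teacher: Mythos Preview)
Your proposal is correct and follows exactly the same approach as the paper, which simply states that the proof is precisely that of Theorem~\ref{thm:large_cycles_2} with Lemmas~\ref{lm:D2} and~\ref{lem:ell_1_ell_2_super_2} substituted for Lemmas~\ref{lm:D} and~\ref{lem:ell_1_ell_2_super}. Your additional remark about interpreting the asymptotic relation when the target moment vanishes (reading $\sim$ as convergence of the normalized moments, possibly to zero) is a useful clarification that the paper leaves implicit.
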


\begin{proof}
The proof is precisely the same as that of
Theorem~\ref{thm:large_cycles_2}, now using Lemmas \ref{lm:D2}
and~\ref{lem:ell_1_ell_2_super_2} in place of Lemmas \ref{lm:D} and
\ref{lem:ell_1_ell_2_super}, respectively.
\end{proof}

We finally obtain our main result about convergence of ordered
cycles in the case $\theta^*\myn=0$, which is in sharp contrast with
Theorem~\ref{thm:large_cycles_1}.
\begin{theorem}\label{thm:large_cycles_0}
In the sense of convergence of finite-dimensional distributions,
\begin{equation}\label{eq:->100}
\frac{1}{N\tilde{\nu}}\,\bigl(L^{(1)}\myn,L^{(2)}\myn,\dots\bigr)
\stackrel{d}{\longrightarrow} (1,0,0,\dots),\qquad N\to\infty.
\end{equation}
\end{theorem}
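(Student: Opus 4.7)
The plan is to mimic the proof of Theorem~\ref{thm:large_cycles_1}, using Theorem~\ref{th:law_ell_j_theta0} in place of Theorem~\ref{thm:large_cycles_2}. The structural simplification is that the limiting sequence $X_n=\bfOne_{\{\tau_1=n\}}$ from~\eqref{eq:X/theta*=0} is degenerate: since $\tau_1<\infty$ a.s.\ by~\eqref{eq:tau1=geometric}, exactly one $X_j$ equals~$1$ and all others vanish, so in the ordered sense $X^{(1)}=1$ and $X^{(k)}=0$ for $k\ge 2$ (a.s.), matching precisely the asserted limit. The first preparatory step is to re-establish Lemma~\ref{lm:cut-off} in this regime. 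Its proof uses only Theorem~\ref{thm:fractional_infinite_random} (which is insensitive to $\theta^*$) and the finite-dimensional convergence $(L_1,\dots,L_n)/N\stackrel{d}{\to}\tilde{\nu}(X_1,\dots,X_n)$ coupled with $\sum_j X_j=1$ a.s. Theorem~\ref{th:law_ell_j_theta0} supplies the former, while the identity $\sum_{j=1}^\infty \bfOne_{\{\tau_1=j\}}=1$ supplies the latter, so the remaining estimation in Lemma~\ref{lm:cut-off} is reproduced word-for-word.

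Next, for the first marginal, the lower bound $L^{(1)}\ge\max_{j\le n}L_j$ together with Theorem~\ref{th:law_ell_j_theta0} gives, for any $x\in(0,1)$,
\[
\liminf_{N\to\infty}\Pns\bigl\{(N\tilde{\nu})^{-1}L^{(1)}\myn>x\bigr\}\ge \PP\bigl\{\tau_1\le n\bigr\}\xrightarrow[n\to\infty]{}1.
\]
For the matching upper bound I would exploit the deterministic inequality $L^{(1)}\le M+\sum_{j>M}j\mypp C_j$ (valid because a cycle of length greater than $M$ contributes its own length to that sum), combined with Theorem~\ref{thm:fractional_infinite_random} and $\tilde{\nu}_M\downarrow\tilde{\nu}$, which yields $L^{(1)}/(N\tilde{\nu})\stackrel{p}{\to}1$.

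For the remaining ordered statistics ($k\ge 2$) and any $\varepsilon\in(0,1)$, I would decompose
\[
\bigl\{L^{(k)}>\varepsilon N\tilde{\nu}\bigr\}\subset\bigl\{\#\{j\le n:L_j>\varepsilon N\tilde{\nu}\}\ge k\bigr\}\cup\bigl\{\max_{j>n}L_j>\varepsilon N\tilde{\nu}\bigr\}.
\]
Theorem~\ref{th:law_ell_j_theta0} shows the first probability tends to $\PP\bigl\{\#\{j\le n:X_j\ge \varepsilon\}\ge k\bigr\}=0$ since at most one $X_j$ is nonzero, while the second vanishes as $n\to\infty$ by the adapted cut-off. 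Hence $L^{(k)}/(N\tilde{\nu})\stackrel{p}{\to}0$ for every $k\ge 2$. Since the candidate limit $(1,0,0,\dots)$ is deterministic, these componentwise convergences in probability assemble directly into the required finite-dimensional weak convergence~\eqref{eq:->100}.

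The main obstacle is the re-derivation of the cut-off lemma, specifically verifying $\frac{1}{N}\sum_{j=1}^n L_j\,\bfOne_{\{L_j>K\}}\stackrel{d}{\to}\tilde{\nu}\,\bfOne_{\{\tau_1\le n\}}$: one must couple the scaling limit for the unique ``long'' term $j=\tau_1$ with the non-scaling weak limit from Theorem~\ref{thm:unordered_sub}, which guarantees that each $L_j$ with $j\ne\tau_1$ remains stochastically bounded and hence $\bfOne_{\{L_j>K\}}\to 0$ as $K\to\infty$. Everything else is an elementary consequence of the degeneracy of the limit and should mirror the proof of Theorem~\ref{thm:large_cycles_1} closely.
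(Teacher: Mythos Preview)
Your overall strategy is correct and matches the paper's own proof, which simply notes that the ordered sequence of the $X_n$'s from~\eqref{eq:X/theta*=0} is $(1,0,0,\dots)$ and then invokes the proof of Theorem~\ref{thm:large_cycles_1} together with Lemma~\ref{lm:cut-off} verbatim. Your alternative upper bound for $L^{(1)}$ via $L^{(1)}\le M+\sum_{j>M}j\,C_j$ is a nice shortcut that works equally well here because the target limit is deterministic.

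However, your ``main obstacle'' is misdiagnosed, and your proposed resolution of it is confused. You write that one must couple the scaling limit for the term ``$j=\tau_1$'' with Theorem~\ref{thm:unordered_sub} to ensure that the other $L_j$'s stay bounded. This does not make sense: $\tau_1$ is a random variable defined only in the \emph{limiting} probability space, not an index one can isolate at finite~$N$; and Theorem~\ref{thm:unordered_sub} gives a \emph{deficient} limit in the supercritical regime, so it certainly does not show that $L_j$ is stochastically bounded (indeed, with probability~$\tilde\nu$ it escapes to infinity). The step you worry about is in fact trivial and needs none of this. For fixed $K$ and each $j\le n$,
\[
0\;\le\;\frac{L_j}{N}-\frac{L_j}{N}\,\bfOne_{\{L_j>K\}}
\;=\;\frac{L_j}{N}\,\bfOne_{\{L_j\le K\}}\;\le\;\frac{K}{N}\;\longrightarrow\;0,
\]
so by Slutsky's theorem $\frac{1}{N}\sum_{j=1}^n L_j\,\bfOne_{\{L_j>K\}}$ has the same distributional limit as $\frac{1}{N}\sum_{j=1}^n L_j$, which by Theorem~\ref{th:law_ell_j_theta0} and the continuous mapping theorem is $\tilde\nu\sum_{j=1}^n X_j=\tilde\nu\,\bfOne_{\{\tau_1\le n\}}$. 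This is exactly how the corresponding step in the proof of Lemma~\ref{lm:cut-off} goes through (and why the paper calls that lemma ``general''): no separate treatment of short versus long $L_j$'s is required. With this correction, the rest of your argument is fine.
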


\begin{proof}
From the definition \eqref{eq:X/theta*=0} it is clear that
rearranging the sequence $(X_n)$ in decreasing order gives
$(1,0,0,\dots)$ (cf.\ the right-hand side of~\eqref{eq:->100}). The
rest of the proof proceeds exactly as in
Theorem~\ref{thm:large_cycles_1}, again using the general cut-off
Lemma~\ref{lm:cut-off}.
\end{proof}

\begin{corollary}\label{cor:LLN}
Weak convergence of the first component in \eqref{eq:->100} entails
the following law of large numbers for the longest cycle in the case
$\theta^*\myn=0$,
\begin{equation*}
\frac{\,L^{(1)}}{N\tilde{\nu}} \stackrel{p}{\longrightarrow}
1,\qquad N\to\infty.
\end{equation*}
\end{corollary}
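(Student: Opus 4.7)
The plan is essentially a one-line reduction. Theorem~\ref{thm:large_cycles_0} provides the finite-dimensional convergence of $(L^{(j)}/(N\tilde{\nu}))_{j\ge 1}$ to the deterministic sequence $(1,0,0,\dots)$, and in particular the projection onto the first coordinate (a continuous operation) gives
\begin{equation*}
\frac{L^{(1)}}{N\tilde{\nu}} \stackrel{d}{\longrightarrow} 1, \qquad N\to\infty,
\end{equation*}
where the limit is the degenerate random variable identically equal to $1$.

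I would then invoke the standard probabilistic fact that convergence in distribution to a constant is equivalent to convergence in probability to that same constant. Concretely, fix $\varepsilon>0$ and decompose
\begin{equation*}
\Pns\bigl\{\bigl|L^{(1)}/(N\tilde{\nu})-1\bigr|>\varepsilon\bigr\}
= \Pns\bigl\{L^{(1)}/(N\tilde{\nu})<1-\varepsilon\bigr\}
+ \Pns\bigl\{L^{(1)}/(N\tilde{\nu})>1+\varepsilon\bigr\}.
\end{equation*}
Both terms are values of the distribution function of $L^{(1)}/(N\tilde{\nu})$ at the continuity points $1\pm\varepsilon$ of the unit mass at $1$; the weak limit assigns mass $0$ to the half-lines $(-\infty,1-\varepsilon\myp]$ and $[\myp 1+\varepsilon,\infty)$, hence each probability tends to $0$ as $N\to\infty$.

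There is no genuine obstacle here: the corollary is a direct consequence of Theorem~\ref{thm:large_cycles_0} combined with the elementary equivalence of distributional and probabilistic convergence for degenerate limits, and indeed the statement itself makes this point by phrasing the result as an entailment of the first-component weak convergence already proved.
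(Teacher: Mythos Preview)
Your proposal is correct and matches the paper's treatment: the corollary is stated without proof, precisely because the statement itself flags it as an immediate entailment of the first-component weak convergence in Theorem~\ref{thm:large_cycles_0} together with the standard fact that convergence in distribution to a constant is equivalent to convergence in probability. One small wording slip: the second term $\Pns\{L^{(1)}/(N\tilde\nu)>1+\varepsilon\}$ is a tail probability rather than a distribution-function value, but the argument via continuity points of $\delta_1$ goes through unchanged.
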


\begin{remark}
The result of Theorem~\ref{thm:large_cycles_0} means that there is a
single giant cycle (of size about $N\tilde{\nu}$) emerging as
$N\to\infty$.
\end{remark}

\begin{remark}\label{rm:thm5.13extension}
The condition of regularity of $g_\theta(z)$ at $z=R$ imposed at the
beginning of Section \ref{sec:theta*=0} is not essential; in the
spirit of Remark~\ref{rm:sing_g_kappa_and_theta}, Lemmas \ref{lm:D2}
and~\ref{lem:ell_1_ell_2_super_2} (underlying the proof of
Theorem~\ref{th:law_ell_j_theta0}) may be extended to the case where
$g_\theta(z)$ has a power and possibly also a power-logarithmic
singularity, $\tilde{b}_{s_1}(1-z/R\myp)^{s_1}\log\myn(1-z/R\myp)$,
as long as $s_1\myn>0$.
Furthermore, again alluding to
Remark~\ref{rm:sing_g_kappa_and_theta} it is not hard to see that
all calculations can be adapted for the asymptotic expansion
\eqref{eq:g_kappa_as=0} of $g_\kappa(z)$ to include a
power-logarithmic term $b_s(1-z/R\myp)^s\log\myn(1-z/R\myp)$, in
which case the index $s>1$ is permitted to be integer.
\end{remark}

\begin{remark} Ercolani et al.\ \cite[Theorem~7.3]{ErJaUe14},
using a different method, have obtained the limit distribution of
fluctuations of $L^{(1)}$ about $N\tilde{\nu}$ (cf.\
Corollary~\ref{cor:LLN}) in the particular case of $\theta_j\equiv
0$ and for certain concrete sequences $(\kappa_j)$; for instance, if
$\kappa_j=j^{-s}$ and $1<s<2$ then $(L^{(1)}-N\tilde{\nu})/N^{1/s}$
converges weakly to a stable distribution with characteristic
exponent~$s$, whereas if $s>2$ then
$(L^{(1)}-N\tilde{\nu})/\sqrt{N}$ is asymptotically normal as
$N\to\infty$.
\end{remark}

\section{Comparison with the Spatial Model}\label{sec:comparison}
The aim of this section is to bridge the gap between the
surrogate-spatial ($\Pns$) and spatial ($\Psp$) models. First, in
Section \ref{sec:6.1} we will explore in some detail the
Euler--Maclaurin type approximation \eqref{eq:approx_weigths} of the
Riemann sums arising in the definition of the measure $\Psp$
(see~\eqref{eq:Pspatial_with_partition}). More specifically, the
opening general remarks based on the Poisson summation formula
(Section~\ref{sec:6.1.1}) are followed by an in-depth analysis of
three concrete examples including the basic Gaussian case (Sections
\ref{sec:6.1.2}\mypp--\mypp\ref{sec:6.1.4}), summarized in
Section~\ref{sec:heuristic_conclusions} by a heuristic discussion of
limitations of the surrogate-spatial model and its possible
improvements. In Section~\ref{sec:6.2} we will show how the concept
of the system ``density'' may be introduced and interpreted in our
model. Finally, in Section~\ref{sec:comparison_1} we will compare
the asymptotic results for the cycle statistics obtained in the
present paper and by Betz and Ueltschi~\cite{BeUe11a}.

\subsection{Asymptotics of the Riemann sums}\label{sec:6.1}
To justify the ansatz \eqref{eq:approx_weigths}, which has motivated
the surrogate-spatial model \eqref{eq:def_near_spatial}, we need to
look more carefully at the Riemann integral sums in
\eqref{eq:Pspatial_with_partition}. Rather than using
Euler--Maclaurin's summation formula as suggested in Section
\ref{sec:1.3}, we take advantage of the link \eqref{eq:eps} between
the functions $\rme^{-\varepsilon(\bfs)}$ and $\rme^{-V(\bfx)}$ and
deploy the powerful \emph{Poisson summation formula} (see, e.g.,
\cite[\S\myp3.12, p.~52]{Bru} or \cite[\S\mypp{}XIX.5,
p.~630]{Feller}) yielding the identity
\begin{equation}\label{eq:PSS}
\sum_{k\in\ZZ} \varphi(k/\lambda)=\lambda\sum_{\ell\in\ZZ}
f(\lambda\ell)\qquad (\lambda>0),
\end{equation}
where the functions $\varphi(s)$ and $f(x)$ are the reciprocal
Fourier transforms,
\begin{align*}
\varphi(s)&=\int_\RR \rme^{-2\pi\rmi
xs}\,f(x)\,\rmd{x},\qquad
s\in\RR,\\
f(x)&=\int_\RR \rme^{\myp 2\pi\rmi xs}\,\varphi(s)\,\rmd{s},\qquad
x\in\RR,
\end{align*}
such that $f(x)$ is a probability density and, moreover,
$\varphi(s)$ is absolutely integrable on $\RR$.

For simplicity, in higher dimensions we restrict ourselves to
\emph{isotropic} (radially symmetric) potentials,
\begin{equation*}
V(\bfx)=\sum_{i=1}^d V_0(x_i),\qquad \bfx=(x_1,\dots,x_d)\in\RR^d,
\end{equation*}
leading to the decomposition
\begin{equation}\label{eq:eps-isotropic}
\varepsilon(\bfs)=\sum_{i=1}^d \varepsilon_0(s_i),\qquad \qquad
\bfs=(s_1,\dots,s_d)\in\RR^d,
\end{equation}
with the function $\varepsilon_0(\cdot)$ defined by
\begin{equation}\label{eq:epsilon_0}
\rme^{-\varepsilon_0(s)}=\int_\RR \rme^{-2\pi\rmi \myp xs}\myp
f_0(x)\,\rmd{x}\myp,\qquad f_0(x):=\rme^{-V_0(x)}.
\end{equation}
Hence, the $d$-dimensional sum in \eqref{eq:Pspatial_with_partition}
(with $j=1,\dots,N$) is decomposable as a product,
\begin{equation}\label{eq:product-sum}
\sum_{\bfk\in\ZZ^d} \rme^{- j\myp\varepsilon(\bfk/L)}=\prod_{i=1}^d
\sum_{k_i\in\ZZ} \rme^{-
j\myp\varepsilon_0(k_i/L)}=\left(\sum_{k\in\ZZ} \rme^{-
j\myp\varepsilon_0(k/L)}\right)^d,
\end{equation}
and likewise
\begin{equation}\label{eq:product-int}
\int_{\RR^d} \rme^{-
j\myp\varepsilon(\bfs)}\,\rmd{\bfs}=\prod_{i=1}^d \int_{\RR} \rme^{-
j\myp\varepsilon_0(s_i)}\,\rmd{s_i}=\left(\int_{\RR} \rme^{-
j\myp\varepsilon_0(s)}\,\rmd{s}\right)^d.
\end{equation}

Relation \eqref{eq:epsilon_0} implies that the function
$\exp\{-j\myp\varepsilon_0(s)\}$ is the Fourier transform of the
$j$-fold convolution $f_0^{\star j}(x)=f_0\star\cdots\star f_0(x)$
\,($j\in\NN$). Furthermore, from the Fourier inversion formula for
$f_0^{\star j}(x)$ (with $x=0$) we get
\begin{equation}\label{eq:f*j(0)}
f_0^{\star j}(0)=\int_\RR \rme^{-j\myp \varepsilon_0(s)}\,\rmd{s}.
\end{equation}
Hence, the Poisson formula \eqref{eq:PSS} (with $\lambda=L$)
together with the product decompositions \eqref{eq:product-sum} and
\eqref{eq:product-int} yields the key representation
\begin{align}
\label{eq:PSS^d} \sum_{\bfk\in\ZZ^d}
\rme^{-j\myp\varepsilon(\bfk/L)}&=L^d\left(\sum_{\ell\in\ZZ}
f_0^{\star
j}(\ell L)\right)^d\\
\label{eq:PSS_j} &=\rho^{\mypp-1}N \int_{\RR^d} \rme^{-j\myp
\varepsilon(\bfs)}\,\rmd{\bfs} \cdot\left(1+\frac{\sum_{\ell\ne0}
f_0^{\star j}(\ell L)}{f_0^{\star j}(0)}\right)^d,
\end{align}
on account of the thermodynamic calibration $L^d=\rho^{\mypp-1}\myn
N$ (see Section~\ref{sec:1.2}). On comparison with the conjectural
approximation \eqref{eq:approx_weigths}, we see that the expression
\eqref{eq:PSS_j} may be quite useful in providing information about
the Riemann sums asymptotics.

Let us prove one general result in this direction. Recall that the
probability density $f_0(x)$ is always assumed to be symmetric,
$f_0(-x)= f_0(x)$ \,($x\in\RR$). A symmetric function is called
\emph{unimodal} if it is non-increasing for $x\ge 0$.

\begin{lemma}\label{lm:unimodal}
Assume that $f_0(x)$ is unimodal, then for each $j\in\NN$ and any
$L>0$
\begin{equation}\label{eq:<1/L}
\sum_{\ell\ne0} f_0^{\star j}(\ell L)\le
\frac{4}{L}\int_{L/2}^\infty f_0^{\star j}(x)\,\rmd{x}.
\end{equation}
\end{lemma}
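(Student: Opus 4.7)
The plan is to first reduce the claim to a standard Riemann-sum estimate for a symmetric non-increasing function, and then to supply the preservation-of-unimodality property that makes that reduction legitimate for every $j\in\NN$.

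First, observe that by symmetry of $f_0$, the $j$-fold convolution $f_0^{\star j}$ is also symmetric, so
\[
\sum_{\ell\ne 0} f_0^{\star j}(\ell L)=2\sum_{\ell=1}^{\infty} f_0^{\star j}(\ell L).
\]
I would then claim that $f_0^{\star j}$ is unimodal. This is the Wintner theorem (1938): the convolution of two symmetric unimodal densities on $\RR$ is again symmetric unimodal. The quickest justification goes through Khintchine's representation\,---\,any symmetric unimodal density can be written as a mixture of uniform densities on symmetric intervals $[-a,a]$, and the convolution of two such uniforms is a symmetric trapezoidal (hence unimodal) density; integrating against the mixing measures preserves this. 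An $j$-fold induction then gives the unimodality of $f_0^{\star j}$.

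Granting this, $f_0^{\star j}$ is non-increasing on $[0,\infty)$. Hence, for each integer $\ell\ge1$ and every $x\in [(\ell-\tfrac12)L,\myp \ell L\myp]$, we have $f_0^{\star j}(\ell L)\le f_0^{\star j}(x)$, which after integrating gives
\[
\tfrac{L}{2}\,f_0^{\star j}(\ell L)\le \int_{(\ell-1/2)L}^{\ell L} f_0^{\star j}(x)\,\rmd x.
\]
Summing over $\ell\ge1$ and noting that the (disjoint) intervals $[(\ell-\tfrac12)L,\myp \ell L\myp]$ are all contained in $[L/2,\infty)$, while $f_0^{\star j}\ge0$, we obtain
\[
\sum_{\ell=1}^{\infty} f_0^{\star j}(\ell L)\le \frac{2}{L}\int_{L/2}^{\infty} f_0^{\star j}(x)\,\rmd x.
\]
Doubling via the symmetry relation above yields the required estimate \eqref{eq:<1/L}.

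The main obstacle is the unimodality step: the one-dimensional elementary estimate in the last paragraph is standard, but it genuinely needs $f_0^{\star j}$ to be monotone on $[0,\infty)$, and this is not automatic from unimodality of $f_0$ alone without invoking (or reproving) Wintner's theorem. If the authors wish to avoid citing it, the cleanest self-contained route is to establish the Khintchine mixture representation for a single symmetric unimodal density, perform the trapezoidal-convolution computation for two uniforms, and then iterate; everything after that is a short comparison of the series with the integral as above.
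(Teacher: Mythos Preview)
Your proof is correct and follows essentially the same approach as the paper: both invoke preservation of symmetric unimodality under convolution (the paper cites \cite[Theorem~2.5.2]{IL}, you invoke Wintner/Khintchine) and then bound the lattice sum by the tail integral via monotonicity of $f_0^{\star j}$ on $[0,\infty)$. The only cosmetic difference is that the paper first inserts the half-lattice points $\ell L/2$ and uses the rectangle bound $f_0^{\star j}(\ell y)\le\int_{\ell-1}^{\ell}f_0^{\star j}(uy)\,\rmd u$ with $y=L/2$, whereas you compare $f_0^{\star j}(\ell L)$ directly against $\int_{(\ell-1/2)L}^{\ell L}f_0^{\star j}(x)\,\rmd x$; both routes yield the identical constant $4/L$.
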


\begin{proof} Observe that the convolutions $f_0^{\star j}(x)$ are also
symmetric and unimodal (see \cite[Theorem 2.5.2, p.~67]{IL}).
Hence, for any $\ell\ge 1$ and $y>0$
\begin{equation}\label{eq:rectangular}
f_0^{\star j}(\ell y)\le \int_{\ell - 1}^{\ell} f_0^{\star
j}(uy)\,\rmd{u}.
\end{equation}
On the other hand, inserting extra ``mid-terms'' into the sum in
\eqref{eq:<1/L} and then using the bound \eqref{eq:rectangular} with
$y=L/2$, we obtain
\begin{align*}
\sum_{\ell\ne 0} f_0^{\star j}(\ell L)=2\sum_{\ell=1}^\infty
f_0^{\star j}(\ell L)&\le 2\sum_{\ell=2}^\infty f_0^{\star j}(\ell L/2)\\
&\le 2\int_{1}^\infty \!f_0^{\star
j}(uL/2)\,\rmd{u}=\frac{4}{L}\int_{L/2}^\infty f_0^{\star
j}(x)\,\rmd{x},
\end{align*}
and the estimate \eqref{eq:<1/L} is proved.
\end{proof}

\begin{remark}
The bound \eqref{eq:<1/L}, together with the Poisson formula
\eqref{eq:PSS_j}, ensures that the Riemann sum on the left-hand side
of \eqref{eq:PSS_j} is finite for all $L>0$, and so the series
convergence condition \eqref{eq:sum-eps} is automatically satisfied.
\end{remark}

By Lemma \ref{lm:unimodal}, from \eqref{eq:f*j(0)} and
\eqref{eq:PSS_j} we get for each $j\in\NN$ the asymptotic
equivalence
\begin{equation*}
\sum_{\bfk\in\ZZ^d} \rme^{-j\myp\varepsilon(\bfk/L)}\sim
\rho^{\mypp-1}N \int_{\RR^d} \rme^{-j\myp
\varepsilon(\bfs)}\,\rmd{\bfs}\qquad (N\to\infty),
\end{equation*}
with the absolute error
\begin{equation}\label{eq:remainder}
\begin{aligned}
\Delta_N^{(j)}:={}&\sum_{\bfk\in\ZZ^d}
\rme^{-j\myp\varepsilon(\bfk/L)}- \rho^{\mypp-1} N
\int_{\RR^d} \rme^{-j\myp \varepsilon(\bfs)}\,\rmd{\bfs}\\[.3pc]
 \sim{}&\rho^{\mypp-1} N\myp d\,\bigl\{f_0^{\star
j}(0)\bigr\}^{d-1}\sum_{\ell\ne0} f_0^{\star j}(\ell
L)=o(N^{1-1/d}), \qquad N\to\infty.
\end{aligned}
\end{equation}
Obtaining more accurate asymptotics of $\Delta_N^{(j)}$ (in
particular, investigating if it converges to a \emph{constant} as
suggested by the term $\theta_j$ in \eqref{eq:approx_weigths}), as
well as treating the case of $j$ growing with $N\to\infty$, requires
more information about the tail of the density $f_0(x)$ and its
convolutions $f_0^{\star j}(x)$ as $x\to\infty$, which will also
translate into the behaviour of the function $\varepsilon_0(s)$ for
$s\approx 0$ needed for the asymptotics of the integral in
\eqref{eq:PSS_j}, according to the Laplace method.


\begin{remark}\label{rm:LD}
Note that the integral in \eqref{eq:<1/L} can be written as the
probability $\PP\{S_j\ge L/2\}$, where $S_j:=X_1+\dots+X_j$ and
$(X_i)$ are i.i.d.\ random variables each with density $f_0(x)$;
hence, one can use suitable results from the large deviations theory
(see, e.g., \cite{Nagaev}) to get further bounds on \eqref{eq:<1/L}.
We will use this idea below to treat the example in
Section~\ref{sec:6.1.3}.
\end{remark}

Rather then attempting to develop any further general results, we
will illustrate some typical asymptotic effects by considering a few
``exactly solvable'' examples classified according to the type of
the probability density $f_0(x)=\rme^{-V_0(x)}$: \,(i)
\,$\varepsilon_0(s)=s^2$ \,(\emph{Gaussian}); \,(ii)
\,$\varepsilon_0(s)=|s|^\gamma$, \,$0<\gamma<2$ \,(\emph{stable});
\,(iii) \,$f_0(x)=\mu_0\mypp\rme^{-|x|^\gamma}$, \,$0<\gamma<2$
\,(\emph{exponential-power}). In what follows, we write $a_N\asymp
b_N$ if $0<\liminf_{N\to\infty} b_N/a_N\le\limsup_{N\to\infty}
b_N/a_N<\infty$.

\subsubsection{Gaussian case}\label{sec:6.1.1}
Here we have $\varepsilon_0(s)=s^2$,
\,$f_0(x)=\sqrt{\pi}\,\rme^{-\pi^2x^2}$, and the convolutions
$f_0^{\star j}$ \myp($j\in\NN$) are easily found,
\begin{equation}\label{eq:norm_int}
f_0^{\star j}(x)=\sqrt{\frac{\pi}{j}}\;\rme^{-\pi^2 x^2
j^{-1}},\qquad x\in\RR.
\end{equation}
Hence, the representation \eqref{eq:PSS_j} specializes to
\begin{equation}\label{eq:norm_sum_1}
\sum_{\bfk\in\ZZ^d} \rme^{- j\mypp \|\bfk\|^2/L^{2}}=\rho^{\mypp-1}N
\int_{\RR^d} \rme^{-j\mypp \|\bfs\|^2}\mypp\rmd{\bfs}
\cdot\left(1+\sum_{\ell\ne0} \rme^{-\pi^2 \ell^2 L^{2}/j} \right)^d,
\end{equation}
where $N=\rho\myp L^d$ and (see \eqref{eq:f*j(0)}
and~\eqref{eq:norm_int})
\begin{equation}\label{eq:Gauss_int}
\int_{\RR^d} \rme^{-j\mypp
\|\bfs\|^2}\mypp\rmd{\bfs}=\left(\int_{\RR} \rme^{-j
s^2}\mypp\rmd{s}\right)^d=\bigl\{f_0^{\star
j}(0)\bigr\}^d=\left(\frac{\pi}{j}\right)^{d/2}.
\end{equation}

The analysis of the expression \eqref{eq:norm_sum_1} is
straightforward. Recall that the index $j$ ranges from $1$ to $N$
(see \eqref{eq:Pspatial_with_partition}). As long as $j\myp
L^{-2}=o(1)$ (which is always true in dimension $d=1$), from
\eqref{eq:remainder} and \eqref{eq:norm_int} we obtain
$$
\Delta_N^{(j)}=\left(\frac{L^2}{j}\right)^{d/2}
O\bigl(\rme^{-\const\cdot L^{2}/j}\bigr)=o(1),\qquad N\to\infty,
$$
which means that the approximation \eqref{eq:approx_weigths},
\eqref{eq:kappa_j} is valid in this range of $j$, with
$\theta_j\equiv 0$. However, if $j\asymp L^{2}$ (when $d\ge2$) then
the sum on the left-hand side of \eqref{eq:norm_sum_1} is of order
of a constant; more specifically, if $jL^{-2}\to c>0$ then
\begin{equation}\label{eq:sum->const}
\sum_{\bfk\in\ZZ^d} \rme^{- j\mypp
\|\bfk\|^2/L^{2}}=\left(\sum_{k\in\ZZ} \rme^{- (j/L^2) \mypp
k^2}\right)^d\to \left(\sum_{k\in\ZZ} \rme^{- c\myp
k^2}\right)^d,\qquad L\to\infty.
\end{equation}
Note that the integral part in \eqref{eq:approx_weigths},
\eqref{eq:kappa_j} contributes to the limit \eqref{eq:sum->const}
the amount
\begin{equation}\label{eq:int_contribution}
L^d \left(\int_{\RR} \rme^{-j\myp
s^2}\mypp\rmd{s}\right)^d=L^d\left(\frac{\pi}{j}\right)^{d/2}\to
\left(\frac{\pi}{c}\right)^{d/2},
\end{equation}
whilst the rest of it must come from additional \emph{positive}
constants (see \eqref{eq:norm_sum_1}), leading accordingly to the
coefficients $\theta_j>0$ in \eqref{eq:approx_weigths}.

Similarly, if $jL^{-2}\to +\infty$ (which is possible in dimensions
$d\ge 3$) then
\begin{equation}\label{eq:sum->1}
\sum_{\bfk\in\ZZ^d} \rme^{- j\mypp
\|\bfk\|^2/L^{2}}=\left(1+2\sum_{k=1}^\infty \rme^{- (j/L^{2})\mypp
k^2}\right)^d\to 1,\qquad L\to\infty.
\end{equation}
Note that here the integral contribution is asymptotically vanishing
(cf.\ \eqref{eq:int_contribution}), so in this range of $j$ we must
have $\theta_j\sim \rme^{-\alpha_j}$, according to
\eqref{eq:approx_weigths} and~\eqref{eq:sum->1}.

\subsubsection{Stable case}\label{sec:6.1.2}
Here $\varepsilon_0(s)=|s|^\gamma$ with $0<\gamma<2$, which implies
that the (symmetric) density $f_0(x)$ is \emph{stable} (see, e.g.,
\cite[Theorem 2.2.2, p.~43]{IL}) and therefore unimodal
\cite[Theorem 2.5.3, p.~67]{IL}. The particular case $\gamma=1$
corresponds to the Cauchy distribution,
$f_0(x)=\pi^{-1}(1+x^2)^{-1}$. The convolutions $f_0^{\star j}(x)$
are easily found by rescaling,
$$
f_0^{\star j}(x)=j^{\myp-1/\gamma}
f_0\bigl(j^{\myp-1/\gamma}x\bigr),\qquad x\in\RR.
$$
In particular, note that (cf.\ \eqref{eq:f*j(0)})
\begin{equation}\label{eq:int_gamma}
f_0^{\star j}(0)=j^{\myp-1/\gamma} f_0(0)=j^{\myp-1/\gamma}\int_\RR
\rme^{-|s|^\gamma}\mypp\rmd{s}=
\frac{2\mypp\Gamma(1+1/\gamma)}{j^{1/\gamma}}\,,\qquad j\in\NN\myp.
\end{equation}
Hence, the representation \eqref{eq:PSS_j} takes the form
\begin{equation*}
\sum_{\bfk\in\ZZ^d} \rme^{- j\myp\varepsilon(\bfk/L)}
=\rho^{\mypp-1}N \int_{\RR^d}
\rme^{-j\myp\varepsilon(\bfs)}\,\rmd{\bfs}
\cdot\left(1+\frac{1}{f_0(0)}\sum_{\ell\ne0}
f_0(j^{\myp-1/\gamma}\ell L)\right)^d,
\end{equation*}
where (see \eqref{eq:f*j(0)} and \eqref{eq:int_gamma})
\begin{equation}\label{eq:stable-j}
\int_{\RR^d}
\rme^{-j\myp\varepsilon(\bfs)}\,\rmd{\bfs}=\left(\int_{\RR} \rme^{-j
\myp|s|^\gamma}\mypp\rmd{s}\right)^d=\bigl\{f_0^{\star
j}(0)\bigr\}^d=\bigl\{2\mypp\Gamma(1+1/\gamma)\bigr\}^{d}
\,j^{-d/\gamma}\myp.
\end{equation}

Furthermore, the tail asymptotics of the stable density $f_0(x)$ are
given by (see, e.g., \cite[Theorem 2.4.1, p.~54, for $0<\gamma<1$
and Theorem 2.4.2, p.~55, for $1<\gamma<2$]{IL})
$$
f_0(x)\sim \frac{1}{\pi
|x|^{1+\gamma}}\,\Gamma(1+\gamma)\,\sin\frac{\pi\gamma}{2}\mypp,\qquad
x\to\infty.
$$
(The case $\gamma=1$ is automatic in view of the explicit form of
the Cauchy density $f_0(x)$, as mentioned above.) Therefore, in the
range $j=o(L^{\gamma})$ the error term \eqref{eq:remainder} is
estimated as
\begin{equation}\label{eq:asymp_Delta}
\Delta_N^{(j)}\asymp N\myp j^{-d/\gamma} \sum_{\ell=1}^\infty
\frac{j^{\myp1+1/\gamma}}{\ell^{\myp1+\gamma}L^{1+\gamma}}\asymp
\left(\frac{L}{j^{1/\gamma}}\right)^{d-1-\gamma}, \qquad N\to\infty.
\end{equation}
Since $L\myp j^{\myp-1/\gamma} \to\infty$, the right-hand side of
\eqref{eq:asymp_Delta} tends to zero only if $d<1+\gamma$, which is
always true for $d=1$ but false for $d\ge3$ (where in fact
$\Delta_N^{(j)}\to\infty$); for $d=2$ we have $\Delta_N^{(j)}=o(1)$
if $\gamma>1$, while $\Delta_N^{(j)}\asymp 1$ if $\gamma=1$ and
$\Delta_N^{(j)}\to\infty$ if $0<\gamma<1$. More precisely, if
$j\asymp L^{\gamma-\epsilon}$ ($0<\epsilon\le \gamma$) then
$\Delta_N^{(j)}\asymp L^{\myp\epsilon\myp(d-1-\gamma)/\gamma}$,
which identifies the scale of $\Delta_N^{(j)}$ in the ``moderate''
range from $j=1$ (with $\Delta_N^{(1)}\asymp L^{d-1-\gamma}$ up to
$j\asymp L^{\gamma}$, when \eqref{eq:asymp_Delta} is formally
reduced to $\Delta_N^{(j)}\asymp 1$.

On the other hand, if $jL^{-\gamma}\to\infty$ then, using the
product formula \eqref{eq:product-sum} and the expression
$\varepsilon_0(s)=|s|^\gamma$, we see directly that
\begin{equation}\label{eq:sum->1'}
\sum_{\bfk\in\ZZ^d} \rme^{-
j\myp\varepsilon(\bfk/L)}=\left(1+2\sum_{k=1}^\infty \rme^{-
(j/L^\gamma)\mypp k^\gamma }\right)^d \to 1,\qquad L\to\infty.
\end{equation}

\subsubsection{Exponential-power case}\label{sec:6.1.3}
In this example, the density is specified as
$f_0(x)=\mu_0\mypp\rme^{-|x|^\gamma}$ \,($x\in\RR$), with
$0<\gamma<2$ and the normalization constant (cf.\
\eqref{eq:int_gamma})
\begin{equation}\label{eq:C0}
\mu_0\equiv \mu_0(\gamma)=\left(2\int_0^\infty
\!\rme^{-x^{\gamma}}\myp\rmd{x}
\right)^{-1}=\frac{1}{2\mypp\Gamma(1+1/\gamma)}\mypp.
\end{equation}
By the duality, from Section \ref{sec:6.1.2} we see that the Fourier
transform of $f_0(x)$ is given by $\varphi_0(s)=
\tilde{f}(s)/\tilde{f}(0)\ge0$, where $\tilde{f}(\cdot)$ is the
(stable) density with Fourier transform $\rme^{-|s|^\gamma}$.

\begin{remark}
The case $\gamma=2$, which corresponds to a Gaussian density (see
Section~\ref{sec:6.1.1}), is easily included in the analysis below.
\end{remark}

To estimate the error term \eqref{eq:remainder}, let us use Lemma
\ref{lm:unimodal} together with Remark \ref{rm:LD}, giving
\begin{equation}\label{eq:Delta<P}
\Delta_N^{(j)}\le L^d \left(\int_{\RR} \rme^{-j\myp
\varepsilon_0(s)}\,\rmd{s}\right)^d \left\{\left(1+\frac{\PP\{S_j\ge
L/2\}}{(L/4) \mypp f_0^{\star j}(0)}\right)^d-1\right\}.
\end{equation}
We have to distinguish two cases, (i) $1\le\gamma\le2$ and (ii)
$0<\gamma<1$.

\smallskip
(i) Consider the absolute moments of order $r\ge0$
(cf.~\eqref{eq:C0})
\begin{equation}\label{eq:mu_r}
\mu_r:=\int_\RR |x|^r f_0(x)\,\rmd{x}=2\myp \mu_0\int_0^\infty \mynn
x^r \rme^{-x^\gamma}\myp\rmd{x}=
\frac{\Gamma(1+(r+1)/\gamma)}{(r+1)\,\Gamma(1+1/\gamma)}\,.
\end{equation}
We will need a simple lemma about the gamma function.

\begin{lemma}\label{lm:Gamma(1+)}
For any $\gamma\in[1,2]$ and all integers $r\ge2$, the following
inequality holds
\begin{equation}\label{eq:Gamma_r}
\Gamma\mynn\left(1+\frac{r+1}{\gamma}\right)\le
\frac{(r+1)!}{6}\;\Gamma\mynn\left(1+\frac{3}{\gamma}\right)\!.
\end{equation}
\end{lemma}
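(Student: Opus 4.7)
My plan is to prove \eqref{eq:Gamma_r} by induction on $r\ge 2$, with $\gamma\in[1,2]$ treated as a fixed parameter. The base case $r=2$ is trivial: both sides coincide with $\Gamma(1+3/\gamma)$, so equality holds (and in fact, as a sanity check, for $\gamma=1$ the inequality is an equality for every $r$, which suggests the right inductive shape).

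For the inductive step, suppose the bound holds for some $r\ge 2$. Multiplying the hypothesis by $r+2$, it suffices to establish the one-step estimate
\begin{equation*}
\Gamma\mynn\left(1+\frac{r+2}{\gamma}\right)\le (r+2)\,\Gamma\mynn\left(1+\frac{r+1}{\gamma}\right).
\end{equation*}
I would obtain this from the classical log-convexity of $\Gamma$: for any $x>0$ and $h\in[0,1]$, applying convexity of $\log\Gamma$ on the segment $[x,x+1]$ gives
\begin{equation*}
\Gamma(x+h)\le \Gamma(x)^{1-h}\,\Gamma(x+1)^{h}=\Gamma(x)\,x^{\myp h},
\end{equation*}
since $\Gamma(x+1)=x\Gamma(x)$. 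Specialising to $x=1+(r+1)/\gamma$ and $h=1/\gamma\in[1/2,1]$ (legitimate because $\gamma\in[1,2]$) converts the desired one-step estimate into the purely scalar inequality
\begin{equation*}
\left(1+\frac{r+1}{\gamma}\right)^{\myn1/\gamma}\le r+2.
\end{equation*}

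Finally, this scalar inequality is elementary: since $\gamma\ge 1$, the base satisfies $1+(r+1)/\gamma\le 1+(r+1)=r+2$, and since $r+2\ge 4>1$ with the exponent $1/\gamma\le 1$, raising to the power $1/\gamma$ only weakens the bound, yielding $(1+(r+1)/\gamma)^{1/\gamma}\le (r+2)^{1/\gamma}\le r+2$. Chaining these together closes the induction.

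No step here looks delicate; the only nontrivial ingredient is identifying the right application of log-convexity (the inequality $\Gamma(x+h)\le \Gamma(x)\myp x^h$), which is what cleanly reduces a gamma-function estimate to the trivial monotonicity $(r+2)^{1/\gamma}\le r+2$.
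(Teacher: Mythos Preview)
Your proof is correct and follows the same overall strategy as the paper: induction on $r$, with the inductive step reduced to the one-step bound
\[
\Gamma\!\left(1+\tfrac{r+2}{\gamma}\right)\le (r+2)\,\Gamma\!\left(1+\tfrac{r+1}{\gamma}\right).
\]
The only difference is in how this one-step bound is obtained. The paper writes $\Gamma\!\left(1+\tfrac{r+2}{\gamma}\right)=\tfrac{r+2}{\gamma}\,\Gamma\!\left(\tfrac{r+2}{\gamma}\right)$ via the functional equation, then uses that $\Gamma$ is increasing on $[2,\infty)$ (from ordinary convexity plus $\Gamma(1)=\Gamma(2)$) together with $2\le \tfrac{r+2}{\gamma}\le 1+\tfrac{r+1}{\gamma}$ to bound $\Gamma\!\left(\tfrac{r+2}{\gamma}\right)\le \Gamma\!\left(1+\tfrac{r+1}{\gamma}\right)$, and finally $\tfrac{r+2}{\gamma}\le r+2$. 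You instead invoke log-convexity directly via $\Gamma(x+h)\le \Gamma(x)\,x^{h}$ for $h=1/\gamma\in[0,1]$, reducing to the scalar inequality $(1+(r+1)/\gamma)^{1/\gamma}\le r+2$. Both arguments ultimately rest on a convexity property of the gamma function; yours is marginally slicker in that it bypasses the separate monotonicity lemma, while the paper's version keeps the ingredients (functional equation, monotonicity) more visibly elementary.
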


\begin{proof} We argue by induction in $r$. For $r=2$ the claim
\eqref{eq:Gamma_r} is obvious (with the equality sign). Now, assume
that \eqref{eq:Gamma_r} holds for some $r\ge2$. Note that the gamma
function $\Gamma(t)$ is convex on $(0,\infty)$, because
$$
\Gamma''(t)=\int_0^\infty\! (\log x)^2 \mypp
x^{t-1}\,\rme^{-x}\,\rmd{x}>0,\qquad t>0.
$$
Since $\Gamma(1)=\Gamma(2)=1$, the convexity implies that
$\Gamma(t)$ is monotone increasing on $[2,\infty)$. On the other
hand, it is easy to check that if $\gamma\in[1,2]$ and $r\ge2$ then
\begin{equation}\label{eq:2<}
2\le \frac{r+2}{\gamma}\le 1+\frac{r+1}{\gamma}\mypp.
\end{equation}
Hence, by the monotonicity (using \eqref{eq:2<}) and the induction
hypothesis we obtain
\begin{align*}
\Gamma\mynn\left(1+\frac{r+2}{\gamma}\right)=\frac{r+2}{\gamma}\,\Gamma\mynn\left(\frac{r+2}{\gamma}\right)
&\le (r+2)\,\Gamma\mynn\left(1+\frac{r+1}{\gamma}\right)\\
&\le \frac{(r+2)!}{6}\;\Gamma\mynn\left(1+\frac{3}{\gamma}\right),
\end{align*}
which proves \eqref{eq:Gamma_r} for $r+1$. Thus, the lemma is valid
for all integer $r\ge2$.
\end{proof}

In view of the expressions \eqref{eq:mu_r}, the inequality
\eqref{eq:Gamma_r} yields the following estimate on the growth of
successive moments in the case $1\le \gamma\le 2$,
$$
\mu_r\le \tfrac12\, \mu_2\,r!\,,\qquad r\ge 2.
$$
Thus, one can apply Bernstein's inequality (in its enhanced modern
form, see \cite[Eq.\;(7), p.~38]{Bennett}), which gives for all
$j\in\NN$
\begin{equation}\label{eq:LD>1-B}
\PP\{S_j\ge L/2\}\le \exp\!\left(-\frac{L^2/4}{2j\myp\mu_2
+L}\right),\qquad L>0.
\end{equation}
Noting that $2j\myp\mu_2+L\le (2\myp\mu_2+1)\myp\max\{j,L\}$, from
\eqref{eq:LD>1-B} we easily deduce a more convenient estimate
\begin{equation}\label{eq:LD>1}
\PP\{S_j\ge L/2\}\le \exp\!\left(-\frac{L^2/j}{4\myp(2\myp\mu_2
+1)}\right)+\exp\!\left(-\frac{L}{4\myp(2\myp\mu_2 +1)}\right).
\end{equation}

\smallskip
(ii) If $0<\gamma<1$, we utilize a different suitable bound from the
large deviations theory (see \cite[Eq.\;(2.32), p.~764]{Nagaev})
yielding
\begin{equation}\label{eq:LD<1}
\PP\{S_j\ge L/2\}\le c \left(\exp\!\left(-\frac{L^2}{80\myp
j}\right)+j\,\PP\{X_1\ge L/4\}\right),\qquad L>0,
\end{equation}
where $c>0$ is a constant depending only on $\gamma$. Integrating by
parts it is easy to find
$$
\PP\{X_1\ge y\}=\mu_0\int_y^\infty\mynn \rme^{-u^\gamma}\mypp
\rmd{u}\sim
\frac{\mu_0}{\gamma}\,y^{1-\gamma}\rme^{-y^\gamma},\qquad
y\to+\infty.
$$
Hence, for $L\to\infty$ the bound \eqref{eq:LD<1} becomes
\begin{equation}\label{eq:j/L^2-gamma<1}
\PP\{S_j\ge L/2\}=O(1)\mypp\exp\!\left(-\frac{L^2}{80\myp
j}\right)+O\myp(jL^{1-\gamma})\myp\exp\!\left(-\frac{L^{\gamma}}{4^{\gamma}}\right).
\end{equation}

\smallskip
Returning to \eqref{eq:Delta<P}, from the estimates \eqref{eq:LD>1}
and \eqref{eq:j/L^2-gamma<1} we get, for any fixed $j\in\NN$\myp,
\begin{equation*}
\begin{array}{ll}\Delta_N^{(j)}= O(L^{d-1})
\exp\myn(-c_1\myn L),\qquad&1\le \gamma\le 2,\\[.4pc]
\Delta_N^{(j)}=O(L^{d-\gamma})\exp\myn(-c_2 L^{\gamma}),&0<\gamma<1,
\end{array}
\end{equation*}
with some constants $c_1,\mypp c_2>0$. In particular,
$\Delta_N^{(j)}$ is (exponentially) small as $L\to\infty$.

For $j\to\infty$, the asymptotics of $f_0^{\star j}(0)$ represented
as the integral \eqref{eq:f*j(0)} can be found using the Laplace
method \cite[Ch.\;4]{Bru}. Specifically, $\varepsilon_0(0)=0$ is the
unique minimum of $\varepsilon_0(s)=-\log \varphi_0(s)$; noting that
$\varphi'_0(0)=0$, \,$\varphi''_0(0)=-4\myp\pi^2\mu_2$ (see
\eqref{eq:mu_r}), we have $\varepsilon_0''(0)=4\myp\pi^2\mu_2$ and
hence
\begin{equation}\label{eq:f^j-Laplace}
f_0^{\star j}(0)=\int_\RR \rme^{-j\myp
\varepsilon_0(s)}\mypp\rmd{s}\sim
\sqrt{\frac{2\myp\pi}{j\mypp\varepsilon_0''(0)}}
=\frac{1}{\sqrt{\vphantom{j^j}\mypp2\myp\pi \mu_2 \myp
j\,}}\,,\qquad j\to\infty.
\end{equation}
Consequently, the estimates \eqref{eq:Delta<P}, \eqref{eq:LD>1} and
\eqref{eq:j/L^2-gamma<1} give
\begin{equation}\label{eq:j/L^2**}
\begin{array}{ll}
\displaystyle \Delta_N^{(j)}=O\bigl((Lj^{-1/2})^{d-1}\bigr)
\Bigl\{\exp\myn(-\tilde{c}_1 L^2j^{-1}) +
\exp\myn(-\tilde{c}_2 L\myp)\Bigr\}, &1\le \gamma\le 2,\\[.9pc]
\displaystyle \Delta_N^{(j)}=O\bigl((Lj^{-1/2})^{d-1}\bigr)
\Bigl\{\exp\myn(-\tilde{c}_3 L^2j^{-1})+
jL^{1-\gamma}\exp\myn(-\tilde{c}_4 L^\gamma)\Bigr\},\quad
&0<\gamma<1,
\end{array}
\end{equation}
where $\tilde{c}_i>0$ are some constants. Again, it is easy to see
from \eqref{eq:j/L^2**} that in all cases $\Delta_N^{(j)}=o(1)$ as
$L\to\infty$, provided that $L^2/j\to\infty$.

Consider now the opposite case where the index $j$ grows as $L^2$ or
faster. If $j\asymp L^2$ then the Poisson summation formula
\eqref{eq:PSS^d}, by virtue of Lemma \ref{lm:unimodal} and formula
\eqref{eq:f^j-Laplace}, yields
\begin{align}
\notag 1\le \sum_{\bfk\in\ZZ^d}^\infty
\rme^{-j\myp\varepsilon(\bfk/L)}&\le L^d\left(f_0^{\star j}(0) +
\frac{4}{L}\int_{0}^\infty\! f_0^{\star
j}(x)\,\rmd{x}\right)^d\\[-.3pc]
\notag
&=\left(L\int_\RR\rme^{-j\myp\varepsilon_0(s)}\mypp\rmd{s}
+ 2\right)^d\\[.2pc]
\label{eq:sum=O(1)} &=\bigl\{O(L\myp j^{-1/2})+2\bigr\}^d=O(1),
\end{align}
that is, $\Delta_N^{(j)}\asymp 1$ as $L\to\infty$.

In the remaining case where $j/L^2\to\infty$, observe that the
function $\varepsilon_0(s)$ is \emph{strictly increasing} in the
right neighbourhood of $s=0$, and moreover
$\varepsilon_0(s)\to+\infty$ as $s\to\infty$. This implies that
$0<\varepsilon_0(1/L)\le \varepsilon_0(k/L)$ for all $k\ge1$ (at
least for $L$ large enough). Hence,
\begin{align}
\notag
\sum_{k=1}^\infty
\rme^{-j\myp\varepsilon_0(k/L)}&=\sum_{k=1}^\infty
\rme^{-(j-L^2)\mypp\varepsilon_0(k/L)}\,\rme^{-L^2\myp\varepsilon_0(k/L)}\\
\label{eq:sum->1''}
&\le
\rme^{-(j-L^2)\mypp\varepsilon_0(1/L)}\sum_{k=1}^\infty\rme^{-L^2\myp\varepsilon_0(k/L)}
=\rme^{-(j-L^2)\mypp\varepsilon_0(1/L)} \cdot O(1),
\end{align}
where the $O(1)$-term appears according to the estimate
\eqref{eq:sum=O(1)} with $j=L^2$. Furthermore, using the expansion
$\varepsilon_0(s)=4\myp\pi^2 s^2+O(s^4)$ as $s\to0$ and recalling
that $j/L^2\to+\infty$, we see that
$(j-L^2)\,\varepsilon_0(1/L)\asymp jL^{-2}\to+\infty$. Hence, the
right-hand side of \eqref{eq:sum->1''} tends to zero and therefore
we get (cf.\ \eqref{eq:sum->1} and \eqref{eq:sum->1'})
\begin{equation*}
\sum_{\bfk\in\ZZ^d} \rme^{-
j\myp\varepsilon(\bfk/L)}=\left(1+2\sum_{k=1}^\infty
\rme^{-j\myp\varepsilon_0(k/L)}\right)^d \to 1,\qquad L\to\infty.
\end{equation*}

\subsubsection{Behaviour of the function\/ $\varepsilon(\bfs)$ at the origin}
\label{sec:6.1.4}

Betz and Ueltschi \cite[p.\:1176]{BeUe11a} work under the condition
that, with some $a>0$, $\delta>0$ and $0<\eta<d$,
\begin{equation}\label{eq:eps-near0}
\varepsilon(\bfs)\ge a\mypp\|\bfs\|^\eta,\qquad \|\bfs\|\le \delta,
\end{equation}
which guarantees that the critical density
\eqref{eq:Riemann-integral} is finite, $\rho_{\rm c}<\infty$.

For the examples considered in Sections \ref{sec:6.1.1} and
\ref{sec:6.1.3} we have $\varepsilon(\bfs)\sim \const\,\|\bfs\|^2$
as $\bfs\to\bfzero$, and so the condition \eqref{eq:eps-near0} is
fulfilled (with $\eta=2$) in dimensions $d\ge 3$.

For the ``stable'' example in Section \ref{sec:6.1.2}, the function
$\varepsilon(\bfs)=\sum_{i=1}^d |s_i|^\gamma$ ($0<\gamma<2$) is
comparable with $\|\bfs\|^\gamma$ due to the well-known fact that
any two norms in $\RR^d$ are equivalent; more explicitly, this
follows from the elementary inequalities (see \cite[Theorem 16,
p.~26, and Theorem 19, p.~28]{HLP})
$$
\|\bfs\|^\gamma \le \sum_{i=1}^d |s_i|^\gamma \le d^{\mypp
1-\gamma/2}\, \|\bfs\|^\gamma,\qquad \bfs=(s_1,\dots,s_d)\in\RR^d.
$$
Thus, here we have $\eta=\gamma<d$ unless $d=1$, $\gamma\ge1$.

Under the condition \eqref{eq:eps-near0} it is easy to justify the
universal behaviour of the Riemann sums in
\eqref{eq:Pspatial_with_partition} for large indices $j$ (even
without the condition of radial symmetry,
see~\eqref{eq:eps-isotropic}).

\begin{lemma}\label{lm:Sum->1}
Assume that \eqref{eq:eps-near0} is satisfied, and suppose that
$jL^{-\eta}\to\infty$. Then
\begin{equation}\label{eq:->1}
\sum_{\bfk\in\ZZ^d}\rme^{-j\myp\varepsilon(\bfk/L)}\to 1,\qquad
L\to\infty.
\end{equation}
\end{lemma}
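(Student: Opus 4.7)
\medskip
\noindent\textbf{Proof proposal for Lemma \ref{lm:Sum->1}.}  The plan is to isolate the $\bfk=\bfzero$ term (which contributes exactly $1$) and show that the remaining sum tends to $0$ as $L\to\infty$.  To do so I would fix the parameter $\delta>0$ from the lower bound \eqref{eq:eps-near0} and split the tail sum into a \emph{near} range $0<\|\bfk\|\le \delta L$, where the polynomial lower bound on $\varepsilon(\bfs)$ is available, and a \emph{far} range $\|\bfk\|>\delta L$, where $\varepsilon(\bfs)$ is bounded below by a positive constant but one cannot invoke \eqref{eq:eps-near0}.

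For the near range, \eqref{eq:eps-near0} gives
\begin{equation*}
\sum_{0<\|\bfk\|\le\delta L} \rme^{-j\myp\varepsilon(\bfk/L)}\le \sum_{\bfk\ne\bfzero} \rme^{-c_L\myp\|\bfk\|^\eta},\qquad c_L:=a\myp j L^{-\eta}\to\infty.
\end{equation*}
Since $\eta>0$, the reference sum $S_0:=\sum_{\bfk\ne\bfzero}\rme^{-\|\bfk\|^\eta}$ is finite (comparable to $\int_{\RR^d}\rme^{-\|\bfs\|^\eta}\,\rmd\bfs<\infty$), and for $c_L\ge 1$ the elementary splitting $c_L\|\bfk\|^\eta\ge (c_L-1)+\|\bfk\|^\eta$ (valid because $\|\bfk\|\ge 1$) gives the bound $\rme^{-(c_L-1)}\myp S_0$, which vanishes as $L\to\infty$.

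For the far range, let $\varepsilon_*:=\inf_{\|\bfs\|\ge\delta}\varepsilon(\bfs)$, which is strictly positive thanks to the continuity of $\varepsilon(\cdot)$, the strict positivity away from $\bfzero$, and $\varepsilon(\bfs)\to\infty$ at infinity (so the infimum is attained on a compact set).  I would then factor $\rme^{-j\myp\varepsilon(\bfk/L)}=\rme^{-(j-1)\myp\varepsilon(\bfk/L)}\mypp \rme^{-\varepsilon(\bfk/L)}$ and use $\varepsilon(\bfk/L)\ge\varepsilon_*$ in the first factor to obtain
\begin{equation*}
\sum_{\|\bfk\|>\delta L}\rme^{-j\myp\varepsilon(\bfk/L)}\le \rme^{-(j-1)\myp\varepsilon_*}\sum_{\bfk\in\ZZ^d}\rme^{-\varepsilon(\bfk/L)}.
\end{equation*}
The right-hand sum, by the usual Riemann-sum interpretation and \eqref{eq:int-eps}, is $O(L^d)$; since $j$ grows at least like $L^\eta$ times a divergent factor, $\rme^{-(j-1)\myp\varepsilon_*}\cdot L^d\to 0$, polynomial growth being dwarfed by the stretched-exponential decay.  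Combining the two bounds proves \eqref{eq:->1}.

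The only genuinely delicate step is the treatment of the far range: the bound \eqref{eq:eps-near0} is local, so one must borrow integrability from the standing assumption \eqref{eq:sum-eps} and convert it into a useful $L$-dependent estimate via the factorization $\rme^{-j\varepsilon}=\rme^{-(j-1)\varepsilon}\cdot\rme^{-\varepsilon}$.  Everything else is routine comparison of sums with integrals.
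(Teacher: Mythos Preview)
Your proposal is correct and follows essentially the same route as the paper: isolate $\bfk=\bfzero$, split the tail at $\|\bfk\|=\delta L$, use the lower bound \eqref{eq:eps-near0} on the near range, and on the far range factor $\rme^{-j\varepsilon}=\rme^{-(j-1)\varepsilon}\cdot\rme^{-\varepsilon}$, bounding the first factor by $\rme^{-(j-1)\varepsilon_*}$ and the residual sum by $O(L^d)$ via the Riemann-sum interpretation and \eqref{eq:int-eps}. The only cosmetic difference is in the near range, where the paper bounds the $d$-dimensional sum by a $d$-th power of a one-dimensional sum, whereas your inequality $c_L\|\bfk\|^\eta\ge (c_L-1)+\|\bfk\|^\eta$ for $\|\bfk\|\ge1$ is a slightly cleaner way to extract the decaying prefactor.
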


\begin{proof} Note that $j\le N\asymp L^d$ and, since $\eta<d$, the range
of $j$ considered in the lemma is non-empty. Recalling that
$\varepsilon(\bfzero)=0$, for the proof of \eqref{eq:->1} it
suffices to show that the sum over $\bfk\ne\bfzero$ is
asymptotically small. With $\delta>0$ as in the condition
\eqref{eq:eps-near0} we have
\begin{equation}\label{eq:<sum+sum}
0\le\sum_{\bfk\ne\bfzero}\rme^{-j\myp\varepsilon(\bfk/L)}\le
\sum_{0<\|\bfk\|\le \delta L}\rme^{-j\myp
a\myp\|\bfk/L\|^\eta}+\sum_{\|\bfk\|>\delta
L}\rme^{-j\myp\varepsilon(\bfk/L)}.
\end{equation}

Using \eqref{eq:eps-near0} and the condition $j/L^\eta\to+\infty$,
we obtain by dominated convergence
\begin{equation}\label{eq:sum1}
\sum_{0<\|\bfk\|\le \delta L}\rme^{-j\myp
a\myp\|\bfk/L\|^\eta}\le\left(\sum_{0<k\le \delta L}\rme^{-a\myp
(j/L^\eta)\,k^\eta}\right)^d\to 0,\qquad L\to\infty,
\end{equation}
since $j\ge a L^\eta$ (for $L$ large enough) and
$\sum_{k=1}^\infty\rme^{-a\myp k^\eta}<\infty$.

To estimate the second sum in \eqref{eq:<sum+sum}, we can assume
that $\varepsilon(\bfs)\ge c_0>0$ for $\|\bfs\|>\delta$, hence,
owing to the bound \eqref{eq:int-eps},
\begin{align*}
\sum_{\|\bfk\|>\delta L}\rme^{-j\myp\varepsilon(\bfk/L)}&\le
\rme^{-(j-1)\myp c_0} \sum_{\|\bfk\|>\delta
L}\rme^{-\varepsilon(\bfk/L)}\\
&\le \rme^{-c_0 L^\eta} L^d\sum_{\|\bfk\|>\delta
L}\rme^{-\varepsilon(\bfk/L)}\,L^{-d}\\
&\sim \rme^{-c_0 L^\eta} L^d
\int_{\|\bfs\|>\delta}\rme^{-\varepsilon(\bfs)}\,\rmd{\bfs}\\[.4pc]
&=O\bigl(\rme^{-c_0 L^\eta} L^d\bigr)\to0,\qquad L\to\infty.
\end{align*}

By the estimates \eqref{eq:sum1} and \eqref{eq:sum1} the right-hand
side of \eqref{eq:<sum+sum} vanishes as $L\to\infty$, which
completes the proof.
\end{proof}

\subsubsection{Some heuristic conclusions}\label{sec:heuristic_conclusions}
Empirical evidence provided by the examples in Sections
\ref{sec:6.1.1}\mypp--\mypp\ref{sec:6.1.3} suggests that the
approximation picture is qualitatively universal in the class of
probability densities $f_0(x)$ with fast decaying tails (more
precisely, Gaussian as in Section~\ref{sec:6.1.1} or
exponential-power as in Section~\ref{sec:6.1.3}). Namely, here the
error $\Delta_N^{(j)}$ is asymptotically small if $j$ is fixed or
growing slower than $L^2\asymp N^{2/d}$; in the transition zone
$j\asymp L^{2}$, a bounded correction $\Delta_N^{(j)}\asymp 1$
emerges (comparable with the contribution of the ``main'' integral
term $N\kappa_j$ as defined in \eqref{eq:kappa_j}, cf.\
\eqref{eq:int_contribution} and \eqref{eq:f^j-Laplace}), whereas
with a faster growth of $j$ (possible in dimensions $d\ge3$) this is
transformed into the flat asymptotics $\Delta_N^{(j)}=1+o(1)$ (in
accordance with Lemma~\ref{lm:Sum->1}), but now with a polynomially
small error arising entirely due to the integral contribution,
$N\kappa_j\asymp (Lj^{-1/2})^d$ (see~\eqref{eq:f^j-Laplace}).

For polynomially decaying distribution tails as exemplified in
Section \ref{sec:6.1.2}, the situation is more complex: here, the
range $j=o(L^\gamma)$ produces an extended scale of the power
asymptotics $\Delta_N^{(j)}\asymp (L\myp
j^{-1/\gamma})^{d-1-\gamma}$, which is unbounded in sharp contrast
with the ansatz \eqref{eq:approx_weigths} unless $d=1$ or $d=2$,
$\gamma\ge1$. In the transition zone $j\asymp L^\gamma$ this is
naturally transformed into $\Delta_N^{(j)}\asymp 1$, and
furthermore, if $L\myp j^{-1/\gamma}=o(1)$ then we have the
universal (distribution-free) asymptotics $\Delta_N{(j)}=1+o(1)$,
similarly to the exponential tails (and again in line with
Lemma~\ref{lm:Sum->1}).

It should be clear from this discussion that the na\"ive use of any
specific surrogate-spatial model $\Pns$ as a proxy to the spatial
model $\Psp$ cannot be correct in the entire range of the cycle
lengths $j=1,\dots,N$. For instance, in the simplest Gaussian case,
taking $\theta_j\equiv 0$ works well for moderate values of $j$
(i.e., asymptotically smaller than $N^{2/d}$) but fails above this
threshold; on the other hand, choosing $\theta_j\equiv
\rme^{-\alpha_j}$ is adequate for cycles of size $j$ above $N^{2/d}$
but would incorrectly enhance the weighting of shorter cycles.

One might attempt to achieve a better approximation to $\Psp$ by
choosing the coefficients $\theta_j$ in formula
\eqref{eq:approx_weigths} so as to emulate the different asymptotics
of the correction term $\Delta_N^{(j)}$ (in particular, allowing
$\theta_j$ to depend on $L$). For instance, noting that the Gaussian
case is essentially characterized in terms of the natural
\emph{order parameter} $\eta_{j,L}:=L\myp j^{-1/2}$ (see
Section~\ref{sec:6.1.1}), the following phenomenological formula may
be suggested,
\begin{equation}\label{eq:Delta_modified}
\rme^{\myp\alpha_j}\myp\theta_{j,L}\propto \Theta_{\myn j,L}^{
d-1}\exp\myn(-\Theta_{\myn j,L}^{2}),\qquad j\in\NN\myp,
\end{equation}
where
\begin{equation}\label{eq:Theta}
\Theta_{\myn j,L}:=\frac{1}{1-\rme^{-1/\eta_{j\myn, L}}}\mypp,\qquad
j\in\NN\myp.
\end{equation}
Similarly, in the stable case (see Section~\ref{sec:6.1.2}) a
plausible approximation is given by
\begin{equation}\label{eq:Delta_modified-stable}
\rme^{\myp\alpha_j}\myp\theta_{j,L}\propto \Theta_{\myn
j,L}^{d-1-\gamma},\qquad j\in\NN\myp,
\end{equation}
where the parameter $\eta_{j,L}$ in \eqref{eq:Theta} is now
re-defined as $\eta_{j,L}:=L\myp j^{-1/\gamma}$ (see
\eqref{eq:asymp_Delta}).

The corresponding generating function $g_\theta(t)$ (see
\eqref{eq:def_g_theta_and_p_tau}) for the coefficients
\eqref{eq:Delta_modified} or \eqref{eq:Delta_modified-stable} may be
too complicated to deal with, but if we opt to ignore the
transitional details in the narrow zone $j\asymp L^2$ or $j\asymp
L^\gamma$, respectively, then we get much simpler heuristic formulas
\begin{equation*}
\rme^{\myp\alpha_j}\myp \theta_{j,L}\propto \left\{\begin{array}{ll}
\displaystyle 0,&
j\le L^2,\\[.3pc]
\displaystyle 1,& j>L^2,
\end{array}\right.\quad\ \text{and}\qquad
\rme^{\myp\alpha_j}\myp \theta_{j,L}\propto \left\{\begin{array}{cl}
\displaystyle (Lj^{-1/\gamma})^{d-1-\gamma},&
j\le L^\gamma,\\[.3pc]
\displaystyle 1,\qquad& j\ge L^\gamma.
\end{array}\right.
\end{equation*}
We intend to study such modifications of the surrogate-spatial model
in another paper.

\subsection{Density dependence in the surrogate-spatial model}\label{sec:6.2}

\subsubsection{Introducing an analogue of the particle density}
Although the surrogate-spatial model \eqref{eq:def_near_spatial},
\eqref{eq:HN} is defined with no reference to any underlying spatial
structure, an analogue of the density $\rho$ (cf.\
Section~\ref{sec:1.2}) can be incorporated in the system using the
expression \eqref{eq:kappa_j}, which provides a heuristic link
between the surrogate-spatial and spatial models. Namely, by analogy
with formula \eqref{eq:kappa_j}, let us write the coefficients
$\kappa_j$ in the form
\begin{equation}\label{eq:kappa*}
\kappa_j=\tilde{\rho}^{\mypp -1}\myp\check{\kappa}_j,\qquad
j\in\NN\myp,
\end{equation}
where the parameter $\tilde{\rho}>0$ is interpreted as ``density''
and the constants
\begin{equation}\label{eq:0kappa_j}
\check{\kappa}_j=\rme^{-\alpha_j} \int_{\RR^d} \rme^{-j
\myp\varepsilon(\bfs)}\,\rmd{\bfs},\qquad j\in\NN\myp,
\end{equation}
are treated as the baseline (density-free) coefficients that define
a specific subclass of the models~\eqref{eq:def_near_spatial}. For
the corresponding generating function this gives
\begin{equation}\label{eq:g-rho}
g_\kappa(z)=\tilde{\rho}^{\mypp -1}\sum_{j=1}^\infty
\frac{\check{\kappa}_j}{j}\,z^j=:\tilde{\rho}^{\mypp
-1}\mypp\check{g}_\kappa(z),
\end{equation}
hence
\begin{equation}\label{eq:g'-rho}
g_\kappa^{\{1\}}\myn(z)=\tilde{\rho}^{\mypp -1}\sum_{j=1}^\infty
\check{\kappa}_j\myp z^j=\tilde{\rho}^{\mypp
-1}\mypp\check{g}_\kappa^{\{1\}}\myn(z).
\end{equation}

\subsubsection{Critical density}\label{sec:crit_density}
At the singularity point $z=R$\myp, formula \eqref{eq:g'-rho}
specializes to
\begin{equation}\label{eq:g'-rhoR}
g_\kappa^{\{1\}}\myn(R\myp)=\tilde{\rho}^{\mypp -1}\sum_{j=1}^\infty
\check{\kappa}_j\myp R^j=\tilde{\rho}^{\mypp
-1}\myp\check{g}_\kappa^{\{1\}}\myn(R\myp).
\end{equation}
According to Definition \ref{def:sub-sup} (see also
Section~\ref{sec:cycles}), the critical case is determined by the
condition $g_\kappa^{\{1\}}\myn(R\myp)=1$; therefore,
\eqref{eq:0kappa_j} and \eqref{eq:g'-rhoR} imply that the critical
density is given by
\begin{equation}\label{eq:crit_density_surrogate}
\tilde{\rho}_{\myp \mathrm{c}}=\sum_{j=1}^\infty
\check{\kappa}_j\myp R^j=\sum_{j=1}^\infty R^j\,\rme^{-\alpha_j}
\int_{\RR^d} \rme^{-j \myp\varepsilon(\bfs)}\,\rmd{\bfs}.
\end{equation}
This is consistent with the sub- and supercritical regimes as
introduced in Definition \ref{def:sub-sup}:
$$
g_\kappa^{\{1\}}\myn(R\myp)>1\quad \Leftrightarrow\quad
\tilde{\rho}<\tilde{\rho}_{\myp \mathrm{c}}\mypp.
$$

Hence, we can express the expected fraction of points in infinite
cycles (see \eqref{eq:tilde-nu}) as
\begin{equation*}
\tilde\nu=\left\{\begin{array}{cl}
0,&\quad \tilde{\rho}\le \tilde{\rho}_{\myp \mathrm{c}}\myp,\\[.3pc]
\displaystyle 1- \frac{ \tilde{\rho}_{\myp \mathrm{c}}}{
\tilde{\rho}},&\quad \tilde{\rho}> \tilde{\rho}_{\myp
\mathrm{c}}\myp,
\end{array}
\right.
\end{equation*}
which exactly reproduces the formula \eqref{eq:rho-critical} for the
spatial model.

Under natural assumptions on the coefficients $(\alpha_j)$, the
expression \eqref{eq:crit_density_surrogate} recovers the formula
for the critical density $\rho_{\rm c}$ obtained in
\cite[Eq.\,(2.9), p.\:1177]{BeUe11a}.
\begin{lemma}\label{lm:r-cr<infty} Suppose that the coefficients
$\alpha_j$ satisfy the bounds
\begin{equation}\label{eq:e^alpha}
c_1 j^{-\delta}\le \rme^{-\alpha_j}\le c_2,\qquad j\in\NN\myp,
\end{equation}
with some positive constants $\delta$, $c_1$ and $c_2$. Then $R=1$
and formula \eqref{eq:crit_density_surrogate} is reduced to
\begin{equation*}
\tilde{\rho}_{\myp \mathrm{c}}=\sum_{j=1}^\infty \rme^{-\alpha_j}
\int_{\RR^d} \rme^{-j \myp\varepsilon(\bfs)}\,\rmd{\bfs}.
\end{equation*}
\end{lemma}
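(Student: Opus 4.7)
The plan is to establish $R=1$ first; once this is done, the claimed formula for $\tilde{\rho}_{\myp\mathrm{c}}$ follows by simply substituting $R=1$ into \eqref{eq:crit_density_surrogate} and recalling the definition \eqref{eq:0kappa_j} of $\check{\kappa}_j$.

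In view of \eqref{eq:g-rho}, the radius of convergence of $g_\kappa(z)$ coincides with that of $\check{g}_\kappa(z)=\sum_{j\ge1}(\check{\kappa}_j/j)\myp z^j$, so by the Cauchy--Hadamard formula it suffices to verify that $\limsup_{j\to\infty}\check{\kappa}_j^{1/j}=1$. For the upper bound $\limsup\check{\kappa}_j^{1/j}\le 1$, I would combine the upper bound $\rme^{-\alpha_j}\le c_2$ from \eqref{eq:e^alpha} with the trivial inequality $\rme^{-j\myp\varepsilon(\bfs)}\le\rme^{-\varepsilon(\bfs)}$ (valid for $j\ge1$ since $\varepsilon\ge 0$), which gives
\begin{equation*}
\check{\kappa}_j\le c_2\int_{\RR^d}\rme^{-\varepsilon(\bfs)}\,\rmd{\bfs}<\infty
\end{equation*}
thanks to the integrability condition \eqref{eq:int-eps}. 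Taking $j$-th roots yields $\limsup\check{\kappa}_j^{1/j}\le 1$, hence $R\ge 1$.

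For the reverse bound $\liminf\check{\kappa}_j^{1/j}\ge 1$, I exploit the continuity of $\varepsilon(\bfs)$ at the origin, which follows from the fact that $\rme^{-\varepsilon(\bfs)}$ is the Fourier transform of the $L^1$-density $\rme^{-V(\bfx)}$ and hence continuous, together with $\varepsilon(\bfzero)=0$. Given an arbitrary $\epsilon>0$, choose $\eta=\eta(\epsilon)>0$ such that $\varepsilon(\bfs)\le\epsilon$ for all $\|\bfs\|\le\eta$. Using the lower bound $\rme^{-\alpha_j}\ge c_1 j^{-\delta}$ from \eqref{eq:e^alpha} and restricting the integral in \eqref{eq:0kappa_j} to the ball $B_\eta:=\{\bfs:\|\bfs\|\le\eta\}$, I obtain
\begin{equation*}
\check{\kappa}_j\ge c_1\myp j^{-\delta}\mynn\int_{B_\eta}\rme^{-j\myp\varepsilon(\bfs)}\,\rmd{\bfs}\ge c_1\myp j^{-\delta}\myp\mathrm{Vol}(B_\eta)\,\rme^{-j\myp\epsilon}.
\end{equation*}
Taking $j$-th roots and passing to the limit as $j\to\infty$, the polynomial and volume prefactors tend to $1$, so $\liminf\check{\kappa}_j^{1/j}\ge\rme^{-\epsilon}$; since $\epsilon>0$ is arbitrary, $\liminf\check{\kappa}_j^{1/j}\ge 1$, and therefore $R=1$.

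With $R=1$ in hand, formula \eqref{eq:crit_density_surrogate} specializes to $\tilde{\rho}_{\myp\mathrm{c}}=\sum_{j=1}^\infty\check{\kappa}_j$, which is precisely the desired expression by \eqref{eq:0kappa_j}. There is no real obstacle here; the only delicate step is the lower-bound argument, which requires combining the polynomial lower bound on $\rme^{-\alpha_j}$ with the concentration of $\rme^{-j\myp\varepsilon(\bfs)}$ near the minimum $\bfs=\bfzero$ of $\varepsilon$, and the key analytic input is simply the continuity of $\varepsilon$ at the origin inherited from its Fourier-transform representation.
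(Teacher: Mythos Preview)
Your argument is correct and takes a genuinely different route from the paper. The paper bounds the generating function $g_\kappa(r)$ itself: for $R\ge1$ it uses the upper bound $\rme^{-\alpha_j}\le c_2$ to dominate $\tilde\rho\,g_\kappa(r)$ by the convergent integral $-c_2\int_{\RR^d}\log(1-r\rme^{-\varepsilon(\bfs)})\,\rmd\bfs$ for all $r\in(0,1)$, and then invokes Pringsheim's theorem; for $R\le1$ it uses the lower bound $\rme^{-\alpha_j}\ge c_1 j^{-\delta}$ to minorize $\tilde\rho\,g_\kappa(r)$ by $c_1\int_{\RR^d}\Li_{1+\delta}(r\rme^{-\varepsilon(\bfs)})\,\rmd\bfs$ and appeals to the singularity structure of the polylogarithm at $r=1$. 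By contrast, you work directly at the level of coefficients via Cauchy--Hadamard, bounding $\check\kappa_j$ above by a constant and below by $c_1 j^{-\delta}\,\mathrm{Vol}(B_\eta)\,\rme^{-j\epsilon}$ using only the continuity of $\varepsilon$ at the origin. Your approach is more elementary and self-contained (it avoids Pringsheim's theorem and any polylogarithm asymptotics), while the paper's method stays within the generating-function framework that pervades the rest of the article and ties the result to the analytic machinery already developed. Both are perfectly valid; yours is arguably cleaner for this isolated lemma.
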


\begin{proof} Using the upper bound in \eqref{eq:e^alpha}, for any real
$r\in(0,R)$ we have
\begin{equation}\label{eq:Li_1}
\tilde{\rho}\, g_\kappa(r)\le c_2\int_{\RR^d}\sum_{j=1}^\infty
\frac{\rme^{-j \myp\varepsilon(\bfs)}}{j}\,r^j\,\rmd{\bfs}=
-c_2\int_{\RR^d} \log\myn\bigl(1-r\myp
\rme^{-\myp\varepsilon(\bfs)}\bigr) \,\rmd{\bfs}.
\end{equation}
Note that the right-hand side of \eqref{eq:Li_1} is finite due to
the bound \eqref{eq:int-eps},
\begin{align*}
\int_{\RR^d}\sum_{j=1}^\infty \frac{\rme^{-j
\myp\varepsilon(\bfs)}}{j}\,r^j\,\rmd{\bfs}&\le \int_{\RR^d} \rme^{-
\myp\varepsilon(\bfs)}\,\rmd{\bfs} \ \sum_{j=1}^\infty
\frac{r^j}{j}<\infty.
\end{align*}
Since $\varepsilon(\bfzero)=0$, the right-hand side of
\eqref{eq:Li_1} has singularity as $r\uparrow 1$. Therefore, by
Prings\-heim's Theorem (see Lemma~\ref{lm:Pringsheim}) the estimate
\eqref{eq:Li_1} implies that $R\ge 1$.

Similarly, by the lower bound in \eqref{eq:e^alpha} we get
\begin{equation}\label{eq:Li_1+delta}
\tilde{\rho}\, g_\kappa(r)\ge c_1\int_{\RR^d}\sum_{j=1}^\infty
\frac{\rme^{-j
\myp\varepsilon(\bfs)}}{j^{1+\delta}}\,r^j\,\rmd{\bfs}=c_1\int_{\RR^d}
\Li_{1+\delta}\bigl(r\myp\rme^{-\myp\varepsilon(\bfs)}\bigr)\,\rmd{\bfs}.
\end{equation}
By the known asymptotics of polylogarithm (see
Lemma~\ref{lm:polylog-sing}) the right-hand side of
\eqref{eq:Li_1+delta} has singularity at $r=1$ and it follows that
$R\le 1$. Thus $R=1$ and the lemma is proven.
\end{proof}

\begin{remark}
Assumption \eqref{eq:e^alpha} covers the cases considered in
\cite{BeUe11a} (see Section \ref{sec:comparison_1} below).
\end{remark}

\subsubsection{Total number of cycles}
Ansatz \eqref{eq:kappa*} also enables us to investigate the
$\tilde{\rho}$-dependence of the asymptotic statistics of cycles.
For instance, it is easy to check that the total number of cycles,
$T_N$, stochastically decreases with the growth of the density
$\tilde{\rho}$, as one would expect. Indeed, according to Corollary
\ref{cor:T-LLN-sub} and formula \eqref{eq:r*}, $T_N/N$ converges to
$g_\kappa(r_1)$ if $g_\kappa^{\{1\}}\myn(R\myp)\ge1$ \,(i.e.,
$\tilde{\rho}\le\tilde{\rho}_{\myp\mathrm{c}}$) or $g_\kappa(R\myp)$
if $g_\kappa^{\{1\}}\myn(R\myp)\le 1$ \,(i.e., $\tilde{\rho}\ge
\tilde{\rho}_{\myp\mathrm{c}}$), and to verify the claim it suffices
to show that the $\tilde{\rho}$-derivative of the limit is negative.

Differentiating with respect to $\tilde{\rho}$ and using the
representation \eqref{eq:g-rho}, we readily obtain
\begin{equation*}
\frac{\partial g_\kappa(R\myp)}{\partial\tilde{\rho}}=
-\tilde{\rho}^{\mypp-2}\myp\check{g}_\kappa(R\myp)=-\tilde{\rho}^{\mypp-1}g_\kappa(R\myp)<0.
\end{equation*}
Similarly,
\begin{equation}\label{eq:d/drho-sub}
\frac{\partial
g_\kappa(r_1)}{\partial\tilde{\rho}}=-\tilde{\rho}^{\mypp-1}g_\kappa(r_1)+g'_\kappa(r_1)\,
\frac{\partial\myp r_1}{\partial\tilde{\rho}}\mypp.
\end{equation}
On the other hand, differentiation of the equation
$g_\kappa^{\{1\}}\myn(r_1)=1$, rewritten for convenience as $r_1
\myp \check{g}^{\myp\prime}_\kappa(r_1)=\tilde{\rho}$, gives
$$
\frac{\partial\myp r_1}{\partial\tilde{\rho}}\,\tilde{\rho}\,
g'_\kappa(r_1)+r_1\myp \tilde{\rho}\,
g''_\kappa(r_1)\,\frac{\partial\myp r_1}{\partial\tilde{\rho}}=1,
$$
whence we find
\begin{equation}\label{eq:dr1}
\frac{\partial\myp
r_1}{\partial\tilde{\rho}}=\frac{\tilde{\rho}^{\mypp-1}}{g'_\kappa(r_1)+r_1
 g''_\kappa(r_1)}=\frac{\tilde{\rho}^{\mypp-1}\myp r_1}{1
 + g_\kappa^{\{2\}}\myn(r_1)}>0.
\end{equation}
Hence, returning to \eqref{eq:d/drho-sub} and again using the
identity $r_1\myp g'_\kappa(r_1)=g_\kappa^{\{1\}}\myn(r_1)\equiv 1$,
we get
\begin{align*}
\frac{\partial g_\kappa(r_1)}{\partial\tilde{\rho}}
&=-\tilde{\rho}^{\mypp-1}\!\left(g_\kappa(r_1)-\frac{1} {1+
 g_\kappa^{\{2\}}\myn(r_1)}\right)<0,
\end{align*}
where the inequality follows from Lemma \ref{lm:g<g}.

\subsubsection{Cycle counts}
Let us now investigate the asymptotic trend of the individual cycle
counts $C_j$ (for each $j\in\NN$) with the growth of the density
$\tilde{\rho}$. Assuming that all $\kappa_j>0$, by Theorem
\ref{thm:cycle_counts_normalized} and formula \eqref{eq:r*} we know
that $C_j/N$ converges to $\kappa_j\mypp r_1^j/j$ \,(for
$\tilde{\rho}\le\tilde{\rho}_{\myp\mathrm{c}}$) or $\kappa_j R^j/j$
\,(for $\tilde{\rho}\ge\tilde{\rho}_{\myp\mathrm{c}}$).

First, consider the supercritical domain,
$\tilde{\rho}>\tilde{\rho}_{\myp\mathrm{c}}$. Using the
representation \eqref{eq:kappa*} we obtain
\begin{align*}
\frac{\partial\myp(\kappa_j
R^j)}{\partial\tilde{\rho}}&=-\tilde{\rho}^{\mypp-2}\myp\check{\kappa}_j
R^j=-\tilde{\rho}^{\mypp-1}\kappa_j R^j<0,\qquad j\in\NN\myp,
\end{align*}
which means that the asymptotic proportion of cycles of any finite
length has the tendency to decrease with the growth of
$\tilde{\rho}$ (whilst the infinite cycle stays infinite).

In the subcritical domain
($\tilde{\rho}>\tilde{\rho}_{\myp\mathrm{c}}$), again invoking
\eqref{eq:kappa*} and also using formula \eqref{eq:dr1} for the
derivative $\partial\myp r_1/\partial\tilde{\rho}$, we get
\begin{align*}
\frac{\partial\myp(\kappa_j\mypp
r_1^j)}{\partial\tilde{\rho}}&=-\tilde{\rho}^{\mypp-2}\check{\kappa}_j
\mypp r_1^j+\kappa_j\myp j\, r_1^{j-1}\mypp\frac{\partial\myp
r_1}{\partial\tilde{\rho}} =-\tilde{\rho}^{\mypp-1}\kappa_j\mypp
r_1^j\left(1-\frac{j}{1+g_\kappa^{\{2\}}\myn(r_1)}\right).
\end{align*}
Thus, with the growth of the density $\tilde{\rho}$, as long as
$\tilde{\rho}<\tilde{\rho}_{\myp\mathrm{c}}$, the limiting
proportions of short cycles (with lengths
$j<1+g_\kappa^{\{2\}}\myn(r_1)$) decrease whereas those of longer
cycles (with lengths $j>1+g_\kappa^{\{2\}}\myn(r_1)$) increase.

Note, however, that the threshold $1+g_\kappa^{\{2\}}\myn(r_1)$
varies itself, and it is natural to expect that it is
\emph{increasing} with $\tilde{\rho}$, which is corroborated
heuristically by the limiting case $\tilde{\rho}\uparrow
\tilde{\rho}_{\myp\mathrm{c}}$, with $r_1\uparrow R$ and
$g_\kappa^{\{2\}}\myn(r_1)\uparrow
g_\kappa^{\{2\}}\myn(R\myp)=\max_{0\le r\le R}
\,g_\kappa^{\{2\}}\myn(r)$. More precisely, observing that
$$
\sum_{j=1}^\infty \check{\kappa}_j\mypp
r_1^j=\tilde{\rho}\,g_\kappa^{\{1\}}\myn(r_1)=\tilde{\rho}
$$
and
\begin{equation}\label{eq:frac:kappa/kappa}
1+g_\kappa^{\{2\}}\myn(r_1)=g_\kappa^{\{1\}}\myn(r_1)+
g_\kappa^{\{2\}}\myn(r_1)=\tilde{\rho}^{\mypp-1}\sum_{j=1}^\infty
j\myp\check{\kappa}_j\mypp r_1^j=\frac{\sum_{j=1}^\infty
j\myp\check{\kappa}_j\mypp r_1^j}{\sum_{j=1}^\infty
\check{\kappa}_j\mypp r_1^j}\mypp,
\end{equation}
we differentiate the right-hand side of \eqref{eq:frac:kappa/kappa}
to obtain
\begin{align*}
\frac{\partial\mypp
\bigl(1+g_\kappa^{\{2\}}\myn(r_1)\bigr)}{\partial\tilde{\rho}}
&=\tilde{\rho}^{\mypp -2}\myp r_1^{-1}\mypp \frac{\partial\myp
r_1}{\partial\tilde{\rho}}\cdot\left\{\sum_{j=1}^\infty
j^2\myp\check{\kappa}_j\mypp r_1^j\,\sum_{j=1}^\infty
\check{\kappa}_j \mypp r_1^j-\left(\sum_{j=1}^\infty
j\myp\check{\kappa}_j\mypp r_1^j\right)^2\right\} \ge0,
\end{align*}
according to \eqref{eq:dr1} and the Cauchy--Schwarz inequality (cf.\
Lemma~\ref{lm:g<g}).

\subsection{Comparison of the asymptotic results for long
cycles}\label{sec:comparison_1}

\subsubsection{Choosing a suitable surrogate-spatial model}
As was stressed in Section \ref{sec:heuristic_conclusions}, the
surrogate-spatial model $\Pns$ defined by
\eqref{eq:def_near_spatial}\mypp--\mypp\eqref{eq:HN} cannot
approximate correctly the spatial model $\Psp$
\eqref{eq:Pspatial_with_partition} in the entire range of the cycle
lengths $j=1,\dots,N$. However, if we focus on the asymptotics of
\emph{long cycles} only (i.e., with lengths $j\asymp N$), then the
discussion in Section \ref{sec:6.1} suggests the following choice of
the coefficients in the surrogate-spatial model,
\begin{equation}\label{eq:kappa-theta-ansatz}
\kappa_j=\rme^{-\alpha_j} \kappa^* j^{-s},\qquad
\theta_j=\rme^{-\alpha_j},\qquad j\in\NN\myp,
\end{equation}
with some index $s>0$. Here we suppress the dependence on the
density $\tilde{\rho}$ (cf.\ \eqref{eq:kappa*},
\eqref{eq:0kappa_j}), which is not essential for the comparison. The
expression \eqref{eq:kappa-theta-ansatz} for $\kappa_j$ bears on the
asymptotics of the integral \eqref{eq:product-int} as $j\to\infty$,
exemplified by the Gaussian and the exponential-power cases (both
with $s=d/2$, see \eqref{eq:Gauss_int} and \eqref{eq:f^j-Laplace},
respectively) and the stable case (with $s=d/\gamma$, see
\eqref{eq:stable-j}). The expression for $\theta_j$ in
\eqref{eq:kappa-theta-ansatz} picks up on the universal behaviour of
the correction term to the integral approximation of the Riemann sum
in \eqref{eq:Pspatial_with_partition} (for $j$ large enough, see
Lemma~\ref{lm:Sum->1}).


\begin{remark} It would be interesting to compare the measures
$\Psp$ and $\Pns$ with regard to the asymptotic statistics of
\emph{short cycles} (say, with fixed lengths $j=1,2,\dots$ as in
Theorem~\ref{thm:cycle_counts_normalized}). According to the
discussion in Section \ref{sec:6.1}, a better match between the two
models may be expected when the expression for $\theta_j$ in
\eqref{eq:kappa-theta-ansatz} is replaced by $\theta_j\equiv0$. The
asymptotics of the total number of cycles $T_N$ are also of
significant interest, especially in the critical case (cf.\
Theorem~\ref{thm:asymptotic_Kon}). However, such information is
currently not available under the spatial measure~$\Psp$
(see~\cite{BeUe11a} and further references therein).
\end{remark}

\subsubsection{Convergence to the Poisson--Dirichlet distribution}

For the coefficients $\alpha_j$ entering the definition of the
spatial measure $\Psp$ (see \eqref{eq:Pspatial_with_partition}),
Betz and Ueltschi \cite[p.\:1176]{BeUe11a} have considered
\emph{inter alia} the following two classes,
\begin{align}\label{eq:(i)}
\text{(i)}&\quad \lim_{j\to\infty}\alpha_j =
\alpha>0, \quad \sum_j
|\alpha_j-\alpha|<\infty;\\[-.2pc]
\label{eq:(ii)} \text{(ii)}&\quad \lim_{j\to\infty}\alpha_j =
\alpha\le 0,\quad \sum_j \frac{|\alpha_j-\alpha|}{j}<\infty.
\end{align}
With either of these assumptions, they prove for $\Psp$
\cite[Theorem 2.1\myp(b), p.\:1177]{BeUe11a} that in the
supercritical regime (i.e., $\rho>\rho_{\rm c}$), the ordered cycle
lengths $L^{(1)}\myn,L^{(2)}\myn,\dots$ (see
Definition~\ref{def:longest_cycle}) converge to the
Poisson--Dirichlet distribution with parameter $\rme^{-\alpha}$,
\begin{equation}\label{eq:PD_spatial}
\frac{1}{N \nu}\,(L^{(1)}\myn,L^{(2)}\myn,\dots)
\stackrel{d}{\longrightarrow} \PD(\rme^{-\alpha}),\qquad
N,L\to\infty.
\end{equation}

This resonates well with our Theorem~\ref{thm:large_cycles_1}.
Indeed, let the coefficients $\alpha_j$ have the form
\begin{equation}\label{eq:alpha_j=}
\alpha_j=\alpha-\log\myn(1+\xi(j)),\qquad j\in\NN\myp,
\end{equation}
where the function $\xi(z)$ satisfies the analyticity conditions of
Section \ref{sec:pert_polylog}, together with the estimate
$\xi(z)=O(z^{-\epsilon})$ \,($\epsilon>0$). The simplest example is
$\xi(j)=j^{-\epsilon}$, leading to
$\alpha-\alpha_j=\log\myn(1+j^{-\epsilon}) \sim j^{-\epsilon} \to0$
as $j\to\infty$. Let us stress that, as opposed to
\eqref{eq:(i)}\mypp--\mypp\eqref{eq:(ii)}, the sign of $\alpha$ in
\eqref{eq:alpha_j=} is not important, and also that the difference
$\alpha_j-\alpha$ satisfies the series convergence condition
\eqref{eq:(ii)}, but not necessarily \eqref{eq:(i)} (which only
holds for $\epsilon>1$).

With \eqref{eq:alpha_j=}, the coefficients
\eqref{eq:kappa-theta-ansatz} take the form
\begin{equation}\label{eq:kappa-theta-ansatz*}
\kappa_j=\rme^{-\alpha} \kappa^*\mypp
\frac{1+\xi(j)}{j^{s}}\myp,\qquad
\theta_j=\rme^{-\alpha}(1+\xi(j))\mypp,\qquad j\in\NN\myp.
\end{equation}
Let $s>1$, which ensures the existence of the supercritical regime
(see~\eqref{eq:xi-series-derivarive<1}), and suppose first that $s$
is non-integer, $q<s<q+1$ ($q\in\NN$); without loss of generality
(by reducing $\epsilon>0$ if necessary) we can assume that
$s+\epsilon<q+1$.
Then, using Lemma \ref{lm:pert_polylog} (more precisely, its part
(a) for $g_\kappa(z)$ and part (b) with $q=0$ and
$\kappa^*\myn=\rme^{-\alpha}$ for $g_\theta(z)$), it is not hard to
see that the generating functions $g_\kappa(z)$, $g_\theta(z)$
satisfy all the conditions of Theorem~\ref{thm:aux_asypmtotic_2},
including the asymptotic formulas \eqref{eq:g_theta_as} and
\eqref{eq:g_kappa_as} with $\theta^*\myn=\rme^{-\alpha}>0$. Hence,
Theorem~\ref{thm:large_cycles_1} may be applied, thus replicating
the convergence \eqref{eq:PD_spatial} for the surrogate-spatial
measure $\Pns$ (of course, with $\tilde\nu$ in place of~$\nu$).

The case of integer $s=q>1$ may be handled similarly, using the
suitable versions of Theorems~\ref{thm:aux_asypmtotic_2} and
\ref{thm:large_cycles_1} as indicated in
Remark~\ref{rm:thm5.11extension}.

\subsubsection{Emergence of a giant cycle} The third and last
specific class of the coefficients $\alpha_j$ considered by Betz and
Ueltschi \cite[p.\:1177]{BeUe11a} is given by
\begin{equation}\label{eq:(iii)}
\alpha_j = \gamma_0 \log j, \qquad j\in\NN\myp,
\end{equation}
where $\gamma_0>0$. Then it is proven \cite[Theorem 2.2\myp(b),
p.\:1178]{BeUe11a} that, under the supercritical spatial measure
$\Psp$\myp, there is asymptotically a single giant cycle,
\begin{equation}\label{eq:giant_cycle_spatial}
\frac{1}{N\nu}\, L^{(1)} \stackrel{d}{\longrightarrow} 1,\qquad
N,L\to\infty.
\end{equation}

This is directly analogous to the convergence \eqref{eq:->100} in
the statement of Theorem~\ref{thm:large_cycles_0}. Indeed,
substituting the expression \eqref{eq:(iii)} into
\eqref{eq:kappa-theta-ansatz} we get
\begin{equation}\label{eq:kappa-theta-ansatz**}
\kappa_j=\frac{\kappa^*}{j^{s+\gamma_0}}\mypp,\qquad
\theta_j=\frac{1}{j^{\gamma_0}}\mypp,\qquad j\in\NN\myp,
\end{equation}
so that the corresponding generating functions are given by
\begin{equation}\label{eq:kappa-theta-iii}
g_\kappa(z)=\kappa^*\Li_{s+\gamma_0+1}(z),\qquad
g_\theta(z)=\Li_{\gamma_0+1}(z).
\end{equation}
Although Theorem~\ref{thm:large_cycles_0} is not immediately
applicable here (because the function $g_\theta(z)$ has singularity
at $z=1$), the result \eqref{eq:->100} is valid for
\eqref{eq:kappa-theta-iii} in view of
Remark~\ref{rm:thm5.13extension}.

Moreover, we can go further and generalize the narrow class
\eqref{eq:(iii)} to
$$
\alpha_j=\gamma_0\log j-\log\myn(1+\xi(j)),\qquad j\in\NN\myp,
$$
thus replacing \eqref{eq:kappa-theta-ansatz**} by
(cf.~\eqref{eq:kappa-theta-ansatz*})
\begin{equation*}
\kappa_j=\kappa^*\mypp \frac{1+\xi(j)}{j^{s+\gamma_0}}\mypp,\qquad
\theta_j=\frac{1+\xi(j)}{j^{\gamma_0}}\mypp,\qquad j\in\NN\myp.
\end{equation*}
Then, using Lemmas \ref{lm:polylog-extension} and
\ref{lm:pert_polylog}, and deploying
Remark~\ref{rm:thm5.13extension}, we see that the convergence
\eqref{eq:->100} holds true.

\subsection{Summary of the comparison}\label{sec:6.4}

To wrap up the discussion in Section \ref{sec:comparison}, we have
demonstrated that, under natural conditions on the coefficients
$\kappa_j$ and $\theta_j$, the surrogate-spatial model successfully
reproduces the main features of the spatial model, including the
formulas for the critical density and the limiting fraction of
points in infinite clusters, as well as the asymptotic convergence
of the descending cycle lengths either to the Poisson--Dirichlet
distribution or to the degenerate distribution (with a single giant
cycle), depending on the asymptotic behaviour of the modulating
coefficients $\rme^{-\alpha_j}$ (i.e., convergent \emph{vs.}\
divergent $\alpha_j$'s); in our terms, this is translated into the
distinction between the type of singularity of the generating
function $g_\theta(z)$ (i.e., purely logarithmic or
power-logarithmic, respectively). Overall, our analysis shows that
the surrogate-spatial model, being of significant interest in its
own right, proves to be a flexible and efficient approximation of
the spatial model, providing at the same time a much greater
analytical tractability thus making it a useful exploratory tool.

\subsection*{Acknowledgements}
Partial support from SFB\,701 and ZiF (Bielefeld) is gratefully
acknowledged. L.V.\,B.\ was also partially supported by a Leverhulme
Research Fellowship and by the Hausdorff Research Institute for
Mathematics (Bonn). The authors would like to thank Joseph Najnudel,
Ashkan Nikeghbali and Anatoly Vershik for stimulating discussions on
the topic of random permutations. D.\,Z.\ is also grateful to Volker
Betz and Daniel Ueltschi for enlightening comments on the model of
spatial permutations.



\begin{thebibliography}{99}

\bibitem{ABT03}
Arratia, R., Barbour, A.D.\ and Tavar\'e, S. \textit{Logarithmic
Combinatorial Structures: a Probabilistic Approach}. EMS Monographs
in Mathematics. European Mathematical Society, Z\"urich, 2003.
\MR{2032426}

\bibitem{Bennett}
Bennett, G. Probability inequalities for the sum of independent
random variables. J.~Amer.\ Statist.\ Assoc.\ {\bf 57} (1962),
33--45.


\bibitem{BeUe09}
Betz, V.\ and Ueltschi, D. Spatial random permutations and infinite
cycles. \textit{Comm.\ Math.\ Phys.} {\bf 285} (2009), 469--501.
\MR{2461985}

\bibitem{BeUe10}
Betz, V. and Ueltschi, D. Critical temperature of dilute Bose gases.
\textit{Phys.\ Rev.\ A} {\bf 81} (2010), 023611.

\bibitem{BeUe11} Betz, V.\ and Ueltschi, D. Spatial random
permutations with small cycle weights. \textit{Probab.\ Theory
Related Fields} {\bf 149} (2011), 191--222. \MR{2773029}

\bibitem{BeUe11a} Betz, V.\ and Ueltschi, D. Spatial
random permutations and Poisson--Dirichlet law of cycle lengths.
\textit{Electron.\ J.\ Probab.} {\bf 16} (2011),
1173--1192. \MR{2820074}

\bibitem{BeUeVe11}
Betz, V., Ueltschi, D.\ and Velenik, Y. Random permutations with
cycle weights. \textit{Ann.\ Appl.\ Probab.} {\bf 21} (2011),
312--331. \MR{2759204}

\bibitem{BRR}
Bhattacharya, R.\,N.\ and Ranga Rao, R. \textit{Normal Approximation
and Asymptotic Expansions}, corrected printing. Robert E.~Krieger
Publ.\ Co.,
Malabar, FL, 1986. \MR{0855460}



\bibitem{Bochner}
Bochner, S.\ and Chandrasekharan, K. \textit{Fourier Transforms}.
Annals of Mathematics Studies, vol.\,19. Princeton University Press,
Princeton, NJ; Oxford University Press, London, 1949. \MR{0031582}

\bibitem{Bru} de Bruijn, N.G. \textit{Asymptotic Methods in
Analysis}, 2nd ed. Bibliotheca Mathematica, vol.\,IV. North-Holland,
Amsterdam; Noordhoff, Groningen, 1961. \MR{0177247}


\bibitem{ErJaUe14}
Ercolani, N.M., Jansen, A.\ and Ueltschi, D. Random partitions in
statistical mechanics. Preprint (2014),
\url{http://arxiv.org/abs/1401.1442} \,(last accessed 17.02.2014).

\bibitem{ErUe12}
Ercolani, N.M.\ and Ueltschi, D. Cycle structure of random
permutations with cycle weights. \textit{Random Structures
Algorithms} {\bf 44} (2014), 109--133. \MR{3143592}

\bibitem{Ewens}
Ewens, W.J. The sampling theory of selectively neutral alleles.
\textit{Theoret.\ Population Biology} {\bf 3} (1972),
87--112. \MR{0325177}

\bibitem{Feller}
Feller, W. \textit{An Introduction to Probability Theory and Its
Applications, Vol.~II}, 2nd ed. Wiley Series in Probability and
Mathematical Statistics.
Wiley, New York, 1971. \MR{0270403}


\bibitem{FlSe09}
Flajolet, P.\ and Sedgewick, R. \textit{Analytic Combinatorics}.
Cambridge University Press, New York, 2009. \MR{2483235}

\bibitem{GR}
Gradshteyn, I.S.\ and Ryzhik,  I.M. \textit{Table of Integrals,
Series, and Products}, 7th ed.
Elsevier/Academic Press, Amsterdam, 2007. \MR{2360010}


\bibitem{HLP}
Hardy, G.H., Littlewood, J.E.\ and P\'olya, G.
\textit{Inequalities}, 2nd ed. Cambridge Mathematical Library. At
the University Press, Cambridge, 1952. \MR{0046395}

\bibitem{IL}
Ibragimov, I.A.\ and Linnik, Yu.V. \textit{Independent and
Stationary Sequences of Random Variables}.
Wolters-Noordhoff, Groningen, 1971. \MR{0322926}


\bibitem{IsZa}
Ishwaran, H.\ and Zarepour, M. Exact and approximate sum
representations for the Dirichlet process. \textit{Canad.\ J.\
Statist.} {\bf 30} (2002),
269--283. \MR{1926065}

\bibitem{Kingman1975} Kingman, J.F.C. Random discrete
distributions.
\textit{J.~Roy.\ Statist.\ Soc.\ Ser.~B}
{\bf 37} (1975),
1--22. \MR{0368264}

\bibitem{Kingman}
Kingman, J.F.C. The population structure associated with the Ewens
sampling formula. \textit{Theoret.\ Population Biology} {\bf 11}
(1977), 274--283. \MR{0682238}


\bibitem{Kingman-Poisson}
Kingman, J.F.C. \textit{Poisson Processes}. Oxford Studies in
Probability, vol.~3. Clarendon Press, Oxford University Press,
Oxford, 1993. \MR{1207584}

\bibitem{Lewin} Lewin, L. \textit{Polylogarithms and Associated
Functions}. North-Holland, New York, 1981. \MR{0618278}



\bibitem{Mac95}
Macdonald, I.G. \textit{Symmetric Functions and Hall Polynomials},
2nd ed. Oxford Mathematical Monographs.
Oxford University Press, New York, 1995. \MR{1354144}

\bibitem{Man02}
Manstavi\v{c}ius, E. Mappings on decomposable combinatorial
structures: Analytic approach. \textit{Combin.\ Probab.\ Comput.}\
{\bf 11} (2002), 61--78. \MR{1888183}



\bibitem{MaNiZe12} Maples, K., Nikeghbali, A.\ and Zeindler, D.
On the number of cycles in a random permutation. \textit{Electron.\
Commun.\ Probab.} {\bf 17} (2012), no.~20, 13~pp. \MR{2943103}

\bibitem{Nagaev}
Nagaev, S.V. Large deviations of sums of independent random
variables. \textit{Ann.\ Probab.} {\bf 7} (1979), 745--789.
\MR{0542129}

\bibitem{NiZe13}
Nikeghbali, A.\ and Zeindler, D. The generalized weighted
probability measure on the symmetric group and the asymptotic
behavior of the cycles. \textit{Ann.\ Inst.\ H.~Poincar\'e Probab.\
Statist.} {\bf 49} (2013),
961--981. \MR{3127909}





\bibitem{Polya}
P\'olya, G. Kombinatorische Anzahlbestimmungen f\"ur Gruppen,
Graphen und chemische Verbindungen. (German) \textit{Acta Math.}
{\bf 68} (1937), 145--254; English transl.\ in: P\'olya, G.\ and
Read, R.C. \textit{Combinatorial Enumeration of Groups, Graphs, and
Chemical Compounds}.
Springer, New York, 1987, pp.\ 1--95. \MR{0884155}


\bibitem{Tavare}
Tavar\'e, S. The birth process with immigration, and the
genealogical structure of large populations. \textit{J.~Math.\
Biol.} {\bf 25} (1987), 161--168. \MR{0896431}

\bibitem{VeSh77}
Vershik, A.M.\ and Shmidt, A.A. Limit measures arising in the
asymptotic theory of symmetric groups.~I. (Russian) \textit{Teor.\
Veroyatnost.\ i Primenen.}\ {\bf 22} (1977),
72--88; English transl.\ in: \textit{Theory Probab.\ Appl.}\ {\bf
22} (1977),
70--85. \MR{0448476}

\bibitem{Watterson}
Watterson, G.A. The stationary distribution of the infinitely-many
neutral alleles diffusion model. \textit{J.~Appl.\ Probab.} {\bf 13}
(1976), 639--651. \MR{0504014}

\end{thebibliography}

\end{document}